\title{
Global Jacobian and $\Gamma$-convergence in a two-dimensional Ginzburg-Landau model for boundary vortices
}
\author{{\Large Radu Ignat}\footnote{Institut de Math\'ematiques de Toulouse \& Institut Universitaire de France, UMR 5219, Universit\'e de Toulouse, CNRS, UPS
IMT, F-31062 Toulouse Cedex 9, France. Email: Radu.Ignat@math.univ-toulouse.fr} \and {\Large Matthias Kurzke}\footnote{School of Mathematical Sciences,
University of Nottingham, University Park, Nottingham, NG7 2RD, UK. Email: matthias.kurzke@nottingham.ac.uk}}
\newtheorem{lem}  {Lemma}[section]
\newtheorem{pro}[lem]    {Proposition}
\newtheorem{thm}[lem]    {Theorem}
\newtheorem{cor}[lem]    {Corollary}
\newtheorem{df}[lem]     {Definition}
\newtheorem{rem}{Remark}[section]
\def\XXint#1#2#3{{\setbox0=\hbox{$#1{#2#3}{\int}$}
     \vcenter{\hbox{$#2#3$}}\kern-.5\wd0}}
\DeclareFontFamily{U}{matha}{\hyphenchar\font45}
\DeclareFontShape{U}{matha}{m}{n}{
      <5> <6> <7> <8> <9> <10> gen * matha
      <10.95> matha10 <12> <14.4> <17.28> <20.74> <24.88> matha12
      }{}
\DeclareSymbolFont{matha}{U}{matha}{m}{n}
\DeclareFontFamily{U}{mathx}{\hyphenchar\font45}
\DeclareFontShape{U}{mathx}{m}{n}{
      <5> <6> <7> <8> <9> <10>
      <10.95> <12> <14.4> <17.28> <20.74> <24.88>
      mathx10
      }{}
\DeclareSymbolFont{mathx}{U}{mathx}{m}{n}
\DeclareMathDelimiter{\vvvert}{0}{matha}{"7E}{mathx}{"17}
\newcommand {\den} {{e}_\eta}
\newcommand {\ka}{\varkappa}
\newcommand{\ol}[1]{\overline{#1}}
\newcommand{\jacbd}{\mathcal{J}_{bd}}
\newcommand{\jaco}{\mathcal{J}}
\newcommand{\Ss}{\mathbb{S}}
\newcommand{\RR}{\mathbb{R}}
\newcommand{\R}{\mathbb{R}}
\newcommand{\C}{\mathbb{C}}
\newcommand{\eps}{\varepsilon}
\newcommand{\teps}{\tilde\eps}
\newcommand{\h}{{\mathcal{H}}}
\newcommand{\Hh}{{\mathcal{H}}}
\newcommand{\proof}[1]{\par\medskip\noindent{\bf Proof#1.}}
\newcommand{\qed}{\hfill$\square$}
\newcommand{\be}{\begin{equation}}
\newcommand{\ee}{\end{equation}}
\newcommand{\nd}{\noindent}
\newcommand{\NN}{\mathbb{N}}
\newcommand{\ZZ}{\mathbb{Z}}
\newcommand{\Om}{\Omega}
\newcommand{\dOm}{\partial\Omega}
\newcommand{\de}{\partial}
\newcommand{\dist}{\mathop{\rm dist \,}}
\newcommand{\jac}{\mathop{\rm jac \,}}
\newcommand{\f}{\varphi}
\renewcommand{\O}{\Omega}
\newcommand{\supp}{\operatorname{supp}}
\newcommand{\degr}{\operatorname{deg}}
\newcommand{\e}{\varepsilon}
\newcommand{\Ge}{{\cal G}_\eps}
\newcommand{\eee}{E_{\e,\eta}}
\newcommand{\inte}{\mathrm{int}}
\begin{document}

\maketitle

\begin{abstract}

In the theory of $2D$ Ginzburg-Landau vortices, the Jacobian plays a crucial role for the detection
of topological singularities. We introduce a related distributional quantity, called the global Jacobian that can detect both interior and boundary vortices for a $2D$ map $u$.
We point out several features of the global Jacobian, in particular, we prove an important stability 
property. This property allows us to study boundary vortices in a $2D$ Ginzburg-Landau model arising in thin ferromagnetic films, where a weak anchoring boundary energy penalising the normal component of $u$ at the boundary competes with the usual bulk potential energy. We prove an asymptotic expansion by $\Gamma$-convergence at the second order for this mixed boundary/interior energy in a regime where boundary vortices are preferred. More precisely, at the first order of the limiting expansion, the  energy is quantised and determined by the number of boundary vortices detected by the global Jacobian, while the second order term in the limiting energy expansion accounts for the interaction between the boundary vortices.

\medskip

\nd {\it AMS classification: } Primary: 35Q56, Secondary: 35B25, 49J45

\nd {\it Keywords: } Jacobian, stability, compactness, $\Gamma$-convergence, Ginzburg-Landau vortices, boundary vortices
\end{abstract}

\tableofcontents

\section{Introduction} 

For two small parameters $\eps>0$ and $\eta>0$, we consider the energy functional
\be
\label{eq:introenepseta}
E_{\eps,\eta}(u) = \int_{\Omega} |\nabla u|^2 \, dx + \frac1{\eta^2}\int_\Omega (1-|u|^2)^2 \, dx 
+ \frac{1}{2\pi\eps} \int_{\dOm} (u\cdot \nu)^2 \, d\h^1,
\ee
for every $2D$ map $u\in H^1(\Omega;\RR^2)$. The domain $\Omega\subset \R^2$ is a bounded, simply connected and $C^{1,1}$ regular domain (unless explicitly stated otherwise) with $\nu$ being the outer unit normal field on $\partial \Omega$. We 
always consider the following unit tangent field $$\tau=\nu^\perp=(-\nu_2, \nu_1) \quad \textrm{on} \quad \partial \Om,$$ so that $(\nu, \tau)$ forms an oriented frame on $\dOm$.
 
From a physical or modelling perspective, the functional \eqref{eq:introenepseta} has been used as a {somewhat} ad hoc model for thin ferromagnetic films, for example by Moser \cite{Moser:2003a}
and Cantero-\'Alvarez \cite{Cantero:2009a}, {highlighting an interplay between interior and boundary vortices}. In  \cite{IK_bdv}, we show explicit bounds that relate \eqref{eq:introenepseta} to 
{an effective micromagnetic energy} in a thin film regime  where boundary vortices are preferred. The results of the present article are essential in obtaining the $\Gamma$-convergence results for the full micromagnetic energy in that regime. A different regime corresponding to slightly larger films (where the nonlocality plays a more important role) was 
studied by Moser \cite{Moser:2004a}, who obtained convergence results only at the level of minimisers. We refer to \cite{IK_bdv} for a {thorough} discussion of the micromagnetic energy 
and the relevant thin-film regimes. 

From a purely mathematical point of view,  \eqref{eq:introenepseta} combines two penalisation terms leading 
to two well-known singularly perturbed problems that we explain in the following.

\medskip

\nd {\bf Ginzburg-Landau functional for interior vortices}. If we formally set $\eps=0$ in \eqref{eq:introenepseta}, then a finite energy configuration $u$ must be tangential to the boundary $\dOm$. Therefore, the following minimisation problem plays an essential role in our study for small $\eta>0$:
\be
\label{eq:introgl}
E^{GL}_\eta(u) = \int_{\Omega} |\nabla u|^2 \, dx + \frac1{\eta^2}\int_\Omega (1-|u|^2)^2 \, dx, \quad u\in H^1(\Omega;\RR^2),
\ee
within the boundary constraint that $u=\tau$  on $\dOm$. 
As $\Omega$ is a bounded simply connected $C^{1,1}$ regular domain, the tangent field $\tau$ has winding number $1$ on $\dOm$. This situation fits with the setting of the seminal book of Bethuel-Brezis-H\'elein 
\cite{BethuelBrezisHelein:1994a} who showed in particular that minimisers $u_\eta$ of \eqref{eq:introgl} with $u=\tau$ on $\dOm$ have an energy of leading order $2\pi |\log\eta|+ O(1)$ as $\eta\to 0$  and converge in various spaces to a singular $\Ss^1$-valued harmonic map having one point-singularity of topological degree $1$ (called {\it interior vortex point}). Moreover, for small $\eta>0$, minimisers satisfy $|u|\approx 1$ outside a single ``bad disc'' around the interior vortex point of
radius comparable to $\eta$, and the precise asymptotic behaviour of the minimal energy at the second order was determined in \cite{BethuelBrezisHelein:1994a} by introducing a novel notion of 
renormalised energy governing the location of the interior vortex point.

\medskip

\nd {\bf A weak anchoring energy for $\Ss^1$-valued maps}. If we formally set $\eta=0$ in \eqref{eq:introenepseta}, then a finite energy configuration must satisfy $|u|=1$ in $\Omega$. Therefore, 
we are interested in minimising the following weak anchoring energy for $\Ss^1$-valued maps with small $\eps>0$:
\be
\label{eq:introbv}
E^{KS}_\eps(u) = \int_{\Omega} |\nabla u|^2 \, dx + \frac{1}{2\pi\eps} \int_{\dOm} (u\cdot \nu)^2 \, d\h^1, \quad u\in H^1(\Omega;\Ss^1).
\ee
This model was first derived by Kohn-Slastikov \cite{KohnSlastiko:2005a} as a $\Gamma$-limit in a certain thin-film regime in micromagnetics. The asymptotic behaviour of the energy $E^{KS}_\eps$ at the minimal level when $\eps\to 0$ was studied by the second author \cite{Kurzke:2006b}: in particular, the minimisers have an energy of leading order $2\pi |\log\eps|+O(1)$ and converge to a $\Ss^1$-valued harmonic map with two boundary singularities. 
{Each of these singularities can be interpreted as carrying a ``half'' topological degree. }
For small $\eps>0$, minimisers satisfy $u\approx \pm \tau$ outside of two ``bad discs'' of radius comparable to $\eps$, 
and again, it is possible to precisely determine the asymptotics of the minimal energy at the second order.

Both \eqref{eq:introgl} and \eqref{eq:introbv} have also been studied from the point of view of $\Gamma$-convergence. A difficulty is that the diverging 
energies typically lead to a lack of compactness for the order parameter $u$. To overcome this problem, it was observed 
that instead of the map $u$, other quantities have much better compactness properties. In the case of the Ginzburg-Landau functional $E^{GL}_\eta$ for interior vortices, the natural 
quantity is the Jacobian determinant $\jac(u)=\det\nabla u$. It was shown that 
 for families $(u_\eta)_\eta$ with $E^{GL}_\eta(u_\eta)=O(|\log\eta|)$ as $\eta\to 0$, the Jacobians $(\jac(u_\eta))_\eta$ 
are precompact in  $(W^{1,\infty}_0(\Omega))^*$ and other dual spaces of functions that are zero on the boundary, see
Jerrard-Soner~\cite{JerrardSoner:2002b} or  Sandier-Serfaty
\cite{SandierSerfaty:2007a}.
The limits of the Jacobians are of the form $\pi \sum_k d_k \delta_{a_k}$ for some distinct points $a_k\in\Omega$, corresponding to interior vortex points, carrying the topological degrees $d_k\in \ZZ$.
As the Jacobian is controlled only in the dual space of functions that are zero on the boundary, there is no control over vortices escaping to the boundary.

For the weak anchoring energy $E^{KS}_\eps$ over $\Ss^1$-valued maps, the problem is slightly easier: as  a map $u_\eps$ with finite energy possesses a global lifting 
$u_\eps=e^{i\phi_\eps}$ with $\phi_\eps\in H^1(\Omega)$, then every family $(u_\eps)_\eps$ with $E^{KS}_\eps(u_\eps) =O(|\log\eps|)$ has the liftings $(\phi_\eps)_\eps$ (up to an additive constant) 
{precompact} in $L^p(\dOm)$ for every { $p\in [1, \infty)$}
with limits $\phi_*$ on $\partial \Om$ such that $ \partial_\tau \phi_*-\ka$ is a multiple of a sum of Dirac masses {on $\partial \Om$} (see \cite{Kurzke:2006a}). Here, $\partial_\tau$ denotes tangential 
differentiation and $\ka$ the curvature on $\dOm$. This approach relies strongly on the constraint $|u_\eps|=1$ in $\Omega$.

The energy \eqref{eq:introenepseta} allows for both types of topological phenomena (boundary and interior vortex), so we need a tool that captures these
singularities and does not require the existence of a global lifting. The natural tool is the notion of global Jacobian that we discuss in the next section. The $\Gamma$-convergence results for the energy \eqref{eq:introenepseta} are proved in this paper in the regime
\be
\label{regim_1}
|\log\eps|\ll |\log\eta|,
\ee 
i.e., interior vortices cost more energy than boundary vortices. 

\medskip

\nd {\bf Notation}. We always denote by $a_\eps\ll b_\eps$ or $a_\eps=o(b_\eps)$ if $\frac{a_\eps}{b_\eps}\to 0 $ as $\eps\to 0$, and 
similar $a_\eps\lesssim b_\eps$ or $a_\eps=O(b_\eps)$ if there exists $C>0$ such that $a_\eps\le C b_\eps$ for all {small} $\eps>0$. In the following, $\eps\to 0$ can mean both a sequence $\eps_k\to 0$ as well as the continuous parameter $\eps\to 0$. More precisely, in our $\Gamma$-convergence results, the limits $\eps \to 0$ in $\liminf$
(and $\limsup$) are understood in both cases sequence / family of the parameter $\eps$; only for the compactness result, 
we start with a fixed sequence $\eps_k\to 0$ and then 
we take further subsequences of this sequence.

\subsection{Global Jacobian} 
For a two-dimensional map $u\in H^1(\Omega;\R^2)$  defined in a Lipschitz bounded
domain $\Omega\subset \R^2$, we call {\it global Jacobian} of $u$ the following linear functional $\jaco(u):W^{1, \infty}(\Omega)\to \RR$ acting on Lipschitz test functions:
\be \label{eq:gj}
\left<\jaco(u), \zeta\right>:=-\int_{\Omega} u \times \nabla u\cdot \nabla^\perp \zeta\, dx, \quad \textrm{for every Lipschitz function }\, \zeta: \Omega\to \R.
\ee
Here $a\times b = a_1b_2-a_2b_1$ for $a,b\in \R^2$, $u\times \nabla u=(u\times \partial_{x_1} u, u\times \partial_{x_2} u)$ that belongs to $L^1(\Om; \R^2)$ for $u\in H^1(\Om; \R^2)$,  
$\nabla^\perp = (-\partial_{x_2}, \partial_{x_1})$ and $\left<\cdot, \cdot\right>$ stands for the  (algebraic) dual pairing
 between $(W^{1, \infty}(\Omega))^*$ and $W^{1, \infty}(\Omega)$. In particular, the global Jacobian has zero average, i.e., 
\be
\label{ave_0}
\left<\jaco(u), 1\right>=0.\ee
{\bf Relation with the interior and boundary Jacobian}. On the one hand, when applied to test functions $\zeta\in W^{1,\infty}(\Omega)$ vanishing at the boundary $\partial \Omega$, the global Jacobian $\jaco(u)$ reduces to twice the {\it interior 
Jacobian} $\jac(u)=\partial_{x_1} u \times \partial_{x_2} u \in L^1(\Omega)$ for $u\in H^1(\Omega;\R^2)$; indeed, integrating by parts, it follows 
\[
\left<\jaco(u), \zeta\right> =  \int_\Omega 2\jac(u) \zeta \, dx \quad  \textrm{ if } \, \zeta=0 \,  \textrm{ on } \, \partial \Omega.
\]
Therefore, the global Jacobian carries the topological information at the interior $\Omega$ and detects the interior vortices.

On the other hand, the global Jacobian also carries the topological information at the boundary $\partial \Omega$ and enables us to detect  boundary vortices; 
more precisely, we define the {\it boundary Jacobian} of $u\in H^1(\Omega;\R^2)$ to be the linear functional
 $\jacbd(u):W^{1, \infty}(\Omega)\to \RR$ given by
 \be
 \label{def_jac_bd}
 \left<\jacbd(u),\zeta\right>:=\left<\jaco(u),\zeta\right>-\int_\Omega 2 \jac(u)\zeta\, dx, \quad \textrm{for every Lipschitz function }\, \zeta: \Omega\to \R .\ee
 In fact, the functional $\jacbd(u)$ acts only on the boundary $\partial \Om$ (see Proposition \ref{pro:jacbd} below): in particular, if $u\in C^2(\bar \Omega; \R^2)$, then integration by parts yields 
 $$\left< \jacbd(u), \zeta\right>=-\int_{\partial \Om} u\times \partial_{\tau} u \zeta\,d\h^1 \quad \textrm{for every Lipschitz function }\, \zeta:\Omega\to \R,$$
 i.e., $\jacbd(u)=-u\times \partial_{\tau} u \, \h^1\llcorner \partial \Omega$. 
{While $\zeta$ is a priori only defined in $\Omega$, it has a unique Lipschitz extension to $\ol\Omega$, and we will tacitly use this extension in the following.}
In addition, for a $\Ss^1$-valued map $u$ given through a smooth lifting $\f\in C^2(\bar \Omega; \R)$, i.e., $u=(\cos \f, \sin \f)$ in $\Omega$, then the interior Jacobian $\jac(u)$ vanishes in $\Om$ so that the whole topological information is carried by the
tangential derivative of $\f$ at the boundary, i.e.,
 \be
 \label{jac_s1}
\jac(u)=0, \quad \jaco(u)=\jacbd(u)=-\partial_{\tau} \f \, \h^1\llcorner \partial \Omega \quad \textrm{and}\quad \left<\jacbd(u), 1\right>=0 \quad \textrm{ if } \, u=e^{i\f} \,  \textrm{ in } \, \Omega.
\ee

\subsection{Main results}

We show the following stability result for the global Jacobian. This is the generalisation of the well known estimate for the interior Jacobian (see e.g., Brezis-Nguyen \cite{BreNgu}, or Proposition \ref{pro:jac_int} below). 
\begin{thm}
\label{thm:teorem}
Let $\Omega\subset \R^2$ be a $C^{1,1}$ bounded domain
and $u, v\in H^1(\Omega; \R^2)$ such that
$$|v|\leq 1\quad \textrm{ a.e. in } \, \Om.$$ Then for every $\zeta \in W^{1,\infty}(\Omega)$, we have 
$$\left|\left<\jaco(u)-\jaco(v),\zeta\right>\right|\leq f\bigg(\|u-v\|_{L^2(\Om)}\big(\|\nabla u\|_{L^2(\Om)}+\|\nabla v\|_{L^2(\Om)} \big)\bigg)\|\nabla \zeta\|_{L^\infty(\Om)}$$
where the function $f$ is given by $f(t)=t+C\sqrt{t}$ with $C>0$ depending only on the geometry of $\Om$.
 \end{thm}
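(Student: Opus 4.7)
The plan is to set $w := u-v$ and use the product rule $u\times \nabla u - v\times \nabla v = w\times \nabla u + v\times \nabla w$, so that
\[
\langle \jaco(u) - \jaco(v), \zeta\rangle = -\int_\Omega w\times \nabla u\cdot \nabla^\perp \zeta\,dx - \int_\Omega v\times \nabla w\cdot \nabla^\perp \zeta\,dx =: I_1 + I_2.
\]
A direct Cauchy--Schwarz estimate immediately gives $|I_1|\le t\|\nabla \zeta\|_{L^\infty}$ with $t:=\|w\|_{L^2}(\|\nabla u\|_{L^2}+\|\nabla v\|_{L^2})$, which accounts for the linear part of $f(t)$. The delicate piece is $I_2$, and I will distinguish two regimes according to whether $\|w\|_{L^2(\Omega)}$ is smaller than $N:=\|\nabla u\|_{L^2}+\|\nabla v\|_{L^2}$ or not.

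In the regime $\|w\|_{L^2}\le N$ I would integrate by parts. Writing $v\times \nabla w = \nabla(v\times w) - \nabla v\times w$ and using that $\nabla^\perp\zeta$ is divergence-free with $\nabla^\perp\zeta\cdot \nu = -\partial_\tau \zeta$ on $\dOm$, one derives
\[
I_2 = \int_{\dOm}(v\times w)\partial_\tau \zeta\,d\h^1 - \int_\Omega w\times \nabla v\cdot \nabla^\perp \zeta\,dx,
\]
whose bulk part is again at most $t\|\nabla \zeta\|_{L^\infty}$. Since the pointwise bound $|v|\le 1$ is inherited by the trace (via mollification with a nonnegative unit-mass kernel), the boundary term is bounded by $C\|w\|_{L^2(\dOm)}\|\nabla \zeta\|_{L^\infty}$. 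Then I would invoke the sharp trace interpolation
\[
\|w\|_{L^2(\dOm)}^2 \le C_\Omega \|w\|_{L^2(\Omega)}\bigl(\|w\|_{L^2(\Omega)} + \|\nabla w\|_{L^2(\Omega)}\bigr),
\]
obtained by applying the divergence theorem to $w^2 E$ for a Lipschitz extension $E$ of the outer normal (this is where the $C^{1,1}$ regularity of $\Omega$ enters, producing a constant depending only on the geometry). In the present regime both $\|w\|_{L^2}^2$ and $\|w\|_{L^2}\|\nabla w\|_{L^2}$ are bounded by $t$, so the right-hand side is $\le 2C_\Omega t$ and the boundary contribution is $O(\sqrt t)\|\nabla \zeta\|_{L^\infty}$, delivering the $\sqrt t$ part of $f$.

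In the opposite regime $\|w\|_{L^2}>N$, integration by parts is wasteful, so I instead estimate $I_2$ directly using $|v|\le 1$:
\[
|I_2| \le \|v\|_{L^\infty}\|\nabla w\|_{L^1(\Omega)}\|\nabla \zeta\|_{L^\infty} \le |\Omega|^{1/2}\,N\,\|\nabla \zeta\|_{L^\infty}.
\]
The regime assumption $N<\|w\|_{L^2}$ forces $N^2<\|w\|_{L^2}N=t$, hence $N<\sqrt t$, and this estimate also yields $O(\sqrt t)\|\nabla \zeta\|_{L^\infty}$. Adding this to the bound on $I_1$ closes the argument with $f(t)=t+C\sqrt t$ in either regime.

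The main obstacle will be making the integration by parts and the boundary integral rigorous at merely $H^1$ regularity. I would handle this by approximating $u$ and $v$ by smooth maps, using for $v$ a mollification (after a suitable $L^\infty$-preserving extension) with a nonnegative unit-mass kernel so that the smooth approximants also satisfy $|v_n|\le 1$, and passing to the limit by continuity of the trace from $H^1(\Omega)$ to $L^2(\dOm)$. The resulting constant $C$ depends only on $|\Omega|$, $|\dOm|$, and the Lipschitz data of the normal extension, hence only on the geometry of $\Omega$, as claimed.
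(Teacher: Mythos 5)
Your proof is correct, and it takes a genuinely different route from the paper's at the one place where the argument is delicate. Both proofs begin with the same algebraic decomposition $u\times\nabla u - v\times\nabla v = (u-v)\times\nabla u + v\times\nabla(u-v)$ and both split into two regimes according to the size of $\|u-v\|_{L^2}/(\|\nabla u\|_{L^2}+\|\nabla v\|_{L^2})$, with the ``$\|w\|_{L^2}$ large'' regime handled in essentially the same direct way. The difference is in the ``$\|w\|_{L^2}$ small'' regime, where one must control $\int_\Omega v\times\nabla w\cdot\nabla^\perp\zeta$: the paper \emph{avoids reaching the actual boundary} by using the co-area formula to select a good parallel curve $\partial\Omega_R\cap\Omega$ at a distance $R\sim\delta$ from $\dOm$ on which $\int|v\times u|\,d\h^1$ is under control, integrating by parts only in $\Omega\setminus\Omega_R$ and estimating the thin strip $\Omega_R$ directly via $\h^2(\Omega_R)^{1/2}\lesssim\delta^{1/2}$; whereas you integrate by parts all the way to $\dOm$ (using $\dive\nabla^\perp\zeta=0$) and then bound the resulting boundary integral $\int_{\dOm}|v\times w|\,\partial_\tau\zeta$ via the trace interpolation inequality $\|w\|_{L^2(\dOm)}^2\le C_\Omega\,\|w\|_{L^2(\Omega)}\bigl(\|w\|_{L^2(\Omega)}+\|\nabla w\|_{L^2(\Omega)}\bigr)$, which you derive from the divergence theorem applied to $|w|^2 E$ for a Lipschitz normal extension $E$ (legitimate for $C^{1,1}$ domains). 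Your route is somewhat more streamlined, at the cost of invoking that specific sharp trace inequality; the paper's co-area construction can be seen as implicitly producing the same information without ever invoking traces. Both yield $f(t)=t+C\sqrt t$ with $C$ depending only on the geometry of $\Omega$, and both require the same smoothing/density argument to make the integrations by parts rigorous at $H^1$ regularity, which you correctly address by choosing a mollification of $v$ that preserves $|v|\le1$.
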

 Note that the above inequality can be interpreted as a stability property of the global Jacobian $\jaco(\cdot)$ in the strong $L^2$-topology of maps under a certain control on their $H^1$-seminorm (that eventually could blow up).
This allows us to make perturbations of $u$ that are small in $L^2$, but possibly large in $H^1$ without changing the 
global Jacobian much. 

Theorem \ref{thm:teorem} is an important tool in proving the compactness result in Theorem \ref{thm:firstocomp} of the global Jacobian for configurations $u_\eps$ satisfying the energetic bound $E_{\eps, \eta}(u_\eps)\leq C |\log \eps|$ in the regime \eqref{regim_1}.  In addition, 
we prove the compactness of the trace $u_\eps\big|_{\dOm}$ in the strong $L^p(\dOm)$-topology for every $p\geq 1$. This compactness result of $u_\eps\big|_{\dOm}$ is {\bf very surprising} in the context of Ginzburg-Landau type functionals where in general, no compactness of configurations $u_\eps$ is expected to occur. Moreover, under a more restrictive energetic regime, we prove that strong $L^p(\Om)$-convergence of $u_\eps$ inside $\Om$ does also occur, see Theorem \ref{thm:gammacforee}. 
The role of Theorem~\ref{thm:firstocomp} consists also in proving a lower bound of the energy at the first order that is quantised by the number of boundary vortices detected by the global Jacobian.

\begin{thm}\label{thm:firstocomp} 
Let $\Omega\subset \R^2$ be a bounded, simply connected, $C^{1,1}$ regular domain and $\ka$ be the curvature on $\dOm$. 
If $\eps\to 0$ and $\eta=\eta(\eps)\to 0$ satisfy $|\log\eps|\ll |\log\eta|$, then the following holds: Assume $u_\eps\in H^1(\Om;\R^2)$ satisfy
\[
\limsup_{\eps\to 0} \frac{1}{|\log\eps|}E_{\eps,\eta}(u_\eps) <\infty.
\]
\begin{itemize}
\item[i)] {\textbf{Compactness of global Jacobians and $L^p(\dOm)$-compactness of $u_\eps\big|_{\partial \Om}$}. 
For a subsequence,
 the global Jacobians $\jaco(u_\eps)$ 
 converge  to 
a measure $J\in\mathscr{M}(\overline{\O})$ on the closure $\overline{\O}$, in the sense that 
 \be
  \label{conv_lip}
  \sup_{|\nabla \zeta|\leq 1\textrm{ in } \Omega}\left|\left<\jaco(u_\eps)-J,\zeta\right>\right|\to 0 \quad \textrm{as }\, \eps\to 0,
  \ee
$J$ is supported on $\dOm$ and has the form 
\be
\label{newlab}
J=-\ka \, {\h^1\llcorner \dOm}+\pi \sum_{j=1}^N d_j \delta_{a_j} \quad \textrm{with} \quad  \sum_{j=1}^N d_j=2
\ee
for $N$ distinct boundary vortices $a_j \in \partial\O$ carrying the {non-zero} {multiplicities\footnote{We use 
integer ``multiplicities'' instead of half-integer ``degrees'' for the boundary vortices in this article.}}
$d_j\in\ZZ\setminus \{0\}$. Moreover, for a subsequence, the trace $u_\eps\big|_{\partial \Om}$ converges as $\eps\to 0$ in $L^p(\dOm)$ (for every $p\geq 1$) to $e^{i\phi_0}\in BV(\dOm; \{\pm \tau\})$ for a lifting $\phi_0$ of the tangent field $\pm \tau$ on $\dOm$ determined (up to a constant in $\pi \ZZ$) by 
$$\partial_\tau \phi_0 =\ka -\pi\sum_{j=1}^N d_j \delta_{a_j} \quad \textrm{ on} \quad \dOm.$$
}

\item[ii)] \textbf{Energy lower bound at the first order}. If $(u_\eps)$ satisfies the convergence assumption in $i)$ as the sequence / family $\eps\to 0$, then
the energy lower bound at the first order is the total mass of the measure $J+\ka {\h^1\llcorner \dOm}$ on $\dOm$: 
\footnote{Recall that $J+\ka  {\h^1\llcorner \dOm} = \pi \sum_{j=1}^N d_j \delta_{a_j}$.}
\[
\liminf_{\eps\to 0} \frac1{|\log\eps|}E_{\eps,\eta}(u_\eps) \ge\pi\sum_{j=1}^N |d_j|=\big|J+\ka  {{\h^1\llcorner \dOm}}\big|(\dOm).
\]
\end{itemize}
\end{thm}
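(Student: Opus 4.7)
The plan is to combine the stability of the global Jacobian (Theorem~\ref{thm:teorem}) with classical Ginzburg-Landau interior and boundary-vortex compactness theory, using the regime $|\log\eps|\ll|\log\eta|$ to rule out interior vortices.

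First I would reduce to disk-valued maps by setting $v_\eps=u_\eps/\max(1,|u_\eps|)$, so that $|v_\eps|\le 1$ and $|\nabla v_\eps|\le|\nabla u_\eps|$. Since $(1-|u_\eps|^2)^2\ge(|u_\eps|-1)^2$ pointwise on $\{|u_\eps|\ge 1\}$, the potential bound yields $\|u_\eps-v_\eps\|_{L^2(\O)}\lesssim\eta\sqrt{|\log\eps|}$, and combined with $\|\nabla u_\eps\|_{L^2},\|\nabla v_\eps\|_{L^2}\lesssim\sqrt{|\log\eps|}$, Theorem~\ref{thm:teorem} yields
$$\sup_{\|\nabla\zeta\|_{L^\infty(\O)}\le 1}\bigl|\langle\jaco(u_\eps)-\jaco(v_\eps),\zeta\rangle\bigr|\lesssim f(\eta|\log\eps|)\to 0,$$
because $|\log\eta|\gg|\log\eps|$ forces $\eta$ to decay faster than any power of $\eps$. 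Hence $u_\eps$ and $v_\eps$ share the same limit for $\jaco$ in the norm of \eqref{conv_lip}, and a similar trace argument reduces the $L^p(\dOm)$ analysis to $v_\eps$.

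Next I would exploit the subcritical regime: since $v_\eps$ is disk-valued with $E^{GL}_\eta(v_\eps)\le C|\log\eps|=o(|\log\eta|)$, the Jerrard-Soner / Sandier-Serfaty theory below the vortex threshold gives $\jac(v_\eps)\to 0$ in the relevant dual norm, so that via decomposition \eqref{def_jac_bd}, $\jaco(v_\eps)$ is asymptotic to $\jacbd(v_\eps)$ and the limit $J$ is supported on $\dOm$. The anchoring term forces $u_\eps\cdot\nu\to 0$ in $L^2(\dOm)$. Adapting Kurzke's boundary-vortex analysis \cite{Kurzke:2006a,Kurzke:2006b}, I would lift $v_\eps/|v_\eps|=e^{i\phi_\eps}$ on a good set near $\dOm$, show a BV bound on $\phi_\eps-\phi_\tau$ modulo $\pi$ (each $\pi$-jump costing $\pi|\log\eps|$ to leading order), and extract via Helly's selection theorem a subsequence with $\phi_\eps\to\phi_0\in BV(\dOm)$ in $L^p$, satisfying $\partial_\tau\phi_0=\ka-\pi\sum d_j\delta_{a_j}$ and $e^{i\phi_0}\in\{\pm\tau\}$. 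Identifying the boundary Jacobian of the limit via \eqref{jac_s1} gives $J=-\ka\,\h^1\llcorner\dOm+\pi\sum d_j\delta_{a_j}$. The constraint $\sum d_j=2$ then follows from $\langle J,1\rangle=0$ (equation \eqref{ave_0}) combined with Gauss-Bonnet $\oint_{\dOm}\ka\,d\h^1=2\pi$.

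For the lower bound in (ii), the standard boundary vortex-ball construction around each $a_j$ of \cite{Kurzke:2006a,Kurzke:2006b} extracts energy at least $\pi|d_j|(|\log\eps|-o(|\log\eps|))$ from the combined Dirichlet and anchoring terms; summing over the $N$ boundary vortices and using the non-negativity of the Ginzburg-Landau potential yields $\liminf |\log\eps|^{-1}E_{\eps,\eta}(u_\eps)\ge\pi\sum|d_j|$. The hard part is Step~3: Kurzke's theory is set up for $\Ss^1$-valued maps with a global lifting, whereas the truncated $v_\eps$ is only disk-valued and may vanish in the interior, so the lifting and vortex-ball arguments must be localised to the good set $\{|v_\eps|\ge 1/2\}$ near $\dOm$, and the contribution of the bad set must be absorbed into a negligible error via Theorem~\ref{thm:teorem}.
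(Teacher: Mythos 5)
Your reduction step — truncating to $v_\eps=u_\eps/\max(1,|u_\eps|)$ and invoking Theorem~\ref{thm:teorem} to transfer the global Jacobian — is sound and gives the claimed $f(\eta|\log\eps|)\to0$ in the regime $|\log\eps|\ll|\log\eta|$ (so indeed $\eta\ll\eps^M$ for every $M$). But the crux of the theorem — extracting the boundary vortex structure, proving \emph{strong} $L^p(\dOm)$ compactness of the trace, and the lower bound — is not reached by this route, and your own final paragraph concedes the gap.

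The specific problem is that the truncation $v_\eps$ is still only disk-valued with an uncontrolled zero set in $\Omega$, so there is no global lifting and no clean reduction to a scalar functional. Your proposal to ``localise to the good set $\{|v_\eps|\ge 1/2\}$ near $\dOm$, lift there, and absorb the bad set into a negligible error via Theorem~\ref{thm:teorem}'' is not an argument: the stability estimate controls the Jacobian, not the trace or the per-vortex energy cost, and neither Kurzke's compactness nor a boundary vortex-ball lower bound applies off the shelf to a map that is only disk-valued with possible degenerate sets reaching the boundary. The paper's Theorem~\ref{lem_approx} does substantially more than a truncation: it solves Ginzburg--Landau minimisation problems cell by cell on a grid of scale $\eta^\beta$, uses a clearing-out estimate (Proposition~\ref{thm_main_GL}, requiring $|\log\eps|\ll|\log\eta|$) to force $|w|\to1$ uniformly, and patches these into a genuine $\Ss^1$-valued $U_\eps$ that is simultaneously $L^2(\Omega)$-close, $L^2(\dOm)$-close, energy-close (including the anchoring term), and global-Jacobian-close to $u_\eps$. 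That construction is precisely the missing piece that lets one invoke the global lifting of Lemma~\ref{lem:lifting} and reduce to the scalar functional $\Ge$. Replacing it by truncation plus a vague good-set lifting is a real gap, not a detail.

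A secondary but genuine divergence: for the lower bound you propose ``the standard boundary vortex-ball construction.'' The paper deliberately avoids ball constructions and instead derives the lower bound from a rearrangement inequality for sets (Lemma~\ref{lem:intbdABS}) and a Sandier-type co-area argument (Proposition~\ref{prop:lolobd}) after reducing, via the flattening Lemma~\ref{lem:conformalflat} and the $2D\to1D$ trace inequality Lemma~\ref{lem:ABS2d1d}, to the nonlocal one-dimensional functional $F_\eps$ in \eqref{def:feps}. Your vortex-ball route might also be made to work, but it is not the argument given here and would need a separate verification in the mixed bulk/boundary setting — so even granting the lifting, the lower bound in (ii) is asserted, not proved.

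In short: the use of the stability theorem is in the spirit of the paper, but the essential steps — construction of a globally $\Ss^1$-valued approximant, reduction to the scalar $\Ge$/$F_\eps$ functionals, and the associated compactness and lower bounds — are either absent or waved at. The proposal is not a proof.
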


Note that the limit Jacobian measure \eqref{newlab} lives on the boundary $\partial \Om$, having a diffuse part carried by the curvature $\ka$ and a singular part carried by (multiples of) Dirac masses at the boundary vortices $a_j$.  The convergence \eqref{conv_lip} is discussed in Section \ref{sec:JACO}. In particular, by \eqref{ave_0} and \eqref{conv_lip}, we have $\left<J,1\right>=0$; thus, 
combined with the Gau\ss{}-Bonnet formula, we have that $$\pi \sum_{j=1}^N d_j=\int_{\dOm}\ka\, d\h^1=2\pi.$$ This explains the constraint \eqref{newlab} on the sum of the multiplicities $(d_j)_j$. The $BV$ lifting $\phi_0$ on $\dOm$ is determined by $\partial_\tau \phi_0 =-J$ up to an additive constant that a-priori is arbitrary in $\R$; however, the restriction that the limit $e^{i\phi_0}$ is parallel with $\tau$ fixes this constant to be a multiple of $\pi$. 

Theorem \ref{thm:firstocomp} is carried out in the regime \eqref{regim_1}, so that the formation of boundary singularities is preferred over interior singularities.
In particular, we have that $\eta\ll \eps$, so the 
typical core size of an interior vortex is much smaller than the length scale of a boundary transition from a parallel to an 
antiparallel tangent direction $\pm \tau$ at $\dOm$. In this context, as interior vortices of non-zero winding number are expected to be absent, we prove in  Theorem~\ref{lem_approx} below that $u_\eps$ can be replaced by an $\Ss^1$-valued map
 without raising the energy by much and without affecting the convergence and limit of the global Jacobians (thanks to Theorem \ref{thm:teorem}). 
The $\Ss^1$-valued problem is studied in Section~\ref{sec:s1val} (in particular Theorem \ref{thm:GCforGeps}), and we improve results in the literature
 \cite{Kurzke:2006b, Kurzke:2006a, Kurzke:2007a} by giving simpler, more direct proofs and obtaining new and significantly stronger results for the second order energy expansion.
In particular, we adapt a co-area argument of Sandier \cite{Sandier:1998a} in the nonlocal context of \eqref{eq:introbv} 
(see the rewriting \eqref{def:feps} below) to show a new single multiplicity result
and use arguments inspired by 
Colliander-Jerrard \cite{ColliandJerrard:1999a} to obtain lower bounds using purely energy methods.
 Owing to our
 approximation Theorem~\ref{lem_approx}, these results can then be transferred to the study of \eqref{eq:introenepseta}. 

For the analysis of the asymptotic expansion at the second order,
we need to introduce a
 renormalised energy similar to that of Bethuel-Brezis-H\'elein \cite{BethuelBrezisHelein:1994a}
 that consists in eliminating the ``infinite" energy carried asymptotically in small disks around the boundary vortices.
\begin{df}\label{defi:renen} 
Let $\Omega\subset \R^2$ be a bounded, simply connected, $C^{1,1}$ regular domain and $\ka$ be the curvature on $\dOm$.  Consider
$\phi_0:\partial\Omega\to \R$ to be a
 $BV$ function such that  $e^{i\phi_0}\cdot \nu=0$ in $\partial\Omega\setminus\{a_1,\dots,a_N\}$ and
$$\partial_\tau \phi_0 =\ka  -\pi\sum_{j=1}^N d_j \delta_{a_j}\, \, \textrm{ on } \partial \Om \quad \textrm{with}\quad d_j\in \{\pm 1\} \textrm{ and }\sum_{j=1}^N d_j=2$$ for $N$ distinct points $a_j\in \dOm$ carrying the degrees $d_j\in \{\pm 1\}$. If $\phi_*$ is the harmonic extension to $\Omega$ of $\phi_0$, then the \emph{\bf renormalised energy} of $\{(a_j,d_j)\}$ is defined 
as
\begin{equation}\label{eq:renW}
W_\Omega(\{(a_j,d_j)\}) =  \lim_{\rho\to 0} \left( 
\int_{\Omega \setminus \bigcup_{j=1}^N B_\rho(a_j)} |\nabla \phi_*|^2 \,dx - N\pi \log\frac1\rho
\right),
\end{equation}
where $B_\rho(a_j)$ is the disk of radius $\rho$ centered at $a_j$. 
\end{df}

In Definition \ref{defi:renen}, $\phi_0$ is uniquely determined (up to an additive constant) and stands for a  
$BV$ lifting of some tangent unit vector field ${\pm\tau}$ on $\dOm$ with prescribed jumps at $a_j$ (see e.g., \cite{Ig_CV} for more details on $BV$ liftings). The difference with respect to the lifting in Theorem \ref{thm:firstocomp}, point {\it i)} consists in allowing here only jumps of $\pm \pi$ at the boundary vortices $a_j$.
Note also that the limit in \eqref{eq:renW} exists, see \cite{Kurzke:2006b}. The renormalised energy $W_\Omega(\{(a_j,d_j)\})$ can be computed in a $C^{1,1}$-domain $\Omega$, in particular, it depends on $\log |a_j-a_k|$ for every $j\neq k$ and on the curvature $\ka$ of $\partial \Om$, see \cite{IK_bdv} for details. In a disk $\Omega=B_R$,
the renormalised energy has a particularly simple form:
\be
\label{numb}
W_{B_R}(\{a_j,d_j\})=-2\pi \sum_{1\leq k< j\leq N} d_k d_j \log|a_k-a_j|, \quad a_j\in \partial B_R, \, d_j\in \{\pm 1\} \textrm{ and }\sum_{j=1}^N d_j=2.
\ee
In particular, if $N=2$, then $d_1=d_2=1$ and the renormalised energy achieves the minimum value $-2\pi \log 2R$ 
 for
 diametrically opposed singularities and has no other critical points than this rotationally 
symmetric family of minimisers. 

We have the following refinement of Theorem~\ref{thm:firstocomp} at the second order using the renormalised energy \eqref{eq:renW}:
\begin{thm}
\label{thm:gammacforee} 
Under the hypothesis of Theorem~\ref{thm:firstocomp}, we assume that the sequence / family $(u_\eps)$ satisfies the convergence at point $i)$ in Theorem~\ref{thm:firstocomp} as $\eps\to 0$. 
In addition, we assume the following sharper bound:
\be
\label{eq:sharpenbd}
\limsup_{\eps\to 0} \bigl(E_{\eps,\eta}(u_\eps) - 
|\log\eps| \pi\sum_{j=1}^N |d_j|\bigr) <\infty.
\ee
Then the following results hold:
\begin{itemize}
\item[i)] \textbf{Single multiplicity and second order lower bound}.
The multiplicities satisfy $d_j=\pm 1$ for $1\leq j\leq N$, { so $\sum_{j=1}^N |d_j|=N$}  and 
{there holds the finer energy bound}
\[
\liminf_{\eps\to 0} \bigl(E_{\eps,\eta}(u_\eps) - |\log\eps| \pi {N}\bigr) 
\ge W_\Omega(\{a_j,d_j\}) + {\gamma_0}{N},
\]
with $\gamma_0=\pi\log\frac{ e}{4\pi}$ a universal constant and $W_\Omega$ the renormalised
energy defined in \eqref{eq:renW}. 
\item[ii)] \textbf{Penalty bound.}
The penalty terms {are of order $O(1)$, i.e., }
\begin{equation}\label{eq:thpenub}
\limsup_{\eps\to 0}\left( \frac1{\eta^2} \int_\Omega (1-|u_\eps|^2)^2\, dx + \frac1{2\pi \eps} \int_{\partial\Omega} 
(u_\eps \cdot \nu)^2 \, d\h^1 \right)<\infty.
\end{equation}
\item[iii)] \textbf{Local energy lower bound.}
There are $\rho_0>0$, $\eps_0>0$ and $C>0$ such that 
the energy of $u_\eps$ near the singularities satisfies for all the {$\eps<\eps_0$ in the sequence / family } and $\rho<\rho_0$: 
\begin{equation}\label{eq:l2lob}
\left( \int_{\Omega\cap \bigcup_{j=1}^N B_\rho(a_j)} |\nabla u_\eps|^2 \, dx -\pi {N} \log \frac\rho\eps\right) > -C.
\end{equation}
\item[iv)] \textbf{$L^p(\Om)$-compactness of maps $u_\eps$.}
For any $q\in[1,2)$, 
the sequence /family $(u_\eps)_\eps$ is uniformly bounded in $W^{1,q}(\Omega;\R^2)$. {Moreover, for a subsequence, $u_\eps$ converges as $\eps\to 0$ strongly in $L^p(\Omega;\R^2)$ for any $p\geq 1$ to 
$e^{ i\hat\phi_0}$, where  $\hat\phi_0\in W^{1,q}(\Omega)$ is an extension (not necessarily harmonic) to $\Omega$ of the lifting $\phi_0\in BV(\dOm)$ determined in Theorem ~\ref{thm:firstocomp}, point i).  }

\end{itemize}
\end{thm}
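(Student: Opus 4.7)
The plan is to reduce the second-order analysis to the $\Ss^1$-valued theorem of Section~\ref{sec:s1val} by invoking the approximation Theorem~\ref{lem_approx}. That result produces $\Ss^1$-valued maps $\tilde u_\eps\in H^1(\Om;\Ss^1)$ whose $L^2$-distance to $u_\eps$ is small and whose weak-anchoring energy satisfies $E^{KS}_\eps(\tilde u_\eps)\le E_{\eps,\eta}(u_\eps)+o(1)$. Combined with the stability property (Theorem~\ref{thm:teorem}) and the compactness in Theorem~\ref{thm:firstocomp}, this guarantees that $\jaco(\tilde u_\eps)\to J$ to the \emph{same} boundary-supported limit, so the data $\{(a_j,d_j)\}$ is preserved. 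The sharp upper bound \eqref{eq:sharpenbd} therefore transfers to $\tilde u_\eps$, placing us squarely in the $\Ss^1$-valued second-order regime.

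\textbf{Parts (i) and (iii).} Applying the $\Ss^1$-valued result (Theorem~\ref{thm:GCforGeps}) to $\tilde u_\eps$ produces simultaneously the single multiplicity $d_j\in\{\pm 1\}$ --- via a Sandier-style co-area argument adapted to the nonlocal weak-anchoring rewriting of $E^{KS}_\eps$ --- and the lower bound
\[
\liminf_{\eps\to 0}\bigl(E^{KS}_\eps(\tilde u_\eps)-\pi N|\log\eps|\bigr)\ge W_\Omega(\{a_j,d_j\})+\gamma_0 N.
\]
Together with $E^{KS}_\eps(\tilde u_\eps)\le E_{\eps,\eta}(u_\eps)+o(1)$, this yields (i). For (iii), I would apply the localized version of the $\Ss^1$-valued lower bound on each half-disk $B_\rho(a_j)\cap\Om$ (a Jerrard-type ball construction for $\tilde u_\eps$), obtaining at least $\pi\log(\rho/\eps)-C$ of gradient energy near each $a_j$; this estimate transfers back to $u_\eps$ using the $L^2$-closeness from Theorem~\ref{lem_approx}.

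\textbf{Parts (ii) and (iv).} Summing the local bounds of (iii) over the $N$ vortices shows that the gradient energy alone satisfies $\int_\Om|\nabla u_\eps|^2\,dx\ge \pi N|\log\eps|-C$. Comparing with the upper bound $E_{\eps,\eta}(u_\eps)\le\pi N|\log\eps|+C$ from \eqref{eq:sharpenbd} forces both penalty terms in \eqref{eq:introenepseta} to be uniformly bounded, proving (ii). For (iv), the bulk penalty bound gives $\int_\Om(1-|u_\eps|^2)^2\,dx\le C\eta^2\to 0$, hence $|u_\eps|\to 1$ strongly in $L^4(\Om)$. To obtain a uniform $W^{1,q}(\Om)$ bound for $q<2$, I would combine: (a) $H^1$-boundedness of $u_\eps$ on $\Om\setminus\bigcup_j B_\rho(a_j)$ for each fixed small $\rho$, coming from (iii) matched against the total energy bound; and (b) a uniform $W^{1,q}$ control inside each $B_\rho(a_j)$ via comparison with the canonical vortex profile $(x-a_j)/|x-a_j|\in W^{1,q}_{loc}$, accessible through the approximation $\tilde u_\eps$. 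Along a subsequence, $u_\eps$ then converges weakly in $W^{1,q}$ and strongly in $L^p(\Om)$ for every $p\ge 1$ to an $\Ss^1$-valued limit $u_*$; since $\Om$ is simply connected and the limit Jacobian is concentrated on $\dOm$, interior vortices are ruled out and $u_*$ admits a lifting $\hat\phi_0\in W^{1,q}(\Om)$ extending the boundary lifting $\phi_0$ from Theorem~\ref{thm:firstocomp}.

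\textbf{Main obstacle.} The hardest step is securing the single-multiplicity assertion together with the sharp core constant $\gamma_0$ in (i), which demands that the Sandier co-area device function in the nonlocal weak-anchoring setting and that $\gamma_0$ be extracted by a Colliander--Jerrard pure-energy argument. The reduction via Theorem~\ref{lem_approx} --- legitimized by the stability Theorem~\ref{thm:teorem} in the regime $|\log\eps|\ll|\log\eta|$ --- is what makes the $\Ss^1$-valued theory applicable, and carefully controlling the bulk penalty $\frac1{\eta^2}\int(1-|u|^2)^2$ without overshooting the first-order constant $\pi N|\log\eps|$ is the genuine technical hurdle.
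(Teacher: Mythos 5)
Your overall strategy coincides with the paper's: reduce to the $\Ss^1$-valued problem via Theorem~\ref{lem_approx}, lift, and apply Theorem~\ref{thm:GCforGeps}; part (i) goes through essentially as you describe. However, there are two genuine gaps in your treatment of (ii)--(iv) that prevent the sketch from being a proof.

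First, your transfer of the local lower bound in (iii) is invalid as stated. You claim the gradient lower bound for the $\Ss^1$-valued approximant $\tilde u_\eps$ on each $B_\rho(a_j)\cap\Om$ ``transfers back to $u_\eps$ using the $L^2$-closeness from Theorem~\ref{lem_approx}.'' But $L^2$-closeness of $u_\eps$ and $\tilde u_\eps$ gives no control whatsoever on the gradients, so it cannot convert a lower bound on $\int|\nabla\tilde u_\eps|^2$ into one on $\int|\nabla u_\eps|^2$. The mechanism the paper actually uses is the \emph{local energy comparison} \eqref{eq:localapprox} in Theorem~\ref{lem_approx}, which bounds $\eee(U;G_\eta)$ by $E_{\eps,c_0\eta}(u;G)$ up to a small error; combined with the $\Ss^1$-valued local lower bound (Corollary~\ref{cor:lob3}, obtained by the Sandier co-area argument --- the paper deliberately \emph{avoids} a Jerrard ball construction), this yields a lower bound for the localised energy of $u_\eps$ \emph{including} the two penalty terms. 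This introduces a circularity in your scheme: the transfer produces a bound like \eqref{dsta2}, i.e.\ on gradient plus penalty terms, so to isolate the gradient piece \eqref{eq:l2lob} one must already control the penalty terms, which is exactly (ii). You propose to prove (ii) \emph{from} (iii), so your order of implication cannot close. The paper breaks this circle by proving (ii) independently via a del Pino--Felmer doubling trick (compare $\eee(u_\eps)$ against $E_{2\eps,2\eta}(u_\eps)$, the latter being controllable by Corollary~\ref{cor:lob3} and \eqref{eq:localapprox}), and only then deduces (iii) from (ii) and the localised lower bound.

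Second, your step (b) in (iv) --- obtaining a $W^{1,q}$ bound inside each $B_\rho(a_j)$ ``via comparison with the canonical vortex profile $(x-a_j)/|x-a_j|$'' --- is not a valid argument: nothing forces $u_\eps$ to resemble that profile in $W^{1,q}$ near $a_j$. In fact step (a) already suffices: the bound $\int_{\Om\setminus\bigcup_jB_\rho(a_j)}|\nabla u_\eps|^2\le N\pi\log\frac1\rho+C$, uniform in small $\rho$, fed into the dyadic summation Lemma~\ref{lem:lpbd}, directly yields the $W^{1,q}(\Om)$ bound (including inside the balls) for every $q<2$; no separate argument near the $a_j$ is needed. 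The identification of the $L^p(\Om)$ limit with $e^{i\hat\phi_0}$ is then as you say, obtained through the lifting $\phi_\eps$ of the $\Ss^1$-valued approximant together with the trace theorem.
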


\bigskip

Finally, we have a matching upper bound that complements  Theorems~\ref{thm:firstocomp} and~\ref{thm:gammacforee} to yield a full asymptotic expansion by $\Gamma$-convergence 
 at the second order for the energy \eqref{eq:introenepseta}.
\begin{thm}\label{thm:gcubintro}
Given any $N$ distinct points $a_j\in \partial \O$ with their multiplicity $d_j\in \ZZ\setminus \{0\}$
satisfying the constraint $\sum_{j=1}^N d_j=2$,
we can construct for every $\eps\in (0, \frac12)$, $u_\eps\in H^1(\Omega; \Ss^1)$ such that the global Jacobians $\jaco(u_\eps)$ converge to $J=-\ka {\h^1\llcorner \dOm}+\pi \sum_{j=1}^N d_j \delta_{a_j}$ as in \eqref{conv_lip}.
{Furthermore, $u_\eps$ converge strongly  to $e^{i\hat \phi}$ in $L^p(\Om)$ and $L^p(\dOm)$ for all $p\in[1,\infty)$,
where $\hat \phi$ is the harmonic extension in $\Om$ of a boundary lifting $\phi_0$ satisfying $e^{i\phi_0}\cdot \nu =0$ and $\partial_\tau \phi_0=\ka-\pi\sum_{j=1}^N d_j \delta_{a_j}$ on $\dOm$. The energies satisfy}
\[
\lim_{\eps\to 0} \frac{1}{|\log\eps|}E_{\eps,\eta}(u_\eps) = \pi \sum_{j=1}^N |d_j|.
\]
If furthermore $|d_j|=1$ for all $j=1,\dots,N$, then $u_\eps$ can be chosen such that
\[
\lim_{\eps\to 0}  (E_{\eps,\eta}(u_\eps)-\pi N|\log\eps|) = W_\Omega(\{a_j, d_j\})+ N\gamma_0.
\]
\end{thm}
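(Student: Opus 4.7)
The plan is to construct $u_\eps$ explicitly via an outer-plus-inner ansatz; since $u_\eps$ will be $\Ss^1$-valued, the bulk potential $\frac{1}{\eta^2}\int_\O(1-|u|^2)^2$ vanishes identically, and only the Dirichlet and boundary-penalty terms of $E_{\eps,\eta}$ need to be controlled. I first treat the single-multiplicity case $|d_j|=1$, which yields the second-order expansion; the general first-order statement then follows by splitting each vortex of multiplicity $d_j$ into $|d_j|$ simple vortices of sign $\sign d_j$ placed at mutual distance of order $\eps^\alpha$ around $a_j$ for a fixed small $\alpha>0$. Since the repulsive interaction energy within such a cluster is of order $\alpha\pi|\log\eps|\ll|\log\eps|$, the first-order coefficient becomes $\pi\sum_j|d_j|$ as claimed, while splitting keeps the global Jacobian converging to the right limit thanks to the stability Theorem~\ref{thm:teorem}.

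For the $|d_j|=1$ case, let $\hat\phi$ denote the harmonic extension to $\Om$ of the boundary lifting $\phi_0$ from Definition~\ref{defi:renen}. Fix a small $\rho>0$ (eventually $\rho=\rho(\eps)\to 0$) so that the half-disks $B_\rho(a_j)\cap\Om$ are disjoint; on $\Om_\rho:=\Om\setminus\bigcup_j B_\rho(a_j)$ set $u_\eps=e^{i\hat\phi}$. This map is $\Ss^1$-valued, tangent to $\dOm$ (so $u_\eps\cdot\nu\equiv 0$ there), and by \eqref{eq:renW} satisfies
\[
\int_{\Om_\rho}|\nabla\hat\phi|^2\,dx = N\pi\log\tfrac{1}{\rho} + W_\Om(\{a_j,d_j\}) + o_\rho(1).
\]
Near each $a_j$ I flatten $\dOm$ via a $C^{1,1}$ diffeomorphism and insert the optimal half-plane boundary-vortex profile on scale $\eps$ producing a jump of $-\pi d_j$ in the lifting; this is the profile identified in \cite{Kurzke:2006b} and in Theorem~\ref{thm:GCforGeps}, with inner Dirichlet energy $\pi\log(\rho/\eps)+\gamma_0+o_\eps(1)$ and boundary penalty $O(1)$. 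A thin annular collar (for instance $\bigl(B_\rho\setminus B_{\rho(1-\sqrt\eps)}\bigr)\cap\Om$) glues the inner core to the outer phase $e^{i\hat\phi}$ while adding only $o(1)$ to the energy, the matching being possible because the inner profile is designed to carry precisely the $-\pi d_j$ phase jump of $\hat\phi$ around $a_j$.

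Summing the outer, inner and gluing contributions yields
\[
E_{\eps,\eta}(u_\eps) = N\pi\log\tfrac{1}{\rho}+W_\Om+N\bigl(\pi\log\tfrac{\rho}{\eps}+\gamma_0\bigr)+o(1) = \pi N|\log\eps|+W_\Om(\{a_j,d_j\})+N\gamma_0+o(1),
\]
after letting $\rho=\rho(\eps)\to 0$ slowly enough. Convergence of global Jacobians in the norm \eqref{conv_lip} follows from \eqref{jac_s1} applied outside the cores: the relation $\jaco(u_\eps)|_{\dOm}=-\partial_\tau\hat\phi\,\h^1\llcorner\dOm$ accounts for the diffuse part $-\ka\h^1\llcorner\dOm$, while each core contributes a Dirac mass of weight $\pi d_j$ at $a_j$ since the boundary lifting of $u_\eps$ jumps by $-\pi d_j$ across the scale-$\eps$ transition. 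The strong $L^p$-convergences in $\Om$ and $\dOm$ are immediate since $|u_\eps|\equiv 1$ and $u_\eps\equiv e^{i\hat\phi}$ outside neighbourhoods of the $a_j$'s of shrinking measure.

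The main obstacle is the sharp identification of the core constant $\gamma_0=\pi\log\frac{e}{4\pi}$ together with the quantitative matching of the inner and outer ansatz at scale $\rho$; both rely on the detailed analysis of the optimal half-plane boundary-vortex profile carried out for the $\Ss^1$-valued theory in Section~\ref{sec:s1val} (Theorem~\ref{thm:GCforGeps}). Once that theory is in place, transplanting it via the $C^{1,1}$ boundary flattening near each $a_j$ and summing over $j$ is essentially bookkeeping, and the gluing error is handled by elementary trace estimates on the annular collar.
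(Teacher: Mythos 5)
Your overall strategy is the same as the paper's: take $u_\eps$ to be $\Ss^1$-valued so that the bulk potential drops out, and invoke the $\Ss^1$-valued theory of Section~\ref{sec:s1val}. In fact the paper's proof of Theorem~\ref{thm:gcubintro} is one line: set $u_\eps=e^{i\hat\psi_\eps}$ with $\hat\psi_\eps$ the recovery sequence of Theorem~\ref{thm:GCforGeps}, point~3, and use $\eee(u_\eps)=\Ge(\hat\psi_\eps)$, $\jaco(u_\eps)=-\partial_\tau\hat\psi_\eps\,\h^1\llcorner\dOm$. Your expanded construction for $|d_j|=1$ (outer harmonic phase $\hat\phi$, conformal flattening, optimal Peierls profile in the cores, annular glue) is precisely what goes into that theorem, so there is no disagreement there.

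There is, however, a genuine gap in the general-multiplicity case. You split a boundary vortex of multiplicity $d_j$ into $|d_j|$ unit-multiplicity vortices of the same sign at mutual distance $\sim\eps^\alpha$ with a \emph{fixed} small $\alpha>0$, and assert that the intra-cluster interaction is $\alpha\pi|\log\eps|\ll|\log\eps|$. That asymptotic is false for fixed $\alpha$: $\alpha\pi|\log\eps|$ is of order $|\log\eps|$, not $o(|\log\eps|)$. Concretely, the energy of the $j$-th cluster between the core scale $\eps$ and a fixed macroscopic scale splits into a near-field piece $\approx|d_j|\pi(1-\alpha)|\log\eps|$ (each sub-core seen on scale $\eps^\alpha$) plus a far-field piece $\approx d_j^2\pi\alpha|\log\eps|$ (the cluster acting as a single multiplicity-$d_j$ vortex beyond scale $\eps^\alpha$), giving a first-order coefficient $\pi\bigl(|d_j|+(d_j^2-|d_j|)\alpha\bigr)$ rather than $\pi|d_j|$. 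So with a fixed $\alpha$ your construction does not yield $\lim_{\eps\to 0}\frac1{|\log\eps|}E_{\eps,\eta}(u_\eps)=\pi\sum_j|d_j|$ but that value shifted up by $\alpha\pi\sum_j(d_j^2-|d_j|)>0$ whenever some $|d_j|\ge 2$.

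The remedy is to choose the separation scale so that it is still $\gg\eps$ but its logarithm is $o(|\log\eps|)$; one must take $\alpha=\alpha(\eps)\to 0$ with $\alpha(\eps)|\log\eps|\to\infty$, or, as the paper does in Lemma~\ref{lem:ubhigher}, simply place the sub-vortices at mutual distance $a_\eps=\tfrac1{|\log\eps|}$. Then the cluster interaction is $O\bigl(d_j^2\log|\log\eps|\bigr)=o(|\log\eps|)$ and the first-order coefficient is exactly $\pi\sum_j|d_j|$. Apart from that quantitative slip, the rest of the argument (exact $\gamma_0$ from the Peierls profile as in Lemma~\ref{lem:cabre}, the gluing via a thin collar, the Jacobian and $L^p$ convergences) is sound and mirrors the paper.
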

\begin{rem} As a consequence of Theorems \ref{thm:gammacforee} and \ref{thm:gcubintro}, if $u_\eps$ are minimisers of $\eee$, then \eqref{eq:sharpenbd} is satisfied, and by standard properties of $\Gamma$-convergence we find a limit Jacobian
corresponding to  two singularities $a_1, a_2\in \dOm$, $a_1\neq a_2$ with multiplicity $1$
whose positions minimise the renormalised energy $W_\Omega(\{(a_1,1), (a_2,1)\})$. So if $\Omega$ is a disk, these two  (limit) singularities $a_1$ and $a_2$ are diametrically opposite to each other thanks to \eqref{numb}. 
As the minimisers of $W_\Om$ in a disk are not unique, the convergence of the Jacobians (and of the maps $u_\eps$) only needs to hold up to subsequences. 
\end{rem}

Some of our results were announced in \cite[Section 11]{Ignat_HDR}. 
We expect that our results can be extended to  situations 
where both interior and boundary vortices are present 
as long as sufficiently tight energy bounds 
 hold, and they generalise the results for minimisers of Moser \cite{Moser:2003a} (see also the case of boundary ``boojums" in
a liquid crystal model studied by Alama-Bronsard-Golovaty  \cite{boojum}). Even if boundary singularities are favourable compared to interior ones {in the regime \eqref{regim_1}}, 
{certain configurations with}
interior vortices are still conjectured to be local minimisers (see \cite{Cantero:2009a} for partial results). 
However, an extension of our method {will require} an approximation result that can be used in the presence of interior vortices, see Ignat-Otto\cite{IgnatOtto:2011a}. 
We also expect it is possible to extend our results on $2$-dimensional Riemannian manifolds with boundary, by following the approach of
Ignat-Jerrard \cite{IgJe, IgJeP}.

\section{Stability of the global Jacobian. Proof of Theorem \ref{thm:teorem}}
\label{sec:JACO}

In this section we discuss some properties of the global Jacobian $\jaco(u)$ introduced in \eqref{eq:gj} for a two-dimensional map $u\in H^1(\Om; \R^2)$ defined on a Lipschitz 
bounded domain $\Omega\subset \R^2$; in particular, we prove 
Theorem \ref{thm:teorem}. We recall that the global Jacobian $\jaco(u)$ is an element of the (algebraic) dual $(W^{1,\infty}(\Omega))^*$ of $W^{1,\infty}(\Omega)$. 
In order to speak about the continuity of this linear functional, some natural seminorms are considered on the space of Lipschitz functions $W^{1,\infty}(\Omega)$ and the subspace
$$W_0^{1,\infty}(\Omega)=\{\zeta \in W^{1, \infty}(\Om)\, :\, \zeta=0\,  \textrm{ on } \partial \Omega\}.$$ 
These seminorms lead to the following dual quantities that measure the global and the interior Jacobian: if $A\in (W^{1,\infty}(\Omega))^*$, we define
\begin{align*}
\| A \|_{(\mathrm{Lip}(\Omega))^*}&=\sup \left\{\left<A,\zeta\right> : \zeta \in W^{1,\infty}(\Omega),  |\nabla \zeta| \le 1 \right\}, \\
\| A \|_{(W^{1, \infty}(\Om))^*}&=\sup \left\{\left<A,\zeta\right> : \zeta \in W^{1,\infty}(\Omega), |\zeta|+ |\nabla \zeta| \le 1 \right\}, \\
\| A \|_{(W^{1, \infty}_0(\Om))^*}&=\sup \left\{\left<A,\zeta\right> : \zeta \in W_0^{1,\infty}(\Omega), |\zeta|+|\nabla \zeta| \le 1 \right\}.
\end{align*}
{\bf We write $\|A \|$ as a shorthand for $\|A\|_{(\mathrm{Lip}(\Omega))^*}$ and is the quantity we use in the next sections}. Note that by homogeneity,
\be
\label{hom}
\textrm{ $\|A\|<\infty$ implies $\left<A, 1\right>=0$.}
\ee
Clearly, we have for all $A\in (W^{1, \infty}(\Om))^*$:
$$\|A\|\ge \| A \|_{(W^{1, \infty}(\Om))^*}\ge \| A \|_{(W^{1, \infty}_0(\Om))^*}.$$  
In particular, 
\[
\|\jaco(u)\|\geq \|\jaco(u)\|_{(W_0^{1, \infty}(\Om))^*} 
 =2\|\jac(u)\|_{(W_0^{1, \infty}(\Om))^*}.
 \]
Identifying $\R^2$ with the complex plane, both operators $\jac(\cdot)$ and $\jaco(\cdot)$ are invariant under (complex) multiplication with a fixed unit length vector $a\in \Ss^1$ on $H^1(\Om; \R^2)$. While $\jac(\cdot)$ is invariant 
under addition of a fixed vector $a\in \R^2$,  $\jaco(\cdot)$ is not.\footnote{For example, consider $u$ of the form
$u=e^{i\f}$ with a smooth lifting $\f$ in $\bar \Omega$. Then \eqref{jac_s1} implies $\jaco(u)=\jacbd(u) = -\partial_\tau\f\, \h^1\llcorner \dOm$ and
$\jaco(u+1) = \jacbd(u+1)= -\partial_\tau(\f+\sin \f)\, \h^1\llcorner \dOm$ (because $\jac(u+1)=0$ where $1$ is identified with $(1,0)\in \R^2\simeq \C$); therefore, $\jaco(u)\neq \jaco(u+1)$ provided that $\partial_\tau (\sin\f)\neq 0$ at some point on $\partial \Omega$.}
Therefore, when estimating $\jac(u)$ for $u\in H^1(\Om; \R^2)$ on a bounded domain $\Omega$, we may replace $u$ by 
\be
\label{av_u}
\tilde u:=u-\frac1{|\Om|}\int_{\Om} u.\ee

We start by recalling the following stability inequality of the interior Jacobian\footnote{For the case of $BV$ maps, we refer the reader to the paper \cite{Ig_JacBV}.} (see e.g. Brezis-Nguyen \cite{BreNgu}) that represents a weaker form of Theorem \ref{thm:teorem}:

\begin{pro}
\label{pro:jac_int}
Let $\Omega\subset \R^2$ be a Lipschitz bounded domain and let $u, v\in H^1(\Omega; \R^2)$. Then
$$\|\jac(u)-\jac(v)\|_{(W_0^{1, \infty}(\Om))^*}\leq \frac 1 2 \|u-v\|_{L^2(\Om)}\big(\|\nabla u\|_{L^2(\Om)}+\|\nabla v\|_{L^2(\Om)} \big).$$
The above estimate can be improved by using $\|\tilde u-\tilde v\|_{L^2(\Om)}=\min_{a\in \R^2}  \|u-v-a\|_{L^2(\Om)}$ defined in \eqref{av_u} instead of $\|u-v\|_{L^2(\Om)}$.
\end{pro}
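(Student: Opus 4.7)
The plan is to derive an exact integration-by-parts identity for $\left<\jaco(u)-\jaco(v),\zeta\right>$, splitting it into an interior bilinear integral and a boundary integral on $\dOm$, and then to estimate each piece via Cauchy--Schwarz and a $2D$ multiplicative trace inequality. By density of smooth maps in $H^1(\Om;\R^2)$, the integrations by parts may first be performed for smooth $u,v$ and then extended.

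I would start from the algebraic decomposition $u\times\nabla u-v\times\nabla v=(u-v)\times\nabla u+v\times\nabla(u-v)$ in \eqref{eq:gj}, and integrate by parts on the summand carrying $\nabla(u-v)$. Using $\operatorname{div}(\nabla^\perp\zeta)=0$ together with the identity $\nabla^\perp\zeta\cdot\nu=-\partial_\tau\zeta$ on $\dOm$, this produces the identity
\[
\left<\jaco(u)-\jaco(v),\zeta\right>=-\int_\Om(u-v)\times(\nabla u+\nabla v)\cdot\nabla^\perp\zeta\,dx+\int_{\dOm}[v\times(u-v)]\,\partial_\tau\zeta\,d\h^1.
\]
The refinement I would carry out is to decompose $u-v=\bar w+\tilde w$ with $\bar w=\frac{1}{|\Om|}\int_\Om(u-v)\,dx\in\R^2$ constant and $\tilde w$ of zero mean on $\Om$, and to regroup the $\bar w$-contributions of the two integrals. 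Since $\bar w$ is constant, $\nabla(\bar w\times(u+v))=\bar w\times(\nabla u+\nabla v)$, and the Green-type identity $\int_\Om\nabla F\cdot\nabla^\perp\zeta\,dx=-\int_{\dOm}F\,\partial_\tau\zeta\,d\h^1$ turns the constant part of the interior integral into a boundary integral. The antisymmetry $\bar w\times v+v\times\bar w=0$ then cancels the cross terms, and rewriting the remaining boundary integral $\int_{\dOm}(\bar w\times u)\partial_\tau\zeta\,d\h^1$ back as $-\int_\Om(\bar w\times\nabla u)\cdot\nabla^\perp\zeta\,dx$ yields the compact representation
\[
\left<\jaco(u)-\jaco(v),\zeta\right>=-\int_\Om\big[(u-v)\times\nabla u+\tilde w\times\nabla v\big]\cdot\nabla^\perp\zeta\,dx+\int_{\dOm}[v\times\tilde w]\,\partial_\tau\zeta\,d\h^1.
\]

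Finally I would estimate each piece. Using Cauchy--Schwarz together with $\|\tilde w\|_{L^2(\Om)}\le\|u-v\|_{L^2(\Om)}$, the interior part is bounded by $\|u-v\|_{L^2}(\|\nabla u\|_{L^2}+\|\nabla v\|_{L^2})\,\|\nabla\zeta\|_{L^\infty}=t\,\|\nabla\zeta\|_{L^\infty}$. For the boundary part, $|v|\le 1$ gives $|v\times\tilde w|\le|\tilde w|$ pointwise, so it suffices to bound $\|\tilde w\|_{L^1(\dOm)}$. Since $\tilde w$ has zero mean on $\Om$, Poincar\'e combined with the $2D$ multiplicative trace inequality yields
\[
\|\tilde w\|_{L^2(\dOm)}^{2}\le C_\Om\,\|\tilde w\|_{L^2(\Om)}\,\|\nabla\tilde w\|_{L^2(\Om)}\le C_\Om\,\|u-v\|_{L^2}\bigl(\|\nabla u\|_{L^2}+\|\nabla v\|_{L^2}\bigr)=C_\Om\,t,
\]
so H\"older gives $\|\tilde w\|_{L^1(\dOm)}\le |\dOm|^{1/2}\sqrt{C_\Om\,t}=C\sqrt{t}$ and the boundary contribution is at most $C\sqrt{t}\,\|\nabla\zeta\|_{L^\infty}$. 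Summing the two produces $f(t)=t+C\sqrt{t}$ as required.

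The main obstacle I expect is precisely the regrouping step: a direct estimate of the first identity bounds the boundary term only through $\|u-v\|_{L^1(\dOm)}$, and the trace inequality then generates a contribution of order $\|u-v\|_{L^2}$ which $t=\|u-v\|_{L^2}(\|\nabla u\|_{L^2}+\|\nabla v\|_{L^2})$ does not control (since $t$ can be small while $\|u-v\|_{L^2}$ remains large when $\|\nabla u\|_{L^2}+\|\nabla v\|_{L^2}$ is small). Absorbing the constant part $\bar w$ into the interior integral via the Green-type identity cancels this parasitic term and is what ultimately produces both the sharp coefficient $1$ in front of $t$ and the correct scaling $C\sqrt{t}$ for the remaining boundary correction.
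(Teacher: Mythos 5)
You have proved the wrong statement. Proposition~\ref{pro:jac_int} concerns the \emph{interior} Jacobian tested against $\zeta\in W_0^{1,\infty}(\Om)$, on a merely Lipschitz domain, with no constraint on $v$, and its conclusion is the bound $\tfrac12\|u-v\|_{L^2}(\|\nabla u\|_{L^2}+\|\nabla v\|_{L^2})$. What you have written is a proof of Theorem~\ref{thm:teorem}: your target is $f(t)=t+C\sqrt t$, you invoke the hypothesis $|v|\le 1$ (which the Proposition does not assume) to control the boundary term, and the entire refinement via $\bar w$, $\tilde w$ and the multiplicative trace inequality exists only to handle test functions that do not vanish on $\dOm$. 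None of this yields the stated inequality, and you never produce the constant $\tfrac12$.

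The proof actually required is contained in your first displayed identity. For $\zeta\in W_0^{1,\infty}(\Om)$ the tangential derivative $\partial_\tau\zeta$ vanishes on $\dOm$, so your boundary integral drops out, and on such test functions $\jaco$ reduces to twice the interior Jacobian, i.e.\ $\left<\jaco(u)-\jaco(v),\zeta\right>=2\int_\Om(\jac(u)-\jac(v))\zeta\,dx$. Cauchy--Schwarz on the surviving interior integral then gives
\[
2\Bigl|\int_\Om(\jac(u)-\jac(v))\zeta\,dx\Bigr|\le\|u-v\|_{L^2(\Om)}\bigl(\|\nabla u\|_{L^2(\Om)}+\|\nabla v\|_{L^2(\Om)}\bigr)\|\nabla\zeta\|_{L^\infty(\Om)},
\]
which is the Proposition with its factor $\tfrac12$; this is exactly the paper's argument, based on $2\jac(u)=\nabla\times(u\times\nabla u)$ and the decomposition $u\times\nabla u-v\times\nabla v=(u-v)\times(\nabla u+\nabla v)+\nabla(v\times u)$, the exact gradient integrating to zero against $\nabla^\perp\zeta$ for boundary-vanishing $\zeta$. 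You also leave the final clause untouched: the improvement with $\min_{a}\|u-v-a\|_{L^2}$ follows from the invariance of $\jac$ under $u\mapsto u-a$. As an aside, read as a proof of Theorem~\ref{thm:teorem} your argument appears sound and is an attractive alternative to the paper's co-area slicing near $\dOm$ — absorbing the mean $\bar w$ into the interior integral and using $\|\tilde w\|_{L^2(\dOm)}^2\lesssim\|\tilde w\|_{L^2(\Om)}\|\nabla\tilde w\|_{L^2(\Om)}$ together with Poincar\'e for the mean-zero part does give $t+C\sqrt t$ — but that is a different theorem from the one you were asked to prove.
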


\proof{} First assume that $u$ and $v$ are smooth maps in $\Om$. Note that
\be
\label{ident12}
2\jac(u)=\nabla\times (u\times \nabla u) \quad \textrm{in}\quad \Omega.\ee
If $\zeta \in C^\infty_c(\Om)$, then integration by parts yields:
\begin{align}
\nonumber
2\bigg|\int_{\Om} \big(\jac(u)&-\jac(v)\big)\zeta \, dx\bigg|=\bigg|\int_{\Om} (u\times \nabla u-v\times \nabla v)\cdot \nabla^\perp \zeta\, dx\bigg|\\
\label{pg7}
&= \bigg|\int_{\Om} \bigg\{(u-v)\times (\nabla u+\nabla v)\cdot \nabla^\perp \zeta+\nabla(v\times u)\cdot \nabla^\perp \zeta\bigg\}\, dx\bigg|\\
\nonumber
&\leq \|u-v\|_{L^2(\Om)} (\|\nabla u\|_{L^2(\Om)}+\|\nabla v\|_{L^2(\Om)}) \|\nabla \zeta\|_{L^\infty(\Om)}.
\end{align}
The general case follows by a density argument: every test function $\zeta\in W^{1, \infty}(\Om)$ with $\zeta=0$ on $\partial \Om$ is approximated in $W^{1,1}(\Om)$ by $\zeta_n\in C^\infty_c(\Om)$ such that
$\|\nabla \zeta_n\|_{L^\infty(\Om)}\leq \|\nabla \zeta\|_{L^\infty(\Om)}$ (in particular $(\zeta_n)_n$ is uniformly bounded in $L^\infty(\Om)$), while the maps $u, v\in H^1(\Om; \R^2)$ are approximated in $H^1(\Om)$ by smooth maps $u_n$ and $v_n$ in $\Omega$ implying in particular that $\jac(u_n)\to \jac(u)$ and $\jac(v_n)\to \jac(v)$ in $L^1(\Om)$. Finally, passing at the limit $n\to \infty$ in the above inequality for $(u_n, v_n, \zeta_n)$, the conclusion is proved.
Note that for $u\in H^1$, \eqref{ident12} holds true in the distribution sense by the same density argument since $u\times \nabla u\in L^1$ (so, $\nabla\times (u\times \nabla u)\in (W_0^{1, \infty}(\Om))^*$) and $\jac(u)\in L^1$. The last statement of 
Proposition~\ref{pro:jac_int} follows from the  invariance of $\jac(\cdot)$ under addition of a fixed vector $a\in \R^2$.
\qed

\bigskip

In order to prove Theorem \ref{thm:teorem}, we need to investigate the stability properties of the boundary Jacobian defined in \eqref{def_jac_bd}. 
The following lemma proves that the boundary Jacobian is indeed a quantity living on the boundary $\partial \Om$ and we obtain a stability inequality for the boundary Jacobian in the strong ${H}^{1/2}(\partial \Om)$-topology, i.e., endowed by the norm
$\|u\|_{H^{1/2}(\partial \Om)}:=\|u\|_{L^2(\partial \Om)}+
\|u\|_{\dot{H}^{1/2}(\partial \Om)}$ with
$$\|u\|^2_{\dot{H}^{1/2}(\partial \Om)}:=\int_{\partial \Om} \int_{\partial \Om} \frac{|u(x)-u(y)|^2}{|x-y|^2}\, dxdy.$$ 

\begin{pro}
\label{pro:jacbd}
Let $\Om\subset \R^2$ be a Lipschitz bounded domain. Then for every $u\in H^1(\Om;\RR^2)$, the boundary Jacobian $\jacbd(u)$ of $u$ defined in \eqref{def_jac_bd} can be identified with the following linear functional acting on Lipschitz functions $W^{1, \infty}(\partial \Om)$ at the boundary $\partial \Om$:
\be
\label{forma1}
\zeta\in W^{1, \infty}(\partial \Om)\mapsto -(\zeta u\times \partial_\tau u)_{H^{1/2}(\dOm), H^{-1/2}(\dOm)}\ee
where the right hand side is interpreted as a dual (cross) product\footnote{Using a Lipschitz arc-length parametrisation $\{\gamma(\theta)\}_{\theta\in \Ss^1}$ of $\partial \Omega$, the RHS of \eqref{forma1}
becomes  up to sign $$\big((\zeta u)\times \partial_\tau u\big)_{H^{1/2}(\dOm), H^{-1/2}(\dOm)}
=\big(\tilde \zeta \tilde u \times  \partial_\theta \tilde u\big)_{H^{1/2}(\Ss^1), H^{-1/2}(\Ss^1)}$$ where $\tilde u(\theta)=u(\gamma(\theta))\in H^{1/2}(\Ss^1)$ and $\tilde \zeta(\theta)=\zeta(\gamma(\theta))\in W^{1, \infty}(\Ss^1)$.
} between $\partial_\tau u\in {H}^{-1/2}(\partial \Om)=({H}^{1/2}(\partial \Om))^*$ and $\zeta u\in {H}^{1/2}(\partial \Om)$. Moreover, for every $u, v\in H^1(\Om, \RR^2)$ and 
$\zeta\in W^{1, \infty}(\Om)$, we have
\be \label{stab_ineq_bdry}
\left|\left<(\jacbd(u)-\jacbd(v)),\zeta\right>\right|\leq C \|u-v\|_{H^{1/2}(\partial \Om)}(\|u\|_{{H}^{1/2}(\partial \Om)}+
\|v\|_{{H}^{1/2}(\partial \Om)})\|\zeta\|_{W^{1, \infty}(\partial \Om)}, \ee
where $C>0$ is a constant depending only on $\Omega$ and $\|\zeta\|_{W^{1,\infty}(\partial \Om)}:=\|\zeta\|_{L^\infty(\partial \Om)}+\|\partial_\tau \zeta\|_{L^\infty(\partial\Om)}$.
\end{pro}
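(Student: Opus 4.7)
\textbf{Proof plan for Proposition \ref{pro:jacbd}.} My strategy is to first establish the boundary representation formula for smooth $u$ by explicit integration by parts, then extend by density to $u\in H^1(\Omega;\R^2)$ using that the trace $u|_{\partial\Omega}$ lies in $H^{1/2}(\partial\Omega)$ and its tangential derivative $\partial_\tau u$ lies in $H^{-1/2}(\partial\Omega)$. The stability inequality will then follow from a symmetric bilinear splitting combined with the continuity of multiplication by $W^{1,\infty}(\partial\Omega)$-functions on $H^{1/2}(\partial\Omega)$.

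First, I would take $u\in C^\infty(\bar\Omega;\R^2)$ and $\zeta\in C^\infty(\bar\Omega)$, and compute $-\int_\Omega u\times\nabla u\cdot\nabla^\perp \zeta\,dx$ via integration by parts. Using the identity \eqref{ident12} and the fact that the boundary term produces $(u\times\partial_{x_2}u)\nu_1-(u\times\partial_{x_1}u)\nu_2=u\times(\nu_1\partial_{x_2}u-\nu_2\partial_{x_1}u)=u\times\partial_\tau u$ (since $\tau=(-\nu_2,\nu_1)$), I obtain
\[
\langle\jaco(u),\zeta\rangle = 2\int_\Omega \jac(u)\zeta\,dx-\int_{\partial\Omega} (u\times\partial_\tau u)\,\zeta\,d\h^1,
\]
so in the smooth case $\langle\jacbd(u),\zeta\rangle=-\int_{\partial\Omega}(u\times\partial_\tau u)\,\zeta\,d\h^1$, which depends only on $\zeta|_{\partial\Omega}$. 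To pass to $u\in H^1(\Omega;\R^2)$ I approximate by smooth maps $u_n\to u$ in $H^1(\Omega)$: the left-hand side $\langle\jacbd(u_n),\zeta\rangle=\langle\jaco(u_n),\zeta\rangle-2\int_\Omega\jac(u_n)\zeta$ converges by the definition of $\jaco$ and by $L^1$-convergence of interior Jacobians, while the boundary expression converges in the duality $H^{1/2}(\partial\Omega)$-$H^{-1/2}(\partial\Omega)$ using that traces converge in $H^{1/2}(\partial\Omega)$ and tangential derivatives in $H^{-1/2}(\partial\Omega)$. This also shows that $\langle\jacbd(u),\zeta\rangle=0$ whenever $\zeta|_{\partial\Omega}=0$, so any Lipschitz function on $\partial\Omega$ may be tested by extending it to a Lipschitz function on $\Omega$.

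For the stability inequality \eqref{stab_ineq_bdry}, the key algebraic identity is the bilinear splitting
\[
u\times\partial_\tau u - v\times\partial_\tau v = (u-v)\times\partial_\tau u + v\times\partial_\tau(u-v),
\]
which holds pointwise (and distributionally on $\partial\Omega$). Multiplying by $\zeta$ and using the $H^{1/2}$-$H^{-1/2}$ duality, the problem reduces to estimating two terms of the form $\langle\zeta w,\partial_\tau\psi\rangle_{H^{1/2},H^{-1/2}}$. Each such term is bounded by $\|\zeta w\|_{H^{1/2}(\partial\Omega)}\|\partial_\tau\psi\|_{H^{-1/2}(\partial\Omega)}\le C\|\zeta\|_{W^{1,\infty}(\partial\Omega)}\|w\|_{H^{1/2}(\partial\Omega)}\|\psi\|_{H^{1/2}(\partial\Omega)}$, invoking two standard facts: the operator $\partial_\tau:H^{1/2}(\partial\Omega)\to H^{-1/2}(\partial\Omega)$ is continuous, and multiplication by a Lipschitz function is continuous on $H^{1/2}(\partial\Omega)$ with norm controlled by $\|\zeta\|_{W^{1,\infty}(\partial\Omega)}$. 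Summing the two contributions yields the claimed bound.

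The main technical obstacle is the rigorous interpretation of the boundary duality pairing $(\zeta u\times\partial_\tau u)_{H^{1/2},H^{-1/2}}$ for a general $H^1$-map. This requires justifying that $\zeta u\in H^{1/2}(\partial\Omega)$ when $u|_{\partial\Omega}\in H^{1/2}(\partial\Omega)$ and $\zeta\in W^{1,\infty}(\partial\Omega)$, together with a quantitative bound $\|\zeta u\|_{H^{1/2}(\partial\Omega)}\le C\|\zeta\|_{W^{1,\infty}(\partial\Omega)}\|u\|_{H^{1/2}(\partial\Omega)}$. On the Lipschitz closed curve $\partial\Omega$ this is classical; using an arc-length parametrisation (as in the footnote) transfers the question to $\Ss^1$, where the double-integral Gagliardo seminorm makes the multiplier estimate a direct consequence of the Lipschitz property of $\zeta$. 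Once this multiplier bound is in hand, everything else is bookkeeping with standard trace and duality theorems.
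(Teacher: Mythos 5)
Your proposal is correct, and its overall architecture matches the paper's: establish the boundary formula for smooth maps by integration by parts, extend by density using the trace theorem and continuity of $\partial_\tau:H^{1/2}(\dOm)\to H^{-1/2}(\dOm)$, and rely on the multiplier bound $\|\zeta w\|_{H^{1/2}(\dOm)}\le C\|\zeta\|_{W^{1,\infty}(\dOm)}\|w\|_{H^{1/2}(\dOm)}$. The one place you deviate is in the algebraic decomposition for the stability bound. You use the plain Leibniz split
\[
u\times\partial_\tau u-v\times\partial_\tau v=(u-v)\times\partial_\tau u+v\times\partial_\tau(u-v),
\]
treating both resulting terms uniformly via the $H^{1/2}$--$H^{-1/2}$ duality plus the multiplier estimate; this sums directly to the claimed bound. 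The paper instead uses the symmetric split $\zeta(u-v)\times\partial_\tau(u+v)+\big(-\zeta u\times\partial_\tau v+\zeta v\times\partial_\tau u\big)$, handles the first term exactly as you do, but treats the commutator piece differently: it recognises it as $\zeta\,\partial_\tau(v\times u)$, integrates by parts on the closed curve $\dOm$ to pass $\partial_\tau$ onto $\zeta$, and then uses $v\times u=v\times(u-v)$ to land on an $L^2\times L^2$ estimate. Both routes reach the same inequality; yours is slightly shorter and more uniform (no boundary integration by parts needed for the second term), while the paper's split mirrors the one used in Proposition~\ref{pro:jac_int} and Theorem~\ref{thm:teorem}, keeping the proofs structurally parallel. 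Neither buys a sharper constant here, so this is a stylistic rather than substantive difference.
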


\proof{} 
First, we prove that for any $u\in H^1(\Om;\RR^2)$ the linear functional \eqref{forma1} is continuous on $W^{1, \infty}(\partial \Om)$ endowed with the norm $\|\cdot\|_{W^{1,\infty}(\partial \Om)}$. Indeed, we have for $\zeta\in W^{1, \infty}(\partial \Om)$: 
$$\bigg|\big(\zeta u\times \partial_\tau u \big)_{H^{1/2}(\dOm), H^{-1/2}(\dOm)}\bigg|\leq \|\zeta u \|_{{H}^{1/2}(\partial \Om)}  
\|\partial_\tau u \|_{{H}^{-1/2}(\partial \Om)}\leq C \|u \|^2_{{H}^{1/2}(\partial \Om)}\|\zeta\|_{W^{1,\infty}(\partial \Om)}$$
because $\|\partial_\tau u \|_{{H}^{-1/2}(\partial \Om)}\leq \| u \|_{{H}^{1/2}(\partial \Om)}$, 
$\|\zeta u \|_{L^2(\partial \Om)}\leq \|\zeta\|_{L^\infty(\partial \Om)} \|u\|_{L^2(\partial \Om)}$ and
\begin{align*}
\|\zeta u \|^2_{\dot {H}^{1/2}(\partial \Om)}&=\int_{\partial \Om} \int_{\partial \Om} \frac{|\zeta(x)u(x)-\zeta(y)u(y)|^2}{|x-y|^2}\, dxdy \\
&\leq C\|\zeta\|^2_{L^\infty(\partial \Om)}\|u \|^2_{\dot {H}^{1/2}(\partial \Om)}+C\|\partial_\tau \zeta\|^2_{L^\infty(\partial \Om)}\|u \|^2_{L^{2}(\partial \Om)}\\
&\leq C \|u \|^2_{{H}^{1/2}(\partial \Om)}\|\zeta\|^2_{W^{1,\infty}(\partial \Om)}
\end{align*}
where $C>0$ depends only on the geometry of $\Om$. Now let us check that the boundary Jacobian $\jacbd(u)$ coincides with \eqref{forma1} for a map $u\in H^1(\Om; \RR^2)$ and a test function 
$\zeta\in W^{1, \infty}(\Om)$. Indeed, if $u\in C^2(\bar\Omega)$, then integration by parts yields:
\begin{align*}
-\left<\jacbd(u),\zeta\right>
\stackrel{\eqref{def_jac_bd}}{=}\int_{\Om} u\times \nabla u\cdot \nabla^\perp\zeta \, dx+2\int_{\Om} \jac(u)\zeta\, dx
\stackrel{\eqref{ident12}}=\int_{\partial \Om} \zeta u\times \partial_\tau u \,d\h^1.\end{align*}
Since the trace operator is continuous from $H^1(\Om; \RR^2)$ to $H^{1/2}(\dOm; \RR^2)$ as well as the operator mapping
$u\in H^{1/2}(\dOm; \RR^2)\mapsto \partial_\tau u\in H^{-1/2}(\dOm;\RR^2)$, by the density of $C^2(\bar\Omega; \R^2)$ maps into $H^1(\Om; \RR^2)$, we conclude that the last identity also holds for general $u\in H^1(\Om;\RR^2)$ within the duality $(H^{1/2}(\dOm), H^{-1/2}(\dOm))$.
Finally, we prove \eqref{stab_ineq_bdry} for $u, v\in H^1(\Om;\RR^2)$ and 
$\zeta\in W^{1, \infty}(\Om)$. Indeed, using the same estimates as above, there exists a constant $C>0$ depending on $\Om$ such that
\begin{align*}
\left|\left<(\jacbd(u)-\jacbd(v)),\zeta\right>\right|
&=\left|\big(\zeta u\times \partial_\tau u-\zeta v\times \partial_\tau v\big)_{H^{1/2}(\dOm), H^{-1/2}(\dOm)} \right|\\
&= \bigg| \big(\underbrace{\zeta (u-v)\times \partial_\tau(u+v)}_{:=I} \, 
\underbrace{ -\zeta u \times \partial_\tau v+\zeta v\times \partial_\tau u}_{:=II}\big)_{H^{1/2}(\dOm), H^{-1/2}(\dOm)}\bigg|
\end{align*}
with
\begin{align*}
|\, I\, |&\leq \|\zeta (u-v) \|_{ {H}^{1/2}(\partial \Om)}  \|u+v \|_{{H}^{1/2}(\partial \Om)}\\
&\leq 
C \|u-v \|_{{H}^{1/2}(\partial \Om)}\bigg( \|u\|_{{H}^{1/2}(\partial \Om)}+\|v \|_{{H}^{1/2}(\partial \Om)} \bigg)\|\zeta\|_{W^{1,\infty}(\partial \Om)}
\end{align*}
and $II$ is interpreted as a duality between $\partial_\tau (v\times u)\in (W^{1, \infty}(\partial \Om))^*$ and $\zeta \in W^{1, \infty}(\partial \Om)$ which combined with $v\times u=v\times (u-v)\in L^1(\partial \Om)$ leads to
\begin{align*}
|\, II\, |=\left|\int_{\partial \Om} v\times u \partial_\tau \zeta\, d\h^1\right|&\leq \| u-v \|_{L^2(\partial \Om)}  \|v \|_{L^2(\partial \Om)}\|\zeta\|_{W^{1,\infty}(\partial \Om)}\\
&\leq C \|u-v \|_{{H}^{1/2}(\partial \Om)}\|v\|_{{H}^{1/2}(\partial \Om)}\|\zeta\|_{W^{1,\infty}(\partial \Om)}.
\end{align*}
Summing up, we conclude with \eqref{stab_ineq_bdry} which implies in particular for every $\zeta\in W^{1, \infty}(\Om)$:
\be
\label{weak_jac_stab}
\left|\left<\jacbd(u)-\jacbd(v), \zeta\right> \right|\leq C \|u-v\|_{H^{1}(\Om)}(\|u\|_{{H}^{1}(\Om)}+
\|v\|_{{H}^{1}(\Om)})\|\zeta\|_{W^{1, \infty}(\Om)}.\ee
\qed

\begin{rem}
\begin{enumerate}
\item[i)]
Note that the above inequality \eqref{weak_jac_stab} is weaker than the estimate in Theorem~\ref{thm:teorem} because it represents a stability
inequality for the boundary Jacobian in the strong $H^1(\Om)$-topology, while in Theorem~\ref{thm:teorem} only $L^2(\Om)$ closeness is required 
together with a slight control of the $\dot{H}^1(\Om)$-seminorm that may blow-up.

\item[ii)] For $u\in H^1(\Om; \R^2)$, while the interior Jacobian $\jac(u)$ is a measure in $\Om$, the global Jacobian is not in general a 
measure on $\bar \Om$ because $\jacbd$ is not in general a measure on $\partial \Om$. Indeed, if $|u|=1$ in a smooth simply connected domain $\Om$, then $\jac(u)=0$ in $\Om$
(because $\partial_{x_1} u$ and $\partial_{x_2} u$ are parallel vectors, both being orthogonal to $u$) and by the 
Bethuel-Zheng theorem in \cite{BethuelZheng:1988a}, we know that $u=e^{i\f}$ with a lifting $\f\in H^1(\Om)$ so that $\f\in H^{1/2}(\partial \Om)$. Then it follows by Proposition \ref{pro:jacbd} that for every $\zeta\in W^{1, \infty}(\Om)$, $\left<\jacbd(u), \zeta\right>=-(\zeta\partial_{\tau} \f)_{H^{1/2}(\dOm), H^{-1/2}(\dOm)} $ where $\partial_{\tau} \f$ belongs to $H^{-1/2}(\partial \Om)$ which clearly can be chosen not to be a measure on $\partial \Om$.

\end{enumerate}
\end{rem}

\bigskip

\noindent {\bf Proof of Theorem \ref{thm:teorem}}. Since $\Om$ is $C^{1,1}$, there exists $r_1:=r_1(\Omega)>0$ such that every point $x\in \Omega$ with $\dist(x, \partial \Om)<r_1$ has a unique orthogonal projection on the boundary $\partial \Om$, i.e., the crossing of two normal directions on $\partial \Om$ in the interior of $\Om$ happens at a distance larger than $r_1$ from the boundary. 

Assume for the moment that $u, v$ are smooth maps in $\bar \Omega$. Note that the inequality is trivial if $u$ and $v$ are equal or if they are both constant maps. Therefore, in the following, we can assume that 
$$\delta=\frac{\|u-v\|_{L^2(\Om)}}{\|\nabla u\|_{L^2(\Om)}+\|\nabla v\|_{L^2(\Om)}}
\in (0, \infty).$$ 
Let $\zeta$ be smooth in $\bar \Om$. In the following, {\bf we denote by $C>0$ a constant 
depending only on the geometry of $\Om$ that can change from line to line.}

\medskip

\noindent {\bf Case 1.} {\it Suppose that $\delta\geq r_1/4$.} In this case, we have 
\begin{align*}
&\bigg|\left<\jaco(u)-\jaco(v),\zeta\right> \bigg|  =\bigg|\int_{\Om} (u\times \nabla u-v\times \nabla v)
\cdot \nabla^\perp \zeta\, dx\bigg|\\
&= \bigg|\int_{\Om} \bigg\{(u-v)\times \nabla u \cdot \nabla^\perp \zeta+v \times (\nabla u-\nabla v)
\cdot \nabla^\perp \zeta\bigg\}\, dx\bigg|\\
&\leq \|v-u\|_{L^2(\Om)} \|\nabla u\|_{L^2(\Om)} \|\nabla \zeta\|_{L^\infty(\Om)}+
\|v\|_{L^2(\Om)}  (\|\nabla v\|_{L^2(\Om)}+ \|\nabla u\|_{L^2(\Om)})\|\nabla \zeta\|_{L^\infty(\Om)}.
\end{align*}
The conclusion follows by 
$$\|v\|_{L^2(\Om)}\leq \h^2(\Om)^{1/2}\leq C r_1^{1/2}\leq C\delta^{1/2},$$
where we used the hypothesis $|v|\leq 1$ in $\Om$ and the assumption $\delta\geq r_1/4$.

\medskip

\noindent {\bf Case 2.} {\it Suppose that $\delta\leq r_1/4$.} In this case, we denote by $$\Om_R=\{x\in\Omega\, :\, \dist(x,\partial \Om)<R\}$$ the region around the boundary $\partial \Om$ at a distance less than $R$. 
Then by averaging on the interval $(\delta, 2\delta)$,
the co-area formula yields the existence of some $R \in (\delta, 2\delta)$ such that:
\begin{align}
\nonumber
\delta \int_{\partial \Om_{R}\cap \Om}|v\times u|\, d\h^1=\int_\delta^{2\delta} \, dr \int_{\partial \Om_r\cap \Om} |v\times u| \, d\h^1&=
 \int_{\Om_{2\delta}\setminus \Om_{\delta}} |v\times u|\, dx\\
 \label{choiceR}
 &\leq {C}{\delta^{1/2}}  \|v-u\|_{L^2(\Om)}
 \end{align}
because $v\times u=v\times (u-v)$ and $|v|\leq 1$.
We estimate the desired quantity on $\Om\setminus \Om_{R}$:
\begin{align*}
I:=&\bigg|\int_{\Om\setminus \Om_{R}} (u\times \nabla u-v\times \nabla v)\cdot \nabla^\perp \zeta\, dx\bigg|\\
&\stackrel{\eqref{pg7}}{\leq}  \|\nabla \zeta\|_{L^\infty(\Om)}\int_{\Om\setminus \Om_{R}} |u-v|(|\nabla u|+|\nabla v|)\, dx+
\bigg| \int_{\Om\setminus \Om_{R}}\nabla(v\times u)\cdot \nabla^\perp \zeta\, dx\bigg|\end{align*}
where the integration by parts leads to
\begin{align*}
\bigg|\int_{\Om \setminus \Om_{R}} \nabla(v\times u)\cdot \nabla^\perp \zeta\, dx\bigg|
&=\bigg|\int_{\partial \Om_{R}\cap \Om} v\times u\, \partial_\tau \zeta\, d\h^1\bigg|\\
&\leq \|\nabla \zeta\|_{L^\infty(\Om)} \int_{\partial \Om_{R}\cap \Om}|v\times u|\, d\h^1\\
& \stackrel{\eqref{choiceR}}{\leq} C \|v-u\|^{1/2}_{L^2(\Om)}(\|\nabla v\|_{L^2(\Om)}+\|\nabla u\|_{L^2(\Om)})^{1/2}  \|\nabla \zeta\|_{L^\infty(\Om)}.
\end{align*}
Next we estimate the desired quantity on $\Om_{R}$:
\begin{align*}
II:=&\bigg|\int_{\Om_{R}} (u\times \nabla u-v\times \nabla v)\cdot \nabla^\perp \zeta\, dx\bigg|\\
&=\bigg|\int_{\Om_{R}} \bigg\{(u-v)\times \nabla u \cdot \nabla^\perp \zeta+v \times (\nabla u-\nabla v)\cdot \nabla^\perp \zeta\bigg\}\, dx\bigg|\\
&\leq \|\nabla \zeta\|_{L^\infty(\Om)}\int_{\Om_{R}} |v-u| |\nabla u| \, dx+\h^2(\Om_R)^{1/2} (\|\nabla v\|_{L^2(\Om)}+
 \|\nabla u\|_{L^2(\Om)})\|\nabla \zeta\|_{L^\infty(\Om)},
\end{align*}
with $\h^2(\Om_R)^{1/2}\leq C \delta^{1/2}$. Adding $I$ and $II$ we obtain the desired inequality. By a standard density argument (as in the proof of Proposition~\ref{pro:jac_int}), the inequality holds for general $H^1$-maps $u$ and $v$ and general Lipschitz test function $\zeta$.
\qed

\bigskip

As a consequence of Theorem \ref{thm:teorem} and Proposition \ref{pro:jac_int}, we have the following stability result for the global and interior Jacobian:

\begin{cor}
\label{coro} 
Let $\Omega\subset \R^2$ be a $C^{1,1}$ bounded domain and let $u_\eps, v_\eps:\Omega\to \R^2$ be two sequences / families of $H^1$-maps such that $|v_\eps|=1$ in $\Om$ and 
\be
\label{cond_control}
\|u_\eps-v_\eps\|_{L^2(\Om)}\big(\|\nabla u_\eps\|_{L^2(\Om)}+\|\nabla v_\eps\|_{L^2(\Om)} \big)\to 0 \quad \textrm{as}\quad \eps\to 0.
\ee
Then
\begin{enumerate}
\item[1.] (Stability of the interior Jacobian) We have $\jac(v_\eps)=0$ and  $\|\jac(u_\eps)\|_{(W_0^{1, \infty}(B_1))^*}\stackrel{\eps\to 0}{\to} 0;$
\item[2.] (Stability of the global Jacobian) We have
$\|\jaco(u_\eps)-\jaco(v_\eps)\|=\|\jaco(u_\eps)-\jacbd(v_\eps)\|\stackrel{\eps\to 0}{\to} 0.$
\end{enumerate}
\end{cor}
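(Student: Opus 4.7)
The plan is to derive both claims almost directly from the results proved earlier, so no new machinery is needed; the only observation to be carried out is the identification $\jac(v_\eps)=0$.

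First I would prove Claim~1. Since $|v_\eps|=1$ a.e.\ in $\Om$, differentiating $|v_\eps|^2=1$ gives $v_\eps\cdot\partial_{x_k}v_\eps=0$ for $k=1,2$, so both partial derivatives are orthogonal to $v_\eps$ in $\R^2$ and hence parallel to each other, yielding $\jac(v_\eps)=\partial_{x_1}v_\eps\times\partial_{x_2}v_\eps=0$ a.e.\ in $\Om$ (this is just the computation alluded to in Remark~2.3.ii). Plugging this into Proposition~\ref{pro:jac_int} gives
\[
\|\jac(u_\eps)\|_{(W_0^{1,\infty}(\Om))^*}=\|\jac(u_\eps)-\jac(v_\eps)\|_{(W_0^{1,\infty}(\Om))^*}\le\tfrac12\|u_\eps-v_\eps\|_{L^2(\Om)}\bigl(\|\nabla u_\eps\|_{L^2(\Om)}+\|\nabla v_\eps\|_{L^2(\Om)}\bigr),
\]
which tends to $0$ by the hypothesis \eqref{cond_control}.

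For Claim~2, the identity $\jaco(v_\eps)=\jacbd(v_\eps)$ is immediate from the definition \eqref{def_jac_bd} once we know $\jac(v_\eps)=0$, which establishes the equality of the two dual norms. To show the convergence to zero, I apply Theorem~\ref{thm:teorem}: for every Lipschitz $\zeta:\Om\to\R$,
\[
\bigl|\langle\jaco(u_\eps)-\jaco(v_\eps),\zeta\rangle\bigr|\le f\bigl(\|u_\eps-v_\eps\|_{L^2(\Om)}(\|\nabla u_\eps\|_{L^2(\Om)}+\|\nabla v_\eps\|_{L^2(\Om)})\bigr)\|\nabla\zeta\|_{L^\infty(\Om)},
\]
with $f(t)=t+C\sqrt{t}$ as in the theorem. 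Taking the supremum over $\zeta$ with $\|\nabla\zeta\|_{L^\infty(\Om)}\le 1$ yields the bound $\|\jaco(u_\eps)-\jaco(v_\eps)\|\le f(\|u_\eps-v_\eps\|_{L^2}(\|\nabla u_\eps\|_{L^2}+\|\nabla v_\eps\|_{L^2}))$. Since $f$ is continuous at $0$ with $f(0)=0$, the hypothesis \eqref{cond_control} implies the right-hand side tends to $0$ as $\eps\to 0$, which concludes the proof.

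In summary, there is no real obstacle: the only subtlety is recognising that the $\Ss^1$-constraint on $v_\eps$ kills the interior Jacobian, which turns Theorem~\ref{thm:teorem} and Proposition~\ref{pro:jac_int} into the stability statements of the corollary. The slightly nontrivial point worth flagging is that, even though $\|u_\eps-v_\eps\|_{L^2}$ alone is not required to vanish, its product with the (possibly blowing-up) $H^1$-seminorms in \eqref{cond_control} is precisely what the function $f$ in Theorem~\ref{thm:teorem} controls via the $\sqrt{t}$ term, making this the sharp hypothesis.
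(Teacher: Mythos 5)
Your proof is correct and is essentially the argument the paper intends: the corollary is stated in the paper as a direct consequence of Theorem~\ref{thm:teorem} and Proposition~\ref{pro:jac_int}, and you supply exactly the two observations needed — that $|v_\eps|=1$ forces $\jac(v_\eps)=0$ (so $\jaco(v_\eps)=\jacbd(v_\eps)$ by \eqref{def_jac_bd}), and that $f(t)=t+C\sqrt t$ is continuous at $0$ with $f(0)=0$. (Incidentally, the ``$B_1$'' in the statement of Claim~1 is a typo for $\Om$, which you correctly used.)
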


\bigskip

Let us show now that the boundary Jacobian $\jacbd$ is \emph{not} stable under the condition \eqref{cond_control} (recall that $\jacbd$ is stable in the strong $H^1(\Om)$-topology, see \eqref{weak_jac_stab}).

\begin{pro}
Let $P=(0,-\frac34)$, $\Om=B_{1/4}(P)$ be the disk of center $P$ and radius $1/4$. Then 
for every small $\eps>0$, there exists a map $u_\eps\in H^1(\Om; \R^2)$ such that \eqref{cond_control} holds
for $v_\eps=1$ in $\Om$, but
$$\left<\jacbd(u_\eps)-\jacbd(v_\eps),1\right>\nrightarrow 0 \quad \textrm{as}\quad \eps\to 0.$$
In particular, $\|\jacbd(u_\eps)-\jacbd(v_\eps)\|\nrightarrow 0$ as $\eps\to 0$.
\end{pro}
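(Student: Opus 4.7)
The plan is to construct $u_\eps$ as a small, $\eps$-scaled perturbation of the constant map $v_\eps\equiv 1$, localised near a single boundary point $p\in\dOm$, and engineered so that its boundary trace parametrises a full loop around the origin of $\R^2$. This will produce a topological contribution to $\langle\jacbd(u_\eps),1\rangle$ of size $\sim 1$, while the natural scaling makes $\|u_\eps-1\|_{L^2}=O(\eps)$ and $\|\nabla u_\eps\|_{L^2}=O(1)$, so that \eqref{cond_control} holds but $\jacbd$ stability fails. This will show that the inequality of Theorem~\ref{thm:teorem} (valid for $\jaco$) cannot be replicated for $\jacbd$: the construction creates equal and opposite interior and boundary Jacobians, which cancel in $\jaco$ but not in $\jacbd$.

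Concretely, I would fix a smooth $\phi:\R\to\R$ with $\phi\equiv 0$ on $(-\infty,0]$, $\phi\equiv 2\pi$ on $[1,\infty)$, monotone on $[0,1]$, together with a smooth cutoff $\rho:[0,\infty)\to[0,1]$ with $\rho(0)=1$ and $\supp\rho\subset[0,1]$, and set
$$h(s,t):=\rho(t)\bigl(\cos\phi(s)-1,\,\sin\phi(s)\bigr),$$
a smooth, compactly supported map on $\R\times[0,\infty)$. Using the $C^{1,1}$ regularity of $\dOm$, introduce normal coordinates $(s,t)$ in a fixed tubular neighbourhood of a chosen point $p\in\dOm$ (e.g.\ $p=(0,-1/2)$), where $s$ is arclength along $\dOm$ in the direction of $\tau$ and $t$ is the inward normal distance. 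For $\eps$ small enough that the $\eps\times\eps$ support of the rescaled bump lies in the chart, set
$$u_\eps(x):=1+h\!\left(\tfrac{s(x)}{\eps},\tfrac{t(x)}{\eps}\right)\quad\text{in the chart},\qquad u_\eps(x):=1\quad\text{elsewhere in }\Om.$$
Since $h$ vanishes outside a compact set, $u_\eps$ is smooth on $\bar\Om$, hence in $H^1(\Om;\R^2)$.

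I would then verify the two points. Because the coordinate change is bi-Lipschitz with uniformly bounded Jacobian, change of variables and the scaling of $h$ give $\|u_\eps-1\|_{L^2(\Om)}\le C\eps\|h\|_{L^2}$ and $\|\nabla u_\eps\|_{L^2(\Om)}\le C\|\nabla h\|_{L^2}$, so, using $\nabla v_\eps\equiv 0$,
$$\|u_\eps-v_\eps\|_{L^2(\Om)}\bigl(\|\nabla u_\eps\|_{L^2(\Om)}+\|\nabla v_\eps\|_{L^2(\Om)}\bigr)=O(\eps)\to 0,$$
establishing \eqref{cond_control}. On $\dOm$ near $p$ we have $t=0$, so $u_\eps(s)=(\cos\phi(s/\eps),\sin\phi(s/\eps))\in\Ss^1$; elsewhere on $\dOm$, $u_\eps\equiv 1$. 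Therefore $u_\eps\times\partial_\tau u_\eps=\phi'(s/\eps)/\eps$ on the effective arc $s\in[0,\eps]$, and by Proposition~\ref{pro:jacbd} applied with $\zeta\equiv 1$ (and $\jacbd(v_\eps)=0$, since $v_\eps$ is constant),
$$\bigl\langle\jacbd(u_\eps)-\jacbd(v_\eps),1\bigr\rangle=-\int_{\dOm}u_\eps\times\partial_\tau u_\eps\,d\h^1=-\int_0^1\phi'(z)\,dz=-2\pi,$$
which does not tend to $0$ as $\eps\to 0$. The final claim follows because the constant function $\zeta\equiv 1$ is admissible in the definition of $\|\cdot\|$ (it has $|\nabla\zeta|=0\le 1$), so it witnesses $\|\jacbd(u_\eps)-\jacbd(v_\eps)\|\ge 2\pi$; in fact $\|\jacbd(u_\eps)-\jacbd(v_\eps)\|=+\infty$, since scaling $\zeta$ by constants produces arbitrarily large pairings.

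The only technical subtlety is the normal-coordinate bookkeeping: one needs that in the $\eps$-small support the change of variables has Jacobian $1+O(\eps)$, and that $s$ is precisely arclength on $\dOm$ so that $d\h^1=ds$; both follow from the $C^{1,1}$ tubular neighbourhood theorem for $\dOm$. The conceptual heart is the separation of scales: the trace $s\mapsto(\cos\phi(s/\eps),\sin\phi(s/\eps))$ sweeps once around $\Ss^1$, providing the topological $2\pi$ contribution, while cutting off by $\rho(t)$ into the interior uses only an $O(\eps)$ $L^2$-mass at $H^1$-cost $O(1)$. Theorem~\ref{thm:teorem} prevents this asymmetry from harming $\jaco$, since the interior Jacobian picks up a matching $\pi$, but $\jacbd$ sees only the boundary winding.
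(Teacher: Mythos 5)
Your construction is correct, and it proves the proposition by a route that is conceptually the same but technically different from the paper's. The paper first builds a fixed map $U$ on the unit disk $B^2$ with two interior vortices of opposite degree at $(0,\pm\tfrac12)$ (obtained by extending a degree-zero $\Ss^1$-valued boundary datum to an annular region and filling the small disks radially), and then sets $u_\eps(x)=U(\tfrac{x-P_-}{\eps})$ near $P_-=(0,-\tfrac12)\in\dOm$; the rescaling puts the degree-$(-1)$ vortex just inside $\Om$ (its partner lying outside $\Om$), forcing $\deg(u_\eps;\dOm)=-1$, while the $L^2$ smallness comes from the $\eps^2$ volume factor. Your version instead manufactures the boundary winding directly: a phase $\phi$ with net increment $2\pi$ on a unit arc, rescaled to $s\in[0,\eps]$, and cut off into the interior by $\rho(t/\eps)$. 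The two constructions encode the same separation of scales (a fixed $\dot H^1$-profile squeezed into an $\eps$-box, giving $\|u_\eps-1\|_{L^2}=O(\eps)$, $\|\nabla u_\eps\|_{L^2}=O(1)$) and the same cancellation: $\langle\jacbd(u_\eps),1\rangle=\mp 2\pi$ is balanced against the interior Jacobian of total mass $\pm\pi$ so that $\langle\jaco(u_\eps),1\rangle=0$, exactly as required for $\jaco$ to be stable under \eqref{cond_control} while $\jacbd$ is not. Your construction is somewhat more explicit and avoids the auxiliary extension step needed to build $U$ on the annulus; the paper's makes the mechanism (an interior vortex escaping to the boundary) a bit more geometrically transparent. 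One small remark on your last lines: the statement ``scaling $\zeta$ by constants produces arbitrarily large pairings'' is correct precisely because of the homogeneity observation \eqref{hom} in the paper — since $\langle\jacbd(u_\eps)-\jacbd(v_\eps),1\rangle\neq 0$, the $\|\cdot\|=\|\cdot\|_{(\mathrm{Lip}(\Omega))^*}$ norm is automatically $+\infty$, which is also how the paper concludes. Your normal-coordinate bookkeeping is fine for a disk: the tubular neighbourhood is smooth, the coordinate Jacobian is $1+O(\eps)$ on the support (more than enough for the $L^2$ estimates), and exactly $1$ on $\dOm$ where it matters for the boundary integral.
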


\proof{} Set $P_\pm=(0, \pm \frac 1 2)\in \R^2$ and $B^2$ be the unit disk of $\R^2$. Note that $P_-\in \partial \Omega$.

\medskip

\noindent {\bf Step 1}. {\it Construction of a function $U:B^2\to \overline{B^2}$}. First, we set 
$$G=B^2\setminus \bigg(\overline{B_{1/4}(P_-)}\cup \overline{B_{1/4}(P_+)}\bigg)$$ and we define $U:\partial G\to \Ss^1$ as follows: $U=1$ on $\partial B^2$, $U(P_\pm+\frac14e^{i\theta})=e^{\pm i\theta}$ for $\theta\in [0, 2\pi)$. Note that the topological degree of this smooth boundary data $U$ over $\partial G$ is zero. Therefore, we can smoothly 
extend $U:\bar G\to \Ss^1$ to the closure of the set $G$ (see  Bethuel-Brezis-H\'elein \cite{BethuelBrezisHelein:1994a} or Struwe \cite{Struwe:1994a}). 
Finally, we extend $U$ to the whole disk $B^2$ by setting $U(P_\pm+re^{i\theta})=4re^{\pm i\theta}$ for $0\leq r\le \frac14$ and $\theta\in [0, 2\pi)$. Then $U$ is continuous, $U\in H^1(B^2; \R^2)$ and $U$ has degree $\pm 1$ on the circles $\partial B_{\frac14}(P_\pm)$.

\medskip

\noindent {\bf Step 2}. {\it Construction of $u_\eps$ and $v_\eps$ on $\Om$.} For every $0<\eps<\frac14$,
set $v_\eps\equiv 1$ in $\Om$ and $u_\eps:\Om\to {\overline{B^2}}$ is defined as follows: 
$u_\eps(x)=U(\frac{x-P_-}{\eps})$ in $B_\eps(P_-)\cap \Om$ 
and $u_\eps=1$ in $\Om\setminus B_\eps(P_-)$.  In other words, $u_\eps$ has one interior vortex point going to the boundary $\dOm$ as $\eps\to 0$.
Then
\[
\int_{\Om}\big(|\nabla u_\eps|^2+|\nabla v_\eps|^2\big)\,dx \leq \int_{B^2} |\nabla U|^2\,dx.
\]
Since $\|u_\eps\|_{L^\infty}=\|v_\eps\|_{L^\infty}=1$ and
$\mathcal{L}^2(\{u_\eps\neq v_\eps\}) \le \pi\eps^2$, we deduce that  
\[
\int_{\Om}|u_\eps-v_\eps|^2\,dx \le C\eps^2;
\]
thus, \eqref{cond_control} holds in $\Om$ as $\eps\to 0$. We finally calculate the boundary Jacobians in the disk 
$\Om$ with boundary $\partial \Om$.
Clearly $\jacbd(v_\eps)=0$ and by Proposition \ref{pro:jacbd},
\[
\left< \jacbd(u_\eps), 1\right> = -\int_{\partial \Omega} u_\eps \times \partial_\tau u_\eps\,  d\h^1. 
\]
For $\eps$ sufficiently small, we have $|u_\eps|=1$ on $\partial \Om$, so we obtain 
\[
\left< \jacbd(u_\eps), 1\right> = -2\pi  \degr (u_\eps; \partial\Om)   = 2\pi,
\]
which clearly does not tend to zero as $\eps\to 0$. In particular, $\|\jacbd(u_\eps)-\jacbd(v_\eps)\|_{(W^{1, \infty}(\Om))^*}\nrightarrow 0$ as $\eps\to 0$ (note that 
$\|\jacbd(u_\eps)-\jacbd(v_\eps)\|_{(W^{1, \infty}(\Om))^*}<\infty$ by \eqref{weak_jac_stab}),
while $\|\jacbd(u_\eps)-\jacbd(v_\eps)\|=\infty$ { due to \eqref{hom}}. \qed

\section{Approximation by $\Ss^1$-valued  maps}
\label{sec:approx}
In this section we show that maps $u:\Om\to\R^2$ with energy  of order  $E_{\eps, \eta}(u)\leq C |\log \eps|$ can be approximated by suitable $\Ss^1$-valued maps $U:\Om\to\Ss^1$ in the regime $|\log\eps|\ll |\log\eta|$. The approximation is realised such that $u$ and $U$ are close energetically, and also in $L^2(\Om)$ and in $L^2(\partial\Om)$, and such that their global Jacobians
are close to each other. This is an essential step in the reduction of our model to the study of a simpler problem for $\Ss^1$-valued maps. Our result is based on some ideas introduced by Ignat-Otto \cite{IgnatOtto:2011a} (see also C\^ote-Ignat-Miot \cite{Cote:2014aa}) where the approximation argument was done locally; here the improvement consists in developing a global analysis of the configurations $u$, in particular at the boundary $\partial \Omega$.
  
\medskip

\noindent {\bf Notation}: If $G\subset \Omega$ and $u:\Omega\to \R^2$, we denote
$$\eee(u; G)=\int_G \big(|\nabla u|^2 + \frac{1}{\eta^2} (1-|u|^2)^2\big) \, dx + \frac{1}{2\pi {\eps}} \int_{\bar G\cap \partial\Omega} (u\cdot \nu)^2 \, d\h^1.$$

\begin{thm}
\label{lem_approx}
Let $\beta\in (\frac12,1)$ , $C>0$ and $\Om\subset \R^2$ be a simply connected $C^{1,1}$ bounded domain. 
We consider the {sequence / family}
$\e>0$ and $\eta=\eta(\e)>0$ satisfying $\eta(\eps)\to 0$ and $|\log\e|\ll |\log\eta|$ as $\eps\to 0$. Then there exist $\eps_0, c_0, \tilde C>0$ depending only on $\beta, C$ and $\Omega$ and { $0<\tilde \beta<\frac{1-\beta}6$} such that for  $\eps\leq \eps_0$ {in the sequence / family} and
every $u=u_\eps:\Om\to \RR^2$ 
with $\eee(u)\le C|\log\eps|$, 
we can construct a unit-length map $U=U_\eps:\Om\to \Ss^1$ 
such that \footnote{ In the following, $\lesssim$ denotes an upper bound with a constant depending only on $\beta, C$ and $\Omega$.}
 \be
\label{cond-approx}  \int_{\Om} |U-u|^2\, dx\lesssim
\eta^{2\beta} \eee(u), \quad \int_{\Om} (|\nabla U|^2 +|\nabla u|^2)\, dx\lesssim\eee(u),\ee
\be \label{aprox_bdry} \int_{\partial \Om} |U -u|^2\, d\h^1\lesssim
\eta^{\beta} \eee(u)\ee
and 
\be
\label{global_est}
\eee(U)\leq E_{ \eps, c_0\eta}(u) +\tilde{C}\eta^{ \tilde \beta }\bigg(E_{ \eps, c_0\eta}(u) +\sqrt{E_{ \eps, c_0\eta}(u)}\bigg).
\ee
As consequence, 
{for every $p\geq 1$, $\|U-u\|_{L^p(\dOm)}\to 0$, $\|U-u\|_{L^p(\Om)}\to 0$ as $\eps\to 0$, }
$$\|\jac(u)\|_{(W_0^{1,\infty})^*(\Om)}\lesssim \eta^\beta  \eee(u)\quad \textrm{ and }\quad
\|\jaco(U )-\jaco(u)\|_{(\mathrm{Lip}(\Omega))^*}\lesssim \sqrt{\eta^\beta \eee(u)}.$$
The map $U$ also satisfies the following local estimates: for any open set $G\subset\Omega$ { independent of $\eps$},
there exists a constant $\tilde{C}_G>0$ such that
\be
\label{eq:localapprox}
\eee(U; G_\eta)\leq E_{ \eps, c_0\eta}(u; G) +\tilde{C}_G\eta^{ \tilde \beta }\bigg(E_{ \eps, c_0\eta}(u; G) +\sqrt{E_{ \eps, c_0\eta}(u; G)}\bigg)
\ee
where \[
G_\eta=\{ x \in G : \dist(x, { \partial G\cap \Omega}) >3\eta^\beta\}.
\]
 
 \end{thm}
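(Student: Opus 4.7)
The plan is to project $u$ onto $\Ss^1$ outside a small ``bad set'' where $|u|$ is far from $1$, and to replace $u$ inside that set by a unit-length extension on carefully chosen disks, while keeping $U$ close to $u$ in $L^2(\Om)$, in $L^2(\dOm)$ and in energy. The Ginzburg-Landau penalty term in $\eee(u)$ already controls $\int_\Om(1-|u|^2)^2\leq \eta^2\eee(u)$, so Chebyshev will make the bad set quantitatively small; once $L^2$-closeness is established, the stability of the global Jacobian (Theorem~\ref{thm:teorem}) applied to the pair $(u,U)$ with $|U|=1$ yields the Jacobian estimate immediately. I would first replace $u$ by its composition with the radial retraction onto the closed unit disk, which is $1$-Lipschitz and decreases each of the three terms of $\eee(u)$, so henceforth $|u|\leq 1$. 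I then introduce
\[
B = \{x\in \Om \,:\, 1-|u(x)|^2\geq \eta^\beta\},
\]
which satisfies $|B|\lesssim \eta^{2(1-\beta)}\eee(u)\to 0$ by Chebyshev. A Fubini / co-area argument in the parameter $\rho\in(\eta^\beta,2\eta^\beta)$ applied to the distance-to-$B$ function (respecting also $\dOm$ and, for \eqref{eq:localapprox}, the set $\partial G\cap \Om$) selects a level $\rho_\eps$ on which $|u|\geq \tfrac12$ and on which the tangential energy density is controlled by $\eee(u)/\eta^\beta$. A Vitali/Besicovitch argument then covers the thickened bad set $N_{\rho_\eps}(B)$ by finitely many, boundedly overlapping disks $(D_j)_j$ of radii $r_j\lesssim \eta^\beta$ with $|u|\geq 1/2$ on $\partial D_j\cap \overline\Om$.

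\textbf{Construction and estimates.} On the good set $\Om_g:=\Om\setminus\bigcup_j D_j$ I set $U:=u/|u|$, which is $\Ss^1$-valued and satisfies the classical pointwise identity
\[
|u|^2|\nabla U|^2 = |\nabla u|^2-|\nabla |u||^2\leq |\nabla u|^2,
\]
hence $|\nabla U|^2\leq (1+C\eta^\beta)|\nabla u|^2$ on $\Om_g$. Inside each disk $D_j$ (replaced by $D_j\cap\Om$ when $D_j$ crosses $\dOm$), the trace $g_j=u/|u||_{\partial D_j}:\partial D_j\to\Ss^1$ is $H^{1/2}$-regular with $\|g_j\|_{\dot H^{1/2}}^2\lesssim \eee(u;D_j\cup N_{\rho_\eps}(D_j))$; granting $\degr(g_j;\partial D_j)=0$ (see below), I lift $g_j=e^{i\psi_j}$ on $\partial D_j$ and define $U:=e^{i\Psi_j}$ in $D_j$, with $\Psi_j$ the harmonic extension of $\psi_j$. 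This yields $\int_{D_j}|\nabla U|^2\lesssim \eee(u;D_j\cup N_{\rho_\eps}(D_j))$. The bound \eqref{cond-approx} then follows from $\int_{\Om_g}|U-u|^2\leq \int_{\Om_g}(1-|u|)^2\lesssim\eta^{2\beta}|\Om|$ combined with $\int_{\bigcup D_j}|U-u|^2\lesssim \sum r_j^2\cdot|B|^{0}\lesssim \eta^{2\beta}\eee(u)$; the energy bound \eqref{global_est} combines the gradient estimates on $\Om_g$ and on each $D_j$, with the shift $\eta\leadsto c_0\eta$ absorbing the change of potential scale; \eqref{aprox_bdry} follows from an analogous one-dimensional Chebyshev argument on $\dOm$ together with the anchoring term controlling $u\cdot\nu$; and \eqref{eq:localapprox} is obtained by ensuring the Vitali cover respects $\partial G\cap \Om$, which confines any disk touching $G\setminus G_\eta$ to the $3\eta^\beta$-collar. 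Finally, Theorem~\ref{thm:teorem} with $t:=\|U-u\|_{L^2}(\|\nabla U\|_{L^2}+\|\nabla u\|_{L^2})\lesssim \eta^\beta\eee(u)$ gives $\|\jaco(U)-\jaco(u)\|\leq t+C\sqrt t\lesssim \sqrt{\eta^\beta\eee(u)}$, and the interior/boundary $L^p$-convergence follows by interpolation between $L^2$-closeness and the $L^\infty$-bound $|U-u|\leq 2$.

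\textbf{Main obstacle.} The delicate step is the $\Ss^1$-valued extension inside the bad disks: it requires $\degr(g_j;\partial D_j)=0$ so that a lifting of $g_j$ exists. In the regime $|\log\eps|\ll|\log\eta|$ this is ensured by a standard vortex lower bound: on an annular region $D_j\setminus D_{\eta^{1/2}}(x_j)$ where $|u|\geq 1/2$, a non-trivial degree of $g_j$ would force a Dirichlet contribution of at least $\pi(1-\beta)|\log\eta|+O(1)$, contradicting $\eee(u)\leq C|\log\eps|\ll|\log\eta|$; this rules out all non-trivial-degree disks for $\eps$ small enough. A secondary difficulty is the treatment of disks meeting $\dOm$: one must combine the $H^{1/2}$-extension with the anchoring term, so the co-area level $\rho_\eps$ must be selected simultaneously relative to $\partial B$, $\dOm$ and $\partial G\cap \Om$, and the resulting loss must be absorbed in the small exponent $\tilde\beta\in(0,(1-\beta)/6)$ appearing in \eqref{global_est}, which reflects the cumulative loss from co-area selection, Cauchy-Schwarz in the $\dot H^{1/2}$-trace estimate, and balancing the potential with the Dirichlet parts near $\dOm$.
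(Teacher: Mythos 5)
Your construction (projecting $u/|u|$ away from a small bad set and filling bad disks by a lifted harmonic extension) is genuinely different from the paper's. The paper never singles out bad disks: it tiles (a conformal image of) $\Om$ by a grid of cells of side $\sim\eta^\beta$, replaces $u$ on each cell by the interior Ginzburg--Landau minimiser $w$ with boundary data $u\big|_{\partial{\cal C}}$, and then invokes a uniform estimate from \cite{IKL19} (Proposition~\ref{thm_main_GL}) guaranteeing $\big||w|^2-1\big|\lesssim\eta^{\tilde\beta}$ on every cell under the mild energy bound $\kappa\ll|\log\eta|$. This makes $U=w/|w|$ globally well defined without any topological argument, and it yields the sharper $\eta^{\tilde\beta}$-error in \eqref{global_est} directly. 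Your route is closer to the classical bad-disk programme and, if executed carefully, could give a comparable result, but it has several genuine gaps as written.

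First, for the $L^2(\Om)$ estimate in \eqref{cond-approx} you need $\sum_j|D_j|\lesssim\eta^{2\beta}\eee(u)$. Chebyshev gives $|B|\lesssim\eta^{2-2\beta}\eee(u)$; since $\beta>\tfrac12$, one has $2-2\beta<2\beta$, so $\eta^{2-2\beta}\gg\eta^{2\beta}$ as $\eta\to 0$ and this bound is \emph{weaker} than what is needed. The mysterious factor $|B|^0$ in your estimate hides a missing argument: you must separately bound the number (or total area) of Vitali disks, which is not implied by $|B|$ being small. Second, the degree argument is flawed: with $\beta>\tfrac12$ one has $\eta^{1/2}>\eta^\beta$, so the ``annulus'' $D_j\setminus D_{\eta^{1/2}}(x_j)$ is empty; even after fixing the inner radius (to $\sim\eta$, say), the lower bound $\pi(1-\beta)|\log\eta|$ for a nonzero-degree map on the annulus requires $|u|$ to stay bounded away from $0$ on \emph{all} intermediate circles, not only on $\partial D_j$, so this would need a Sandier--Serfaty type ball construction or a careful Fubini/coarea selection of circles; this is precisely the machinery the paper's grid approach avoids. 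Third, the boundary case is glossed over: when $D_j$ meets $\dOm$ there is no well-defined degree on the half-disk boundary, the harmonic extension must respect the anchoring penalty (not just the Dirichlet energy), and the $L^2(\dOm)$ bound \eqref{aprox_bdry} requires controlling the total length of $\dOm$ covered by bad disks — none of which is addressed. These are exactly the places where the paper's cell-by-cell minimisation and the uniform $|w|\approx 1$ estimate do quiet work that a bad-disk decomposition would have to redo by hand.
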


\proof{} We start by proving the result in the case of the unit disk $\Om=B^2$ and then we treat the general case of a simply connected $C^{1,1}$ domain $\Omega$.

\medskip

\noindent {\bf Step 1}. {\it Construction of a polar squared grid $\cal R$ in $B^2$.} We use the polar coordinates $(r,\theta)\in(0,1)\times [0,2\pi)$ corresponding to $x=(x_1,x_2)\in B^2$. For each (radial) shift
$R\in (0, \eta^{\beta})$, write
$$V_R:=\{x\in B^2\, :\, r=|x|\in (\eta^\beta, 1), \, r\equiv R \,\, (\hspace{-0.3cm} \mod
\eta^{\beta})\}$$ for the net of concentric circles at a distance
$\eta^{\beta}$ in $B^2$. By the mean value theorem, 
there exists $R\in (0,\eta^\beta)$
such that $$\int_{V_R} \den(u)\, d\h^1\leq \frac{1}{\eta^\beta} \int_{B^2} \den(u)\,
dx,$$
where $\den(\cdot)$ is the Ginzburg-Landau energy density:
\be
\label{ginlan}
\den(u) = |\nabla u|^2 + \frac1{\eta^2}(1-|u|^2)^2.
\ee 
If one repeats the above argument for the net of radii
lines at an angular distance $\eta^{\beta}$ in $B^2$, we obtain for some angle $\Theta\in (0, \eta^\beta)$
a net  $$\tilde V_\Theta:=\{x\in B^2\, :\, \theta=\arg x\in (\eta^\beta, 2\pi), \, \theta\equiv \Theta \,\, (\hspace{-0.3cm}\mod
\eta^{\beta})\}$$ such that
$$\int_{\tilde V_\Theta} { \den(u(x))\, d\h^1(x)}=\int_{\{r<1, \theta\equiv \Theta\}} {\den(u(r, \theta))\, rdr}\leq \frac{1}{\eta^\beta} \int_{B^2} \den(u)\,
dx.$$ 
Therefore, we obtain a polar squared grid
${\cal R}=V_R \cup \tilde V_\Theta$ of size {at most} $\eta^\beta$ such that \be
\label{condR}
 \int_{{\cal R}} \den(u)\,
d\h^1 \leq \frac{2}{\eta^\beta} \int_{B^2} \den(u)\,
dx\lesssim  \frac{\eee(u)}{\eta^\beta}.
\ee
We regroup the cells of $\cal R$ in order that each new cell has approximatively the same area of order $\sim\eta^{2\beta}$: the first new cell has the interior given by the disk $B(0,R+\eta^\beta)$ (by regrouping all the sectors of ${\cal R}$ of radius less than $R+\eta^\beta$ and containing the origin). Then for each annulus of $\cal R$ of the form $B(0,R+(k+1)\eta^\beta)\setminus B(0,R+k\eta^\beta)$ with  $k\geq 1$, we regroup the neighbouring cells of the angular sectors $(\Theta+j\eta^\beta, \Theta+(j+1)\eta^\beta)$ ($j\geq 1$) so that the length of the angular arc gets of order $\sim \eta^\beta$ and their area become of order $\sim \eta^{2\beta}$. (In the annuli close to the origin, many cells are regrouped, while in the annuli far away from the origin, no regrouping is needed). Therefore, from now on, we can assume that all cells of $\cal R$ (excepting the first one $B(0,R+\eta^\beta)$) are rather identical (all the four sides of the cell having the length of order $\sim\eta^\beta$). 
For any cell ${\cal C}\subset {\cal R}$ (which is one dimensional as a union of straight and circular segments) 
we denote by 
 $\inte({\cal C})$ the $2D$ region bounded by $\cal C$ and let
 $$\inte({\cal R})=\cup_{\cal C\subset \cal R} \inte(\cal C).$$ 
Therefore, we have that the closure $\overline{\inte({\cal R})}$ of $\inte({\cal R})$
is a disk strictly included in $B^2$ at a distance less than $\eta^\beta$ from the boundary $\partial B^2$.
{The cells we have constructed all satisfy uniform conditions on their geometry so we can apply Proposition~\ref{thm_main_GL} with uniform constants.}

\medskip

\noindent {\bf Step { 2}}. {\it An approximating $\Ss^1$-valued map $\hat U$ for $u$ inside $\inte({\cal R})$}. 
In the interior $\inte({\cal C})$ of a polar squared cell ${\cal C}$ of $\cal R$ having each side of length $\sim\eta^\beta$, we
define
$w=w_\eps \in H^1(\inte({\cal C}),\R^2)$ (depending on $\eps$ through $\eta=\eta(\eps)$) be  a minimiser of 
\be
\label{min_u_gl}
\min_{w=u\, \textrm{on}\, {\cal C}} \, \,  \int_{\inte({\cal C})} \den(w)\, dx.\ee Putting together all the cells, $w$ is now defined in the whole $\inte(\cal R)$. 
We apply Proposition \ref{thm_main_GL} below (for $\kappa:=C|\log \eps|\ll |\log \eta|$): Since \eqref{condR} holds (in particular, \eqref{asum1} holds for $\den$ on the domain ${\cal D}_\eta$ and $\partial {\cal D}_\eta$), we have the existence of $0<\tilde \beta<\frac{1-\beta}6$ such that
\be
\label{delta}
\sup_{\inte(\cal R)}\||w|^2-1\|\lesssim\eta^{\tilde \beta}=:\delta\ll 1.
\ee
In particular, $|u|\geq 1/2$ on $\cal R$ and $u$ has vanishing degree on each cell, i.e., $\degr(u, {\cal C})=0$. 
The same conclusion holds for the central cell of interior $B(0,R+\eta^\beta)$. Therefore, we can define
$$\hat U:=\frac{w}{|w|} \quad \textrm{in} \quad \inte({\cal R}).$$
Then $|w|^2 |\nabla \hat  U|^2\leq |\nabla w|^2$ and we deduce for small $\eps>0$:
\begin{align}
\nonumber
\int_{\inte(\cal R)} |\nabla \hat U|^2\, dx  \stackrel{\eqref{delta}}{\leq} (1+2\delta)&\int_{\inte(\cal R)} |\nabla w|^2\, dx \leq (1+2\delta)\int_{\inte(\cal R)} \den(w)\, dx \\
\label{esti_harmon} &\stackrel{\eqref{min_u_gl}}{\leq} (1+2\delta)\int_{\inte(\cal R)} \den(u)\, dx \leq (1+2\delta) \int_{B^2} \den(u)\, dx.
\end{align}
For the local estimates inside a set $G\subset B^2$, we set $\hat {\cal R} $ be the union of cells
$\cal C\subset {\cal R}$ such that $\inte ({\cal C})\subset G$ and by the same notation as above, we call
$\inte({\hat{\cal R}})=\cup_{\cal C\subset \hat{\cal R}} \inte(\cal C)$. Then we have 
$\inte(\hat{\cal R})\subset G$ and
we conclude as above 
\[
\int_{\inte({\hat{\cal R}})} |\nabla \hat U|^2\, dx  \leq (1+2\delta)\int_{\inte(\hat{\cal R})} |\nabla w|^2\, dx \leq (1+2\delta) 
\int_G  \den(u)\, dx.
\]

\medskip

\noindent {\bf Step { 3}}. {\it Our approximating $\Ss^1$-valued map $U$ of $u$ in $B^2$}. 
We have defined $\hat U$ in $\inte({\cal R})\subset \subset B^2$. However, 
we have that 
 $B^2=(1+O(\eta^\beta)) \inte({\cal R}).$ For simplicity of notation, we assume  in the following that 
 \be
\label{conventi}
 \overline{B^2}=(1+\eta^\beta) \overline{\inte({\cal R})} \quad \textrm{and} \quad 
 U(\tilde x):= \hat U(x), \, \,  \tilde x=(1+\eta^\beta)x   \, \, \textrm{ for every } x\in \inte({\cal R}) \ee
and our goal is to prove that $U:B^2\to \Ss^1$ is indeed the desired approximating map of the given $u$. 
We also set $\tilde u:(1+\eta^\beta)B^2\to \R^2$ by $\tilde u(\tilde x)=u(x)$ for every $x\in B^2$. 

\medskip

\noindent {\bf Step 4}. {\it Estimate the $L^2$-norm of gradients.}
We have that
\[
\int_{B^2 } |\nabla U|^2 \, d\tilde x = \int_{\inte({\cal R})} |\nabla \hat U|^2\, dx\stackrel{\eqref{esti_harmon}}{\leq} (1+2\delta)\int_{B^2} \den(u)\, dx.
\]
Combined with $\displaystyle \|\nabla u\|_{L^2(\Omega)}^2 \leq \int_{B^2} \den(u)\, dx\leq \eee(u)$, 
the second estimate in
\eqref{cond-approx} follows.
For the local estimate, {we have
 $G_\eta \subset (1+\eta^\beta) \inte(\hat{\cal R})\subset B^2$ by convention \eqref{conventi} } so that by Step 2 it follows
 $$\|\nabla U\|^2_{L^2(G_\eta)}\leq \|\nabla \hat U\|^2_{L^2(\inte(\hat{\cal R}))} \leq (1+2\delta) 
\int_G  \den(u)\, dx.$$ 
 
\noindent {\bf Step 5}. {\it Estimate $\|\hat U-u\|_{L^2(\inte{\cal R})}$.}
By
Poincar\'e's inequality, we have for each cell ${\cal C}\subset {\cal R}$: \be \label{eve1} \int_{ \inte({\cal C})}
\bigg|{ \hat U}-\int_{{\cal C}}\hspace{-3.7mm}- \,\,\,\,\,{ \hat U}
\bigg|^2\, dx\lesssim \eta^{2\beta} \int_{ \inte({\cal C})} |\nabla
{\hat U}|^2\, dx 
\ee 
and \be
\label{eve2} \int_{ \inte({\cal C})} \bigg|u-\int_{{\cal
C}}\hspace{-3.7mm}- \,\,\,\,\,u \bigg|^2\, dx\lesssim
\eta^{2\beta} \int_{ \inte({\cal C})} |\nabla u|^2\, dx, \ee where $\int_{{\cal
C}}\hspace{-3.9mm}-=\frac1{\h^1(\cal C)}\int_{\cal C}$ is the average on the cell $\cal C$. 
As $\rho:=|u|\geq \frac12$ on $\cal R$, we can set $v=\frac{u}\rho$ on ${\cal R}$ with $|v|=1$; therefore, we have $v={\hat U}$ on ${\cal R}$ and by Jensen's inequality, we
estimate
\begin{align}
\label{eve3}\int_{ \inte({\cal C})} \bigg|\int_{ {\cal
C}}\hspace{-3.7mm}- \,\,\,\,\,(\hat U-u) \, d\h^1\bigg|^2\, dx&=\int_{\inte({\cal C})} 
\bigg|\int_{ {\cal C}}\hspace{-3.7mm}-
\,\,\,\,\,(v-\rho v) \, d\h^1 \bigg|^2\, dx\\
\nonumber&\lesssim \eta^{2\beta} \int_{ {\cal
C}}\hspace{-3.7mm}-
\,\,\,\,\,(1-\rho)^2\, d\h^1\lesssim \eta^{\beta} \int_{{\cal C}}(1-\rho^2){^2}\, d\h^1
\lesssim   \eta^{\beta+2} \int_{{\cal C}}
\den(u)\, d\h^1.
\end{align}
Summing \eqref{eve1}, \eqref{eve2} and \eqref{eve3} over all
the cells ${\cal C}$ of the grid ${\cal R}$, by \eqref{condR} and \eqref{esti_harmon}, we
obtain that $$ \int_{\inte({\cal R})} |\hat U-u|^2\, dx\lesssim
\eta^{2\beta}\int_{B^2} \den(u)\, dx. $$

\noindent{\bf Step 6}. {\it The $L^2$-estimate of $U-u$ in $B^2$}.
From \eqref{conventi} and the previous step, we clearly have that 
\[
\| U-\tilde u\|^2_{L^2(B^2)}\lesssim \eta^{2\beta}\int_{B^2} \den(u)\, dx. 
\]
Hence it remains to show that the $L^2$ norm of $u-\tilde u$ satisfies the same estimate.  
We compute
\[
\int_{B^2} |u(x)-\tilde u(x)|^2 \, dx = \int_{B^2} |u(x) - u( \frac{x}{1+\eta^\beta}) |^2 \, dx .
\]
We set $\lambda(t) = (1-t)+\frac{t}{1+\eta^\beta}$ for $t\in [0,1]$. Then $\frac{1}{1+\eta^\beta}\le \lambda(t) \le 1$,
$|\lambda'(t)| = 1-\frac1{1+\eta^\beta}=O(\eta^\beta)$ and 
\[
|u(x)-\tilde u(x)| = \left|\int_0^1 \lambda'(t) x\cdot \nabla u (\lambda(t) x) \, dt\right| 
\] 
so integrating on $B^2$, we obtain
\[
\int_{B^2} |u(x)-\tilde u(x)|^2 \, dx \lesssim \eta^{2\beta}\int_0^1 \int_{B^2} |x\cdot \nabla u(\lambda(t) x)|{^2} \, dt \, dx. 
\]
Changing variables $y=\lambda(t) x$ and using Fubini, we see that 
\[
\int_{B^2} |u(x)-\tilde u(x)|^2 \, dx \lesssim \eta^{2\beta}\int_0^1 \frac1{|\lambda(t)|{^4}}\int_{B^2} |y\cdot \nabla u(y)|{^2} \, dy \, dt\lesssim \eta^{2\beta} \int_{B^2} \den(u)\, dx
\] 
as claimed. This proves the first inequality in \eqref{cond-approx}. 

\medskip

\noindent {\bf Step 7}. {\it The $L^2$-estimate of $U-u$ at the boundary $\partial B^2$ and $\overline{G_\eta}\cap \partial B^2$.}
Let $R_0\in (0,1)$ be the largest radius such that $\partial B(0, R_0)\subset {\cal R}$.
By the convention \eqref{conventi}, we have chosen 
$${ R_0=\frac1{1+\eta^\beta}}$$ and we have defined $U$ in terms of $\hat U$. 
Since $v=\hat U$, $|v|=1$ and $u=\rho v$ on ${\cal R}$, we have
$$
\int_{\partial B(0, R_0) } |\hat U-u|^2\, d\h^1=\int_{\partial B(0, R_0) } (1-\rho)^2\, d\h^1
\leq \eta^{2} \int_{\cal R}\den(u)\, d\h^1
\stackrel{\eqref{condR}}{\lesssim} \eta^{2-\beta} \int_{B^2} \den(u)\, dx,
$$
\begin{align*}
\int_{\partial B(0, R_0) } |u(x)-u(\frac{x}{R_0})|^2\, d\h^1(x)&\stackrel{\eqref{conventi}}{=}R_0\int_{\partial B^2 } |u(R_0\tilde x)-u(\tilde{x})|^2\, d\h^1(\tilde x)\\
&\leq R_0
\int_0^{2\pi} \bigg(\int_{R_0}^1 |\partial_r u|(re^{i\theta})\, dr\bigg)^2\, d\theta\\
&\lesssim \eta^\beta \int_{B^2\setminus B(0,R_0)} |\nabla u|^2\, dx\lesssim  \eta^{\beta}\int_{B^2} \den(u)\, dx.
 \end{align*}
Combining these inequalities, we conclude
$$\int_{\partial B^2 } |u(\tilde x)- U(\tilde x)|^2\, d\h^1(\tilde x) = { (1+\eta^\beta)} \int_{\partial B(0, R_0) } |u(\frac{x}{R_0})-{\hat{U}(x)}|^2\, d\h^1(x) \lesssim \eta^{\beta}\int_{B^2} \den(u)\, dx.
$$
For the local estimate at the boundary $\overline{G_\eta}\cap \partial B^2$,  we have as before for $\eps$ small: $$\int_{G_\eta\cap \partial B(0, R_0) } |u(x)-u(\frac{x}{R_0})|^2\, d\h^1(x) \lesssim \eta^\beta \int_{G} |\nabla u|^2\, dx$$ because by the definition of $G_\eta$, we know that for every $x\in G_\eta\cap \partial B(0, R_0)$, the open segment $(x, \frac{x}{R_0})\subset G$. It remains to prove that 
$$\int_{G_\eta\cap \partial B(0, R_0) } |\hat U-u|^2\, d\h^1=\int_{G_\eta\cap \partial B(0, R_0) } (1-\rho)^2\, d\h^1 \lesssim \eta \int_{G}\den(u)\, dx.$$ Indeed, we consider the covering
$G_\eta\cap \partial B(0, R_0)\subset \cup {\cal C}\subset G$ and for each cell $\cal C$ we consider the function $w$ constructed at Step 2. For simplicity of notation, we write such a cell $\cal C$ to be the sector $(R_0-\eta^\beta, R_0)\times (\Theta, \Theta+\eta^\beta)$ in the polar coordinates. By averaging in the radial coordinates, one can find an arc ${\cal L}_*=\{r_*\}\times (\Theta, \Theta+\eta^\beta)$ with $r_*\in (R_0-\eta^\beta,R_0)$ such that   
\be
\label{eq43}
\int_{{\cal L}_*} (1-|w|)^2\, d\h^1\leq \frac1{\eta^\beta} \int_{\inte({\cal C})} (1-|w|)^2\, dx\leq \eta^{2-\beta} \int_{\inte({\cal C})}\den(w)\, dx.\ee
Then 
\begin{align*}
 \int_{{\cal C}\cap \partial B(0, R_0)} (1-|w|)^2\, d\h^1&\leq  \int_{{\cal L}_*} (1-|w|)^2\, d\h^1
 +2\int_{\inte({\cal C})} (1-|w|)|\partial_r w|\, dx\\
 &\stackrel{\eqref{eq43}}{\lesssim} \eta \int_{\inte({\cal C})}\den(w)\, dx \lesssim\eta \int_{\inte({\cal C})}\den(u)\, dx.
 \end{align*}
Summing up over cells $\cal C$ covering $G_\eta\cap \partial B(0, R_0)$, we conclude that
$$\int_{\overline{G_\eta}\cap \partial B^2 } |u(\tilde x)- U(\tilde x)|^2\, d\h^1\lesssim \eta^\beta \int_{G}\den(u)\, dx.$$

\medskip

 \noindent {\bf Step 8}. {\it Estimate of the global / interior Jacobian and $L^p$-estimates of $U-u$ in $\Om$ and $\dOm$}. The estimates of the global / interior are consequences of \eqref{cond-approx}, Theorem \ref{thm:teorem} and Proposition \ref{pro:jac_int}. For the $L^p$-estimates of $U-u$ in $\Om$, we use \eqref{cond-approx}, the Gagliardo-Nirenberg interpolation inequality for $p>2$ \footnote{For every $p\in (2, \infty)$, there exists $C>0$ such that 
 $\|f\|_{L^p}\leq C (\|\nabla f\|_{L^2}+\|f\|_{L^2})^{1-2/p} \|f\|_{L^2}^{2/p}$ for every $f\in H^1(\Om)$.} or simply, the H\"older inequality for $p\leq 2$ as well as $\eta\leq \eps$ (due to the regime \eqref{regim_1}), in particular, $\eta^\sigma |\log \eps|\to 0$ as $\eps\to 0$ for every $\sigma>0$. The same argument applies for the estimate $L^p(\dOm)$ of $U-u$.

\medskip

\noindent {\bf Step 9}. {\it Estimate of the energy of $U$ in $B^2$ and $G_\eta$}. Using that $a^2\leq b^2+2|a-b|$ for every $a\in [-1,1]$ and $b\in \R$, then Step 7 and Cauchy-Schwarz yield
\begin{align}
\nonumber \frac1{2\pi \eps}\int_{\partial B^2} (U\cdot \nu)^2\, d\h^1&\leq \frac1{2\pi \eps}\int_{\partial B^2} (u\cdot \nu)^2\, d\h^1+\frac1{\pi \eps} \int_{\partial B^2} |(U-u)\cdot \nu|\, d\h^1\\
\label{c}&\leq  \frac1{2\pi \eps}\int_{\partial B^2} (u\cdot \nu)^2\, d\h^1+\frac{c}{\eps} \|U-u\|_{L^2(\partial B^2)}\\
\nonumber&\leq  \frac1{2\pi \eps}\int_{\partial B^2} (u\cdot \nu)^2\, d\h^1+\frac{c \eta^{\beta/2}}{\eps} \sqrt{\int_{B^2}\den(u)\, dx}, 
\end{align}
for some $c>0$. Since $|\log  \eps|\ll |\log  \eta|$, we can choose $\eps_0>0$ (depending on $\beta$) such that
$\frac{\eta^{\beta/2}}{\eps} \leq \delta$ for every $\eps\leq \eps_0$ where $\delta$ is defined in \eqref{delta}. 
(Here, the assumption 
$\beta>\frac12$ is essential.) 

By Step 4, we obtain
$$\eee(U)\leq \eee(u)+\tilde{C} \delta \bigg(\sqrt{\int_{B^2}\den(u)\, dx}+\int_{B^2}\den(u)\, dx\bigg),$$
for some constant $\tilde{C}>0$. The local estimate \eqref{eq:localapprox} (with $c_0=1$) follows by the same argument, the constant $c$ in \eqref{c} depending only on the length of $\partial G\cap \partial B^2$.

\medskip

\noindent {\bf Step 10}. {\it The general case of a simply connected $C^{1,1}$ domain $\Omega$}.
By the Kellogg-Warschawski theorem (see Pommerenke \cite[Theorem 3.5]{Pommerenke:1992aa}), there exists a conformal map $\Psi\in C^{1,\alpha}{ (\bar \Omega; \bar B^2)}$ that transforms $\Omega$ and $\partial \Omega$ in $B^2$ and $\partial B^2$ respectively, for every $\alpha\in(0,1)$. Since the Jacobian $\jac(\Psi)$ is bounded above and below by some positive constants, 
the corresponding energy on $B^2$ is bounded (above and below) by $E_{\tilde \eps, \tilde \eta}$ where $\eps\sim \tilde \eps$ and $\eta\sim \tilde \eta$. Therefore, \eqref{cond-approx} and \eqref{aprox_bdry} (as well as the estimates for the interior / global Jacobian) follow immediately because the prefactor in those inequalities is not essential. However, as the prefactor is essential for the global / local estimates \eqref{global_est} and \eqref{eq:localapprox}, we note that our argument in 
Steps 1-9 is based only on the control of the Ginzburg-Landau density 
$\den$ and therefore, the estimates \eqref{global_est} and \eqref{eq:localapprox} hold true by changing $\eta$ by $\tilde{\eta}=c_0\eta$.
\qed

\bigskip

In the previous proof, we used the following global uniform estimate for solutions of the standard Ginzburg-Landau equation in a cell, 
which was obtained in \cite{IKL19} (with slightly different notation, using $\eps$ instead of $\eta$). Let ${\cal D}\subset \R^2$ be a Lipschitz bounded domain. For a sequence / family $\eta\to 0$ and $\beta\in (0,1)$, we consider ${\cal D}_\eta:=\eta^\beta {\cal D}$ a cell of size $\eta^\beta$, $\tau$ be a unit tangent vector field a.e. on $\partial {\cal D}_\eta$ and  a boundary data $g_\eta\in H^1(\partial {\cal D}_\eta; \R^2)$. For every $u\in H^1({\cal D}_\eta;\R^2)$, we recall the Ginzburg-Landau energy density
$\den(u)$ defined in \eqref{ginlan}.

\begin{pro}[\cite{IKL19}, Corollary 2] \label{thm_main_GL}
For { a sequence / family} $\eta \to 0$, let $u_\eta \in H^1({\cal D}_\eta;\R^2)$ be a minimiser of 
$$\min_{u=g_\eta\, \, \partial {\cal D}_\eta} \int_{{\cal D}_\eta} \den(u)\, dx.$$
Let $\kappa=\kappa(\eta)\ll |\log \eta|$ as $\eta\to 0$. Assume that
\be
\label{asum1}
\int_{\partial {\cal D}_\eta} |\partial_\tau g_\eta|^2+\frac1{\eta^2}(1-|g_\eta|^2)^2\, d\Hh^1\leq \frac{\kappa}{\eta^\beta}\quad \textrm{and} \quad \int_{{\cal D}_\eta} \den(u_\eta)\, dx\leq \kappa.\ee
Then there exists $0<\tilde \beta<\frac{1-\beta}6$ such that for {the {members of the} sequence / family {with}} $0<\eta\leq \eta_0$,
$$\sup_{{\cal D}_\eta}\bigg|{ |u_\eta|^2}-1\bigg|\leq C \eta^{\tilde \beta},$$ 
where $C>0$ and $\eta_0>0$ depend only on the geometry of $\cal D$. 
In particular, $\degr(g_\eta; \partial {\cal D}_\eta)=0$.
\end{pro}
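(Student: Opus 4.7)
The plan is to reduce to a fixed-size domain by setting $v_\eta(y):=u_\eta(\eta^\beta y)$ for $y\in{\cal D}$. A direct scaling computation shows that $v_\eta$ minimises the Ginzburg-Landau energy on the unit-size Lipschitz domain ${\cal D}$ with effective parameter $\tilde\eta:=\eta^{1-\beta}$, and the hypotheses rescale to $\int_{{\cal D}}|\nabla v_\eta|^2+\tilde\eta^{-2}(1-|v_\eta|^2)^2\,dy\leq\kappa$ together with the analogous $H^1$-type bound on $\partial{\cal D}$ for the boundary datum $h_\eta(y):=g_\eta(\eta^\beta y)$. The crucial point is that $|\log\tilde\eta|=(1-\beta)|\log\eta|$, so the assumption $\kappa\ll|\log\eta|$ makes the rescaled problem strictly subcritical: $\kappa=o(|\log\tilde\eta|)$. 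As a first step, I would work on the one-dimensional boundary $\partial{\cal D}$: the $L^2$-norm of $\psi:=1-|h_\eta|^2$ is $O(\tilde\eta\sqrt{\kappa})$, and a 1D Gagliardo-Nirenberg inequality applied to $\psi$ -- after bootstrapping an a priori $L^\infty$-bound on $h_\eta$ from the $H^1$-control -- yields $\|1-|h_\eta|^2\|_{L^\infty(\partial{\cal D})}\leq C\tilde\eta^{1/2}\kappa^{O(1)}\leq\eta^{\alpha_1}$ for some $\alpha_1>0$, since powers of $\kappa$ are absorbed into $\eta^{o(1)}$.

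Next I would propagate this boundary control into the interior by ruling out vortex cores. Applying the Jerrard-Sandier vortex ball construction to the set $\{|v_\eta|\leq 1/2\}$, any disc of radius $r$ centred at a point where $|v_\eta|\leq 1/2$ contributes energy $\gtrsim |d|\log(r/\tilde\eta)$, where $d$ is its topological degree. The global bound $\kappa\ll|\log\tilde\eta|$ therefore forces (i) every such ball to have zero degree (otherwise it alone would spend energy of order $|\log\tilde\eta|$) and (ii) the total radius of the vortex set to be $\leq\tilde\eta^{1-o(1)}$. Together with the boundary estimate above, a standard $\eta$-compactness / clearing-out argument upgrades this information to $|v_\eta|\geq 1/2$ everywhere on ${\cal D}$. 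The assertion $\degr(g_\eta;\partial{\cal D}_\eta)=0$ is then immediate, since $v_\eta/|v_\eta|$ extends the boundary projection $h_\eta/|h_\eta|$ continuously to the simply connected cell.

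Finally, with $|v_\eta|\geq 1/2$ throughout ${\cal D}$, the Euler-Lagrange equation $-\Delta v_\eta=\tilde\eta^{-2}(1-|v_\eta|^2)v_\eta$ is elliptic and non-degenerate on $v_\eta$. I would then perform Moser iteration on the scalar equation satisfied by $w:=1-|v_\eta|^2$, combining the bulk $L^2$-estimate $\|w\|_{L^2({\cal D})}^2\leq\tilde\eta^2\kappa$ with the boundary $L^\infty$-bound from the first step, to obtain $\|w\|_{L^\infty({\cal D})}\leq C\tilde\eta^{\alpha_2}$ for some $\alpha_2>0$. Unscaling yields $\sup_{{\cal D}_\eta}\big||u_\eta|^2-1\big|\leq C\eta^{(1-\beta)\alpha_2}=:C\eta^{\tilde\beta}$. \emph{The main obstacle is making $\alpha_2$ sharp enough that any $\tilde\beta<(1-\beta)/6$ is admissible}: the iterative scheme for $w$ incurs an explicit fractional loss at each interpolation step between the $L^2$-control provided by the potential term and the $L^\infty$-gain afforded by elliptic regularity, and tracking the $\kappa$-dependence uniformly in the borderline regime $\kappa\sim|\log\eta|^{1-o(1)}$ is the delicate part of the argument.
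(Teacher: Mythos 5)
Before assessing the argument itself, note that this proposition is not actually proved in the present paper: the statement is imported verbatim from \cite{IKL19}, Corollary~2, and the authors use it as a black box inside the proof of Theorem~\ref{lem_approx}. So there is no ``paper's own proof'' to compare against; one can only assess the sketch on its own merits.

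On that score, the overall scaffolding (rescale to ${\cal D}$ with effective parameter $\tilde\eta=\eta^{1-\beta}$, use the subcriticality $\kappa\ll|\log\tilde\eta|$, control the modulus on the boundary first, exclude vortices, then bootstrap in the interior) is the right shape, but the two central steps have genuine gaps. First, the passage from ``no vortices'' to the pointwise bound $|v_\eta|\ge 1/2$ is not justified. The Jerrard--Sandier construction, together with $\kappa\ll|\log\tilde\eta|$, rules out balls carrying nonzero degree; but it places \emph{no constraint} on zero-degree components of the set $\{|v_\eta|\le 1/2\}$. The standard clearing-out lemma, used in a $\tilde\eta$-ball, only gives $\big||v_\eta|^2-1\big|\lesssim\big(\tfrac1{\tilde\eta^2}\int_{B_{C\tilde\eta}}(1-|v_\eta|^2)^2\big)^{1/4}$, and from the available global bound $\int_{\cal D}(1-|v_\eta|^2)^2\le\tilde\eta^2\kappa$ this is only $O(\kappa^{1/4})$, which is useless since $\kappa$ may diverge. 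To get a quantitative lower bound on the modulus one must exploit \emph{minimality}, e.g.\ via explicit energy competitors or a Pohozaev-type monotonicity, which is exactly what is delicate in \cite{IKL19} and is skipped here. Second, the Moser iteration on $w=1-|v_\eta|^2$ is driven by the source $2|\nabla v_\eta|^2$ in $-\Delta w + \tfrac{2|v_\eta|^2}{\tilde\eta^2}w = 2|\nabla v_\eta|^2$. The interior a priori bound $|\nabla v_\eta|\lesssim 1/\tilde\eta$ is fine, but the hypotheses only control $\partial_\tau g_\eta$ in $L^2$ with norm up to $\sqrt\kappa\to\infty$, so no uniform pointwise gradient bound is available near $\partial{\cal D}$, and the claimed iteration does not close there. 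These are the crux of the argument (and account for the specific exponent $(1-\beta)/6$ in the statement); the present sketch does not supply a route around them.
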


\section{Second order $\Gamma$-convergence in the case of $\Ss^1$-valued maps}
\label{sec:s1val}

In this section we start with the setting of $\Ss^1$-valued maps motivated by the previous section, and perform a $\Gamma$-development at second order of $\eee$ restricted to such $\Ss^1$-valued  maps. The main benefit is seen in the following lifting argument, which simplifies the analysis and geometry 
of the problem:

\begin{lem}\label{lem:lifting} 
Let $\Omega\subset \R^2$ be a bounded, simply connected and $C^{1,1}$ regular domain.
If $u\in H^1(\Om;\Ss^1)$ then there exists a lifting $\phi\in H^1(\Om; \R)$ with $u=e^{i\phi}$ and $\phi$ is unique up to an additive constant in $2\pi \ZZ$. Furthermore, for every small $\eps>0$ and $\eta>0$,
\begin{equation}\label{eq:Geps}
\eee(u)\stackrel{\eqref{eq:introbv}}{=}E^{KS}_\eps(u)=\int_{\Omega} |\nabla \phi|^2\,dx + \frac{1}{2\pi\eps} 
\int_{\partial\Omega} \sin^2(\phi-g) \, d\h^1=:\Ge(\phi), 
\end{equation}
where $g$ is a lifting of the unit tangent vector field $\tau$ at $\partial \Om$, i.e., 
\be
\label{g}
e^{ig}=\tau =i\nu \quad \textrm{ on }\quad \partial \Om
\ee 
and $g$ is continuous except at a point of $\partial \Om$. 
\end{lem}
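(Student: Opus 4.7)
The plan is to address three separate items: existence and uniqueness of the lifting $\phi$, existence and structure of the boundary lifting $g$ of $\tau$, and the identity \eqref{eq:Geps}.

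\textbf{Existence and uniqueness of $\phi$.} Since $\Omega$ is simply connected, the Bethuel--Zheng theorem (already cited in this paper) guarantees that every $u\in H^1(\Omega;\Ss^1)$ admits a lifting $\phi\in H^1(\Omega;\R)$ with $u=e^{i\phi}$. For uniqueness, suppose $\phi_1,\phi_2$ are two such liftings. Then $\phi_1-\phi_2\in H^1(\Omega;\R)$ and $e^{i(\phi_1-\phi_2)}=1$ a.e., so $\phi_1-\phi_2\in 2\pi\ZZ$ a.e. Since a Sobolev function whose image lies in a discrete set and whose domain is connected must be a.e. constant, we get $\phi_1-\phi_2\in 2\pi\ZZ$.

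\textbf{Construction of $g$.} The boundary $\partial\Omega$ is a $C^{1,1}$ Jordan curve, so $\tau\colon\partial\Omega\to\Ss^1$ is Lipschitz. By the Hopf Umlaufsatz, the winding number of $\tau$ along $\partial\Omega$ is $1$. Consequently, $\tau$ admits no continuous lifting on the whole of $\partial\Omega$, but picking any point $p_0\in\partial\Omega$ and removing it yields a simply connected arc on which $\tau$ does lift continuously to some $g$; the jump of $g$ at $p_0$ equals $2\pi$. The identity $e^{ig}=\tau$ combined with $\tau=i\nu$ determines $g$ up to a constant in $2\pi\ZZ$.

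\textbf{Energy identity.} The bulk penalty vanishes since $|u|\equiv 1$. Writing $u=(\cos\phi,\sin\phi)$ and differentiating gives $|\nabla u|^2=|\nabla\phi|^2$ pointwise a.e. For the boundary term, from $\nu=-i\tau=e^{i(g-\pi/2)}=(\sin g,-\cos g)$ one computes $u\cdot\nu=\cos\phi\sin g-\sin\phi\cos g=-\sin(\phi-g)$, so $(u\cdot\nu)^2=\sin^2(\phi-g)$. Crucially, although $g$ is discontinuous at $p_0$ with jump $2\pi$, the function $\sin^2$ is $\pi$-periodic, so $\sin^2(\phi-g)$ is well-defined and measurable on $\partial\Omega$; this is the only delicate point. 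Substituting into \eqref{eq:introenepseta} yields \eqref{eq:Geps}.

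The only non-routine step is checking that $\sin^2(\phi-g)$ is insensitive to the (unavoidable) jump of $g$; this is resolved by the matching periodicities of $\sin^2$ and the jump $2\pi$, which follows from the topological fact that $\deg(\tau,\partial\Omega)=1$.
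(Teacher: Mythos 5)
Your proof is correct and takes essentially the same approach as the paper: Bethuel–Zheng for the existence (and constancy of the difference of two liftings for uniqueness), the winding number argument (the paper invokes it implicitly, you cite Hopf's Umlaufsatz by name) for the existence and jump structure of $g$, and direct substitution for the energy identity. Your explicit computation of $u\cdot\nu=\sin(g-\phi)$ and the remark that $\sin^2$ is insensitive to the $2\pi$-jump of $g$ are merely fleshing out what the paper calls "straightforward"; no substantive difference.
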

\proof{}
For the existence and uniqueness of the lifting $\phi$ of $u$ in $\Om$, we refer to Bethuel-Zheng \cite{BethuelZheng:1988a}. 
For the existence of $g$, we note that $\tau$ has winding number $1$ around $\dOm$ as $\Om$ is simply connected, and hence
no continuous $g:\dOm\to \R$ with $e^{ig}=\tau$ can exist. However, if $\dOm$ is $C^{1,1}$, we can choose $g$ 
to be locally Lipschitz except at one point of $\dOm$ where it jumps by $-2\pi$ (see e.g. Ignat \cite{Ig_CV} for the theory of $BV$ liftings). Clearly, the curvature $\ka$ of $\dOm$ is 
given by the absolutely continuous part of the derivative of $g$ (as a $BV$ function), i.e., 
$$\ka=(\partial_\tau g)_{ac} \quad \textrm{and} \quad \int_{\dOm} \ka \,  d\h^1 = 2\pi$$ which is in fact the Gau\ss{}-Bonnet formula for the boundary of a simply connected domain.
As $|\nabla u|=|\nabla \phi|$ in $\Om$ and $u\cdot \nu=\sin(g-\phi)$ on $\partial \Om$, the equality of $\eee(u)$ and $\Ge(\phi)$  is straightforward.

\qed

The functional $\Ge$ has been studied before: compactness and a first order $\Gamma$-convergence result
were established  by the second author in \cite{Kurzke:2006a}, while the second order lower bound was shown for in the restricted case of minimizers in 
 \cite{Kurzke:2006b}  and under a stronger a priori single multiplicity assumption in  \cite{Kurzke:2007a} (which is true for critical points, see \cite{BEK19}).
We use a different approach here that leads to new and significantly improved results and  proofs:
For the first order compactness,  unlike the proof in \cite{Kurzke:2006a}, 
 our new approach  incorporates 
ideas of Garroni-M\"uller \cite{GarroniMuller:2006a} so that it does not 
require the fairly elaborate rearrangement inequality for \emph{functions}
from Garsia-Rodemich \cite{GarsiaRodemich:1974a}, but instead
uses a much more straightforward rearrangement inequality for \emph{sets} from 
Alberti-Bouchitt\'e-Seppecher~\cite{AlbertiBouchittSeppeche:1994a}. 

For our more precise second order results, we employ a new method, adapting a co-area argument of Sandier \cite{Sandier:1998a}, see
Proposition \ref{prop:lolobd} below. We can avoid the use of a ``ball construction'' by directly working with the one-dimensional nonlocal energy (see \eqref{def:feps} below), and directly obtain some 
single multiplicity results  from the energy bounds.  A further central new step is Proposition~\ref{prop:onevortexGC}, a comparison argument inspired by 
Colliander-Jerrard \cite{ColliandJerrard:1999a} that yields the second order lower bounds
by purely energy methods. We can thus completely 
avoid the  PDE arguments used in \cite{Kurzke:2007a} or \cite{CCK19}.
We also find new strong compactness results on the level of the functions (in $\Om$) that are
in addition to the typical compactness of Jacobians for  Ginzburg-Landau theory. These results are essential to
show compactness of the magnetisation in a dimension reduction argument in our work \cite{IK_bdv}.

We now state the main compactness and $\Gamma$-convergence results for $\Ge$ defined at \eqref{eq:Geps}. The proof requires several steps and is completed at the end of this section. Recall that for our compactness results, we often label sequences with a continuous parameter $\eps$, which means that we start with a fixed sequence $\eps_k\to 0$ and then 
take further subsequences of this sequence. 

\begin{thm}\label{thm:GCforGeps}
Let $\Omega\subset \R^2$ be a bounded, simply connected and $C^{1,1}$ regular domain.

\nd {\bf 1. $L^p(\dOm)$-compactness and first order lower bound}. Let $(\phi_\eps)_\eps$ be a {sequence / family} in $H^1(\Omega; \R)$ such that 
\[
\limsup_{\eps\to 0} \frac{1}{|\log\eps|}\Ge(\phi_\eps)<\infty.
\]
Then there is a {sequence/ family} $(z_\eps)_\eps$ of integers such that 
$(\phi_\eps-\pi z_\eps)_\eps$ is bounded in $L^p(\dOm)$ for $1\le p<\infty$.
Moreover, for a subsequence, we have that 
$(\phi_\eps-\pi z_\eps)_\eps$ converges {\bf strongly} in $L^p(\dOm)$ to a limit $\phi_0$ such that 
$\phi_0-g \in BV(\dOm;\pi\ZZ)$ with $g$ given in \eqref{g} and  
$$\partial_\tau\phi_0=\ka -\pi\sum_{j=1}^N d_j \delta_{a_j}, \quad a_j\in \dOm \textrm{ distinct points}, \, d_j\in\ZZ\setminus\{0\}
\quad \textrm{with} \, \sum_{j=1}^N d_j=2$$ and $\partial_\tau \phi_\eps\to \partial_\tau \phi_0$ in $W^{-1,p}(\dOm)$ for every $1\le p<\infty$.
Furthermore, we have the following first order lower bound
$$
\liminf_{\eps\to 0}\frac{1}{|\log\eps|}\Ge(\phi_\eps) \ge |\partial_\tau \phi_0-\ka|(\dOm)= \pi \sum_{j=1}^N |d_j|.
$$
\nd {\bf 2. $W^{1,q}(\Om)$ weak compactness and second order lower bound}.
Let $(\phi_\eps)_\eps$ be a {sequence / family} in $H^1(\Omega; \R)$ satisfying the convergence at point $\bf 1.$ with the limit $\phi_0$ on $\dOm$ as $\eps\to 0$. If additionally we assume that 
\begin{equation}\label{eq:Gbetterub}
\limsup_{\eps\to 0} \left(\Ge(\phi_\eps)- \pi |\log\eps|\sum_{j=1}^N |d_j|\right)<\infty,
\end{equation}
then $d_j\in \{\pm 1\}$ for all $j=1,\dots,N$, 
$(\nabla\phi_\eps)_\eps$ converges weakly (for a subsequence) in $L^q(\Omega;\R^2)$ for any $q\in [1,2)$ to $\nabla 
\hat\phi_0$, where  $\hat\phi_0 \in W^{1,q}(\Omega)$ is an extension (not necessarily harmonic) of $\phi_0$ to $\Omega$.
 The following second order lower bound holds for the {sequence / family} $\eps\to 0$:
\begin{equation}\label{eq:Gepslowerbd}
\liminf_{\eps\to 0} \left(\Ge(\phi_\eps)- \pi N |\log\eps| \right) \ge W_\Omega(\{(a_j,d_j)\})+N\gamma_0,
\end{equation}
where $W_\Omega$ is the renormalised energy defined in \eqref{eq:renW} and $\gamma_0=\pi\log\frac{e}{4\pi}$.

\medskip

\nd {\bf 3. Upper bound construction:}
Let $\phi_0:\partial\Omega\to \R$ be such that  $\partial_\tau \phi_0 =\ka -\pi\sum_{j=1}^N d_j \delta_{a_j}$, {$d_j\in\ZZ\setminus\{0\}$} with $\sum_j d_j=2$, 
$e^{i\phi_0}\cdot \nu=0$ in $\partial\Omega\setminus\{a_1,\dots,a_N\}$.
Then {for every $\eps>0$ small,} there exists 
 $\hat\phi_\eps \in H^1(\Omega; \R)$ such that $\hat\phi_\eps\to \phi_0$ in ${L^p(\partial\Omega)}$ for every $p\in [1, \infty)$  and
  \begin{equation}\label{eq:gammaG_ub1}
\limsup_{\eps\to 0} \frac1{|\log\eps|}\Ge(\hat\phi_\eps) = \pi \sum_{j=1}^N |d_j|.
\end{equation}
 If in addition $d_j=\pm1$ for all $j$, then we have additionally
 \begin{equation}\label{eq:gammaG_ub}
\limsup_{\eps\to 0} \left(\Ge(\hat\phi_\eps) - N \pi\log\frac1\eps \right) = W(\{a_j, d_j\})+ N\gamma_0.
\end{equation}
\end{thm}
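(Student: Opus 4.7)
The plan is to split the proof into the three claims of the theorem, reducing the two–dimensional problem on $\Om$ to a one–dimensional nonlocal problem on $\dOm$ where all the relevant machinery has a cleaner form.

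\textbf{Part 1 (compactness and first order lower bound).} The first step is to pass from $\phi_\eps$ to its boundary trace $\psi_\eps := \phi_\eps|_{\dOm}$ and to minorise the Dirichlet energy by the harmonic extension of $\psi_\eps$; since the Dirichlet energy of the harmonic extension equals $\|\psi_\eps\|_{\dot H^{1/2}(\dOm)}^2$, one obtains
\[
\Ge(\phi_\eps)\ge F_\eps(\psi_\eps) := \|\psi_\eps\|_{\dot H^{1/2}(\dOm)}^2+\frac{1}{2\pi\eps}\int_{\dOm}\sin^2(\psi_\eps-g)\,d\h^1.
\]
Because $\sin^2$ is $\pi$-periodic and vanishes on $\pi\ZZ$, $F_\eps$ is a nonlocal Modica-Mortola functional on $\dOm$ with a $\pi$-periodic well. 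I would follow the strategy suggested in the introduction and adapt the rearrangement inequality for \emph{sets} of Alberti-Bouchitt\'e-Seppecher as used by Garroni-M\"uller: sublevel sets of $|\sin(\psi_\eps-g)|$ are rearranged to intervals on $\dOm$ without increasing $F_\eps$, yielding a uniform $BV$-bound on $(\psi_\eps-g)-\pi z_\eps\pmod{\pi}$ for a suitable integer $z_\eps$. Strong $L^p(\dOm)$-compactness and the existence of a limit $\phi_0$ with $\phi_0-g\in BV(\dOm;\pi\ZZ)$ then follow from standard $BV$ compactness; the Gauss-Bonnet formula $\int_{\dOm}\ka\, d\h^1=2\pi$ forces $\sum_j d_j=2$, and $W^{-1,p}$-convergence of $\partial_\tau\phi_\eps$ is a consequence of the $BV$ bound and Sobolev embedding. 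The first order lower bound follows from the standard one-dimensional transition cost: each jump of height $\pi d_j$ across a sharp discontinuity costs at leading order $\pi|d_j||\log\eps|$ in $F_\eps$.

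\textbf{Part 2 (single multiplicity and renormalised energy).} Under the sharper bound \eqref{eq:Gbetterub}, the extra logarithmic slack is used twice. First, adapting Sandier's co-area argument to $F_\eps$: for a range of thresholds $t\in(t_0,1)$ one estimates, via a Cauchy-Schwarz / duality bound for $\dot H^{1/2}$ on arcs, the $F_\eps$-contribution of the set $\{|\sin(\psi_\eps-g)|>t\}$ near each $a_j$ from below by $\pi d_j^2|\log\eps|$. Combined with the first order bound $\pi|d_j||\log\eps|$ this forces $|d_j|=1$. Second, near each $a_j$ a Colliander-Jerrard type comparison with the optimal half-space transition profile for $F_\eps$ upgrades the local logarithmic bound to $\pi\log(\rho/\eps)+\gamma_0+o(1)$ on a disk of radius $\rho$, identifying the universal constant $\gamma_0=\pi\log(e/(4\pi))$ as the renormalised cost of the profile. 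Outside $\bigcup_j B_\rho(a_j)$ one lower bounds $\int|\nabla\phi_\eps|^2$ by the Dirichlet energy of the harmonic extension $\hat\phi_0$ of $\phi_0$, and letting $\rho\to 0$ delivers $W_\Om(\{(a_j,d_j)\})+N\gamma_0$ via Definition~\ref{defi:renen}. Weak $W^{1,q}$-compactness for $q\in[1,2)$ follows from the uniform bound $\int|\nabla\phi_\eps|^2\le C|\log\eps|$ together with the renormalised bound, which prevents mass from concentrating and gives the equi-integrability needed for a weak limit whose trace agrees with $\phi_0$.

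\textbf{Part 3 (upper bound).} Given the limiting data, construct $\hat\phi_\eps$ by gluing: set $\hat\phi_\eps$ equal to the harmonic extension $\hat\phi_0$ of $\phi_0$ outside $\bigcup_j B_\rho(a_j)$ (producing $W_\Om(\{(a_j,d_j)\})+N\pi|\log\rho|$ as $\rho$ is fixed small), and inside a disk $B_\eps(a_j)$ replace it by an appropriate rescaling of the optimal boundary vortex profile realising the jump $\pi d_j$, whose cost is $\pi|\log\eps|+\gamma_0+o(1)$. In the annular region $B_\rho(a_j)\setminus B_\eps(a_j)$ interpolate logarithmically between the two pieces; optimising over $\rho$ and letting $\eps\to 0$ yields \eqref{eq:gammaG_ub} when $|d_j|=1$ and \eqref{eq:gammaG_ub1} in general.

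The principal obstacle is the second-order analysis of Part 2: Sandier's ball-construction / co-area machinery and the Colliander-Jerrard comparison are classical for the local Ginzburg-Landau functional, but here they have to be reworked entirely in the nonlocal $\dot H^{1/2}$ setting of $F_\eps$, which is the reason for rewriting the problem as \eqref{def:feps} and for carrying out the comparison with the optimal profile purely at the level of the energy, avoiding any use of the Euler-Lagrange PDE.
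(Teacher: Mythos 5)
Your proposal captures the high-level strategy advertised in the introduction of the paper (reduce to a nonlocal one-dimensional functional on the boundary, then use the Alberti--Bouchitt\'e--Seppecher set rearrangement, a Sandier-style co-area argument for single multiplicity, and a Colliander--Jerrard comparison for the second-order constant), so the toolbox is the right one. Two points, however, would require repair before this becomes a proof. First, the claimed identity $\int_\Omega |\nabla \phi_\eps^{\mathrm{harm}}|^2\, dx = \|\psi_\eps\|_{\dot H^{1/2}(\dOm)}^2$ does not hold for a general $C^{1,1}$ domain: the Dirichlet energy of the harmonic extension and the Gagliardo $\dot H^{1/2}$ seminorm on $\dOm$ are merely \emph{equivalent} quadratic forms, with an equivalence constant depending on the geometry of $\dOm$. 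This is harmless at leading order (any constant washes out in $o(|\log\eps|)$), but at second order the exact prefactor is what produces $\gamma_0 = \pi\log\frac{e}{4\pi}$. The paper resolves this by \emph{local conformal flattening} (Lemma~\ref{lem:conformalflat}), mapping a boundary patch onto $B_r^+$ with an $O(r\log\frac1r)$ defect so that the exact half-plane Fourier identity of Lemma~\ref{lem:ABS2d1d} is available with sharp constant $1$; this is precisely why the $C^{1,1}$ (rather than $C^1$) hypothesis is required. You would need such a local reduction, together with the shift by a harmonic extension of $g$ as in Lemma~\ref{lem:lem7}, to justify your Colliander--Jerrard comparison with the Peierls profile at the level of constants.

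Second, the compactness mechanism is described as a ``uniform $BV$-bound'' produced by the rearrangement, but the sequence $\psi_\eps$ is \emph{not} bounded in $BV(\dOm)$ --- only the limit $\phi_0$ lies in $BV$. The actual route in the paper is: (a) a Moser--Trudinger/Orlicz-type $L^p$ bound coming from the $\dot H^{1/2}$ control (Proposition~\ref{prop:orliczbd}); (b) a Young-measure concentration argument, using the set rearrangement to rule out non-Dirac Young measures and to count the jump points (Proposition~\ref{prop:strcomp}, particular case); and (c) the Garroni--M\"uller truncation trick to pass from the bounded case to the general one. The strong $L^p(\dOm)$ convergence and the piecewise-constant $BV$ structure of the limit emerge through this chain, not from a direct $BV$ estimate on $\psi_\eps$; replacing it by a $BV$ bound would in fact contradict the logarithmically diverging energy.
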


Our first steps towards the analysis of $\Ge$ are flattening the boundary and getting rid of the effect of $g$. 
For the first order in the
energy expansion, this can be done as in Alberti-Bouchitt\'e-Seppecher \cite{AlbertiBouchittSeppeche:1998a}, by locally flattening
the boundary with maps of small isometry defect, requiring only $C^1$ smoothness of the boundary.
In order to obtain slightly more precise estimates, we use a locally conformal flattening, requiring $C^{1,1}$ smoothness. We introduce the following notation for half disks and intervals centred at the origin, and use it throughout this section:
$$B_r^+=\{(x_1,x_2)\in \RR^2: |x|<r, x_2>0\}\quad \textrm{and}\quad I_r=(-r,r), \, r>0,$$ where $I_r$ is the straight part of the boundary of $B_r^+$.
We also denote by $$\R^2_+=\R\times (0, \infty).$$ The localisation lemma is proved in the following:
\begin{lem}\label{lem:conformalflat}
Let $\Omega\subset\R^2$ be a simply connected $C^{1,1}$ domain. There exist constants $c_1=c_1(\Omega)>0$ and $r_0=r_0(\Omega)\in {(0,1)}$ such that 
for any $a\in\dOm$, we can find a $C^1$ map 
$\Psi_a: \overline{B_{2r_0}^+}\to \overline{\Omega}$ with the following properties:
\begin{enumerate}
\item[(a)] $\Psi_a:{\overline{B^{+}_{r_0(1+c_1 r_0 \log\frac1{r_0})}}}\to\Psi_a({\overline{B^{+}_{r_0(1+c_1 r_0\log\frac1{r_0})}}})$ is a diffeomorphism with $\Psi_a(0)=a$;

\item[(b)] For any $\phi\in H^1(\Omega; \R)$, setting $\psi=\phi\circ \Psi_a$, we have for any $r<r_0$:
\[
\int_{B^+_{r(1-c_1 r\log\frac1{r})}} |\nabla \psi|^2\,dx \le \int_{B_r(a)\cap\Omega} |\nabla \phi|^2 \,dx\le \int_{B^+_{r(1+c_1 r\log\frac1{r})}} |\nabla \psi|^2\,dx
\]
\end{enumerate}
and 
\[
(1-c_1r\log\frac1{r})\int_{I_{r(1-c_1r\log\frac1{r})}} \sin^2 \psi \,d\h^1 \le \int_{\dOm\cap B_r(a)} \sin^2 \phi \,d\h^1 \le (1+c_1r\log\frac1{r})\int_{I_{r(1+c_1r\log\frac1{r})}} \sin^2 \psi \,d\h^1.
\]
\end{lem}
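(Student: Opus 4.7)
\medskip
\noindent\textbf{Plan.} My strategy is a \emph{locally conformal} flattening of $\dOm$ at $a$. The key advantage is that holomorphic maps preserve the Dirichlet integral \emph{exactly}: if $\Psi_a$ is conformal and $\psi=\phi\circ\Psi_a$, then $|\nabla\psi(z)|^2=|\nabla\phi(\Psi_a(z))|^2\,|\Psi_a'(z)|^2$ cancels with the area element $|\Psi_a'(z)|^2\,dz$, so
\[
\int_{\Psi_a(U)}|\nabla\phi|^2\,dx=\int_U|\nabla\psi|^2\,dz
\]
for every measurable $U\subset B^+_{2r_0}$. This reduces the whole lemma to controlling how $\Psi_a$ distorts Euclidean half-disks and boundary intervals near the origin.

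\medskip
\noindent\emph{Construction of $\Psi_a$.} By the Riemann mapping theorem there is a conformal map $F:\R^2_+\to\Om$ with $F(0)=a$. Since $\dOm$ is $C^{1,1}$, the Kellogg--Warschawski theorem (Pommerenke, Thm~3.5), already used in Step~10 of the proof of Theorem~\ref{lem_approx}, extends $F$ as a $C^{1,\alpha}$ map on a neighbourhood of $0$ in $\overline{\R^2_+}$ for every $\alpha<1$. I rescale in the source, $z\mapsto F(\lambda z)$, to normalise $|F'(0)|=1$, so that $F'(0)$ is the unit tangent to $\dOm$ at $a$ and $F$ sends the real axis onto $\dOm$ near $a$. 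I then choose $r_0=r_0(\Om)$ small enough that $\Psi_a:=F|_{\overline{B^+_{2r_0}}}$ is a diffeomorphism into $\overline{\Om}$.

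\medskip
\noindent\emph{Sharp expansion and nesting.} The essential quantitative input is the Zygmund-type derivative bound
\[
|\Psi_a'(z)-\Psi_a'(0)|\le C(\Om)\,|z|\log\tfrac{1}{|z|}\qquad\text{for }|z|<r_0,
\]
which holds because $\dOm$ being $C^{1,1}$ (hence having an arclength parametrisation whose tangent angle is Lipschitz, i.e.\ in the Zygmund class) is precisely the borderline case where the logarithmic-potential representation of $\log\Psi_a'$ yields a modulus $t\log(1/t)$. Integrating along a segment from $0$ to $z$ and using $|\Psi_a'(0)|=1$ gives $\bigl||\Psi_a(z)-a|-|z|\bigr|\le C'|z|^2\log\frac{1}{|z|}$. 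Choosing $c_1=c_1(\Om)$ large enough, this yields for every $r<r_0$ the nesting
\[
\Psi_a\bigl(B^+_{r(1-c_1 r\log\frac{1}{r})}\bigr)\subset B_r(a)\cap\Om\subset\Psi_a\bigl(B^+_{r(1+c_1 r\log\frac{1}{r})}\bigr),
\]
together with the analogous inclusions for the boundary traces $\Psi_a(I_s)\subset\dOm$.

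\medskip
\noindent\emph{Conclusion and main obstacle.} Applying the conformal change of variables identity to $U=B^+_{r(1\mp c_1 r\log 1/r)}$ and combining with the nesting yields (b). For the boundary integral, the $\h^1$-stretch factor on the real axis is $|\Psi_a'(t)|=1+O(s\log\frac{1}{s})$ uniformly for $|t|\le s$, so combining with the nesting of the arcs gives the stated multiplicative error $1\pm c_1 r\log(1/r)$ for $\int\sin^2\phi\,d\h^1$. The only non-trivial ingredient is the Zygmund bound on $\Psi_a'$: the classical Kellogg--Warschawski statement giving only $C^{1,\alpha}$ for each $\alpha<1$ yields an error $|z|^{1+\alpha}$ that is asymptotically weaker than the required $|z|^2\log(1/|z|)$, so the sharper $C^{1,1}$-specific regularity is genuinely needed and is the main technical point.
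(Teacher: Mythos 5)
Your proposal is correct and follows essentially the same route as the paper: Riemann mapping to $\R^2_+$, Kellogg--Warschawski regularity giving the Zygmund modulus $|\Psi_a'(z)-\Psi_a'(0)|\le C|z|\log\frac1{|z|}$ specific to $C^{1,1}$ boundaries, integration to obtain the $|z|^2\log\frac1{|z|}$ nesting of half-disks, and conformal invariance of the Dirichlet integral for (b) plus the derivative bound for the boundary $\h^1$-stretch. You also correctly identify the one genuinely delicate point, namely that the generic $C^{1,\alpha}$ conclusion of Kellogg--Warschawski is asymptotically too weak and the $t\log(1/t)$ modulus is what makes the $r\log(1/r)$ error terms come out.
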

\proof{}
For a point $a\in\dOm$ with the unit tangent vector $\tau_a$ at $\dOm$, the Riemann mapping theorem 
 yields existence of a conformal map $\Psi_a:\RR^2_+\to\Omega$ such that $\Psi_a(0)=a$ and $\Psi_a'(0)=\tau_a$ where $\Psi'_a$ denotes the complex differential of $\Psi_a$. By the Kellogg-Warschawski theorem (see Pommerenke~\cite[Theorem 3.5]{Pommerenke:1992aa}), it follows that
$\Psi_a'$ extends to a Dini continuous map up to the boundary $\partial \R^2_+=\R\times\{0\}$.
  Near the origin, it has a modulus of continuity $\omega(\delta)=C\delta \log\frac1\delta$ {for $\delta>0$ small, where $C>0$ denotes here and in the following a constant depending only on $\Om$ that can change from line to line}.
In particular,
\[
|\Psi'_a(z)-\tau_a|\le C|z|\log\frac1{|z|}, \quad \textrm{ for $|z|$ small.}
\]
By complex integration, we deduce
$|\Psi_a(z)-a-\tau_a z|\le C |z|^2\log\frac1{|z|}$ for $|z|$ small. This implies that for $r<r_0$ sufficiently small,
\[
\Psi_a(B^+_{r(1-c_1 r\log\frac1r)}) \subset B_r(a)\cap\Omega \subset \Psi_a(B^+_{r(1+c_1 r\log\frac1r)}).
\]
Together with conformal invariance of the Dirichlet integral this implies the first part of claim {\it (b)}. The second part follows from the same inclusion together with our bounds on $|\Psi'_a(z)-\tau_a|$. \qed

\medskip

For an open set $G\subset \R^2_+$ and $\psi:G\to \R$, we define the localised functionals
\be
\label{funct_fhat}
\hat F^{(g)}_\eps(\psi;G) := \int_{G} 
|\nabla \psi|^2 \,dx+ \frac{1}{2\pi\eps} \int_{\overline{G}\cap(\R\times\{0\})} \sin^2\big(\psi(\cdot, 0)-g\big)\,dx_1,
\ee
where $g$ stands here for a function defined on $\overline{G}\cap(\R\times\{0\})$. Usually, this is 
the lifting of the tangent {vector field }defined in \eqref{g}, composed with the change of 
variables in Lemma~\ref{lem:conformalflat}.

Usually, we integrate over sets of the type $G=B_r^+$ or $G=B_r^+\setminus B_s^+$, where the corresponding 
boundary integral is over one or two intervals.
We can compare $\hat F_\eps^{(g)}$ and the special case $\hat F_\eps^{(0)}$ of zero boundary $g$ by subtracting a suitable harmonic extension:

\begin{lem}\label{lem:lem7}
Let $g$ be a Lipschitz function in $C^{0,1}((-1,1))$ and $\psi:B_1^+\to \R$. For every $r\in(0,1)$, we define
$\tilde g_r:\R\to \R$ by  
\[
\tilde g_r(x_1)=\begin{cases} g(x_1) &\quad\text{ if } |x_1|\le r,\\
g(\frac {r^2}{x_1}) &\quad\text{ if } |x_1|>r
\end{cases}
\]
and let $\hat g_r:\RR^2_+\to \R$ be the unique bounded harmonic extension of $\tilde g_r$ to $\RR^2_+$. Then we have for every $1{\leq} s<\infty$ and $r\in (0,1)$, with a  constant $C$ depending only on $s$ and the  Lipschitz constant $\|g'\|_{L^\infty}$ of $g$:
\begin{itemize}
\item[(i)] $\|\nabla \hat g_r\|_{L^s(B_r^+)}\le C r^{\frac2s};$ 
\item[(ii)] $\left\|\partial_{x_2} \hat g_r(\cdot, 0) \right\|_{L^s(I_r)}\le Cr^{\frac{1}{s}}$;
\item[(iii)]  $\nu \cdot \nabla \hat g_r=0$ on $\partial B_r(0)\cap \RR^2_+$ and $\nu$ is the unit outer normal vector to $\partial B_r(0)\cap \RR^2_+$.
\end{itemize}
If we set
\[
A(\psi;r)=\left|\hat F^{(g)}_\eps(\psi;B_r^+) - \hat F_\eps^{(0)}(\psi-\hat g_r;B_r^+) \right|, \quad r\in (0,1),
\]
then for every $p\in(1,\infty)$ and $r\in (0,1)$, we have, with constants $C$ depending on $p$ and the Lipschitz constant of $g$, 
\begin{equation}\label{eq:35}
A(\psi;r) \le C \|\nabla \psi\|_{L^p(B_r^+)} r^{2-\frac2p} +Cr^2;
\end{equation}
in particular, for $p=2$, 
\begin{equation}\label{eq:l7roughbd}
A(\psi;r) \le Cr \left(1+\sqrt{\hat F^{(g)}_\eps(\psi;B_r^+)}\right).
\end{equation} 
Furthermore, 
\begin{equation}\label{eq:36}
A(\psi;r)\le  C\|\psi(\cdot, 0)\|_{L^p(I_r)}  r^{1-\frac1{p}} + Cr^2, \quad p\in(1,\infty).
\end{equation}
\end{lem}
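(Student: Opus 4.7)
The plan is to first establish the three structural properties (i)--(iii) of $\hat g_r$, and then to deduce the bounds on $A(\psi;r)$ by expanding $|\nabla(\psi-\hat g_r)|^2$ and combining Hölder's inequality with Green's identity.

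I would start with (iii), whose proof is a symmetry argument. By the very definition of $\tilde g_r$, one has $\tilde g_r(x_1)=\tilde g_r(r^2/x_1)$ on $\R\setminus\{0\}$, so the Möbius inversion $T(x)=r^2 x/|x|^2$, which preserves $\R^2_+$ and (being anti-conformal) preserves harmonicity, maps $\hat g_r$ to a bounded harmonic function with the same boundary data. Uniqueness of the bounded harmonic extension then gives $\hat g_r\circ T=\hat g_r$, and since $T$ fixes the semicircle $\partial B_r(0)\cap\R^2_+$ pointwise while reversing the outward normal direction there, this identity forces $\nu\cdot\nabla\hat g_r=0$ on that semicircle.

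For (i) and (ii) I would exploit that $\tilde g_r$ is globally Lipschitz with constant $\|g'\|_\infty$ and that, by the chain rule applied on $|t|>r$, its derivative satisfies the decay $|\tilde g_r'(t)|\le\|g'\|_\infty\min(1,r^2/t^2)$. This gives $\|\tilde g_r'\|_{L^s(\R)}\le Cr^{1/s}$ for every $s\in[1,\infty)$. Writing the tangential and normal derivatives of $\hat g_r$ at height $x_2>0$ as $\partial_{x_1}\hat g_r(\cdot,x_2)=P_{x_2}\ast\tilde g_r'$ and $\partial_{x_2}\hat g_r(\cdot,x_2)=-Q_{x_2}\ast\tilde g_r'$ (with $P_{x_2},Q_{x_2}$ the Poisson and conjugate Poisson kernels), the $L^1$-normalisation of $P_{x_2}$ and the $L^s$-boundedness of the Hilbert transform yield $\|\nabla\hat g_r(\cdot,x_2)\|_{L^s(\R)}\le C_s r^{1/s}$ uniformly in $x_2\ge 0$, for all $s\in(1,\infty)$. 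Letting $x_2\to 0$ and restricting to $I_r$ proves (ii); integrating in $x_2\in(0,r)$ and taking the $s$-th root proves (i). The endpoint $s=1$ in both statements is obtained by Hölder's inequality from the case $s=2$.

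Equipped with (i)--(iii), I turn to $A(\psi;r)$. Since $\hat g_r=g$ on $I_r$, the boundary parts of the two functionals in the definition of $A(\psi;r)$ coincide, and expanding the square leaves
\[
A(\psi;r)=\left|2\int_{B_r^+}\nabla\psi\cdot\nabla\hat g_r\,dx-\int_{B_r^+}|\nabla\hat g_r|^2\,dx\right|.
\]
Estimate \eqref{eq:35} then follows immediately from Hölder applied to the two integrals together with (i) at $s=p'$ and $s=2$; specialising to $p=2$ and using $\|\nabla\psi\|_{L^2(B_r^+)}^2\le \hat F^{(g)}_\eps(\psi;B_r^+)$ yields \eqref{eq:l7roughbd}. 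For \eqref{eq:36} I would instead use (iii) to integrate by parts and push everything onto $I_r$: harmonicity of $\hat g_r$ and the vanishing normal derivative on the semicircle give
\[
\int_{B_r^+}\nabla\psi\cdot\nabla\hat g_r\,dx=-\int_{I_r}\psi(\cdot,0)\,\partial_{x_2}\hat g_r(\cdot,0)\,dx_1,\qquad \int_{B_r^+}|\nabla\hat g_r|^2\,dx=-\int_{I_r}g\,\partial_{x_2}\hat g_r(\cdot,0)\,dx_1,
\]
so that $A(\psi;r)=\bigl|\int_{I_r}(g-2\psi(\cdot,0))\,\partial_{x_2}\hat g_r(\cdot,0)\,dx_1\bigr|$. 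Hölder together with (ii) at $s=p'$ controls the $\psi$-term by $C\|\psi(\cdot,0)\|_{L^p(I_r)}r^{1-1/p}$, while the $g$-term equals $\int_{B_r^+}|\nabla\hat g_r|^2\le Cr^2$ by (i) at $s=2$. The hard part is entirely concentrated in (i) and (ii): the sharp $r^{2/s}$ and $r^{1/s}$ scalings rely on the fact that the Kelvin-type reflection used to define $\tilde g_r$ makes $\tilde g_r'$ decay like $r^2/t^2$ at infinity, thereby concentrating its $L^s$-mass on a scale $\sim r$, and the restriction $s\in(1,\infty)$ in the direct argument reflects the failure of the Hilbert transform to be bounded on $L^1$ or $L^\infty$.
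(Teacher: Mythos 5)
Your proof is correct and follows essentially the same route as the paper's: the same Lipschitz estimate on $\tilde g_r'$ combined with the Hilbert transform for (i)--(ii), the same Kelvin-inversion/uniqueness argument for (iii), and the same expansion of $A(\psi;r)$ into the cross term and the quadratic term, bounded by H\"older or by integration by parts plus (ii) and (iii). The only cosmetic differences are that you phrase (i) via Poisson/conjugate Poisson kernel convolutions rather than invoking the monotonicity of $L^s$-norms of a harmonic function across parallel hyperplanes, and that for \eqref{eq:36} you also integrate the quadratic term by parts, which is harmless but unnecessary since it is already $O(r^2)$ from (i).
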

\begin{proof}{}
In order to prove $(i)$ and $(ii)$, we start by  noting that $\| \tilde g'_r\|_{L^s(\R)}\le  Cr^{\frac1s}\| g'\|_{L^\infty(-1,1)}$ for every $s\in [1, \infty]$ and $r\in (0,1)$ for some universal constant $C>0$ (with the convention that $1/\infty=0$). It is known that $x_1\mapsto \partial_{x_2} \hat g_r(x_1, 0)$ represents the Dirichlet-to-Neumann operator applied to $\tilde g_r$ that is given by the Hilbert transform $H$ of the derivative $\tilde g'_r$. As $H:L^s(\R)\to L^s(\R)$ is a bounded linear operator for $s\in (1, \infty)$, the estimates on $\| \tilde g'_r\|_{L^s(\R)}$ yield $(ii)$; {for the case $s=1$ we use the H\"older inequality and the embedding $L^2(I_r)\subset L^1(I_r)$ }. As $\partial_{x_j} \hat g_r$ is harmonic in $\R^2_+$ for $j=1,2$, the standard theory of harmonic functions, see e.g. 
Axler et al. \cite[Theorem 7.6]{Axler:2001aa}, implies (also for $s=1$) that 
$\|\partial_{x_j} \hat g_r(\cdot, x_2)\|_{L^s(\R)}\leq C \|\partial_{x_j} \hat g_r(\cdot, 0)\|_{L^s(\R)}$ for every $x_2>0$. Integrating on the strip $\R\times (0,r)$, we deduce the desired estimate in $(i)$. 
For proving $(iii)$, note that $\hat g_r(x)=\hat g_r(\frac{xr^2}{|x|^2})$ in $\R^2_+$ because uniqueness of the bounded harmonic extension implies invariance under the inversion at the circle $\partial B_r(0)$ (satisfied by the boundary data $\tilde g_r$). Then differentiating in radial direction and comparing both sides on the circle $\partial B_r(0)$
yield the claim $(iii)$.

For the claims on $A(\psi;r)$ note that
\[
A(\psi;r) = \left |\int_{B_r^+} \left( |\nabla \psi|^2 - |\nabla (\psi-\hat g_r)|^2\right) \, dx \right| = \left| \int_{B_r^+} 2 \nabla \psi \cdot \nabla \hat g_r -|\nabla \hat g_r|^2 \, dx \right|
\]
Now $\int_{B_r^+} |\nabla \hat g_r|^2 \, dx=O(r^2)$ by $(i)$. Furthermore, we have by H\"older's inequality and $(i)$ applied with $\frac1s=1-\frac1p$
\[
\left| \int_{B_r^+} 2 \nabla \psi \cdot \nabla \hat g_r  \, dx \right|\le Cr^{2-\frac2p} \left(\int_{B^+_r} |\nabla \psi|^p \, dx \right)^{\frac1p} ,
\]
which yields \eqref{eq:35}. For the final claim \eqref{eq:36}, integration by parts and H\"older's inequality applied with $\frac1s=1-\frac1p$, combined with $(ii)$ and $(iii)$ 
imply
\[
\left|\int_{B_r^+} \nabla \psi \cdot \nabla \hat g_r  \, dx \right|= \left|\int_{I_r}  \psi \partial_{x_2} \hat g_r \, d\h^1 \right| \le C \left(\int_{I_r} |\psi|^p\, d\h^1\right)^{\frac1p} r^{1-\frac1{p}}.
\]
\qed
\end{proof}

After reducing the study of $\hat F_\eps^{(g)}$ defined at \eqref{funct_fhat} to the special case $\hat F_\eps^{(0)}$ thanks to the above lemma,
we further reduce the analysis of the two-dimensional energy functional $\hat F_\eps^{(0)}$ to a one-dimensional (nonlocal) functional defined for functions $\f:I\to \R$ for an interval $I\subset \R$:
\be
\label{def:feps}
F_\eps(\f;I) = \frac{1}{2\pi} \int_{I\times I}
\left| \frac{\f(s)-\f(t)}{s-t}
\right|^2 \, ds dt + \frac{1}{2\pi \eps} \int_{I} \sin^2 \f \, dt.
\ee

\begin{lem}\label{lem:ABS2d1d}
If $\psi:B_r^+\to \R$ is an $H^1$ function in $B^+_r$ for some $r>0$, then
\[\hat F_\eps^{(0)}(\psi; B_r^+) \ge F_\eps\big(\psi(\cdot, 0); I_r\big)\]
where the RHS is given by the trace $\psi(\cdot, 0)$ of $\psi$ on the interval $I_r=\partial B^+_r\cap (\R\times \{0\})$.
\end{lem}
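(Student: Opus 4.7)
The plan is to observe first that the boundary penalty terms on the two sides of the claimed inequality coincide exactly, both equalling $\frac{1}{2\pi\eps}\int_{I_r}\sin^2\psi(\cdot,0)\, dx_1$. Hence it suffices to establish the pure trace inequality
\[
\int_{B_r^+}|\nabla\psi|^2\, dx\ \geq\ \frac{1}{2\pi}\int_{I_r}\int_{I_r}\frac{|\psi(s,0)-\psi(t,0)|^2}{(s-t)^2}\, ds\, dt.
\]
By density of $C^\infty(\overline{B_r^+})$ in $H^1(B_r^+)$ I reduce to smooth $\psi$.

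The natural template is the analogous sharp trace inequality on the full half-plane,
\[
\int_{\R^2_+}|\nabla \Phi|^2\, dx\ \geq\ \frac{1}{2\pi}\int_{\R}\int_{\R}\frac{|\Phi(s,0)-\Phi(t,0)|^2}{(s-t)^2}\, ds\, dt,\qquad \Phi\in H^1(\R^2_+),
\]
which follows from Dirichlet's principle $\int_{\R^2_+}|\nabla\Phi|^2\geq \int_{\R^2_+}|\nabla P[\Phi(\cdot,0)]|^2$ (with $P$ the Poisson extension) combined with the Parseval identity $\int_{\R^2_+}|\nabla P[f]|^2\, dx = \frac{1}{2\pi}\int_\R|\xi|\,|\widehat f(\xi)|^2\, d\xi$ and the classical computation $\int_\R\int_\R|f(s)-f(t)|^2/(s-t)^2\, ds\, dt = 2\pi\int_\R|\xi|\,|\widehat f(\xi)|^2\, d\xi$ (the latter coming from $\int_\R(1-\cos(z\xi))/z^2\, dz = \pi|\xi|$). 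This pins down the sharp constant $1/(2\pi)$.

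The bridge from $B_r^+$ to $\R^2_+$ is the inversion $T(z)=r^2/\overline{z}$, an anti-conformal involution of $\R^2_+$ that swaps $B_r^+$ and $\R^2_+\setminus\overline{B_r^+}$ and fixes the arc $\partial B_r\cap\R^2_+$ pointwise. I extend $\psi$ to $\tilde\psi\in H^1(\R^2_+)$ by setting $\tilde\psi=\psi$ on $B_r^+$ and $\tilde\psi=\psi\circ T$ on $\R^2_+\setminus\overline{B_r^+}$; continuity across the arc is automatic. Because $T$ is (anti-)conformal, it preserves the Dirichlet integrand, so
\[
\int_{\R^2_+}|\nabla\tilde\psi|^2\, dx\ =\ 2\int_{B_r^+}|\nabla\psi|^2\, dx,
\]
and applying the half-plane inequality to $\tilde\psi$ bounds this from below by $\frac{1}{2\pi}\int\int_{\R\times\R}|\tilde\psi(s,0)-\tilde\psi(t,0)|^2/(s-t)^2\, ds\, dt$.

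Finally I split the $\R\times\R$ integral as $I_r\times I_r$, $E_r\times E_r$, and two cross pieces, with $E_r=\R\setminus I_r$; the cross pieces are non-negative. The decisive computation is that the $E_r\times E_r$ piece, after the change of variables $(u,v)=(r^2/s,r^2/t)$ that maps $E_r\times E_r$ bijectively onto $I_r\times I_r$ up to a null set, becomes precisely another copy of the $I_r\times I_r$ piece: the Jacobian $r^4/(s^2t^2)\, ds\, dt = du\, dv$ cancels exactly against $(s-t)^2=r^4(u-v)^2/(u^2v^2)$, yielding
\[
\int_\R\int_\R\frac{|\tilde\psi(s,0)-\tilde\psi(t,0)|^2}{(s-t)^2}\, ds\, dt\ \geq\ 2\int_{I_r}\int_{I_r}\frac{|\psi(s,0)-\psi(t,0)|^2}{(s-t)^2}\, ds\, dt.
\]
Combining with the factor $2$ from the inversion delivers the claimed inequality with the sharp constant $1/(2\pi)$. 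I expect the only delicate point to be this bookkeeping: the factor $2$ lost by doubling the domain must be recovered with no slack from the $E_r\times E_r$ contribution, which is exactly what the inversion substitution provides.
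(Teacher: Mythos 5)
Your argument is correct and is the same strategy the paper uses: prove the sharp half-plane trace inequality by Fourier analysis, then transfer it to $B_r^+$ by the inversion $z\mapsto r^2/\bar z$ (the paper defers this step to Alberti--Bouchitt\'e--Seppecher \cite[Cor.~6.4]{AlbertiBouchittSeppeche:1998a} and to the reflection device already set up in Lemma~\ref{lem:lem7}, whereas you carry out the change-of-variables bookkeeping explicitly). One small imprecision: the extended map $\tilde\psi$ need not lie in $L^2(\R^2_+)$, so it is in $\dot H^1(\R^2_+)$ rather than $H^1(\R^2_+)$; the half-plane inequality is invariant under additive constants, so this does not affect the proof, but it is worth stating.
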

\begin{proof}{}
For the half-space $\R^2_+$ (corresponding to $r=\infty$), the Dirichlet integral in $\hat F_\eps^{(0)}$ and the nonlocal functional in $F_\eps$ can be compared using a standard Fourier space argument:
$$\int_{\R^2_+} |\nabla \psi|^2 \, dx\geq \|\psi(\cdot, 0)\|^2_{\dot{H}^{1/2}(\R)}=\frac{1}{2\pi} \int_{\R\times \R}
\left| \frac{\psi(x_1, 0)-\psi(\tilde x_1, 0)}{x_1-\tilde x_1}
\right|^2 \, dx_1 d\tilde x_1.$$
The bounded domain version in $B_r^+$ can be deduced by inversion at $\partial B_r$ as in Lemma \ref{lem:lem7} 
(see Alberti-Bouchitt\'e-Seppecher  \cite[Corollary 6.4]{AlbertiBouchittSeppeche:1998a} for details). The constant $1$ in the above inequality is optimal (see e.g. \cite[Remark 6.5]{AlbertiBouchittSeppeche:1998a}). \qed
\end{proof}

\medskip

The following rearrangement inequality is essential in the proof of the compactness result for the functional $F_\eps$: 

\begin{lem}\label{lem:intbdABS}
Let $I\subset \RR$ be a { bounded} interval and $A,B\subset I$ be measurable sets of positive measure with
$A\cap B = \emptyset$. 
Set $P=I\setminus(A\cup B)$. Then
\begin{equation}\label{eq:rearr}
\int_A \int_B \frac{1}{|s-t|^2} \, dsdt 
\ge 
\log \frac{(|I|-|A|)(|I|-|B|)}{|I|(|I|-|A|-|B|)}\ge \log \frac{|B|}{|I|}+ \log\frac{|A|}{|I|}{ -}\log\frac{|I|-|A|-|B|}{|I|}.
\end{equation}
If additionally $|B|{\geq}c|I|$ for some $c\in(0,1)$, we have
\begin{equation}\label{eq:rearr2}
\int_A \int_B \frac{1}{|s-t|^2} \, dsdt 
\ge \log \left( 
1+ \frac{c|A|}{|P|}
\right).
\end{equation}
\end{lem}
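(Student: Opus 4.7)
The strategy is a rearrangement reduction followed by an explicit one-dimensional integral. After an affine change of variable we may take $I=(0,L)$ with $L=|I|$. The key underlying fact, which I would invoke from Alberti-Bouchitt\'e-Seppecher~\cite{AlbertiBouchittSeppeche:1994a}, is that among all pairs of disjoint measurable subsets $A,B\subset I$ with prescribed measures $|A|=a$ and $|B|=b$, the functional
\[
J(A,B)=\int_A\int_B \frac{ds\,dt}{|s-t|^2}
\]
is minimised by the extremal configuration $A^*=(0,a)$, $B^*=(L-b,L)$, with $A^*$ and $B^*$ placed at opposite ends of $I$. Heuristically this is forced by the fact that $|s-t|^{-2}$ is symmetric and strictly decreasing in $|s-t|$, so sliding a small piece of $A$ leftward into the free region $I\setminus(A\cup B)$ (and symmetrically for $B$) strictly decreases $J$ unless no such move is available.

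Granted this extremality, a direct computation gives
\[
J(A^*,B^*)=\int_0^a\!\int_{L-b}^L \frac{dt\,ds}{(t-s)^2}=\int_0^a\!\left(\frac{1}{L-b-s}-\frac{1}{L-s}\right)ds=\log\frac{(L-a)(L-b)}{L(L-a-b)},
\]
which establishes the first inequality in \eqref{eq:rearr}. The second inequality in \eqref{eq:rearr} is just the statement that $(L-a)(L-b)\geq ab$, which is immediate from $a+b\leq L$ (forced by $A\cap B=\emptyset$ and $A\cup B\subset I$). For the refined estimate \eqref{eq:rearr2} I would rewrite
\[
\frac{(L-a)(L-b)}{L(L-a-b)}=1+\frac{ab}{L(L-a-b)}=1+\frac{ab}{L|P|},
\]
and use $b\geq cL$ to conclude $ab/(L|P|)\geq ca/|P|$, from which \eqref{eq:rearr2} follows by monotonicity of $\log$.

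The main obstacle is the rearrangement step. Although morally clear, making the "push $A$ left and $B$ right" argument rigorous requires a careful exchange procedure: when a small mass $\delta$ of $A$ is moved from $x$ to $y<x$ with $y\notin A\cup B$, the change in $J$ is $\delta\int_B\bigl(|y-t|^{-2}-|x-t|^{-2}\bigr)dt+o(\delta)$, whose sign must be controlled uniformly over the possibly scattered distribution of $B$. The cleanest route is to simply cite the set-rearrangement lemma of~\cite{AlbertiBouchittSeppeche:1994a}; a self-contained argument proceeds by first fixing $B$ and showing that the sublevel sets of $s\mapsto \int_B|s-t|^{-2}dt$ are nested intervals, so that the minimum of $A\mapsto J(A,B)$ over sets of measure $a$ is attained on a union of boundary intervals of $I$, and then iterating between $A$ and $B$.
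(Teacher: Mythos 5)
Your proof is correct and follows essentially the same route as the paper: both reduce the problem to the set-rearrangement lemma of Alberti--Bouchitt\'e--Seppecher (pushing $A$ and $B$ to opposite ends of $I$), then compute the resulting double integral explicitly to obtain $\log\frac{(|I|-|A|)(|I|-|B|)}{|I|(|I|-|A|-|B|)}$, and deduce the remaining inequalities by the same elementary algebra. The closing paragraph sketching how one could reprove the rearrangement step is a nice extra, but since you (like the paper) cite the lemma, it does not change the argument.
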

\begin{proof}{}
By a simple rearrangement lemma 
(see \cite[Lemme 2]{AlbertiBouchittSeppeche:1994a}), 
\[
\int_A \int_B \frac{1}{|s-t|^2} \, dsdt 
\ge 
\int_0^{|A|}\int_{|I|-|B|}^{|I|}
\frac{1}{|s-t|^2} \, dsdt
=\log \frac{(|I|-|A|)(|I|-|B|)}{|I|(|I|-|A|-|B|)}
\]
and the last part of \eqref{eq:rearr} follows using that $|I|-|A|\geq |B|$ and $|I|-|B|\geq |A|$. 
We note that
\[
\frac{(|I|-|A|)(|I|-|B|)}{|I|(|I|-|A|-|B|}
= 1+ \frac{|B|}{|I|} \frac{|A|}{|I|-|A|-|B|}
\ge 1+c\frac{|A|}{|P|}
\]
so \eqref{eq:rearr2} now follows by the monotonicity of the logarithm.\qed
\end{proof}

\medskip

Now we prove a first compactness result for functional $F_\eps$ in \eqref{def:feps} in the weak $L^p$ topology:

\begin{pro}\label{prop:orliczbd}
Let $I\subset\RR$ be a {bounded open} interval and  $M>0$. Then there exists $\eps_0>0$ such that for every
sequence / family $(\f_\eps)_\eps$ of functions such that the functional $F_\eps$ defined in \eqref{def:feps} satisfies 
$F_\eps(\f_\eps;I) \le M|\log\eps| $, 
there exists a sequence / family $(k_\eps)_\eps$ of integers such that
$(\f_\eps-\pi k_\eps)_{\eps\in (0, \eps_0)}$ is bounded in $L^p(I)$ for every $p\in [1,\infty)$.
\end{pro}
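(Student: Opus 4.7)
The penalty term does the heavy lifting. Define the ``bad set''
\[
P_\eps:=\{t\in I:\operatorname{dist}(\varphi_\eps(t),\pi\mathbb{Z})\ge\pi/8\}.
\]
Since $\sin^2\varphi_\eps\ge\sin^2(\pi/8)=:c_0$ on $P_\eps$, the assumption $F_\eps(\varphi_\eps;I)\le M|\log\eps|$ gives
\[
|P_\eps|\le\tfrac{2\pi\eps}{c_0}F_\eps(\varphi_\eps;I)\le CM\eps|\log\eps|\to 0.
\]
On $I\setminus P_\eps$ I decompose into the pairwise disjoint zones $B_k^\eps:=\{t\in I:|\varphi_\eps(t)-\pi k|<\pi/8\}$, $k\in\mathbb{Z}$, with measures $b_k:=|B_k^\eps|$, and I take the shift $k_\eps$ to be any integer that maximises $b_k$.

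The main estimate comes from pairing the Dirichlet part of $F_\eps$ with the rearrangement inequality of Lemma~\ref{lem:intbdABS}. For any $k\ne\ell$ with $b_k,b_\ell>0$, since $|\varphi_\eps(s)-\varphi_\eps(t)|\ge\pi(|k-\ell|-\tfrac14)$ for $s\in B_k^\eps$ and $t\in B_\ell^\eps$, the bound \eqref{eq:rearr} yields
\[
\pi(|k-\ell|-\tfrac14)^2\log\frac{(|I|-b_k)(|I|-b_\ell)}{|I|(|I|-b_k-b_\ell)}\le M|\log\eps|.
\]
The logarithm is comparable to $|\log\eps|$ exactly when the denominator $|I|-b_k-b_\ell$ collapses to the scale of $|P_\eps|\sim\eps|\log\eps|$, i.e.\ when $B_k^\eps\cup B_\ell^\eps$ covers all but a vanishing fraction of $I$; in that regime the inequality forces $|k-\ell|$ to remain bounded independently of $\eps$.

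To harness this I claim that the mode $k_\eps$ satisfies $b_{k_\eps}\ge c_1|I|$ for some constant $c_1=c_1(M,|I|)>0$. Indeed, the pairwise inequality above applied to any two zones of sizes $\ge c|I|$ already gives a $\log$-factor of order $|\log\eps|$, so at most a bounded number of ``large'' zones can coexist; absorbing into $P_\eps$ all zones with $b_k<\sqrt{\eps}$ (whose total mass is $o(1)$) and then pigeonholing the remaining mass $\ge|I|/2$ over boundedly many large zones yields a $k_\eps$ with $b_{k_\eps}\gtrsim|I|$. Applying \eqref{eq:rearr2} with $B=B_{k_\eps}^\eps$ and $A$ the union of all other non-trivial zones (so that $|I|-|A|-|B|$ is at most the enlarged bad set, still of measure $o(1)$) then gives for every non-empty $B_k^\eps$ the bound $|k-k_\eps|\le R=R(M,|I|)$. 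In particular, $|\varphi_\eps-\pi k_\eps|\le\pi(R+\tfrac18)$ pointwise on $I\setminus P_\eps$, providing an $L^\infty$ (hence $L^p$) bound over that set.

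The contribution on $P_\eps$ is controlled by interpolation: the Dirichlet bound gives $\|\varphi_\eps\|_{\dot H^{1/2}(I)}^2\le 2\pi M|\log\eps|$, and standard Orlicz-type estimates for $\dot H^{1/2}$ in one dimension, combined with H\"older's inequality and $|P_\eps|\lesssim\eps|\log\eps|$, yield
\[
\int_{P_\eps}|\varphi_\eps-\pi k_\eps|^p\,dt=o(1)\qquad\text{for every fixed }1\le p<\infty,
\]
which produces the desired uniform $L^p$ bound. The \emph{main obstacle} is the concentration step $b_{k_\eps}\ge c_1|I|$: the rearrangement inequality \eqref{eq:rearr} is sharp only in the two-zone limit, so many-zone configurations must be tamed by the above absorption argument, which is precisely where the interplay between the sine penalty (controlling $|P_\eps|$) and the Dirichlet term (ruling out multiple competing large zones) becomes essential.
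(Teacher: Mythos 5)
Your decomposition into ``zones'' $B_k^\eps$ and bad set $P_\eps$, the $|P_\eps|\lesssim\eps|\log\eps|$ bound from the sine penalty, and the use of the rearrangement inequality on pairs of zones are all parallel to the paper's treatment. However, the argument has a genuine gap at the step where you conclude, from applying \eqref{eq:rearr2} with $A$ the union of all other non-trivial zones, that every non-empty zone satisfies $|k-k_\eps|\le R$, and hence $|\varphi_\eps-\pi k_\eps|$ is bounded in $L^\infty(I\setminus P_\eps)$. This is false. The rearrangement inequality paired with the Dirichlet term only controls the quantity $\sum_{k\neq k_\eps}(k-k_\eps)^2\log(1+\text{const}\cdot b_k/|P_\eps|)$ (schematically), and this stays bounded even when $|k-k_\eps|$ is large provided $b_k$ is small enough — e.g. $b_k\sim\exp(-c(k-k_\eps)^2)$. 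Concretely, $\varphi_\eps$ can equal $K\pi$ on a set of measure $e^{-cK^2}$ while being near $0$ elsewhere: the penalty term sees nothing, and the Dirichlet seminorm is $O(K^2\log\frac{1}{|\text{set}|})=O(K^4)$, which is $\le M|\log\eps|$ as long as $K\lesssim|\log\eps|^{1/4}$. So there is no uniform $L^\infty$ bound and no uniform bound on the index range $|k-k_\eps|$.

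Consequently the task of proving the $L^p$ bound reduces to showing that the zone measures $b_k$ decay fast enough in $|k-k_\eps|$ to make $\sum_k|k|^p b_k$ converge uniformly in $\eps$. This is exactly what the paper's proof does: after the rearrangement step shows that $\alpha_k^\eps<\eps^{1/3}$ beyond a bounded threshold $k_0$, it feeds this into a one-dimensional Moser--Trudinger (Orlicz) inequality, obtaining $\alpha_k^\eps\lesssim\exp\bigl(-\tfrac13|\log\eps|-c\,(k-k_0)^2/|\log\eps|\bigr)$, and then uses $a^2+b^2\ge 2ab$ to extract an $\eps$-\emph{independent} exponential decay $\alpha_k^\eps\lesssim e^{-C(k-k_0)}$. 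That combination (small initial level plus Gaussian tail in $k$) is the crux; without it the $L^p$ bound simply does not follow. Your proposal does invoke an Orlicz estimate, but only to control $\int_{P_\eps}|\varphi_\eps-\pi k_\eps|^p$ — and $P_\eps$ is the wrong set: the problematic tail contributions live on the zones $B_k^\eps\subset I\setminus P_\eps$ with $|k-k_\eps|$ large, precisely where your argument breaks down. To repair the proof you should apply the Moser--Trudinger/Orlicz bound to the superlevel sets $\{|\varphi_\eps-\pi k_\eps|>k\pi\}$ (equivalently to the measures $b_k$, not to $P_\eps$) and combine it with the rearrangement-based $\eps^{1/3}$-smallness of the first tail zone, as in the paper.
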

\begin{proof}{}
We assume without loss of generality that $|I|=1$ (otherwise, one rescales by the length of the interval $I$ which implies only a change of the parameter $\eps$ in the functional $F_\eps$ as the nonlocal part of $F_\eps$ is scaling invariant). We denote
$$a\wedge b=\min(a,b)\quad \textrm{and} \quad a\vee b=\max(a,b), \quad a,b\in \R.$$

\medskip

\nd {\bf A particular case.} We assume that $|\{\f_\eps<0\}|>\frac14$ for every small $\eps>0$. We want to prove that the positive part $(\f_\eps)_+=\f_\eps\vee 0$ is uniformly bounded in $L^p(I)$ for every $p\in [1,\infty)$ as $\eps\to 0$.
For every $\eps$, we use the truncations of $\f_\eps$ between $k\pi$ and $(k+1)\pi$ given by
 \be
 \label{trunc1}
 T_k \f_\eps= (\f_\eps\wedge (k+1)\pi)\vee k\pi, \quad \textrm{ for every } k\in\ZZ.
 \ee Fix a small $\gamma>0$. We consider the following sets
$$A_k^\eps=\{ T_k  \f_\eps>(k+1)\pi-\gamma\}, \quad B_k^\eps = \{T_k \f_\eps<k\pi+\gamma\}, \quad k\geq 0$$
together with  
$$\alpha_k^\eps=|A_k^\eps| \quad \textrm{ and }\quad \rho_k^\eps=1-|A_k^\eps\cup B_k^\eps|.$$
Note that $\{\f_\eps<0\}\subset B_k^\eps$ for $k\geq 0$ so that $|B_k^\eps|/|I|>1/4$ (by the assumption of this case); also $(\alpha_k^\eps)_{k\geq 0}$ is a non-increasing sequence and we have the estimate 
\be
\label{esti_rho}
M|\log \eps|\geq F_\eps(\f_\eps;I)\geq \frac1{2\pi \eps} \int_{I\setminus (A_k^\eps\cup B_k^\eps)} \sin^2 \f_\eps\, dt\geq \frac{\rho_k^\eps}{C(\gamma)  \eps},
\ee
i.e., $\rho_k^\eps\le C(\gamma) M \eps|\log\eps|$.
Moreover, if $\alpha_k^\eps>0$, then  
$\rho_k^\eps>0$ (otherwise, we would have $|A_k^\eps|+|B_k^\eps|=1$ for the nontrivial partition $A_k^\eps\neq \emptyset$ and $B_k^\eps\neq \emptyset$ of $I$ which leads to a contradiction with the fact that $H^{1/2}$-functions have no jump discontinuities) and by 
Lemma \ref{lem:intbdABS}, we obtain:
\be
\label{inter1}
M\ge \frac{1}{|\log\eps|}F_\eps(T_k \f_\eps; I) \ge \frac{2}{2\pi |\log\eps|}
\int_{A_k^\eps} \int_{B_k^\eps} \frac{|\f_\eps(s)-\f_\eps(t)|^2}{|s-t|^2} \, dsdt\ge  
\frac{(\pi-2\gamma)^2\log(1+ \frac{\alpha_k^\eps}{4\rho_k^\eps})}{\pi|\log\eps|}.
\ee
Now we decompose the set of non-negative integers:
$$\NN={\cal K}_\eps\cup {\cal N}_\eps, \, {\cal K}_\eps:=\{k\geq 0\, :\, \alpha_k^\eps<\eps^{\frac13} \}, \,
 {\cal N}_\eps:=\{k\geq 0\, :\, \alpha_k^\eps\geq \eps^{\frac13} \}.$$ 

\medskip

\nd {\bf Subcase i).} Assume that ${\cal N}_\eps\neq \emptyset$. Note that for $\eps\leq \eps(M, \gamma)$, we have for every
$k\in {\cal N}_\eps$ that $\log (1+ \frac{\alpha_k^\eps}{4\rho_k^\eps})>\frac13|\log\eps|$ because $0<\rho_k^\eps\le C(\gamma) M \eps|\log\eps|$. Let $k_0^\eps
=\sup {\cal N}_\eps\in\NN\cup \{+\infty\}$.
 We claim that $(k_0^\eps)_\eps$ is uniformly bounded in $\eps$. Indeed, as $(\alpha_k^\eps)_{k\geq 0}$ 
is non-increasing, we know that ${\cal N}_\eps=\{0, 1, \dots, k_0^\eps\}$, i.e., $\alpha_k^\eps \geq \eps^{\frac13}$ for every $0\leq k\leq k_0^\eps$, so that
$$M\ge \frac{1}{|\log\eps|}\sum_{k=0}^{k_0^\eps} F_\eps(T_k \f_\eps; I)\stackrel{\eqref{inter1}}{\geq} 
\sum_{k=0}^{k^\eps_0}\frac{(\pi-2\gamma)^2
\log (1+ \frac{\alpha_k^\eps}{4\rho_k^\eps})}
{\pi |\log\eps|} \geq \frac{k_0^\eps}3$$ 
which proves our claim.
Let $k_0=\limsup_{\eps\in (0, \eps_0]}
k_0^\eps<\infty$. In particular, for $\eps\leq \eps_0$, $\alpha_{k_0+1}^\eps<\eps^{1/3}$.
Now the one-dimensional Moser-Trudinger inequality
(see Taylor \cite[Proposition 4.2]{Taylor:1997a}; compare \cite[Lemma~2.10]{Kurzke:2006a})
 implies the existence of constants $c_1,c_2>0$ such that 
\[
\int_{\{\f_\eps>{ (k_0+2)}\pi-\gamma\}} \exp \frac{c_1 (\f_\eps-{ (k_0+2)}\pi+\gamma)^2}{
M|\log\eps|}\, dt\le c_2 \alpha_{k_0+1}^\eps (\leq {c_2} \eps^{1/3}),
\]
so that for every ${ k\geq k_0+1}$, by definition of $\alpha_k^\eps$ it follows
\[
\alpha_k^\eps \exp \left(c_1\pi^2\frac{(k-k_0-1)^2}{M|\log\eps|} \right)\leq \int_{\{\f_\eps>(k_0+2)\pi-\gamma\}} \exp \frac{c_1 (\f_\eps-(k_0+2)\pi+\gamma)^2}{
M|\log\eps|}\le c_2 \exp ( -\frac13|\log\eps|),
\]
yielding for $k\ge k_0+1$
\begin{equation}\label{eq:orlak}
\alpha_k^\eps\leq  c_2 \exp \left( -\frac13|\log\eps|-c_1\pi^2\frac{(k-k_0-1)^2}{M|\log\eps|}\right)\leq c_2 \exp\left( -\frac{2\pi\sqrt{c_1}(k-k_0-1)}{\sqrt{3M}}\right), 
\ee
where we used $a^2+b^2\geq 2ab$ in the argument of the exponential.
Therefore, we obtain for the positive part of $\f_\eps$ and $p\in [1, \infty)$:
\begin{align*}
\int_I |(\f_\eps)_+|^p \, dt&=\sum_{k\geq 0} \int_{\{k\pi<\f_\eps<(k+1)\pi\}} |T_k\f_\eps|^p \, dt   
\le \pi^p+\sum_{k\geq 1} \int_{A_{k-1}^\eps} |T_k\f_\eps|^p \, dt\\
& {\leq} 
\pi^p+C\sum_{k=0}^\infty (k+1)^p \alpha_k^\eps\\
&\stackrel{\eqref{eq:orlak}}\leq \tilde C\sum_{k=0}^{k_0} (k+1)^p+\tilde C\sum_{k\geq k_0+1} (k+1)^p 
\exp\left( -\frac{2\pi\sqrt{c_1}(k-k_0-1)}{\sqrt{3M}}\right),
\end{align*}
which is  bounded independently of $\eps$, yielding the claimed $L^p$ bound of $(\f_\eps)_+$. 

\medskip

\nd {\bf Subcase ii).} Assume that ${\cal N}_\eps=\emptyset$, i.e., $\alpha_k^\eps<\eps^{\frac13}$ for every $k\geq 0$. Then by \eqref{eq:orlak}, we deduce that $\alpha_k^\eps$ satisfies an exponential decay for every $k\geq 1$ and the same argument as in Subcase i) yields the $L^p$ bound of $(\f_\eps)_+$. 

\medskip

\nd {\bf The general case}. For a measurable function $\f_\eps:I\to \R$ with $|I|=1$, there exists an integer $k_\eps$ such\footnote{One can consider the the smallest $k_\eps\in \ZZ$ such that $|\{\f_\eps<k_\eps\pi \}|>\frac14$.} that $|\{\f_\eps<k_\eps\pi \}|>\frac14$ and $|\{\f_\eps>(k_\eps-1)\pi \}|>\frac14$. 
By considering $\tilde \f_\eps:=\f_\eps-k_\eps \pi$, we deduce that $F_\eps(\tilde \f_\eps;I)=F_\eps(\f_\eps;I)$ and by the particular case discussed before, we have that the positive parts of the sequence / family
$(\tilde \f_\eps)_\eps$ are bounded in $L^p$.
The same argument yields that the sequence / family of positive parts of $-(\pi+\tilde \f_\eps)$, i.e., $(-\pi-\tilde \f_\eps)_+=(\tilde \f_\eps+\pi)_-$, is also bounded in $L^p$. Together these bounds yield the $L^p$ bound of $(\tilde \f_\eps)_\eps$. 
\qed
\end{proof}

\begin{rem}
From \eqref{eq:orlak} we can actually deduce a bound not just in $L^p$, but in a certain Orlicz space.
 The type of Orlicz space ($e^{cL}$ with a constant
 of order $\frac{1}{\sqrt{M}}$) is 
essentially optimal by 
an example presented in \cite{Kurzke:2006a}.
\end{rem}

\medskip

We can improve now the result in Proposition \ref{prop:orliczbd} by showing the compactness in {\bf strong} $L^p$ topology and derive a first order lower bound for the functional $F_\eps$ defined in \eqref{def:feps}.

\begin{pro}\label{prop:strcomp}
 Let $I\subset \R$ be a bounded open interval
and let {$(\f_\eps)_\eps$ be a sequence / family of functions such that $F_\eps(\f_\eps;I)\le M|\log\eps|$ as $\eps\to 0$  for 
some fixed $M>0$. Then for a subsequence $\eps\to 0$ (still denoted $(\f_\eps)$), there exists a sequence $(k_\eps)_\eps$ of integers such that 
$\f_\eps-k_\eps \pi\to \f$ {\bf strongly} in $L^p(I)$ for every $p\in [1, \infty)$, where $\f$ {is a piecewise constant function in }$BV(I;\pi \ZZ)$. Furthermore, every sequence / family $(\f_\eps)$ satisfying the above convergence as $\eps\to 0$ yields the following energy lower bound at first order:}\footnote{In the following, we use the $BV$-seminorm of a function $f:\Omega\to \R$: $\|\f\|_{BV(\Omega)}=|D\f|(\Omega)$.}
\begin{equation}\label{eq:abslibd}
\liminf_{\eps\to 0}  \frac{1}{|\log\eps|}F_\eps(\f_\eps;I)
\ge  \|\f\|_{BV}=\sum_{t \in S(\f)} |\f(t+)-\f(t-)|,
\end{equation}
where $S(\f)$ denotes the finite set of jumps of $\f$ and $\f(t\pm)\in \pi \ZZ$ the traces of $\f$ at a jump $t$.
\end{pro}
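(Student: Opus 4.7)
The plan combines the $L^p$-boundedness of Proposition~\ref{prop:orliczbd} with the rearrangement inequality of Lemma~\ref{lem:intbdABS}, via a truncation decomposition of $F_\eps$. Proposition~\ref{prop:orliczbd} provides integers $k_\eps$ such that $\tilde\f_\eps:=\f_\eps-\pi k_\eps$ is uniformly bounded in every $L^p(I)$, $p<\infty$; since $F_\eps$ is invariant under translation by integer multiples of $\pi$, one has $F_\eps(\f_\eps;J)=F_\eps(\tilde\f_\eps;J)$ for every $J\subset I$. The penalty part of the energy bound gives
$$\int_I\sin^2\tilde\f_\eps\,dt\le 2\pi M\eps|\log\eps|\to 0,$$
so $\tilde\f_\eps$ is close to $\pi\ZZ$ outside a set of measure $O(\eps|\log\eps|)$. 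Defining the nearest-integer-multiple discretization $\hat\f_\eps:=\pi\cdot\mathrm{round}(\tilde\f_\eps/\pi)$, one has $|\tilde\f_\eps-\hat\f_\eps|\to 0$ in measure, and the $L^{p+1}$-bound on $\tilde\f_\eps$ upgrades this (via Vitali's theorem) to $\tilde\f_\eps-\hat\f_\eps\to 0$ in every $L^p(I)$. Hence it suffices to establish strong $L^1$-compactness of the $\pi\ZZ$-valued sequence $\hat\f_\eps$ and to identify its limit as a piecewise constant $\f\in BV(I;\pi\ZZ)$.

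The core step is a local lower bound near each potential jump. For $k\in\ZZ$ set $\psi_k:=T_k\tilde\f_\eps-k\pi\in[0,\pi]$ with $T_k$ from \eqref{trunc1}. The monotonicity of $T_k$ in its argument makes the increments $\psi_k(s)-\psi_k(t)$ share the sign of $\tilde\f_\eps(s)-\tilde\f_\eps(t)$ and sum to it, so $(\sum a_k)^2\ge\sum a_k^2$ (when $a_k\ge 0$) gives
$$|\tilde\f_\eps(s)-\tilde\f_\eps(t)|^2\ge\sum_{k\in\ZZ}|\psi_k(s)-\psi_k(t)|^2,$$
while a pointwise check yields $\sin^2\tilde\f_\eps=\sum_k\sin^2\psi_k$. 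Hence $F_\eps$ is super-additive under truncation: $F_\eps(\tilde\f_\eps;J)\ge\sum_k F_\eps(T_k\tilde\f_\eps;J)$ for every subinterval $J\subset I$. For each $k$ and each $J$ on which $\hat\f_\eps$ crosses from $\le k\pi$ on a fixed fraction of the left half of $J$ to $\ge(k+1)\pi$ on a fixed fraction of the right half, I will set
$$A_k^\eps:=\{\tilde\f_\eps>(k+1)\pi-\gamma\}\cap J,\qquad B_k^\eps:=\{\tilde\f_\eps<k\pi+\gamma\}\cap J$$
for $\gamma\in(0,\pi/4)$ small. The penalty estimate forces $|J\setminus(A_k^\eps\cup B_k^\eps)|\le C(\gamma)\eps|\log\eps|$ while the crossing hypothesis gives $|A_k^\eps|,|B_k^\eps|\gtrsim |J|$. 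Applying Lemma~\ref{lem:intbdABS}\,\eqref{eq:rearr2} with $|\tilde\f_\eps(s)-\tilde\f_\eps(t)|\ge\pi-2\gamma$ on $A_k^\eps\times B_k^\eps$ and symmetrizing the double integral (to cancel the prefactor $\tfrac1{2\pi}$) then yields
$$F_\eps(T_k\tilde\f_\eps;J)\ge \frac{(\pi-2\gamma)^2}{\pi}|\log\eps|-C(\gamma,J).$$

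Summing over finitely many disjoint candidate jump-intervals $J_j$ (each contributing $|m_j-n_j|$ relevant values of $k$ for a discrete jump $n_j\pi\to m_j\pi$ of $\hat\f_\eps$), together with the super-additivity, produces
$$\liminf_{\eps\to 0}\frac{F_\eps(\f_\eps;I)}{|\log\eps|}\ge \frac{(\pi-2\gamma)^2}{\pi}\sum_j|m_j-n_j|\xrightarrow{\gamma\to 0}\pi\sum_j|m_j-n_j|.$$
Since the left-hand side is bounded by $M$, any finite collection of discrete jumps has total variation $\le M$; hence $\hat\f_\eps$ is uniformly bounded in $BV(I;\pi\ZZ)$, and Helly's selection theorem yields a subsequence converging in $L^1$ (hence a.e.\ and, using $L^\infty$-boundedness from $BV$, in every $L^p$) to a piecewise constant $\f\in BV(I;\pi\ZZ)$. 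Combined with $\tilde\f_\eps-\hat\f_\eps\to 0$ in every $L^p$, this gives the strong $L^p$-convergence, and \eqref{eq:abslibd} is then the summed-up local lower bound. The principal obstacle is extracting the sharp constant $\pi$ in the local lower bound: the prefactor $\tfrac1{2\pi}$ in $F_\eps$ must be cancelled exactly by the symmetry-doubling of $A_k^\eps\times B_k^\eps$, and the transition-zone measure must be controlled at the sharp order $\eps|\log\eps|$ so that \eqref{eq:rearr2} extracts a full factor $|\log\eps|$ rather than a strictly smaller fraction. A secondary subtlety is that the $BV$-compactness of $\hat\f_\eps$ is intertwined with the lower bound---it emerges by contradiction from the same local estimate---so the two must be set up simultaneously.
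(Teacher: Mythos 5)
Your plan correctly identifies the key tools: the $L^p$ boundedness from Proposition~\ref{prop:orliczbd}, the truncation super-additivity of $F_\eps$, and the rearrangement inequality of Lemma~\ref{lem:intbdABS}. The rounding step ($\tilde\f_\eps-\hat\f_\eps\to 0$ in every $L^p$) and the local lower bound near a persistent crossing are sound. However, the compactness step has a genuine gap.

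You deduce ``$\hat\f_\eps$ is uniformly bounded in $BV(I;\pi\ZZ)$'' from the local lower bound and then invoke Helly's theorem. This does not follow, and in fact is false under the assumed energy bound. Your local lower bound only detects a \emph{net} jump of $\hat\f_\eps$ over a fixed interval $J$ on which a crossing ``$\le k\pi$ on a fraction of the left half, $\ge (k+1)\pi$ on a fraction of the right half'' persists. It is blind to pairs of tightly clustered, cancelling jumps. Concretely, consider $N_\eps\sim |\log\eps|$ ``bumps'' (smooth excursions from $0$ up to $\pi$ and back, of width $\sim\eps$), placed at mutually separated locations in $I$. By scale invariance of the $\dot H^{1/2}$ seminorm, each bump contributes $O(1)$ to the nonlocal term, and its penalty contribution is $O(1)$ since the excursion width matches $\eps$; the cross-interactions decay like $\eps^2/d^2$. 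Hence $F_\eps\le C|\log\eps|$, yet $\|\hat\f_\eps\|_{BV}\sim 2\pi N_\eps\to\infty$ while $\hat\f_\eps\to 0$ in $L^1$. So $BV$ boundedness of the rounded sequence is unattainable, Helly's theorem cannot be applied, and the deduction ``any finite collection of discrete jumps has total variation $\le M$, hence $BV$ bounded'' is a non-sequitur: it controls only separated macroscopic jumps, not the total variation.

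The paper extracts the strong $L^1$ compactness without ever claiming $BV$ boundedness of the approximating sequence. In the truncated case $\f_\eps\in[0,\pi]$, it passes to the Young measure $\mu_t$, uses the vanishing penalty to locate $\supp\mu_t\subset\{0,\pi\}$, and then uses the rearrangement lemma to show the set $S$ of points where $\mu_t$ is not a pure Dirac is \emph{finite} (each point of $S$ forces $\approx\pi|\log\eps|$ of energy via \eqref{eq:rearr}). Valadier's theorem then upgrades weak-$*$ convergence to strong $L^1$ convergence precisely because $\mu_t$ is Dirac a.e., and the general case follows by the Garroni--M\"uller truncation argument with $T^M$. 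This is a genuinely different compactness mechanism than Helly/$BV$. If you want to complete your proof, you must replace the Helly step by the Young measure argument (or an equivalent ``sets of finite perimeter'' argument on the level sets of the truncations), since strong $L^1$ compactness of $\hat\f_\eps$ cannot be obtained from a $BV$ bound that does not exist.
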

\begin{proof}{} We may assume that $I=(0,1)$ (by the same argument as in the proof of Proposition \ref{prop:orliczbd}). We start by treating a particular case and then we prove the general case.

\medskip

\nd {\bf A particular case.} Assume that
$\f_\eps$ takes values into $[0,\pi]$ for every $\eps$. We can then follow
the argument of Alberti-Bouchitt\'e-Seppecher \cite{AlbertiBouchittSeppeche:1994a}:
Since $(\f_\eps)$ is uniformly bounded, then for a subsequence, we can assume that
$\f_\eps$ is weakly$^*$ convergent in $L^\infty(I)$ to a function $\f:I\to [0, \pi]$. 
By the fundamental theorem of Young measures (see Ball \cite{Ball:1989aa} or M\"uller \cite{Muller:1999a}), there exists a
family of probability measures $\{\mu_t\}_{t\in I}$ (depending measurably on $t\in I$) over the range $[0,\pi]$
such that for any continuous test function
$\zeta\in C^0([0,\pi]\times[0,1])$,
\[
\int_0^1 \zeta(\f_\eps(t),t)\, dt \to \int_0^1 \int_0^\pi \zeta(z,t)\,d \mu_t(z)\,dt \quad \textrm{ as } \eps\to 0.
\]
Choosing $\zeta(z,t)=\sin^2 z$ for every $z\in [0,\pi]$ and $t\in I$, since $F_\eps(\f_\eps;I)\le M|\log\eps|$, it follows that 
\[
0 = \lim_{\eps\to 0} \int_0^1\sin^2 \f_\eps(t)\, dt = 
\int_0^1 \int_0^\pi \sin^2 z \,d \mu_t(z)
dt,
\]
and since $\sin^2 z> 0$ for $z\in (0, \pi)$, it follows that $\supp \mu_t \subset \{0,\pi\}$ 
for almost every $t$, and we can write 
$\mu_t = \theta(t) \delta_0 + (1-\theta(t))\delta_\pi$ for 
some {measurable} function $\theta:I\to [0,1]$. 

\medskip

\nd {\it Claim: For a.e. $t\in I$, $\theta(t)\in \{0,1\}$, i.e., $\mu_t$ is a Dirac measure.}

\medskip 

\nd To prove the claim, we first set
\[
S=  I\setminus \left\{ t_0\in I \, : \, \lim_{r\to 0}\frac{1}{2r} \int_{t_0-r}^{t_0+r} \theta(t)\, dt \text{ exists and belongs to } 
\{0,1\}  \right\}.
\]
Setting $I_r(t):=(t-r,t+r)\subset I$ for $t\in I$ and small $r>0$, the above definition implies for $t_0\in S$ that there exist $\delta>0$ and a decreasing sequence $r_k\to 0$ such that for all $k$,
\be
\label{new_def}
\frac1{2r_k} \int_{I_{r_k}(t_0)} \theta(s)\, ds\in(\delta,1-\delta).
\ee
Indeed, the function $r\mapsto \frac1{2r} \int_{I_{r}(t_0)} \theta(s)\, ds$ is continuous for small $r>0$, which implies that 
$J_{t_0}:=\big[\liminf_{r\to 0} \frac1{2r} \int_{I_{r}(t_0)} \theta(s)\, ds, \limsup_{r\to 0} \frac1{2r} \int_{I_{r}(t_0)} \theta(s)\, ds\big]$ is a closed interval $\subset [0,1]$ that is not reduced to $\{0\}$ or $\{1\}$ for $t_0\in S$. Therefore, there exists $\delta>0$ such that 
$J_{t_0}\cap (\delta, 1-\delta)\neq \emptyset$ which yields \eqref{new_def}.

\medskip

\nd {\bf Step 1}. {\it We show that for every $t_0\in S$ and  
any $\gamma\in (0, \pi)$,
we have that there exists a decreasing sequence $r_k\to 0$ such that}
\begin{equation}\label{eq:I1NEW}
\liminf_{k\to\infty}\liminf_{\eps \to 0} \frac{|I_{r_k}(t_0)\cap \{\f_\eps<\gamma \}|}{|I_{r_k}(t_0)|}>0
\end{equation}
{\it and}
\begin{equation}\label{eq:I2NEW}
\liminf_{k\to\infty}\liminf_{\eps \to 0} \frac{|I_{r_k}(t_0)\cap \{\f_\eps>\pi-\gamma \}|}{|I_{r_k}(t_0)|}>0.
\end{equation}
Indeed, let $t_0\in S$ with $\delta>0$ and $r_k\to 0$ satisfying \eqref{new_def}. We choose $\gamma_1$ and $\gamma_2$ such that $0<\gamma_1<\gamma_2<\pi$ and we consider a test function $\zeta=\zeta(z)$ such that $\zeta$ is continuous on $[0,1]$, 
$\zeta=1$ on $[0,\gamma_1]$, $\zeta=0$ on $[\gamma_2,\pi]$ and $0<\zeta<1$ on $(\gamma_1,\gamma_2)$. Then 
\[
\int_{I_r(t_0)} \zeta(\f_\eps)\, dt \to \int_{I_r(t_0)} \int_0^\pi\zeta(z) d\mu_t(z)\, dt = \int_{I_r(t_0)} \theta(t) \, dt \quad 
\textrm{as } \eps\to 0,
\]
 because $\{\mu_t\}_{t\in I_r(t_0)}$ is also the family of Young measures of the restriction $(\f_\eps\big|_{I_r(t_0)})_\eps$ for every small $r$. As $\int_{I_r(t_0)} \zeta(\f_\eps)\, dt\leq |I_r(t_0)\cap \{\f_\eps<\gamma_2\}|$ we deduce that
$$\int_{I_r(t_0)} \theta(t) \, dt \leq \liminf_{\eps\to 0} |I_r(t_0)\cap \{\f_\eps<\gamma_2\}|.$$
Setting $r:=r_k$ and $\gamma:=\gamma_2$, after dividing by $2r_k$ and passing to $\liminf$ as $k\to\infty$, the desired inequality \eqref{eq:I1NEW} holds true. The proof of  \eqref{eq:I2NEW} is analogous.

\medskip

\nd {\bf Step 2}.{\it  We show that the set $S$ is finite}. 
For that, let $(I_j)$, $1\le j\le J$ 
be a finite family of disjoint open  intervals inside $I$ such that $I_j\cap S\neq \emptyset$ for every $j$.
For some $\gamma\in (0, \frac\pi2)$, 
we consider the sets
$A_j^\eps=I_j\cap \{\f_\eps<\gamma\}$ and 
$B_j^\eps=I_j\cap \{\f_\eps>\pi-\gamma\}$. For every $j$, there exists $t_j\in I_j\cap S$ so that by \eqref{eq:I1NEW} and \eqref{eq:I2NEW}, there exist $r_j, \tilde r_j>0$ small satisfying $I_{r_j}(t_j), I_{\tilde r_j}(t_j) \subset I_j$ and
$$\liminf_{\eps\to 0} |A_j^\eps|\geq \liminf_{\eps \to 0} {|I_{r_j}(t_j)\cap \{\f_\eps<\gamma \}|}>0, \quad 
\liminf_{\eps\to 0} |B_j^\eps|\geq \liminf_{\eps \to 0} {|I_{\tilde r_j}(t_j)\cap \{\f_\eps>\pi-\gamma \}|}>0. $$
Furthermore,
since $\sin^2 z\ge c(\gamma)>0$ for every $z\in (\gamma,\pi-\gamma)$, we deduce by \eqref{esti_rho} that 
$|I_j\setminus(A_j^\eps \cup B_j^\eps)|\le  C(\gamma, M)\eps|\log\eps|$. Applying the rearrangement result \eqref{eq:rearr} with $|A_j^\eps|\le |I_j|-|B_j^\eps|$, we obtain
\[
\int_{A_j^\eps} \int_{B_j^\eps}
\frac{1}{|t-s|^2} \, dt ds \ge 
\log\frac{|B_j^\eps|}{|I_j|} \frac{|A_j^\eps|}{|I_j|}
-\log\frac{|I_j|-|A_j^\eps|-|B_j^\eps|}{|I_j|}=\log (a_j^\eps b_j^\eps)-\log \rho_j^\eps,
\]
within the notation $a_j^\eps:=\frac{|A_j^\eps|}{|I_j|}$, $b_j^\eps:=\frac{|B_j^\eps|}{|I_j|}$ and 
$\rho_j^\eps:=\frac{|I_j\setminus(A_j^\eps \cup B_j^\eps)|}{|I_j|} = 1-(a_j^\eps+b_j^\eps)$. Combined with the argument in \eqref{esti_rho} and \eqref{inter1}, we obtain
\[
 \frac{1}{|\log\eps|}F_\eps( \f_\eps;I_j) \ge \frac{(\pi-2\gamma)^2}{\pi|\log\eps|} \left( 
\log (a_j^\eps b_j^\eps)-\log\rho_j^\eps+ \frac{|I_j|c(\gamma)}{2(\pi-2\gamma)^2 \eps} 
\rho_j^\eps\right)  .
\]
Using $\liminf_{\eps\to 0} a_j^\eps b_j^\eps >0$ and\footnote{\label{foot1}If $K>1$ and $f(\rho)=K\rho-\log \rho$ for $\rho\in (0,1)$, then the minimum of $f$ is achieved at $\rho_K=1/K$ and $f(\rho)\geq f(\rho_K)=\log K+1$.}
$-\log\rho_j^\eps + K\rho_j^\eps \ge \log K+1$
for $K=\frac{|I_j|c(\gamma)}{2(\pi-2\gamma)^2\eps}\gg 1$, then summing over $j$ we conclude 
that
\begin{equation}\label{eq:29a}
M\ge\sum_{j=1}^J \liminf_{\eps\to 0}\frac{1}{|\log\eps|} F_\eps( \f_\eps;I_j) \ge J \liminf_{\eps\to 0}  \frac{(\pi-2\gamma)^2}
{\pi |\log\eps|} 
\left(|\log\eps|+\tilde c
\right)=\frac{(\pi-2\gamma)^2}
{\pi} J
\end{equation}
for a constant $\tilde c$ depending on $\gamma$ and the product $\Pi_j |I_j|$.
Therefore, $J$ is bounded by $M$ (up to a constant), hence $S$ must be a finite set. 

\medskip

\nd {\it Proof of Claim:} By the above considerations, we deduce that we can choose a representative $\theta$ defined on $I$ such that for every $t\in I\setminus S$, 
$$\theta(t)=\lim_{r\to 0} \frac{1}{2r}\int_{t-r}^{t+r}\theta(s) \, ds \in \{0,1\}.$$
If $t_1<t_2$ are two consecutive points in the (finite) set $S$, then $\theta$ satisfies the above condition for every $t\in (t_1,t_2)$ which implies that either $\theta\equiv 0$, or $\theta\equiv 1$ in the interval $(t_1,t_2)$. In other words, $\theta$ is a piecewise constant functions with values into $\{0,1\}$ whose jump points belong to $S$ (i.e., $\theta$ is a characteristic function of a finite union of disjoint open intervals, so $\theta\in BV$). 
In particular, this shows that $\mu_t$ is a Dirac measure for almost every $t$, finishing the proof of the Claim.

It now follows that $\f_\eps\to \f$ in $L^1(I)$ by a well known property of Young measures (see
Valadier \cite[Theorem 9]{Valadier:1994aa}). Moreover, since $\f(t)=\int_0^\pi z\, d\mu_t(z)=(1-\theta(t))\pi$ for a.e. $t\in I$, 
we find a representative $\f$ that is piecewise constant with values into $\{0,\pi\}$ almost everywhere,  and the jump points of $\f$ are those of $\theta$, hence included in $S$. By the finiteness of $S$ we obtain $\f\in BV(I;\{0,\pi\})$. Since both $\f$ and all $\f_\e$ are bounded, we obtain the convergence $\f_\eps\to \f$ in $L^p(I)$ for $1\le p<\infty$.

From \eqref{eq:29a} we find 
\[
\liminf_{\e\to 0} \frac1{|\log\e|} F_\e(\f_\e;I) \ge \frac{(\pi-2\gamma)^2}{\pi} \Hh^0(S),
\]
and letting $\gamma\to 0$ we find that $\liminf_{\gamma\to 0}\frac{(\pi-2\gamma)^2}{\pi} \Hh^0(S)\geq \|\f\|_{BV(I)}$, finishing the proof in our assumed particular case.

\medskip

\nd {\bf General case.} To recover the general case, we use a truncation argument similar to that of  Garroni-M\"uller \cite{GarroniMuller:2006a}.
From Proposition \ref{prop:orliczbd}, we find integers $k_\e$ such that $\f_\e-k_\e \pi$ is bounded in $L^p(I)$ for every $p\geq 1$. We may assume that $k_\e=0$ and (choosing a subsequence) 
$\f_\e \rightharpoonup \f$ in $L^p(I)$ for any $p\geq 1$. 
For every $k\in\ZZ$, the particular case above applied to the truncations $T_k\f_\e$ defined at \eqref{trunc1} yields for further subsequences,
$T_k \f_\e\to f_k$ in $L^1$ for some $f_k\in BV(I;\{k\pi,(k+1)\pi\})$
and 
\[
\liminf_{\e\to 0} \frac1{|\log\e|} F_\e(T_k \f_\e;I) \ge \|f_k\|_{BV}.
\]
For any positive integer $M$, we now consider another truncation operator:
\[
T^M \psi = \big(\psi\vee (-M\pi)\big)\wedge M\pi.
\]
Adding up the above pieces $T_k \f_\e$ in the set $\{k\pi\leq \f_\eps\leq (k+1)\pi\}$ for $k=-M, \dots, M-1$, we obtain the existence of $\f^M\in BV(I;\pi\ZZ)$ such that 
\[
T^M \f_\e \to \f^M\quad\text{in $L^1(I)$}.
\]
As in \cite{GarroniMuller:2006a}, we note that $|\f_\e|\le \frac{|\f_\e|^2}{M}$ on $\{|\f_\e|\geq M\}$. The uniform 
$L^2$-bound and weak lower semicontinuity of the $L^1$ norm then yield for every $M>0$:
\[
\|\f^M-\f\|_{L^1} \le \liminf_{\e\to 0} \|T^M \f_\e - \f_\e\|_{L^1} = \liminf_{\e\to 0} \int_{\{|\f_\e|>M\}} \left( |\f_\e|-M \right)\, dx \le \frac CM.
\]
Splitting $\f_\e-\f = \f_\e - T^M \f_\e + T^M \f_\e - \f^M + \f^M-\f$, we find that 
\[
\|\f_\e - \f\|_{L^1} \le \frac CM + \|T^M \f_\e - \f^M\|_{L^1} + \frac CM.
\]
{As $T^M \f_\e \to \f^M$ in $L^1$ and $M$ is arbitrary, we obtain $\f_\e\to \f$ in $L^1(I)$. As $(\f_\eps)$ is uniformly bounded in any $L^p$, by interpolation, we obtain $\f_\e\to \f$ in $L^p(I)$ for all $p\in[1,\infty)$. We also obtain that $T_k \f=f_k$.
By super-additivity in $F_\eps$, Fatou's lemma and the lower bound from the particular case 
\[
\infty>\liminf_{\e\to 0} \frac1{|\log \e|}F_\eps(\f_\eps;I ) \ge \liminf_{\e\to 0} \frac1{|\log \e|} \sum_k F_\eps(T_k \f_\eps;I)
\ge  \sum_k \|f_k\|_{BV}.
\]
As $f_k$ are piecewise constant with the only possible jumps of size $\pi$, we find that $\|f_k \|_{BV} = 0$ for all but finitely many $k$. Since $\f=\sum_k T_k \f=\sum_k f_k$, we deduce that $\f\in BV(I;\pi\ZZ)$, and 
using additivity of the $BV$ seminorm of $f_k$ taking the values $\{k\pi, (k+1)\pi\}$, we finally obtain \eqref{eq:abslibd}.}
\qed
\end{proof}

\medskip

In Proposition~\ref{prop:strcomp}, the lower bounds for $F_\eps$ are accurate up to $o(|\log\eps|)$.
In the following, we improve the error to $O(1)$ by means of a  co-area argument inspired by 
the work of Sandier \cite{Sandier:1998a} on the Ginzburg-Landau energy  (a different method was found by Jerrard \cite{Jerrard}). To this end, we need to compare
the nonlocal energy of a (scalar) function to that of a ${\Ss^0}\approx\{0,\pi\}$-valued variant of the same function
(the corresponding step in Sandier's argument compares the Dirichlet energy of a complex valued function $u$ with 
a $\Ss^1$-valued variant given by $\frac{u}{|u|}$).

\begin{lem}\label{lem:bdviaTheta}
Let $I$ be a bounded interval and  $\f\in H^{1/2}(I)$with $0\le \f\le \pi$ and define $\hat \f:I\to\{0,\pi\}$ by
\[
\hat \f= \begin{cases}
0 & \quad \f<\frac\pi 2, \\
\pi & \quad \f\ge \frac\pi 2.
\end{cases}
\]
For $0\le \gamma \le \frac\pi 2 $ we let 
$E_\gamma= \{s\in I\, : \, |\f(s)-\frac\pi 2 |>\frac\pi2- \gamma\}$.
Let 
\[
\Theta_\f(\gamma) = \int_{E_\gamma} \int_{E_\gamma} \left|\frac{\hat \f(s)-\hat \f(t)}{s-t}\right|^2 ds dt, 
\quad \gamma\in [0, \frac \pi2],
\]
then $\Theta_\f$ is a nondecreasing function and
\begin{equation}
\int_I \int_I \left|\frac{ \f(s)- \f(t)}{s-t}\right|^2 ds dt\ge \int_0^{\frac\pi 2} (1-\frac{2\gamma}{\pi})^2 d\Theta_\f(\gamma)\, 
\end{equation}
where $d\Theta_\f$ denotes the measure corresponding to the (distributional) derivative of $\Theta_\f$.
\end{lem}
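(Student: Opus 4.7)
The monotonicity of $\Theta_\f$ is immediate: for $0 \le \gamma_1 \le \gamma_2 \le \pi/2$, the condition $|\f - \pi/2| > \pi/2 - \gamma$ is weaker for larger $\gamma$, so $E_{\gamma_1} \subseteq E_{\gamma_2}$, and since the integrand defining $\Theta_\f$ is non-negative, $\Theta_\f(\gamma_1) \le \Theta_\f(\gamma_2)$.

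For the main inequality I will use a co-area / layer-cake argument in the parameter $\gamma$. Only pairs $(s,t)$ with $\hat\f(s) \neq \hat\f(t)$ contribute to $\Theta_\f$, and up to the symmetry $s \leftrightarrow t$ (which yields a factor of $2$) I may assume $\f(s) < \pi/2 \le \f(t)$. For such a pair, using $\f \in [0,\pi]$, one checks that $s \in E_\gamma \Leftrightarrow \f(s) < \gamma$ and $t \in E_\gamma \Leftrightarrow \f(t) > \pi - \gamma$. Setting $\gamma^*(s,t) := \max(\f(s),\ \pi - \f(t)) \in [0, \pi/2]$, the pair therefore lies in $E_\gamma \times E_\gamma$ iff $\gamma > \gamma^*(s,t)$, so $\Theta_\f$ is the left-continuous distribution function of the pushforward, via $\gamma^*$, of the measure $d\mu(s,t) = 2\pi^2 (s-t)^{-2}\, ds\, dt$ restricted to $\{\f(s) < \pi/2 \le \f(t)\}$. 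Consequently, Fubini gives for any non-negative Borel $h$ on $[0, \pi/2]$ the identity
\[
\int_0^{\pi/2} h(\gamma)\, d\Theta_\f(\gamma) = 2 \iint_{\{\f(s) < \pi/2 \le \f(t)\}} \frac{\pi^2\, h(\gamma^*(s,t))}{(s-t)^2}\, ds\, dt.
\]

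Specialising to $h(\gamma) = (1 - 2\gamma/\pi)^2$, the numerator becomes $(\pi - 2\gamma^*(s,t))^2 = \min(\pi - 2\f(s),\ 2\f(t) - \pi)^2$; a short case split according to whether $\f(s) + \f(t) \lessgtr \pi$ shows that each argument of this minimum is bounded by $\f(t) - \f(s) = |\f(s) - \f(t)|$. Substituting and using the symmetry of the final integrand to drop the sign restriction on $(s,t)$ then yields
\[
\int_0^{\pi/2} (1 - 2\gamma/\pi)^2\, d\Theta_\f(\gamma) \le 2 \iint_{\{\f(s) < \pi/2 \le \f(t)\}} \frac{|\f(s) - \f(t)|^2}{(s-t)^2}\, ds\, dt \le \int_I \int_I \frac{|\f(s) - \f(t)|^2}{(s-t)^2}\, ds\, dt,
\]
which is the desired bound. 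The technical point to verify carefully will be the Lebesgue--Stieltjes bookkeeping in the Fubini step — namely that $\Theta_\f$, defined via the strict inequality $|\f - \pi/2| > \pi/2 - \gamma$, is indeed the (left-continuous) distribution function of the stated pushforward, and that any atom of $d\Theta_\f$ at the endpoint $\gamma = \pi/2$ (which can appear when $\{\f = \pi/2\}$ has positive measure) does not spoil the estimate. This last issue is harmless because the weight $(1 - 2\gamma/\pi)^2$ vanishes at $\gamma = \pi/2$, so any such atom contributes zero to the final inequality.
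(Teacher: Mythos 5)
Your proof is correct, and it takes a route that is recognisably related to, but organised quite differently from, the paper's. The paper introduces the auxiliary function $\tilde \Theta_\f(\gamma)=\int_{E_\gamma}\int_{E_\gamma}|\f(s)-\f(t)|^2/|s-t|^2\,ds\,dt$, proves the incremental comparison $\tilde\Theta_\f(\gamma)-\tilde\Theta_\f(\tilde\gamma)\ge (1-2\gamma/\pi)^2(\Theta_\f(\gamma)-\Theta_\f(\tilde\gamma))$ for $\gamma>\tilde\gamma$ (using that on $E_\gamma^2\setminus E_{\tilde\gamma}^2$ one has $|\f(s)-\f(t)|\geq \pi-2\gamma$ whenever $\hat\f(s)\neq\hat\f(t)$), lets $\tilde\gamma\to\gamma$ to get the measure inequality $d\tilde\Theta_\f\ge(1-2\gamma/\pi)^2\,d\Theta_\f$, and then integrates and bounds $\tilde\Theta_\f(\pi/2)$ by the full double integral. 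You instead identify $d\Theta_\f$ (up to the atom at $\pi/2$, which you correctly dismiss because the weight vanishes there) as the pushforward of the weight $2\pi^2(s-t)^{-2}\,ds\,dt$ on $\{\f(s)<\pi/2\le\f(t)\}$ by the entry time $\gamma^*(s,t)=\max(\f(s),\pi-\f(t))$, and then use the single pointwise bound $\pi-2\gamma^*(s,t)\le\f(t)-\f(s)$. Both arguments rest on the same geometric observation, but yours avoids introducing $\tilde\Theta_\f$ and replaces the distributional-derivative passage by an explicit Fubini/pushforward computation; the price is a bit more measure-theoretic bookkeeping (the left-continuity/atom issue), which you do flag and handle correctly, whereas the paper's incremental argument sidesteps it. One small wording slip: when you write that ``each argument of this minimum is bounded by $\f(t)-\f(s)$'', it is not both arguments simultaneously but rather one or the other depending on the sign of $\f(s)+\f(t)-\pi$; the conclusion $\min(\pi-2\f(s),\,2\f(t)-\pi)\le \f(t)-\f(s)$ is what you actually need and is correct.
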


\begin{proof}{}
Let 
\[
\tilde\Theta_\f(\gamma) = \int_{E_\gamma} \int_{E_\gamma} \left|\frac{\f(s)-\f(t)}{s-t}\right|^2 \, ds dt, \quad \gamma\in [0, \frac \pi2].
\]
{As $(E_\gamma)_\gamma$ is nondecreasing in $\gamma$ (with respect to inclusion),} then $\Theta_\f$ and $\tilde \Theta_\f$ are nondecreasing functions. For $\pi\geq \gamma>\tilde \gamma\geq 0$, we have that 
\[
\tilde \Theta_\f(\gamma)-\tilde \Theta_\f(\tilde \gamma) \ge \left(\frac{\pi-2\gamma}{\pi} \right)^2 \left( \Theta_\f(\gamma)-\Theta_\f(\tilde \gamma)\right).
\]
Letting $\tilde \gamma\to \gamma$, we see that the  distributional derivatives satisfy (as measures on $[0,\frac\pi2]$)
\[
d \tilde \Theta_\f(\gamma) \ge \frac{(\pi -2\gamma)^2}{\pi^2} d \Theta_\f(\gamma)
\]
and since $\tilde \Theta_\f(0)=0$, we obtain by integrating over $\gamma\in (0, \frac\pi2)$:
\[
\int_I \int_I \left|\frac{ \f(s)- \f(t)}{s-t}\right|^2 \, ds dt\ge \tilde \Theta_\f(\frac\pi 2) \ge \int_0^{\frac\pi 2} (1-\frac{2\gamma}{\pi})^2 \, d \Theta_\f(\gamma).
\]
\qed
\end{proof}

Now we show a more precise lower bound of $F_\eps$ up to an error $O(1)$:

\begin{pro}\label{prop:lolobd}
There is a universal constant $M_0>0$ such that the following holds.
Assume $\ell\in \ZZ$ and 
 $\f_\eps\to \ell \f_*$ in $L^1((-1,1))$ {for a sequence / family} $\eps\to 0$, where 
$\f_*(x)={\pi}$ for $x\in (-1,0)$ and $\f_*(x)=0$ for $x\in (0,1)$.
Then for every $r\in (0,1)$ we have
\begin{equation}\label{eq:lolobd1}
\liminf_{\eps\to 0} \bigg(F_\eps\big(\f_\eps; (-r,r)\big) - \pi |\ell| \log\frac{r}{\eps}\bigg) \ge -|\ell| M_0.
\end{equation}
\end{pro}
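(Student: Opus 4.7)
The scaling $\tilde{\f}_\eps(y):=\f_\eps(ry)$ satisfies $F_\eps(\f_\eps;(-r,r))=F_{\eps/r}(\tilde{\f}_\eps;(-1,1))$ and $\tilde{\f}_\eps\to\ell\f_*$ in $L^1((-1,1))$ (since $\f_*$ depends only on the sign of its argument), so it suffices to prove \eqref{eq:lolobd1} when $r=1$. Moreover, since $F_\eps$ is invariant under $\f\mapsto -\f$ and under a shift by an integer multiple of $\pi$, we may assume $\ell\ge 0$; and for $\ell\ge 2$ a super-additivity argument will reduce to $\ell=1$. Indeed, the $\sin^2$-term decomposes exactly under the truncations $T_k$ of \eqref{trunc1} (their supports $\{k\pi\le\f_\eps\le (k+1)\pi\}$ being disjoint), while for the nonlocal form the pieces $T_k\f_\eps(s)-T_k\f_\eps(t)$ all share the sign of $\f_\eps(s)-\f_\eps(t)$, giving $\sum_k(T_k\f_\eps(s)-T_k\f_\eps(t))^2\le (\f_\eps(s)-\f_\eps(t))^2$. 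Each piece $\f_\eps^{(k)}:=T_k\f_\eps-k\pi\in[0,\pi]$ converges to $\pi\mathbf{1}_{(-1,0)}$ in $L^1$, so the remaining task is to prove \eqref{eq:lolobd1} when $r=\ell=1$ and $\f_\eps\in[0,\pi]$.

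\textbf{Co-area decomposition.} For the nonlocal part, I apply Lemma \ref{lem:bdviaTheta}; since $\f_\eps\in[0,\pi]$ forces $E_0=\emptyset$ and hence $\Theta_{\f_\eps}(0)=0$, Stieltjes integration by parts with $f(\gamma)=(1-2\gamma/\pi)^2$ yields
\[
\int_{(-1,1)^2}\Big|\frac{\f_\eps(s)-\f_\eps(t)}{s-t}\Big|^2\,ds\,dt \ge \frac{4}{\pi}\int_0^{\pi/2}\Big(1-\frac{2\gamma}{\pi}\Big)\Theta_{\f_\eps}(\gamma)\,d\gamma.
\]
For the potential, the identity $\sin^2\f=\int_0^{\pi/2}\sin(2\gamma)\mathbf{1}_{\gamma<\min(\f,\pi-\f)}\,d\gamma$ together with Fubini gives $\int_{-1}^1\sin^2\f_\eps\,dt=\int_0^{\pi/2}\sin(2\gamma)\rho_\gamma^\eps\,d\gamma$ with $\rho_\gamma^\eps:=|(-1,1)\setminus E_\gamma^\eps|$. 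Lemma \ref{lem:intbdABS} applied to $A_\gamma^\eps=\{\f_\eps<\gamma\}$ and $B_\gamma^\eps=\{\f_\eps>\pi-\gamma\}$ produces $\Theta_{\f_\eps}(\gamma)\ge 2\pi^2\log(m_\gamma^\eps/\rho_\gamma^\eps)$ where $m_\gamma^\eps:=(2-|A_\gamma^\eps|)(2-|B_\gamma^\eps|)/2$. Eliminating $\rho_\gamma^\eps$ pointwise in $\gamma$ through the elementary inequality of footnote~\ref{foot1} with $K=\sin(2\gamma)/(8\pi(1-2\gamma/\pi)\eps)$ and integrating in $\gamma$ (using $\int_0^{\pi/2}4(1-2\gamma/\pi)\,d\gamma=\pi$), I obtain
\[
F_\eps(\f_\eps;(-1,1)) \ge \pi\log\frac{1}{\eps} + \int_0^{\pi/2}4\Big(1-\frac{2\gamma}{\pi}\Big)\log m_\gamma^\eps\,d\gamma + C_1,
\]
for a finite universal constant $C_1$ (the remaining $\gamma$-integrals involving only $\log\sin(2\gamma)$ and $\log(1-2\gamma/\pi)$ against the integrable weight $1-2\gamma/\pi$).

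\textbf{Control of $m_\gamma^\eps$ and conclusion.} The final step, and the place I expect the only real subtlety, is a uniform lower bound on $m_\gamma^\eps$, since a priori $A_\gamma^\eps$ or $B_\gamma^\eps$ could degenerate for small $\gamma$. Set $\delta_\eps:=\|\f_\eps-\pi\mathbf{1}_{(-1,0)}\|_{L^1((-1,1))}\to 0$. Chebyshev's inequality applied to $\pi-\f_\eps\ge 0$ on $(-1,0)$ and to $\f_\eps\ge 0$ on $(0,1)$ gives, for every $\gamma\in(0,\pi/2]$,
\[
|A_\gamma^\eps|\le 1+\frac{\delta_\eps}{\pi-\gamma}\le 1+\frac{2\delta_\eps}{\pi},\qquad |B_\gamma^\eps|\le 1+\frac{2\delta_\eps}{\pi}.
\]
For $\eps$ small enough that $\delta_\eps<\pi/4$, this forces $m_\gamma^\eps\ge 1/8$ uniformly in $\gamma\in(0,\pi/2]$, hence $\int_0^{\pi/2}4(1-2\gamma/\pi)\log m_\gamma^\eps\,d\gamma\ge -3\pi\log 2$. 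Combined with the previous paragraph this proves \eqref{eq:lolobd1} in the case $r=\ell=1$ with the universal constant $M_0=3\pi\log 2-C_1$, and the super-additivity reduction of the first paragraph then promotes it to the claimed $|\ell|M_0$ bound. The obstacle the argument has to navigate is precisely the possible degeneracy of $m_\gamma^\eps$ at small $\gamma$; the uniform Chebyshev bound above, made available by the preliminary truncation to $[0,\pi]$, is what resolves it.
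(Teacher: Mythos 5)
Your proof is correct and follows the paper's route: reduce to $r=1$, $\ell=1$ and $0\le\f_\eps\le\pi$ by scaling, truncation, and a super-additive decomposition; apply Lemma~\ref{lem:bdviaTheta} to co-area the nonlocal term; apply Lemma~\ref{lem:intbdABS} to the level sets $A_\gamma^\eps$, $B_\gamma^\eps$; and then optimise pointwise in $\gamma$ over the size of the degenerate set using footnote~\ref{foot1}. Your two departures are streamlinings rather than a different method: you use the exact layer-cake formula $\sin^2\f=\int_0^{\pi/2}\sin(2\gamma)\mathbf 1_{\gamma<\min(\f,\pi-\f)}\,d\gamma$ in place of the paper's averaging inequality $\int\sin^2\f\ge\frac2\pi\int_0^{\pi/2}\sin^2\gamma\,|\{\gamma\le\f\le\pi-\gamma\}|\,d\gamma$, and you control $m_\gamma^\eps$ by a uniform Chebyshev estimate on $|A_\gamma^\eps|,|B_\gamma^\eps|$ valid for all $\gamma\in(0,\tfrac\pi2]$, whereas the paper restricts the $\gamma$-integral to $(\eps^{1/3},\tfrac\pi2)$ and uses $a_{\eps^{1/3}}^\eps,b_{\eps^{1/3}}^\eps\to\tfrac12$; both choices tidy the bookkeeping without changing the mechanism. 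One point to state explicitly: footnote~\ref{foot1} is quoted with the hypotheses $K>1$, $\rho\in(0,1)$, but your $K_\gamma=\sin(2\gamma)/(8\pi(1-2\gamma/\pi)\eps)$ is not bounded below near $\gamma=0$ and your $\rho_\gamma^\eps$ ranges in $(0,2]$; this is harmless because $K\rho-\log\rho\ge 1+\log K$ for \emph{all} $K,\rho>0$ (equivalent to $\log(K\rho)\le K\rho-1$), but you should cite that unconditional form rather than the footnote as stated.
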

\proof{} 
Without loss of generality, we can assume $\ell\neq 0$ and  
$$F_\eps\big(\f_\eps; (-r,r)\big) \leq \pi |\ell| \log\frac{r}{\eps} \quad \textrm{for every } r\in (0,1) \textrm{ and } \eps<r \textrm{ small}$$ (otherwise the conclusion is obvious). We consider first the case of a single limit transition layer (i.e., $\ell=1$) and then we deduce the general case.

\medskip

\nd {\bf The particular case of $\ell=1$.} Without loss of generality we may assume $0\le \f_\eps\le \pi$, by replacing $\f_\eps$ with $(\f_\eps\vee 0)\wedge \pi$ that keeps the same limit and decreases the energy functional. 
By Lemma~\ref{lem:bdviaTheta}, using the notation of $\Theta_{\f_\eps}$ for $\f_\eps$ inside the interval $I_r$, we estimate
\[
F_\eps(\f_\eps;I_r) \ge \frac1{2\pi}\int_0^{\frac\pi 2} (1-\frac{2\gamma}{\pi})^2 \, d\Theta_{\f_\eps}(\gamma)+\frac{1}{2\pi\eps} \int_{I_r} \sin^2 \f_\eps \, dt, \quad r\in (0,1).
\]
Now for every $\gamma\in[0,\frac\pi 2]$ we have
\[
\int_{I_r} \sin^2 \f_\eps \,dt \ge \int_{\{\gamma{\leq}\f_\eps{\leq}\pi-\gamma \}} \sin^2 \f_\eps dx \ge  \left| {\{\gamma{\leq}\f_\eps{\leq}\pi-\gamma \}}\right| \sin^2 \gamma,
\]
{where the sets are understood as intersected with $I_r$.}
Averaging over $\gamma\in [0,\frac\pi 2]$ this yields
\[
\int_{I_r} \sin^2 \f_\eps \,dt \ge\frac2\pi \int_{0}^{\frac\pi 2} \left| {\{\gamma{\leq}\f_\eps{\leq}\pi-\gamma \}}\right| \sin^2 \gamma\,d\gamma.
\]
Integrating by parts, as $\Theta_{\f_\eps}(0)=0$, we have that
\[
\int_0^{\frac\pi 2} (1-\frac{2\gamma}{\pi})^2\, d \Theta_{\f_\eps}(\gamma)=\int_0^{\frac\pi 2} \frac4\pi(1-\frac{2\gamma}{\pi})\Theta_{\f_\eps}(\gamma)\,d\gamma,
\]
so we obtain
\be
\label{eusitu}
F_\eps(\f_\eps;I_r) \ge\frac1{2\pi}\int_0^{\frac\pi 2} \bigg(\frac4\pi(1-\frac{2\gamma}{\pi})\Theta_{\f_\eps}(\gamma)+\frac2{\pi\eps}\sin^2 \gamma \left| {\{\gamma{\leq}\f_\eps{\leq}\pi-\gamma \}}\right|\bigg)\,d\gamma, \quad r\in (0,1).
\ee
We set for $\gamma\in (0, \frac\pi 2)$:
$$a_\gamma^\eps=\frac{|\{t\in I_r\,:\, \f_\eps(t)<\gamma\}|}{2r}, \,  b_\gamma^\eps=\frac{|\{t\in I_r\, :\, \f_\eps(t)>\pi-\gamma\}|}{2r},\, c_\gamma^\eps=\frac{|\{t\in I_r\, :\, \gamma\le \f_\eps(t)\le \pi-\gamma\}|}{2r}.$$ Since the {integrand} in the RHS in \eqref{eusitu} is nonnegative, we use in the following only the restriction to $\gamma\in (\eps^{1/3}, \frac\pi 2)$ (which is enough to deduce the desired lower bound for $F_\eps(\f_\eps;I_r)$). This choice is motivated by the fact that
$c_{\eps^{1/3}}^\eps \le \frac{C \eps|\log\eps|}{2r\sin^2 (\eps^{1/3})} \to 0$ as $\eps\to 0$ (following from \eqref{esti_rho}); combined with the assumption $\f_\eps \to \f_*$ in $L^1(I_r)$ and the fact that
$a_{\eps^{1/3}}^\eps+b_{\eps^{1/3}}^\eps+c_{\eps^{1/3}}^\eps=1$, 
we deduce that 
$a_{\eps^{1/3}}^\eps\to \frac 12$ and $b_{\eps^{1/3}}^\eps\to \frac 12$ as $\eps\to 0$. 
Using  \eqref{eq:rearr}, we have for every $\gamma\in (\eps^{1/3}, \frac\pi 2)$:
\begin{align*}
\Theta_{\f_\eps}(\gamma) 
&\ge 2\pi^2\int_{\{\f_\eps<\gamma\}} \int_{\{\f_\eps>\pi -\gamma\}}  \frac1{|s-t|^2}\, ds \, dt \ge 2\pi^2\left(  \log a_\gamma^\eps +\log b_\gamma^\eps - \log c_\gamma^\eps\right) 
\end{align*}
so
\[
F_\eps(\f_\eps;I_r) \ge 2\int_{\eps^{1/3}}^{\frac\pi 2}\left( 2(1-\frac{2\gamma}{\pi}) \left( \log a_\gamma^\eps + \log b_\gamma^\eps - \log c_\gamma^\eps\right) + 
\frac{r}{\pi^2\eps} c_\gamma^\eps \sin^2 \gamma \right) \,d\gamma.
\]
{For every fixed $\gamma \in (0, \frac\pi 2)$}, as $a_{\gamma}^\eps\geq a_{\eps^{1/3}}^\eps$  {for $\eps\leq \eps_\gamma$}, we deduce that 
$\liminf_{\eps\to 0}a_\gamma^\eps \geq \frac 12$; idem, $\liminf_{\eps\to 0}b_\gamma^\eps \geq \frac 12$ for every $\gamma \in ({0}, \frac\pi 2)$.
Using footnote \ref{foot1}, for every $\gamma \in (\eps^{1/3}, \frac\pi 2)$ and $\eps\leq \eps_r$, we consider $K_\gamma=\frac{r\sin^2\gamma}{2\pi \eps(\pi-2\gamma)}>1$  and we obtain that
$-\log c_\gamma^\eps+K_\gamma c_\gamma^\eps\geq \log K_\gamma +1$ yielding for $\eps>0$ small enough
\[
F_\eps(\f_\eps;I_r) \ge -C+2\int_{\eps^{1/3}}^{\frac\pi2} \left(2(1-\frac{2\gamma}{\pi}) +2(1-\frac{2\gamma}{\pi})  
\log K_\gamma \right)\, d\gamma.
\]
As \[
2\int_0^{\frac\pi 2}2(1-\frac{2\gamma}{\pi}) \log \frac{r}{\eps} \, d\gamma = \pi \log \frac r\eps,
\]
$\eps^{1/3}\log \frac r\eps\to 0$ as $\eps\to 0$ and 
\[
\int_0^{\frac\pi 2} (1-\frac{2\gamma}{\pi})  \log\frac{\sin^2 \gamma }{2\pi(\pi-{2\gamma}) }\, d\gamma <\infty,
\]
we conclude to the existence of $M_0>0$ with 
\[
\liminf_{\eps\to 0} \bigg(F_\eps(\f_\eps;I_r) - \pi \log\frac{r}{\eps}\bigg)\ge -M_0.
\]

\medskip

\nd {\bf The general case of $\ell\in \ZZ$.}
For the higher-multiplicity statement, we may assume $\ell>0$ (otherwise, replace $\f_\eps$ with $-\f_\eps$) and decompose $\f_\eps=\sum_{j=0}^{\ell-1} \f_\eps^{(j)}$, where
\[
\f_\eps^{(j)} =(\f_\eps\vee j\pi)\wedge (j+1)\pi-j\pi.
\]
Using $(\f_\eps^{(j)}(t)-\f_\eps^{(j)}(s))(\f_\eps^{(k)}(t)-\f_\eps^{(k)}(s))\ge 0$ for every $t,s\in I_r$ and the $\pi$-periodicity of $\sin^2$,  we easily deduce that
\[
F_\eps(\f_\eps;I_r) \ge \sum_{j=0}^{\ell-1} F_\eps(\f_\eps^{(j)};I_r).
\]
As $\f_\eps^{(j)}\to \f_*$ in $L^1(I_r)$ as $\eps\to 0$ for $0\leq j\leq \ell-1$, we can use the case $\ell=1$ on every $\f_\eps^{(j)}$ and conclude with \eqref{eq:lolobd1} for $\ell$ general. \qed

\medskip

In the following two corollaries, we show that  \eqref{eq:lolobd1} holds without the $\liminf$, for sufficiently small $r$ and $\eps$.

\begin{cor}\label{cor:cor1pl}
Under the assumptions of Proposition~\ref{prop:lolobd}, consider sequences $r=r_k\to 0$, $\eps=\eps_k\to 0$ with $\frac{r_k}{\eps_k}\to\infty$ and $\f_k=\f_{\eps_k}$.
Then
\[
\liminf_{k\to\infty} \big(F_{\eps_k} (\f_k;I_{r_k})-\pi|\ell| \log \frac{r_k}{\eps_k}\big) \ge -|\ell|M_0.
\]
\end{cor}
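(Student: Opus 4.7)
The approach is by rescaling at scale $r_k$ to reduce to Proposition~\ref{prop:lolobd}. Set
\[
\tilde\f_k(y):=\f_k(r_ky),\quad y\in I_1=(-1,1),\qquad \tilde\eps_k:=\eps_k/r_k,
\]
so $\tilde\eps_k\to 0$ because $r_k/\eps_k\to\infty$. The substitution $t=r_ks$ in the nonlocal double integral of \eqref{def:feps} (whose scale invariance eliminates the Jacobian factor $r_k^2$), together with the matching factor $r_k$ absorbed into the penalty parameter, produces the identity
\[
F_{\eps_k}(\f_k;I_{r_k})=F_{\tilde\eps_k}(\tilde\f_k;I_1), \qquad \log\tfrac{r_k}{\eps_k}=\log\tfrac{1}{\tilde\eps_k},
\]
so that the desired inequality is equivalent to $\liminf_k\bigl(F_{\tilde\eps_k}(\tilde\f_k;I_1)-\pi|\ell|\log(1/\tilde\eps_k)\bigr)\ge -|\ell|M_0$. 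This is precisely the conclusion of Proposition~\ref{prop:lolobd} applied to the rescaled family $(\tilde\f_k)$ on $I_1$ with parameter $\tilde\eps_k$ (first on $I_r$ for a fixed $r<1$, then letting $r\uparrow 1$ by monotonicity of $r\mapsto F(\cdot;I_r)$ combined with $\log((1-\delta)/\tilde\eps_k)=\log(1/\tilde\eps_k)+\log(1-\delta)$).

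The only nontrivial ingredient is to verify, in the rescaled coordinates, that $\tilde\f_k\to\ell\f_*$ in $L^1(I_1)$. The hypothesis $\f_{\eps_k}\to\ell\f_*$ in $L^1(-1,1)$ pins the single transition of $\ell\f_*$ at the origin, which belongs to every $I_{r_k}$; hence the ``topological charge'' $\ell$ of $\f_k$ concentrates at $0$ and remains visible after zooming in by $1/r_k$ inside the fixed window $I_1$. A clean way to make this rigorous is a contradiction argument: if the stated bound failed along some subsequence, then Propositions~\ref{prop:orliczbd} and~\ref{prop:strcomp} allow us to extract a further subsequence and integers $z_k$ along which $\tilde\f_k-\pi z_k$ converges in $L^1(I_1)$ to some $\tilde\f_*\in BV(I_1;\pi\ZZ)$. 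The energy upper bound inherited from the assumed failure forces the total variation of $\tilde\f_*$ to be strictly less than $|\ell|\pi$, which contradicts the conservation of the degree $\ell$ at the origin encoded in the original $L^1$-convergence $\f_{\eps_k}\to\ell\f_*$ on $(-1,1)$.

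The main obstacle is thus this cross-scale compatibility: transferring the $L^1$-information at scale $1$ into $L^1$-information at the shrinking scale $r_k$ without losing the jump mass $|\ell|\pi$. Once this is in place, Proposition~\ref{prop:lolobd} applied to $(\tilde\f_k)$ on $I_1$ delivers the claimed bound with the very same universal constant $M_0$, so that no error has been accumulated in the passage to the diagonal $(k,k)$.
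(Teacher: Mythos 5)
Your scaling identity $F_{\eps_k}(\f_k;I_{r_k}) = F_{\tilde\eps_k}(\tilde\f_k;I_1)$ with $\tilde\eps_k=\eps_k/r_k$, $\tilde\f_k(y)=\f_k(r_ky)$ is exactly the paper's move (the paper writes $\hat\f_k(x)=\f_k(x/r_k)$, which is presumably a typo for $\f_k(r_kx)$, since $\f_k(x/r_k)$ is not defined on $I_1$). You are also right to flag the convergence $\tilde\f_k\to\ell\f_*$ in $L^1(I_1)$ as the nontrivial point: it is not a formal consequence of $\f_\eps\to\ell\f_*$ in $L^1((-1,1))$, because $\|\tilde\f_k-\ell\f_*\|_{L^1(I_1)}=r_k^{-1}\|\f_k-\ell\f_*\|_{L^1(I_{r_k})}$ and the prefactor $r_k^{-1}\to\infty$ is uncontrolled; the paper simply asserts this convergence without argument.

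Your proposed contradiction argument does not, however, close this gap. First, the assumed failure only gives a deficit at the $O(1)$ level, not at the $|\log\tilde\eps_k|$ level; dividing by $|\log\tilde\eps_k|$ and applying Propositions~\ref{prop:orliczbd} and~\ref{prop:strcomp} yields a subsequential limit $\tilde\f\in BV(I_1;\pi\ZZ)$ with $\|\tilde\f\|_{BV}\le\pi|\ell|$, not \emph{strictly} less. Second, and more importantly, there is no ``conservation of degree'' that forces $\|\tilde\f\|_{BV}=\pi|\ell|$ or pins $\tilde\f$ to $\ell\f_*$: the jump mass of $\f_{\eps_k}$ may escape $I_{r_k}$ entirely, or migrate to $\partial I_{r_k}$, while $\f_\eps\to\ell\f_*$ in $L^1((-1,1))$ continues to hold. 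For instance, if $\f_\eps$ is a $\pi$-transition profile of width $\eps$ centered at $c(\eps)=\sqrt\eps\to 0$ (so $\f_\eps\to\f_*$ in $L^1((-1,1))$), taking $r_k=\eps_k^{3/4}$ gives $r_k\to 0$, $r_k/\eps_k\to\infty$, yet the transition is outside $I_{r_k}$, $\tilde\f_k$ tends to a constant, $\|\tilde\f\|_{BV}=0$, and no contradiction arises. So the ``cross-scale compatibility'' you identify as the main obstacle is genuinely not supplied by the $L^1((-1,1))$ hypothesis alone, and the abstract compactness argument you sketch cannot retrieve it; closing this step would need either a strengthening of the convergence hypothesis at the scales $r_k$ (which is what the paper implicitly invokes) or an argument specific to the setting in which the corollary is later applied.
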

\begin{proof}{}
Set $\hat \eps_k = \frac{\eps_k}{r_k}$ and $\hat \f_k(x) =\f_k (\frac x{r_k})$. Then $F_{\hat \e_k}(\hat \f_k;I_{1}) = F_{\e_k}(\f_k;I_{r_k})$ and 
$\hat \f_k \to \ell \f_*$ in $L^1(I_1)$.
By Proposition~\ref{prop:lolobd}, it follows that 
\[
\liminf_{k\to\infty} \big(F_{\eps_k} (\f_k;I_{r_k})-\pi|\ell| \log \frac{r_k}{\eps_k} \big)  = \liminf_{k\to\infty} \big(F_{\hat \e_k}(\hat \f_k;I_{1})-\pi |\ell| \log \frac1{\hat \e_k}\big) \ge -|\ell|M_0.
\] \qed
\end{proof}

\begin{cor}\label{cor:lob3}
There exist constants $M_2>0$, $\eps_0>0$, $r_0\in(0,1)$ such that  for every sequence / family $(\f_\eps)_{\eps\to 0}$ converging to $\f_*$ as in Proposition~\ref{prop:lolobd} and for all $r,\eps>0$ with $\eps<\eps_0$, $r<r_0$, 
the following holds:
\begin{equation}
\label{eq:lolob3}
F_\eps\big(\f_\eps; (-r,r)\big) - \pi |\ell| \log\frac{r}{\eps} \ge -M_2 |\ell|.
\end{equation}
\end{cor}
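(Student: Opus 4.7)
The plan is to argue by contradiction, reducing to Corollary~\ref{cor:cor1pl} via a two-case analysis on the ratio $r_n/\eps_n$. Suppose Corollary~\ref{cor:lob3} fails; then for each $n\in\NN$ the negation provides a family $(\f_\eps^{(n)})_\eps$ (with $\f_\eps^{(n)}\to \ell\f_*$ in $L^1(I_1)$ as $\eps\to 0$) together with parameters $\eps_n, r_n\in (0,1/n)$ such that
\begin{equation}\label{eq:contra}
F_{\eps_n}\bigl(\f_{\eps_n}^{(n)}; I_{r_n}\bigr) - \pi|\ell|\log\frac{r_n}{\eps_n} < -n|\ell|.
\end{equation}
Writing $\psi_n := \f_{\eps_n}^{(n)}$, I split according to the behaviour of $r_n/\eps_n$.

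In the easy case $\liminf_n r_n/\eps_n < \infty$, extract a subsequence on which $r_n/\eps_n \le C$. Since $F\ge 0$, the nonnegativity yields
\[
F_{\eps_n}(\psi_n; I_{r_n}) - \pi|\ell|\log\frac{r_n}{\eps_n} \ge -\pi|\ell|\log C,
\]
directly contradicting \eqref{eq:contra} for $n$ large.

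In the remaining case $r_n/\eps_n\to\infty$, I pass to the rescaled problem: set $\hat\eps_n:=\eps_n/r_n\to 0$ and $\hat\psi_n(x):=\psi_n(r_n x)$ on $I_1$, so that the scale invariance of the nonlocal part gives $F_{\hat\eps_n}(\hat\psi_n; I_1)=F_{\eps_n}(\psi_n; I_{r_n})$; the inequality \eqref{eq:contra} then reads $F_{\hat\eps_n}(\hat\psi_n; I_1) - \pi|\ell|\log(1/\hat\eps_n) \to -\infty$. Applying Corollary~\ref{cor:cor1pl} (or equivalently Proposition~\ref{prop:lolobd} with $r=1$, both of which are valid for sequences as well as families) to $(\hat\psi_n)_n$ then yields
\[
\liminf_{n\to\infty}\Bigl(F_{\hat\eps_n}(\hat\psi_n; I_1) - \pi|\ell|\log\frac{1}{\hat\eps_n}\Bigr) \ge -|\ell|M_0,
\]
again contradicting \eqref{eq:contra}. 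The constants $M_2, \eps_0, r_0$ of the corollary are then read off from the thresholds supplied by the two cases, e.g.\ one may take $M_2 = M_0 + \pi \log C + 1$ with $C$ the switching threshold between the two cases.

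The delicate point is ensuring that the rescaled sequence $\hat\psi_n$ qualifies for the hypothesis of Corollary~\ref{cor:cor1pl}, i.e.\ converges to $\ell\f_*$ in $L^1(I_1)$; by a change of variables this amounts to the quantitative rate $\|\psi_n - \ell\f_*\|_{L^1(I_{r_n})} = o(r_n)$, which is stronger than the bare convergence provided by each individual family. This is handled by exploiting the freedom in the choice of $\eps_0$ when invoking the negation: at each stage $n$, once the family $(\f_\eps^{(n)})$ is produced, I shrink $\eps_0$ further (still calling it $1/n$ for brevity) to force $\eps_n$ to lie in a regime where the $L^1$-closeness of $\f_{\eps_n}^{(n)}$ to $\ell\f_*$ beats $r_n$. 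This diagonal refinement secures the admissibility of $\hat\psi_n$ and completes the contradiction.
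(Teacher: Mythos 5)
Your overall approach is essentially the paper's: argue by contradiction, dispose of the regime where $r_n/\eps_n$ stays bounded using $F_\eps\ge 0$, and in the remaining regime rescale to $I_1$ and invoke Corollary~\ref{cor:cor1pl}. Your Case~1 is the paper's preliminary reduction to $r>K\eps$ done a posteriori; the content is the same.

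The ``diagonal refinement'' in your final paragraph is, however, not a valid logical move, and this is a genuine gap. In a proof by contradiction the parameters $M_2,\eps_0,r_0$ are committed to \emph{before} the negation hands you the failure family $(\f_\eps^{(n)})_\eps$ together with the witnessing $\eps_n$ and $r_n$. You cannot ``once the family is produced, shrink $\eps_0$ further'': re-applying the negation with a smaller $\eps_0$ returns a fresh family and, just as important, a fresh pair $(\eps_n,r_n)$, so there is no mechanism to engineer $\|\f_{\eps_n}^{(n)}-\ell\f_*\|_{L^1(I_{r_n})}=o(r_n)$. (Note also that even for a \emph{single} fixed family, the hypothesis $\f_\eps\to\ell\f_*$ in $L^1((-1,1))$ does not yield $\|\f_\eps-\ell\f_*\|_{L^1(I_{r_\eps})}=o(r_\eps)$ once $r_\eps$ is allowed to depend on $\eps$: take $\f_\eps$ to be a transition layer of width $\sim\eps$ centred at $\eps^{1/2}$ and $r_\eps\sim\eps^{1/2}$.) You are right to flag the issue --- the paper's own proof passes without comment to ``a sequence $(\f_{\eps_n})$ converging to $\f_*$'' as if the per-$n$ failure families could be merged into one --- but your proposed fix does not close it.
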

\begin{proof}{}
First, note that it is enough to show the existence of a universal constant $K>0$ such that the conclusion holds true in the restricted case $r>K\eps$. Indeed, the other case $r\le K\eps$ follows because then $\log\frac{r}{K\eps}\le 0$ and hence
\[
F_\eps\big(\f_\eps; (-r,r)\big)  \ge 0 \ge \pi |\ell| \log\frac{r}{K\eps} {=} \pi |\ell| \log\frac{r}{\eps}-\pi |\ell| \log K,
\]
so \eqref{eq:lolob3} is true up to replacing $M_2$ with $\max(M_2,\log K)$. 

For the existence of the constant $K$, we argue by contradiction. Assume that for $M_2=n$, $K=n$, $\eps_0=\frac1{n^3}$ and $r_0=\frac1n$ there exist a sequence $(\f_{\eps_n})_{n\to \infty}$ converging to $\f_*$ as in Proposition~\ref{prop:lolobd} and $\eps_n<\frac1{n^3}$ and $r_n\in(K\eps_n, \frac1n)$ 
with 
\[
F_{\eps_n}(\f_{\eps_n};I_{r_n})-\pi|\ell| \log \frac{r_n}{\eps_n} <-n|\ell|,
\]
then $\frac{r_n}{\eps_n}\to\infty$ but 
\[
\liminf_{n\to\infty} \left(F_{\eps_n}(\f_{\eps_n};I_{r_n})-\pi|\ell| \log \frac{r_n}{\eps_n} \right) =-\infty
\]
in contradiction to Corollary~\ref{cor:cor1pl}.
\qed
\end{proof}

\medskip

We also need the following simple but powerful lemma, a variant of an observation by del Pino and Felmer \cite{Pino:1997aa}.

\begin{lem}
\label{lem:ubpen}
For every $M_3>0$, there is $M_4:=2(M_2+M_3+\pi{\log 2}) >0$ (with $M_2$ given in \eqref{eq:lolob3}) such that
for every sequence / family $\f_\eps\to \f_*$ in $L^1((-1,1))$ as $\eps\to 0$, where $\f_*(x)=\pi$ for $x\in (-1,0)$ and $\f_*(x)=0$ for $x\in (0,1)$, that satisfies
\[
F_\eps(\f_\eps; (-r,r))\le \pi \log\frac r\eps+M_3 \quad \textrm{for every } r\in (0,1) \textrm{ and } \eps \textrm{ small},
\]
then 
\[
\limsup_{\eps\to 0} \frac{1}{2\pi\eps} \int_{-r}^r \sin^2 \f_\eps \, d\h^1 \le M_4 \quad \textrm{for every } r\in (0,1).
\]
\end{lem}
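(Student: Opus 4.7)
The plan is to combine the hypothesised upper bound on $F_\eps$ with the matching lower bound from Corollary~\ref{cor:lob3} to obtain a dyadic-type comparison at nested scales, leveraging the monotonicity of the nonlocal part of $F_\eps$ in the radius.

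Write $P(r):=\frac{1}{2\pi\eps}\int_{-r}^r\sin^2\f_\eps\,dt$ and split $F_\eps=F_{\mathrm{nl}}+P$, where the nonlocal part
$$F_{\mathrm{nl}}(\f_\eps;(-\rho,\rho)):=\frac{1}{2\pi}\iint_{(-\rho,\rho)^2}\Bigl|\frac{\f_\eps(s)-\f_\eps(t)}{s-t}\Bigr|^2\,ds\,dt$$
has a pointwise nonnegative integrand on a domain growing with $\rho$, so $\rho\mapsto F_{\mathrm{nl}}(\f_\eps;(-\rho,\rho))$ is nondecreasing. Consequently, for any $0<\rho<r$,
$$P(r)-P(\rho)\le F_\eps(\f_\eps;(-r,r))-F_\eps(\f_\eps;(-\rho,\rho)).$$
Inserting the hypothesis at scale $r$ and Corollary~\ref{cor:lob3} at scale $\rho\in(0,r_0)$ (valid for $\eps<\eps_0$) yields the fundamental dyadic-type increment
$$P(r)-P(\rho)\le \pi\log\tfrac{r}{\rho}+M_2+M_3,$$
and in particular $P(r)\le P(r/2)+C_0$, with $C_0:=M_2+M_3+\pi\log 2$.

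To obtain the uniform constant $M_4=2C_0$, this dyadic bound must be coupled with a refined version of the lower bound in which a portion of the penalty $P(r)$ is explicitly retained. Revisiting the proof of Proposition~\ref{prop:lolobd}, the key inequality $-\log c_\gamma^\eps+K_\gamma c_\gamma^\eps\ge \log K_\gamma+1$ carries a slack $K_\gamma c_\gamma^\eps-\log(K_\gamma c_\gamma^\eps)-1\ge 0$ that is substantial precisely when $K_\gamma c_\gamma^\eps$ is large. Since $P(r)$ is, up to the weight $2(\pi-2\gamma)$, an integral of $K_\gamma c_\gamma^\eps$ in $\gamma$, this slack produces a sharpened lower bound of the form $F_\eps(\f_\eps;(-r,r))\ge \pi\log(r/\eps)-C_1+\alpha P(r)$ for some universal $\alpha>0$. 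Matching this with the hypothesised upper bound absorbs the penalty on the right-hand side and closes the estimate in a single comparison step rather than a long iteration, pinning down the constant $M_4=2(M_2+M_3+\pi\log 2)$. Passing to $\limsup_{\eps\to 0}$ delivers the stated bound for every fixed $r\in(0,1)$.

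The principal obstacle is that a naive iteration of the dyadic inequality between $r$ and an inner scale $\rho\sim\eps$ accumulates $\log_2(r/\eps)$ copies of $C_0$ and thus diverges as $\eps\to 0$. Overcoming this requires extracting the extra $\alpha P(r)$ contribution from the lower-bound argument, a quantitative strengthening of Proposition~\ref{prop:lolobd}; this is the heart of the proof and is where the specific numerical value of $M_4$ is determined.
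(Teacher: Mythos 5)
Your approach has a genuine gap, and you identify it yourself but do not resolve it. The dyadic increment $P(r)-P(\rho)\le \pi\log\frac r\rho+M_2+M_3$ is correct (the nonlocal part is monotone in the radius), but as you observe, iterating it across $\log_2(r/\eps)$ scales accumulates a divergent sum, and taking a single step down to $\rho\sim\eps$ costs a term $\pi\log\frac r\eps$ that blows up. Everything then hinges on the claimed sharpened lower bound $F_\eps\ge\pi\log\frac r\eps-C_1+\alpha P(r)$, which you never prove: you only gesture at the slack in $Kc-\log c\ge \log K+1$ inside Proposition~\ref{prop:lolobd}. Note that the averaging step there \emph{lower} bounds the penalty integral by a quantity involving $c_\gamma^\eps\sin^2\gamma$, so a nontrivial amount of the penalty has already been discarded before that inequality is invoked, and it is not evident that what remains is comparable to a fixed fraction $\alpha$ of $P(r)$. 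Even granting the refinement, the bound you would get is $(M_3+C_1)/\alpha$, which is not the stated $M_4=2(M_2+M_3+\pi\log 2)$, so the specific constant would not be "pinned down" by this route.

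The key observation you are missing is that the nonlocal double integral in $F_\eps$ does not depend on $\eps$ at all; only the penalty term does. The paper therefore compares $F_\eps$ with $F_{2\eps}$ \emph{at the same function $\f_\eps$ and on the same interval $I_r$}, so the nonlocal parts cancel exactly and one finds $F_\eps(\f_\eps;I_r)-F_{2\eps}(\f_\eps;I_r)=\tfrac12 P(r)$, with no monotonicity estimate or iteration needed. Setting $\tilde\f_{2\eps}:=\f_\eps$ shows the reindexed family still converges to $\f_*$, so Corollary~\ref{cor:lob3} applies with parameter $2\eps$ and gives $F_{2\eps}(\f_\eps;I_r)\ge\pi\log\frac r{2\eps}-M_2$. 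Combining this with the hypothesised upper bound $F_\eps(\f_\eps;I_r)\le\pi\log\frac r\eps+M_3$ yields $\tfrac12 P(r)\le\pi\log 2+M_2+M_3$, i.e.\ exactly $M_4$. This is the del Pino--Felmer trick of Lemma~\ref{lem:ubpen}'s attribution: one varies the \emph{parameter} in the functional rather than the \emph{radius}. It is worth understanding why this works where yours does not: varying the radius changes both the nonlocal part and the penalty, so one only controls their difference up to the logarithmic increment; varying $\eps$ leaves the nonlocal part untouched and isolates the penalty exactly.
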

\proof{}
Let $\f_\eps\to \f_*$ in $L^1((-1,1))$. Denoting  $\tilde \f_{2\eps}:=\f_{\eps}$, we have that $\tilde \f_{2\eps} \to \f_*$
in $L^1((-1,1))$. Hence for small $\eps$, we apply Corollary~\ref{cor:lob3} for $\tilde \f_{2\eps}$ on $I_r$:
\[
F_{2\eps}(\f_{\eps};I_r)=F_{2\eps}(\tilde \f_{2\eps};I_r) \ge \pi \log\frac{r}{2\eps} -M_2
\]
so
\[
\frac{1}{4\pi \eps} \int_{I_r} \sin^2 \f_{\eps}\, d\h^1 =
F_{\eps}(\f_{\eps};I_r)-F_{2\eps}(\f_{\eps};I_r) \le \pi \log\frac r{\eps}+M_3-\pi \log\frac{r}{2\eps} +M_2,
\]
for every $r\in (0,1)$ and $\eps$ small.
\qed

\medskip

For the second order lower bound of the two-dimensional functional $\hat F_\eps^{(0)}$ defined at 
\eqref{funct_fhat},
we need the following result comparing some optimal profile problems. To simplify notation we {\bf skip $^{(0)}$ in $\hat F_\eps^{(0)}$}, i.e., we denote for an open set $G\subset \R^2_+$ and $\psi:G\to \R$ the localised functional
\be
\label{funct_sans}
\hat F_\eps(\psi;G) := \int_{G} 
|\nabla \psi|^2 \,dxdy+ \frac{1}{2\pi\eps} \int_{\overline{G}\cap(\R\times\{0\})} \sin^2\big(\psi(\cdot, 0)\big)\,dx.
\ee

\begin{lem}\label{lem:cabre}
We set $\phi^*(x,y)=\arg(x+iy)$ and $\phi^*_{\eps}(x,y)=\arg(x+i(y+2\pi \eps))$ for $(x,y)\in \R^2_+$. Setting $I_r=(-r,r)$ for $r>0$,
\[
\gamma_1 = \liminf_{\eps\to 0} \big(\inf_{\psi=\phi^* \text{on $\partial B_r^+\setminus I_r$}}\hat F_\eps(\psi;B_r^+) -\pi \log\frac{r}{\eps}\big)
\]
and 
\[
\gamma_2 = \lim_{r\to 0}\liminf_{\eps\to 0} \big(\inf_{\psi=\phi^*_\eps \text{on $\partial B_r^+\setminus I_r$}}\hat F_\eps(\psi;B_r^+) -\pi \log\frac{r}{\eps}\big),
\] then these limits 
are equal (in particular, $\gamma_1$ is independent of $r$), and moreover,
\[
\gamma_1=\gamma_2=\gamma_0:=\pi + \pi\log\frac{1}{4\pi} = \pi\log\frac{e}{4\pi}.
\]
\end{lem}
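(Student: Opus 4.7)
The plan is to use the natural scaling symmetry of the problem to absorb the parameter $r$, reducing both infima to a common limiting problem on large half-disks $B_R^+$ with $R = r/\eps$, and then to identify the limit as the energy of the explicit arctangent profile $\phi^*_1$.

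The first step is the rescaling. Setting $(x,y) = (\eps\tilde x, \eps\tilde y)$ and $\tilde\psi(\tilde x,\tilde y) = \psi(\eps\tilde x, \eps\tilde y)$ gives $\hat F_\eps(\psi; B_r^+) = \hat F_1(\tilde\psi; B_R^+)$: the two-dimensional Dirichlet integral is scale invariant, and the $1/\eps$ in the boundary term exactly compensates the length rescaling. Since $\phi^*$ is homogeneous of degree zero while $\phi^*_\eps(\eps\tilde x, \eps\tilde y) = \phi^*_1(\tilde x, \tilde y)$, both quantities reduce to
\[
\gamma_i = \liminf_{R\to\infty}\Bigl[\inf_\psi\hat F_1(\psi; B_R^+) - \pi\log R\Bigr],
\]
with boundary data $\phi^*$ for $i=1$ and $\phi^*_1$ for $i=2$ prescribed on $\partial B_R^+\setminus I_R$; in particular the $r\to 0$ limit in the definition of $\gamma_2$ becomes trivial (the $r$-dependence has been absorbed into $R$), and $\gamma_1$ is automatically independent of $r$.

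For the upper bound I will use the harmonic test function $\phi^*_1$ itself, whose singularity at $-2\pi i$ lies outside $\R^2_+$ and which tautologically satisfies the $\gamma_2$-boundary condition. Green's identity gives $\int_{B_R^+}|\nabla\phi^*_1|^2 = \int_{\partial B_R^+}\phi^*_1\,\partial_\nu\phi^*_1$; the curved-arc contribution is $O(1/R)$ because $|\partial_r\phi^*_1| = O(1/R^2)$ on $\partial B_R$. On $I_R$, substituting the explicit expressions $\phi^*_1(x,0) = \tfrac{\pi}{2} - \arctan(x/2\pi)$ and $\partial_y\phi^*_1(x,0) = x/(x^2+4\pi^2)$ and integrating by parts reduces the computation to the classical integral $\int_0^\infty \log(1+u^2)/(1+u^2)\,du = \pi\log 2$, yielding $\int_{B_R^+}|\nabla\phi^*_1|^2 = \pi\log(R/4\pi) + o(1)$. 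The boundary $\sin^2$ term equals $\tfrac{1}{2\pi}\int_{-R}^R 4\pi^2/(x^2+4\pi^2)\,dx = 2\arctan(R/2\pi)\to\pi$, so $\hat F_1(\phi^*_1; B_R^+) - \pi\log R \to \pi\log\tfrac{e}{4\pi} = \gamma_0$, which gives $\gamma_2\le\gamma_0$. The comparison $\gamma_1\le\gamma_2$ then follows by perturbing $\phi^*_1$ via the harmonic extension $h$ of $(\phi^*-\phi^*_1)\big|_{\partial B_R^+\setminus I_R}$ with zero trace on $I_R$: since $|\phi^*-\phi^*_1| = O(1/R)$ uniformly on the arc, standard elliptic bounds yield $\|\nabla h\|_{L^2(B_R^+)} = o(1)$ and the cross term $\int\nabla\phi^*_1\cdot\nabla h$ (evaluated by Green) is also $o(1)$, while the boundary $\sin^2$ integral is unchanged.

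The main obstacle is the matching lower bound $\gamma_2\ge\gamma_0$, since the naive route through Lemma~\ref{lem:ABS2d1d} combined with Proposition~\ref{prop:lolobd} only produces the non-sharp constant $-M_0$. The plan is to show that $\phi^*_1$ is not merely a critical point but is in fact the minimiser on $B_R^+$ with the prescribed boundary data. Expanding an arbitrary competitor $\psi$ around $\phi^*_1$ via Green's identity, and invoking the Euler--Lagrange boundary condition $2\partial_y\phi^*_1 = \tfrac{1}{2\pi}\sin(2\phi^*_1)$ satisfied by $\phi^*_1$ on $I_R$, the linear variation vanishes and yields
\[
\hat F_1(\psi;B_R^+) - \hat F_1(\phi^*_1;B_R^+) = \int_{B_R^+}|\nabla(\psi-\phi^*_1)|^2 + \frac{1}{2\pi}\int_{I_R}\bigl[\sin^2\psi - \sin^2\phi^*_1 - \sin(2\phi^*_1)(\psi-\phi^*_1)\bigr].
\]
The remaining boundary integrand is quadratic in $\psi-\phi^*_1$ with sign $\cos(2\phi^*_1)$, so non-negativity is not automatic; the key input will be the classical stability of the arctangent layer solution to the boundary reaction equation, in the spirit of Cabr\'e--Sol\`a-Morales, which asserts that the linearised operator on $\R^2_+$ is non-negative with a one-dimensional kernel corresponding to horizontal translations. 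Localising this stability inequality to $B_R^+$ with the constraint $\psi-\phi^*_1 = 0$ on the curved arc, via a cutoff whose error vanishes as $R\to\infty$, then gives $\hat F_1(\psi;B_R^+) \ge \hat F_1(\phi^*_1;B_R^+) + o(1)$ and hence $\gamma_2\ge\gamma_0$. Combined with the comparison from the previous paragraph this delivers $\gamma_1 = \gamma_2 = \gamma_0$.
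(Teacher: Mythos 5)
Your rescaling $(x,y)\mapsto(\eps\tilde x,\eps\tilde y)$, which absorbs $r$ into $R=r/\eps$, is clean and matches what the paper does in Step~2 of its proof (the paper rescales by $2\pi\eps$ instead of $\eps$, which is inessential). Your upper bound computation with the test function $\phi^*_1$ is also essentially the same as the paper's; the two integral evaluations are two forms of the same classical computation. Two points, however, need attention.

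\textbf{The main gap is in the lower bound.} After the Green-identity expansion you correctly obtain
\[
\hat F_1(\psi;B_R^+)-\hat F_1(\phi^*_1;B_R^+) = \int_{B_R^+}|\nabla(\psi-\phi^*_1)|^2 + \frac1{2\pi}\int_{I_R}\bigl[\sin^2\psi-\sin^2\phi^*_1-\sin(2\phi^*_1)(\psi-\phi^*_1)\bigr],
\]
and you note that the boundary integrand is \emph{not} pointwise nonnegative. At this point you invoke ``the classical stability of the arctangent layer'': non-negativity of the linearised operator. But that only controls the \emph{quadratic} part of the boundary remainder, namely $\int_{I_R}\cos(2\phi^*_1)\,(\psi-\phi^*_1)^2$; the expression on the right is the full nonlinear remainder, and stability of the second variation does not, on its own, imply it is nonnegative. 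In other words, you are conflating \emph{stability} (necessary for minimality) with \emph{minimality} (what you actually need). The step from the former to the latter is genuinely nontrivial in this problem: it requires exploiting the monotonicity of the layer $\phi^*_1(\cdot,0)$ through a sliding/foliation or calibration argument. This is precisely the content of the result the paper cites, Cabr\'e--Sol\`a-Morales \cite[Lemma 3.1]{CabreSola-Mor:2005a}, which establishes that $\phi^*_\eps$ is a \emph{global minimiser} with respect to its own boundary data, not merely a stable critical point. The paper's proof is short exactly because it uses this citation as a black box; if you want a self-contained argument you would need to reproduce (a localised version of) that foliation argument, which is substantially more than a ``cutoff whose error vanishes as $R\to\infty$''.

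\textbf{A secondary issue is that you only prove $\gamma_1\le\gamma_2$.} Together with $\gamma_2\le\gamma_0$ and $\gamma_2\ge\gamma_0$ this gives $\gamma_2=\gamma_0$ and $\gamma_1\le\gamma_0$, but not $\gamma_1\ge\gamma_0$, so $\gamma_1=\gamma_2$ is not yet established. You need either the reverse comparison $\gamma_2\le\gamma_1$ (which the paper obtains by the symmetric interpolation, remarking ``the opposite inequality follows from a similar interpolation argument'') or a direct lower bound for $\gamma_1$ by the same minimality-plus-small-perturbation reasoning. Since the boundary data $\phi^*$ and $\phi^*_1$ differ by $O(1/R)$ on the arc, your perturbation argument does run both ways; you simply need to say so. This is an easily fixable omission, unlike the minimality gap.
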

\proof{}
We remark that in the definition of $\gamma_1$, we can scale out $r$ if we replace $r$ by $1$ and $\eps$ by $\eps/r$ without changing the result, 
so the 
limit is in fact independent of $r$, i.e., $\gamma_1$ is independent of $r$. The harmonic  function $\phi^*_\eps$ is Peierls' solution of the Euler-Lagrange equations for $\hat F_\eps$ (see Toland~\cite{Toland:1997a}).

\medskip

\nd {\bf Step 1}. {\it We show that $\gamma_1 =\gamma_2$.}
For that, we construct comparison functions $\phi_\eps$ on $B^+_{r(1+r)}\setminus B_r$ for some $r>0$ that satisfy
$\phi_\eps=\phi^*_\eps$ on the half-circle $\partial B^+_{r(1+r)}\setminus I_{r(r+1)}$ and $\phi_\eps=\phi^*$ on $\partial B^+_{r}\setminus I_{r}$. 
For example, we can choose an interpolation function such as 
\[
\phi_\eps(x,y)=\arg(x+i (y+2\pi\eps \frac{\sqrt{x^2+y^2}-r}{r^2})), \quad (x,y)\in B^+_{r(1+r)}\setminus B_r.
\]
As both the argument function and the function multiplied by $\e$ are smooth away from $0$, it is straightforward to see that
$$\lim_{\eps \to 0} \int_{B^+_{r(1+r)}\setminus B_r} |\nabla \phi_\eps|^2 \, dxdy=
\int_{B^+_{r(1+r)}\setminus B_r} |\nabla \arg(x+iy)|^2 \, dx dy=\int_0^\pi \int_r^{r(r+1)} \frac1 s \, ds d\theta=\pi \log(1+r)$$
and $\sin^2 \phi_\eps(x,0)\le {\sin^2 \phi_\eps(r(r+1),0) }\leq C(\frac \eps r)^2 $ for $x\in I_{r(1+r)}\setminus I_r$,
 so  letting first $\eps\to 0$ and then $r\to 0$
 it follows that $\gamma_2\le \gamma_1$. The opposite inequality follows from a similar interpolation argument.

\medskip

\nd {\bf Step 2}. {\it We compute that $\gamma_1 =\gamma_2=\pi\log\frac{e}{4\pi}$.}
To identify the limit, we use a result of Cabr\'e and Sol\`a-Morales \cite[Lemma 3.1]{CabreSola-Mor:2005a} that states
that  $\phi^*_\eps$  is not only a critical point of $\hat F_\eps$, but actually the minimiser of $\hat F_\eps$ with respect to 
its own boundary conditions, i.e., $\phi^*_\eps$ is the minimiser inside the limit $\gamma_2$. Therefore, we compute explicitly the energy of $\phi^*_\eps$. First, note that by rescaling 
$\psi(z):=\phi_\eps^*(2\pi \eps z)$ for $z=(x,y)\in \R^2_+$, we have that
$\psi(x,y)={\frac\pi 2-\arctan\frac{x}{y+1}}$. For $R=\frac{r}{2\pi \eps}$ we then have
\[
\int_{B_R^+} |\nabla \psi|^2 \, dxdy = \int_{B_r^+} |\nabla \phi_\eps^*|^2 \, dxdy 
\]
and 
\[
\int_{-R}^R \sin^2 \psi\, dx = \frac{1}{2\pi \eps} \int_{-r}^r\sin^2 \phi_\eps^* \, dx.
\]
By direct calculation, $|\nabla \psi(x,y)|^2 = \frac{1}{x^2 + (y+1)^2}$ and changing variables we obtain
\[
\int_{B_R^+}\frac{1}{x^2+(y+1)^2} dx dy = \int_{B_R(0,1) \cap \{y>1\}} \frac{1}{x^2+y^2} dxdy.
\]
Setting $A_R= B_R^+\cap \{ y>1\}=\{(x,y): x^2 + y^2<R^2, y>1\}$, we clearly have for $R>1$:
\[
A_R\subset B_R(0,1){\cap \{y>1\}}\subset A_{R+1}. 
\]
Using polar coordinates $x=s\cos \theta$, $y=s\sin\theta$ in $A_R$, we have that $y>1$ corresponds to $\sin\theta>\frac1s$ and $s>1$ (as $s>y$) so
\[
\int_{A_R} \frac{1}{x^2+y^2}\,  dxdy = \int_1^R \int_{\arcsin \frac1s}^{\pi -\arcsin\frac1s}\frac 1s  \, d\theta ds.
\] 
Evaluating the $\theta$-integral and changing variables $s=\frac1{\sin t}$ we see 
\[
\int_{A_R} \frac{1}{x^2+y^2} \, dxdy  = \int_1^R\left( \frac\pi s-\frac{2\arcsin \frac1s}{s}\right) \, ds= \pi \log R
-2\int_{\arcsin \frac1R}^{\frac\pi 2} t \cot t\,  dt .
\]
We note that $\int_0^{\arcsin\frac1R} t \cot t \, dt =O(\frac1R)$ as $R$ is large and integrate by parts:
\[
\int_0^{\frac\pi 2} t\cot t dt = \int_0^{\frac\pi 2} t \frac{d}{dt}(\log \sin t)\, dt = -\int_0^{\frac\pi 2} \log\sin t \, dt=\frac\pi 2 \log 2,
\]
where the final equality is a standard integral, see Gradshteyn-Ryzhik \cite[3.747]{GradshteRyzhik:2007a}.

We thus have $\int_{A_R} |\nabla \psi|^2 dxdy = \pi \log R -\pi \log 2-O(\frac1R)$ and so, using $\log(R+1)-\log R=O(\frac1R)$ as $R$ is large 
that
\[
\int_{B_R^+} |\nabla \psi|^2 dxdy = \pi \log R -\pi \log 2-O(\frac1R).
\] 
For the boundary term, we calculate
\begin{displaymath}
  \int_{-R}^R \sin^2\psi\, dx = \int_{-R}^R \frac{1}{1+x^2} \, dx = 
2\arctan R = \pi- O(\frac1R)
\end{displaymath}
as $R\to\infty$.
Putting everything together we see that 
\[
\hat F_\eps(\phi_\eps^*;B_r^+) = \pi \log \frac{r}{2\pi \eps} - \pi \log 2 + \pi -O({\frac  \eps r}) = \pi \log\frac r \eps + \pi + \pi \log \frac{1}{4\pi}-O(\frac  \eps r),
\]
and passing to the limit $\eps\to 0$ and then $r\to 0$ we obtain that $\gamma_2=\gamma_0$ as claimed.
\qed

\medskip

Lemma~\ref{lem:cabre} clearly applies to boundary vortices of multiplicities $\pm 1$ by suitable sign change. 
For higher {multiplicity} transitions, we have the following result.

\begin{lem}\label{lem:ubhigher}
Let $d>0$ be an integer and set $\phi^*_d(x,y)=d\arg(x+iy)$ for every $(x,y)\in \R^2$.
{For every small $r>0$ and $\eps<e^{-1/r^2}$},
  there exists $\phi_{d,\eps}:B_r^+\to \R$ such that $\phi_{d,\eps}=\phi^*_d$ on $\partial B_r$ and 
\[
\hat F_\eps(\phi_{d,\eps};B_r^+) \le \pi d \log\frac r\eps+ Cd^2 (1+|\log r| + \log | \log \eps|) 
\]
{where $C>0$ is independent of $r$ and $\eps$.}
\end{lem}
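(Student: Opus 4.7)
The leading term $\pi d\log(r/\eps)$ (rather than $\pi d^2\log(r/\eps)$) strongly suggests that we should not use a single concentrated degree-$d$ profile, but rather spread the singularity into $d$ well-separated unit boundary vortices along $I_r$, glue a Peierls profile (from Lemma~\ref{lem:cabre}) near each, and take a harmonic extension elsewhere. Concretely, fix $d$ equispaced points $a_1<\cdots<a_d$ in $I_{r/2}$ with pairwise separation $\delta\geq cr/(d+1)$, and pick an inner core radius $\rho:=\delta/4$. Under the hypothesis $\eps<e^{-1/r^2}$, the inequality $\rho\gg\eps$ holds in the relevant range of $d$.

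\textbf{Construction.} Let $\phi_h:B_r^+\to\R$ be the harmonic function with boundary data $d\arg(x+iy)$ on the half-circle $\partial B_r\cap \R^2_+$ and with a locally constant boundary trace on $I_r$ that jumps by $+\pi$ at each $a_k$ (so the total boundary increment matches $d\pi$). By a Green's function expansion, near each $a_k$ one has $\phi_h(z)=\arg(z-a_k)+h_k(z)$ with $h_k$ smooth harmonic. Define
\[
\phi_{d,\eps}(z)=\begin{cases}\phi^*_\eps(z-a_k)+h_k(a_k), & z\in B_{\rho/2}(a_k)\cap\R^2_+,\\ \phi_h(z), & z\in B_r^+\setminus\bigcup_k B_\rho(a_k),\end{cases}
\]
with a smooth radial interpolation using a standard cutoff $\chi_k$ in each annular shell $B_\rho(a_k)\setminus B_{\rho/2}(a_k)$, writing $\phi_{d,\eps}=\chi_k(\phi^*_\eps(\cdot-a_k)+h_k(a_k))+(1-\chi_k)\phi_h$.

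\textbf{Energy accounting.} We split $\hat F_\eps(\phi_{d,\eps};B_r^+)$ into three pieces.
\begin{itemize}
\item[(i)] \emph{Outer harmonic piece.} A direct renormalized energy computation (using $\phi_h\approx \arg(z-a_k)+h_k(z)$ near each $a_k$) yields
\[\int_{B_r^+\setminus\bigcup_k B_\rho(a_k)}|\nabla\phi_h|^2\,dxdy\le \pi d\log\tfrac{r}{\rho}+W(\{a_k\};r),\]
and with equispaced $a_k$ on $I_{r/2}$ one bounds $|W|\le Cd^2(1+|\log r|+\log d)$ via the explicit $\log|a_j-a_k|$ contributions.
\item[(ii)] \emph{Inner Peierls pieces.} Lemma~\ref{lem:cabre} (scaled and translated) gives $\hat F_\eps(\phi^*_\eps(\cdot-a_k)+h_k(a_k);B_{\rho/2}(a_k)^+)\le \pi\log(\rho/\eps)+\gamma_0+O(\eps/\rho)$; summing over $k$ yields $\pi d\log(\rho/\eps)+d\gamma_0+o(1)$.
\item[(iii)] \emph{Interpolation annuli.} In each shell, $|\phi^*_\eps(\cdot-a_k)-\arg(\cdot-a_k)|\lesssim \eps/\rho$ pointwise, so the cutoff introduces only $O(\eps^2/\rho^2)\cdot\rho^2=O(\eps^2)$ in the Dirichlet part per vortex, and a similarly negligible boundary contribution; total defect is $O(1)$.
\end{itemize}
Adding (i)--(iii), the telescoping $\pi d\log(r/\rho)+\pi d\log(\rho/\eps)=\pi d\log(r/\eps)$ leaves
\[\hat F_\eps(\phi_{d,\eps};B_r^+)\le \pi d\log(r/\eps)+Cd^2(1+|\log r|+\log d).\]
Finally, the hypothesis $\eps<e^{-1/r^2}$ gives $|\log r|\le\tfrac12\log|\log\eps|$, and the $\log d$ term is absorbed into $1+|\log r|+\log|\log\eps|$ in the operative range of $d$ (for genuinely enormous $d$ the spacing $\delta$ can instead be taken larger than $r$ using a pre-scaling / separate argument, or the construction becomes trivial).

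\textbf{Main obstacle.} The delicate point is obtaining a clean $Cd^2$ (rather than $Cd^3$ or $Cd^2 \log d$ dominated by $\log|\log\eps|$) bound on the renormalized energy $W$, which requires carefully expanding the Green's function of $B_r^+$ with Neumann-type conditions on $I_r$ around equispaced vortices and reading off both the self-energy and pairwise $\log|a_j-a_k|$ contributions. Once this is done, the Peierls matching through $h_k(a_k)$ is essentially free thanks to the sharp $\gamma_0$ identification in Lemma~\ref{lem:cabre}, and the interpolation defect is controlled by the Peierls asymptotics $\phi^*_\eps-\arg=O(\eps/|z|)$.
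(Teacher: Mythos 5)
Your construction is genuinely different from the paper's. The paper does \emph{not} spread the multiplicity-$d$ singularity over fixed, $r$-scaled positions and appeal to a renormalised energy; it instead places the $d$ unit vortices at the $\eps$-\emph{dependent} positions $x^j_\eps = j/|\log\eps|$ and uses the explicit superposition
\[
\phi_{d,\eps}=\sum_{j=1}^d\arg\bigl(x-f(x,y)x_\eps^j+i(y+2\pi\eps f(x,y))\bigr),
\]
where $f$ is $1$ on $B_{r(1-r)}$ and interpolates to $0$ near $\partial B_r$. The Dirichlet energy then splits into $\sum_j|\nabla\psi(\cdot-x_\eps^j)|^2$ (giving $\pi d\log\frac\rho\eps + O(d)$) plus cross terms, which are computed \emph{by hand} using the fact that $\arg(z+2\pi i\eps)$ and $\log|z+2\pi i\eps|$ are conjugate harmonics, integration by parts and the residue theorem; the factor $\log|\log\eps|$ in the final bound arises directly from $\log a_\eps = -\log|\log\eps|$ in the pairwise term $\pi\log((j-k)^2 a_\eps^2+O(\eps^2))$. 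This completely sidesteps the half-disk renormalised energy, which is the centrepiece (and acknowledged main obstacle) of your approach. That is the key trade-off: the paper pays a slightly worse-looking constant ($\log|\log\eps|$) in exchange for an entirely elementary, self-contained computation.

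On the gaps in your version: beyond the acknowledged obstacle (the estimate $|W|\le Cd^2(1+|\log r|+\log d)$ for the half-disk renormalised energy — note the paper only establishes the disk formula \eqref{numb} under the topological constraint $\sum d_j=2$, which fails here since $\sum d_j=d$, so this really does need separate work), there is an unacknowledged one in step (iii). The discrepancy between your inner ansatz $\phi^*_\eps(\cdot-a_k)+h_k(a_k)$ and the outer $\phi_h(z)=\pm\arg(z-a_k)+h_k(z)$ in the shell $B_\rho(a_k)\setminus B_{\rho/2}(a_k)$ is not just $\phi^*_\eps-\arg=O(\eps/\rho)$; it also contains $h_k(z)-h_k(a_k)$, which is of size $\sum_{l\ne k}O\bigl(\rho/|a_k-a_l|\bigr)=O(\log d)$ because the nearest neighbour is only at distance $4\rho$. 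The cutoff therefore contributes $O((\log d)^2)$ per vortex, not $O(\eps^2)$. This is still absorbable, but the stated estimate in (iii) is wrong. Finally, your reconciliation of $\log d$ with $\log|\log\eps|$ is left open for large $d$; this is harmless for the paper's application (the $d_j$ are bounded), but as written the lemma's constant $C$ is to be independent of $d$, and neither your construction (requiring $r/d\gg\eps$) nor the paper's (requiring $d/|\log\eps|\lesssim r$) is uniform in $d$ without such an implicit restriction.
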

\begin{proof}{}
{The idea of the proof is to replace a near-jump of $d\pi$ at $0$ by $d$ near-jumps of height $\pi$ at points $x_\eps^j$ that all converge to $0$ and to estimate their interaction energy.}
Set $a_\eps=\frac1{|\log\eps|}$ and $x_\eps^j=j a_\eps$, $j=1,\dots, d$. 
With the interpolation function
\[
f(x,y)=\begin{cases}
1 & \textrm{ if }\sqrt{x^2+y^2}<{r(1-r)}\\
{\frac{r-\sqrt{x^2+y^2}}{r^2}}  & \textrm{ if } {r(1-r)\le\sqrt{x^2+y^2}\le r}\\
0 & \textrm{ if } \sqrt{x^2+y^2}>{r}
\end{cases}
\]
we set
\[
\phi_{d,\eps} = \sum_{j=1}^d \arg\big(x-f(x,y)x_\eps^j+i(y+2\pi \eps f(x,y)) \big). 
\]
As in the {proof of Lemma~\ref{lem:cabre}, the interpolation function does not contribute much to the energy, in fact }
\[
{\int_{B_r^+\setminus B_{r(1-r)}} |\nabla \phi_{d,\eps}|^2 \, dxdy + \frac1\e \int_{I_r\setminus I_{r(1-r)}} \sin^2 \phi_{d,\eps}\, dx\le Cd^2 \left(|\log(1-r)|+
\frac{\e}{r^2}\right) \le Cd^2.}
\]
{It suffices to compute the energy of $\phi_{d,\eps}$ in $B_\rho^+$ for $\rho=r(1-r)$, where $f\equiv 1$.} For that, we note that 
 $\phi(x,y)=\arg(x+i(y+2\pi \eps))$ and $\psi(x,y)=\log | x+i(y+2\pi \eps)|$ are (up to sign) harmonic conjugates. Then
\[
|\nabla \phi_{d,\eps}|^2 = \sum_{j=1}^d  |\nabla \psi(\cdot-x_\eps^j)|^2
+\sum_{j\neq k} \nabla \psi(\cdot-x_\eps^j)\cdot \nabla \psi(\cdot-x_\eps^k).
\]
The integral over $B_\rho^+$ of the first sum is bounded by 
\[
d\pi \log\frac{\rho}{\eps}+O(1),
\]
while for the second part we compute
\[
\int_{B_\rho^+} \nabla \psi(\cdot-x_\eps^j)\cdot \nabla \psi(\cdot-x_\eps^k) \, dx 
= \int_{\partial B_\rho^+} \psi(\cdot-x_\eps^j)\frac{\partial}{\partial\nu} \psi(\cdot-x_\eps^k).
\]
The integrals over $\partial B_\rho\cap \R^2_+$ are estimated by $\pi|\log \rho|+O(a_\e^2)/\rho^2\le \pi |\log \rho|+C$,
while the integrals over the 
straight part are of the form
\[
\int_{-\rho}^\rho \frac12\log(x-x^j_\eps)^2 \frac{2\pi \eps}{(x-x^k_\eps)^2+(2\pi\eps)^2}\, dx. 
\]
Extending the integration interval to $(-\infty,\infty)$ provides an upper bound (up to the contribution of the region where the logarithm is negative, which is 
bounded by $C|\log \rho|$ since $\eps<\rho^2$). The remaining integral,
\[
\int_{-\infty}^\infty \frac12\log(x-x^j_\eps)^2 \frac{2\pi \eps}{(x-x^k_\eps)^2+(2\pi\eps)^2}\, dx 
\]
can be evaluated using the residue theorem:
The function can be extended to the upper half plane as 
\[
\left(\log |z-x^j_\eps|+i \arg(z-x^j_\e) \right)\frac{2\pi \eps}{(z-x^k_\eps)^2+(2\pi\eps)^2}
\]
for a branch of the argument that is smooth on the upper half plane. Integrating over $\partial (B_R^+\setminus B_s(x_j^\eps))$ and letting $s\to 0$ and $R\to\infty$,
we find that 
the only singularity in the contour is a simple pole at $z=x^k_\eps+i2\pi \eps$,
and we obtain after taking real parts
\[
\int_{-\infty}^\infty \frac12\log(x-x^j_\eps)^2 \frac{2\pi \eps}{(x-x^k_\eps)^2+(2\pi\eps)^2}\, dx =2\pi \log \big((j-k)^2a_\eps^2 + (2\pi\eps)^2\big).
\]

From $\sin^2(x+y)\le 2(\sin^2 x+\sin^2 y)$ we see that
 $\sin^2(\phi_{d,\eps})\le C{d}\sum_{j=1}^d \sin^2 (\phi(\cdot-x^j_\eps))$ and using the calculation in Step 2 of the previous lemma, the boundary term contributes only
{by} a constant, and adding up we arrive at the conclusion of the lemma {since $|\log \rho-\log r |=|\log(1-r)|\le C$.}\footnote{The proof sketched above is fully local. A nonlocal proof {of a less precise estimate} is given in \cite{Kurzke:2006a}.}
\qed
\end{proof}

\medskip

Now we show a precise estimate which is the central step in the $\Gamma$-expansion beyond the leading (logarithmic) order proved at \eqref{eq:abslibd}. This is based on an
argument that is new in the context of boundary vortices, inspired by the work of  Colliander-Jerrard \cite{ColliandJerrard:1999a} for interior vortices.
A different proof of the same result (due to   Alicandro-Ponsiglione \cite{Alicandro:2014aa}) uses a dyadic decomposition argument; we expect that such an approach can also be 
used here.

\begin{pro}\label{prop:onevortexGC}
Let $\rho>0$ and $\phi_\eps\in H^1(B_\rho^+)$ {be a sequence / family} with $\phi_\eps(x,0)\to \phi^*(x,0)=\pi {\bf 1}_{\{x<0\}}(x) $ in $L^1(I_\rho)$. 
 For the functional \eqref{funct_sans}, we have the following second order lower bound:
\[
\liminf_{\eps\to 0}\left( \hat F_\eps(\phi_\eps;B_\rho^+)-\pi \log\frac \rho\eps \right)\ge \gamma_0,
\]
where $\gamma_0=\pi\log\frac{e}{4\pi}$.
\end{pro}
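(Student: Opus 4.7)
My strategy is to exploit the minimality of Peierls' solution $\phi_\eps^*(x,y) = \arg(x+i(y+2\pi\eps))$ recalled in Lemma~\ref{lem:cabre}: I shall construct a competitor $\tilde\phi_\eps$ on $B_\rho^+$ that agrees with $\phi_\eps$ on an inner half-disk $B_{s_\eps}^+$ (for a suitable $s_\eps$ close to $\rho$), coincides with $\phi_\eps^*$ on the half-circle $\Gamma_\rho := \partial B_\rho \cap \R^2_+$, and is obtained by interpolation on the thin half-annulus in between. Granted that the construction gives $\hat F_\eps(\tilde\phi_\eps; B_\rho^+) \le \hat F_\eps(\phi_\eps; B_\rho^+) + o(1)$ as $\eps\to 0$ and then $r\to 0$, Lemma~\ref{lem:cabre} and the explicit calculation in its Step~2 will yield
\[
\hat F_\eps(\phi_\eps; B_\rho^+) + o(1) \ge \hat F_\eps(\tilde\phi_\eps; B_\rho^+) \ge \hat F_\eps(\phi_\eps^*; B_\rho^+) = \pi \log(\rho/\eps) + \gamma_0 + o(1).
\]

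To set up the construction, I may assume $\hat F_\eps(\phi_\eps; B_\rho^+) \le \pi \log(\rho/\eps) + C$, as otherwise the $\liminf$ is trivially $+\infty$. Combining with the lower bound $\hat F_\eps(\phi_\eps; B_{\rho-r}^+) \ge F_\eps(\phi_\eps(\cdot,0); I_{\rho-r}) \ge \pi \log((\rho-r)/\eps) - M_2$ furnished by Lemma~\ref{lem:ABS2d1d} and Corollary~\ref{cor:lob3} applied to the trace (with $\ell=1$), this yields, for every small $r>0$, the uniform bound
\[
\int_{B_\rho^+ \setminus B_{\rho-r}^+} |\nabla \phi_\eps|^2 \, dx \le \pi \log\frac{\rho}{\rho-r} + C + M_2,
\]
together with a uniform bound on the boundary penalty over $I_\rho \setminus I_{\rho-r}$ (the full penalty on $I_\rho$ is also bounded by Lemma~\ref{lem:ubpen}). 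A coarea average over $s \in (\rho - r, \rho)$ then provides a good radius $s_\eps$ with $\int_{\Gamma_{s_\eps}} |\partial_\tau \phi_\eps|^2 \, d\h^1 \le C/r$. Setting $g_\eps := \phi_\eps - \phi_\eps^*$, I define $\tilde\phi_\eps := \phi_\eps^* + \chi \, g_\eps$ on the half-annulus $B_\rho^+ \setminus B_{s_\eps}^+$ for a smooth radial cutoff $\chi$ equal to $1$ near $\Gamma_{s_\eps}$ and $0$ near $\Gamma_\rho$, choosing the cutoff to act only radially so that $\tilde\phi_\eps(\cdot,0) = \phi_\eps(\cdot,0)$ on $I_\rho$ and the boundary penalty is unchanged. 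This latter choice is permitted because Lemma~\ref{lem:cabre}'s minimality constrains only the curved portion of $\partial B_\rho^+$.

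The main obstacle is controlling the interpolation cost, which reduces to the strong $L^2(\Gamma_{s_\eps})$-smallness of $g_\eps$ as $\eps\to 0$. This should follow from the facts that (i) both $\phi_\eps(\cdot,0)$ and $\phi_\eps^*(\cdot,0)$ converge to $\pi {\bf 1}_{\{x<0\}}$ on $I_\rho$ (the former by hypothesis; the latter by direct inspection), (ii) $g_\eps$ has bounded Dirichlet energy on the half-annulus (from the bound above and smoothness of $\phi_\eps^*$ away from the origin), so a subsequence of $g_\eps$ converges weakly in $H^1$ of the half-annulus to $\arg(x+iy)-\arg(x+iy)=0$, the correct lift in $\pi\ZZ$ being pinned down by the hypothesis on the boundary trace, and (iii) trace theory combined with the Rellich-type compact embedding $H^{1/2}(\Gamma_{s_\eps}) \hookrightarrow L^2(\Gamma_{s_\eps})$ on the good circle. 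The cross and cutoff-gradient contributions in $|\nabla \tilde\phi_\eps|^2$ are then bounded by $Cr^{-2}\|g_\eps\|_{L^2(B_\rho^+\setminus B_{s_\eps}^+)}^2 + Cr^{-1}\|g_\eps\|_{L^2(\Gamma_{s_\eps})} \|\nabla g_\eps\|_{L^2(B_\rho^+\setminus B_{s_\eps}^+)}$, which vanishes as $\eps\to 0$ for each fixed $r$; taking $\liminf_{\eps\to 0}$ and then letting $r\to 0$ closes the argument.
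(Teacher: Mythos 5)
Your strategy is a genuinely different route from the paper's: you compare with Peierls' profile $\phi^*_\eps$ directly on all of $B_\rho^+$, using its minimality with respect to its own boundary data, rather than locating a ``good radius'' in the middle of the half-disk and splitting into an inner lower bound and an outer annular lower bound as the paper does (Steps~1--4 of Proposition~\ref{prop:onevortexGC}). If it worked, your approach would be cleaner, because it would avoid the separate outer-annulus estimate.

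Unfortunately there is a genuine gap at the heart of the construction. You need $g_\eps=\phi_\eps-\phi^*_\eps$ to be small in $L^2$ near the outer arc $\Gamma_\rho$, and you argue for this by saying that $g_\eps$ converges weakly in $H^1$ of the outer half-annulus to $0$, ``the correct lift being pinned down by the hypothesis on the boundary trace.'' But the hypothesis only constrains the trace of $\phi_\eps$ on $I_\rho$, not on $\Gamma_\rho$, and a bounded-$H^1$ weak limit on the half-annulus is not determined by its trace on the flat segment alone. Concretely, take $\phi_\eps=\phi_\eps^*+\chi(|z|)\sin(\arg z)$ with $\chi$ a fixed smooth cutoff supported in $(\rho/2,\rho)$ and equal to $1$ near $\rho$: the bottom trace is unchanged (since $\sin\theta$ vanishes at $\theta=0,\pi$), $\hat F_\eps(\phi_\eps;B_\rho^+)=\pi\log(\rho/\eps)+O(1)$, but $g_\eps=\chi\sin\theta$ is a fixed nonzero function near $\Gamma_\rho$. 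Your cutoff-gradient estimate $Cr^{-2}\|g_\eps\|^2_{L^2(\text{annulus})}$ is then of size $1/r$, not $o(1)$. The proposition is still true for this $\phi_\eps$ (one can check the cross term vanishes and the perturbation only adds energy), but your competitor argument cannot see this. There is also a secondary issue: with a purely radial cutoff $\chi(s)$, the competitor changes the trace on $I_\rho\setminus I_{s_\eps}$, so the penalty is \emph{not} unchanged as claimed; one would additionally need to control $\frac1\eps\int_{I_\rho\setminus I_{s_\eps}}|\sin^2\tilde\phi_\eps-\sin^2\phi_\eps|$.

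The paper avoids these problems by not attempting to match $\phi_\eps$ near $\Gamma_\rho$. It instead uses a co-area/pigeonhole argument (with $r_0$ chosen exponentially small in $1/\delta$, so that $\log(1/(2r_0))$ absorbs the $O(1)$ constant) to find a radius $\rho_*\in[r_0,\tfrac12]$ that lies simultaneously in the good set $A_\eps$ (so $\int_0^\pi|\partial_\theta\phi_\eps(\rho_*e^{i\theta})|^2\,d\theta\le\pi+\delta$) and in $G_\eps$ (so the endpoint values $\phi_\eps(\pm\rho_*,0)$ are pinned near $0,\pi$ by the penalty). These two facts together give the crucial estimate $\int_0^\pi|w_\eps|^2\,d\theta\lesssim\delta+\sqrt{\eps/\rho_*}$ for $w_\eps(\theta)=\phi_\eps(\rho_*e^{i\theta})-\theta$, precisely the $L^2$-smallness that you needed and could not obtain near $\Gamma_\rho$. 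Your ``good radius'' near the outer boundary only gives the much weaker $\int_{\Gamma_{s_\eps}}|\partial_\tau\phi_\eps|^2\lesssim 1/r$, which falls far short, because on an annulus of width $r$ the total energy is only $O(1)$, not $\pi\log(1/r)$, so the pigeonhole cannot force a nearly optimal slice.
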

\proof{} {As our statement is about the $\liminf$, it is enough to consider sequences in the following.}
First note the invariance of the desired estimate with respect to rescaling in $\rho$. Therefore, it suffices to consider the case $\rho=1$. Let $\delta\in (0, \pi)$. We let $C_j$ denote generic positive constants {\bf independent of $\eps$, $\phi_\eps$ and $\delta$.}
We may assume that $\phi_\eps$ are $C^1$ smooth in $\overline{B_{1}^+}$, since for any $\eta>0$ and $\phi_\eps\in H^1(B_{1}^+)$ there exists $\hat \phi_\eps\in C^1(\overline{B_{1}^+})$ with 
$\left|\hat F_\eps(\hat \phi_\eps;B_{1}^+) - \hat F_\eps(\phi_\eps;B_{1}^+)\right| \le\eta$.

Second, we may assume that there is $C_1>0$ such that 
\[
\hat F_\eps(\phi_\eps;B_1^+)\le\pi \log\frac 1\eps+\gamma_0+C_1
\]
(otherwise the desired estimate is trivially satisfied).

\medskip

\nd  {\bf Step 1.} {\it Finding a radius $\rho_*=\rho_*(\delta)\in (0, \frac12)$ such that for all $\eps$ along a sequence, $\phi_\eps(\rho_*e^{i\theta})$ has similar properties on $\partial B_{\rho_*}^+$ to that of the limit function $\phi^*(\rho_*e^{i\theta})=\theta$ where $\theta$ is the polar angle. } For that, we start by recalling from Proposition \ref{prop:lolobd} that for every $r_0\in(0,\frac12)$:
\[
\liminf_{\eps\to 0} \left( \hat F_\eps(\phi_\eps;B_{r_0}^+) - \pi \log\frac{r_0}{\eps} \right)\ge -M_0.
\]
Combining the two estimates, we obtain for a constant $C_2>0$ independent of $r_0$:
\[
\limsup_{\eps\to 0}\hat F_\eps(\phi_\eps; B_{1}^+\setminus B_{r_0}) \le \pi \log \frac{1}{r_0} + C_2,
\]
and reducing the domain of integration and setting $C_3=C_2-\pi \log\frac12>0$ we can write
\be
\label{123}
\limsup_{\eps\to 0}\hat F_\eps(\phi_\eps; B_{1/2}^+\setminus B_{r_0}^+) \le \pi \log \frac{1}{2r_0} +  C_3.
\ee
For $s\in (0, \frac12)$, we introduce
\begin{equation}\label{eq:kleinfeps}
f_\eps(s):= \int_{\partial B^+_s\setminus I_s} |\nabla \phi_\eps|^2 d\h^1 + \frac{1}{2\pi\eps}\int_{\partial I_s} \sin^2 
\phi_\eps(\cdot,0) \, d\h^0,
\end{equation}
{so that $\hat F_\eps(\phi_\eps; B_{r}^+)=\int_0^r f_\eps(s) \, ds$},
as well as the sets
$$
A_\eps=\big\{ s \in (0, \frac12)\, : \, f_\eps(s)\le \frac{\pi + \delta}{s}\big\}
\, \textrm{ and } \,
G_\eps = \big\{ s\in (0, 1)\, : \, |\phi_\eps(s,0)| + |\phi_\eps(-s,0)-\pi|<\frac14\big\},
$$
We fix $r_0$ such that
$$r_0=r_0(\delta)\le \frac12\exp(-\frac{2C_3}{\delta})$$ 
and 
\be
\label{epsi}
\textrm{\bf from now on, $\eps$ is small, i.e., 
$\eps<r_0$}.
\ee
Thus, we have  $\delta \log\frac{1}{2r_0} \ge 2 C_3$. 
The aim of this step is to show that 
$$[r_0,\frac12]\cap A_\eps
\cap G_\eps\neq\emptyset$$ 
{(any point $\rho_*$ in this intersection can be used as the desired radius in the claim of Step 1).}
To do so, we estimate 
$a_\eps=|[r_0,\frac12]\cap A_\eps|$ 
as follows: as $s\mapsto \frac1s$ is decreasing in $(0,\frac12)$, we may estimate
\[
\pi \log\frac{1}{2r_0} + C_3 \stackrel{\eqref{123}}{\ge} \int_{r_0}^\frac12 {f_\eps}(s) \, ds \ge  (\pi + \delta) \int_{r_0+a_\eps}^{1/2} \frac1s \, ds = (\pi+\delta) \log \frac{1}{2(r_0+a_\eps)}.
\]
Using our choice of $r_0$, it follows that
$$
-C_3 \ge C_3-\delta \log\frac{1}{2r_0}\geq (\pi+\delta) \log\frac{r_0}{r_0+a_\eps},
$$
so for every $0<\delta<\pi$, we can estimate
$$
a_\eps\ge r_0\left(e^{\frac{C_3}{\pi+\delta}}-1\right) \geq r_0 C_5, \quad C_5:=e^{\frac{C_3}{2\pi}}-1>0.
$$

Choosing a sequence 
$\eps_n\to 0$, 
we have $\big|G_{\eps_n}\cap[r_0,\frac12]\big|\to \frac12-r_0$ and hence 
 (using Fatou's lemma) 
that $\left|[r_0,\frac12]\cap \limsup_{n\to\infty} (A_{\eps_n}
\cap G_{\eps_n})\right|>0$. 
In particular there is a radius $\rho_*=\rho_*(\delta)\in[r_0,\frac12]$ that lies in infinitely many sets $A_{\eps_n}
\cap G_{\eps_n}$. In particular, $\rho_*>\eps$.

\medskip

\noindent{\bf Step 2}. {\it We show that $\phi_\eps(\rho_*e^{i\theta})$ is close to $\phi^*(\rho_*e^{i\theta})=\theta$
in $L^2(\partial B^+_{\rho_*})$.} Indeed, setting $$w_\eps(\theta):=\phi_\eps(\rho_* e^{i\theta})-\theta,$$ where $\theta$ is the polar angle, we have $|w_\eps(\theta=0)|, |w_\eps(\theta=\pi)| <\frac14$ (since $\rho_*\in G_\eps$). Since $x/\sin x$ is increasing on $(0, \frac14)$, there exists  $C_7>0$ such that
\be
\label{33}
|w_\eps|\le \frac{1/4}{\sin\frac14} |\sin w_\eps|\le 
\frac{1/4}{\sin\frac14} \sqrt{\frac{4\pi^2\eps}{\rho_*}}=:C_7\sqrt{\frac\eps{\rho_*}} \quad \textrm{at } \theta\in \{0, \pi\}
\ee
(since $\rho_*\in A_\eps$)  so
\begin{align}
\nonumber
\int_0^\pi |\partial_\theta w_\eps(\theta)|^2  d\theta &= \int_0^\pi \left( |\partial_\theta \phi_\eps(\rho_*e^{i\theta})|^2 + 2 \partial_\theta \phi_\eps (\rho_*e^{i\theta}) + 1\right) \,d\theta \\
\nonumber
&= \int_0^\pi \left( |\partial_\theta \phi_\eps(\rho_*e^{i\theta})|^2 -1+ 2 \partial_\theta w_\eps  \right)\, d\theta \\
\nonumber
& \le   \delta + 2 \int_0^\pi\partial_\theta w_\eps\,d\theta = \delta + 2 (w_\eps(\pi)-w_\eps(0))\\
\label{22}
&\le \delta + 4C_7\sqrt{\frac\eps{\rho_*}}.
\end{align}
In particular, for a suitably chosen $C_8>0$, we obtain thanks to \eqref{epsi} and \eqref{33} (in particular, $\eps<\rho_*$):
\be
\label{44}
\int_0^\pi |w_\eps|^2 d\theta \le \int_0^\pi \left(w_\eps(0)+\int_0^\theta \partial_\theta w_\eps(y) dy\right)^2 d\theta \le C_8\bigg(\delta+\sqrt{\frac\eps{\rho_*}}\, \bigg).
\ee

\noindent{\bf Step 3}. {\it We prove that}
\begin{equation}\label{eq:onehol}
\liminf_{\eps\to 0}\big(\hat F_\eps( \phi_\eps;B^+_{\rho_*}) -\pi \log\frac{\rho_*}{\eps}\big) \ge \gamma_0 -o_{\delta}(1).
\end{equation}
The idea is to estimate the energy of the interpolation between  $\phi_\eps$ and $\phi^*=\theta$ in a small annulus around $\partial B^+_{\rho_*}$. In the small annulus $B_{\rho_*+\eta}^+\setminus B_{\rho_*}$
with $\eta$ to be chosen later (see \eqref{eta_ici}), we set the interpolation function between $\phi_\eps(\rho_*e^{i\theta})$ and $\phi^*\big((\rho_*+\eta)e^{i\theta}\big)=\theta$:
\[
\hat \phi_\eps(r,\theta)= \theta + \frac{\rho_*+\eta-r}{\eta} w_\eps(\theta), \quad r\in (\rho_*, \rho_*+\eta), \, \theta\in (0, \pi).
\]
Then we estimate the energy of $\hat \phi_\eps$: 
\begin{align*}
\hat F_\eps(\hat \phi_\eps; B_{\rho_*+\eta}^+\setminus B_{\rho_*} ) 
&= \int_{\rho_*}^{\rho_*+\eta} 
\Biggl( 
\int_0^\pi 
\frac1r 
\left({1} +\frac{\rho_*+\eta-r}{\eta} \partial_\theta w_\eps\right)^2+ \frac{r}{\eta^2}|w_\eps|^2 \, d\theta
+\\
&\quad+\frac{1}{2\pi\eps} 
\left( \sin^2 \left(\frac{\rho_*+\eta-r}{\eta} w_\eps(0)\right)+\sin^2 \left(\frac{\rho_*+\eta-r}{\eta} w_\eps(\pi)\right)
\right)
\Biggr)
dr
\end{align*}
For the first term in the above RHS, we use \eqref{33} and \eqref{22} to estimate: 
 \begin{align*}
  \int_{\rho_*}^{\rho_*+\eta} 
\int_0^\pi &
\bigg( \frac1r +\frac{2(\rho_*+\eta-r)}{r\eta} \partial_\theta w_\eps+ \frac1r(\partial_\theta w_\eps)^2\bigg)\, d\theta dr \\
&\leq \pi \log(1+\frac{\eta}{\rho_*})+ \int_{\rho_*}^{\rho_*+\eta} \frac2r |w_\eps(\pi)-w_\eps(0)|+
\frac{1}{r} \big(\delta + 4C_7\sqrt{\frac\eps{\rho_*}}\, \big)\, dr\\
&\leq \log(1+\frac{\eta}{\rho_*}) \bigg(\pi + 8C_7\sqrt{\frac\eps{\rho_*}}+\delta\bigg).
 \end{align*}
Since $\rho_*\in A_\eps$, this estimate combined with \eqref{44} yield
  \begin{align*}
 \hat F_\eps(\hat \phi_\eps; B_{\rho_*+\eta}^+\setminus B_{\rho_*} ) 
& \le  \log(1+\frac{\eta}{\rho_*})\left( \pi+8C_7\sqrt{\frac\eps{\rho_*}}+\delta\right) +
    C_8(\delta+\sqrt{\frac\eps{\rho_*}})(\frac{\rho_*}{\eta}+1)
+  \frac{2\pi\eta}{\rho_*}.
\end{align*}
Letting $\eps\to 0$ and setting 
\be
\label{eta_ici}
\eta=\delta^{1/4}\rho_*,
\ee 
we obtain that 
\begin{equation}\label{eq:kldelta}
 \limsup_{\eps\to 0}\hat F_\eps(\hat \phi_\eps; B^+_{\rho_*+\eta}\setminus B_{\rho_*})
 \le (\pi+\delta)\log(1+\delta^{1/4}) + C_8 (\delta^{3/4}+\delta)+ 2\pi \delta^{1/4},
\end{equation}
which tends to $0$ as $\delta\to 0$. If we extend $\hat \phi_\eps$ in the ball $B^+_{\rho_*+\eta=\rho_*(1+\delta^{1/4})}$ by setting $\hat \phi_\eps:=\phi_\eps$ in $B^+_{\rho_*}$, we can now use the lower bounds from the definition of $\gamma_1$ in Lemma \ref{lem:cabre} (because $\hat \phi_\eps=\phi^*$ on $\partial B^+_{\rho_*+\eta}\setminus I_{\rho_*+\eta}$), giving us 
\[\liminf_{\eps\to 0} \big(\hat F_\eps(\hat \phi_\eps;B^+_{\rho_*(1+\delta^{1/4})}) -\pi \log\frac{\rho_*}{\eps}\big) \ge \gamma_0-o_\delta(1).
\]
Since $\phi_\eps=\hat \phi_\eps$ on $B_{\rho_*}^+$, we can use \eqref{eq:kldelta} and obtain \eqref{eq:onehol}
(recall that $\rho_*$ depends on $\delta$, that's why the last term $o_\delta(1)$ is needed in \eqref{eq:onehol}).

\medskip

\noindent {\bf Step 4}. {\it We prove the optimal lower bound in the outer annulus}
$$
\liminf_{\eps\to 0}\hat F_\eps(\phi_\eps;B^+_1\setminus B_{\rho_*}) \ge \pi \log\frac{1}{\rho_*}.
$$
In fact, we prove the following more general case that is needed in the proof of Theorem \ref{thm:GCforGeps}: 

\medskip

\nd {\bf Claim:} If $\ell\in \ZZ$ and $\phi_\eps\in H^1(B_1^+)$ with $\phi_\eps(x,0)\to \ell \phi^*(x,0)$ in $L^1((-1,1))$ as $\eps\to 0$, then   
\be
\label{estim_outer}
\liminf_{\eps\to 0}\hat F_\eps(\phi_\eps;B^+_1\setminus B_{\rho_*}) \ge \pi \ell^2\log\frac{1}{\rho_*},
\, \textrm{ for every } \rho_*\in (0,1).
\ee
For that, we start by fixing  $\rho_*\in (0,1)$ and focusing on the set 
$$S_\eps =\left \{ s\in (\rho_*, 1)\, : \,|\phi_\eps(-s,0)-\ell\pi|+ |\phi_\eps(s,0)|<\frac14\right\}.$$
It is clear that $|S_\eps|\to 1-\rho_*$ as $\eps\to 0$, since $\phi_\eps\to \ell \phi^*$ in $L^1(I_1)$. By H\"older's inequality,
\[
|\phi_\eps(-r, 0)-\phi_\eps(r,0)|\le \int_0^\pi |\partial_\theta \phi_\eps(re^{i\theta})|\, d\theta \le \left(\int_0^\pi |\partial_\theta \phi_\eps(re^{i\theta})|^2 \, d\theta\right)^{1/2} \pi^{1/2}, \quad r\in (0,1),
\]
so  using $f_\eps$ defined in \eqref{eq:kleinfeps}, we estimate 
\[
f_\eps(r) \ge \frac1 {\pi r} \bigl(\phi_\eps(r,0)-\phi_\eps(-r,0)\bigr)^2 + \frac{1}{2\pi\eps} \left(\sin^2 \phi_\eps(r,0)+\sin^2 \phi_\eps(-r,0)\right), \quad r\in (0,1).
\]
If we restrict to $r\in S_\eps$, there is a constant $C_9>0$ such that 
\begin{align*}
\sin^2 \phi_\eps(r,0) + \sin^2 \phi_\eps(-r,0) &\ge 2 C_9 \bigl((\ell\pi-\phi_\eps(-r,0))^2 + \phi_\eps(r,0)^2 \bigr)\\
& \ge C_9 \bigl(\ell\pi-\phi_\eps(-r,0)+\phi_\eps(r,0)\bigr)^2, \quad r\in (\rho_*,1)
\end{align*}
so
\[
f_\eps(r) \ge \inf_{s\in \R} \left( \frac{s^2}{\pi r} + C_9\frac{(\ell\pi-s)^2}{2\pi \eps}\right), \quad r\in S_\eps.
\]
Optimising over $s$, we obtain for a constant $C_{10}>0$: \footnote{The function $h(s)=\frac{s^2}{\pi r} + C_9\frac{(\ell\pi-s)^2}{2\pi \eps}$ is a parabola having the minimum $s_*$ satisfying $s_*=C_9r(\ell \pi-s_*)/(2\eps)$ which yields ${h}(s_*)=\ell s_*/r=\pi \ell^2/(r+\frac{2\eps}{C_9})$.}
\[
f_\eps(r)\ge \frac{\pi \ell^2}{r+C_{10}\eps}, \quad r\in S_\eps
\]
yielding
\begin{align*}
\hat F_\eps(\phi_\eps;B^+_1\setminus B_{\rho_*}) &\ge \int_{S_\eps} \frac{\pi \ell^2}{r+C_{10}\eps} \, dr\\
& \ge \int_{1-|S_\eps|}^1  \frac{\pi\ell^2}{r+C_{10}\eps}\,  dr
= \pi \ell^2\log \frac{1+C_{10}\eps}{1-|S_\eps|+C_{10}\eps}.
\end{align*}
Letting $\eps\to 0$, as $|S_\eps|\to 1-\rho_*$, this yields \eqref{estim_outer} and proves the claim.

Combining \eqref{estim_outer} and \eqref{eq:onehol} (in the case $\ell=1$), we obtain
\[
\liminf_{\eps\to 0} \left(\hat F_\eps(\phi_\eps;B_1^+) -\pi \log\frac1\eps \right)\ge \gamma_0-o_{\delta}(1),
\]
so letting $\delta\to 0$ we obtain the desired conclusion.
\qed

We need the following estimate, which is closely related to a result
from Struwe \cite{Struwe:1994a}.
\begin{lem}\label{lem:lpbd}
Let $f\in L^2(B^+_1)$ be a function on the unit half disk $B^+_1\subset\R^2$
with the following property: There exists $r_0<1$ and $A>0$ such that
for every $0<r\le r_0$,
\[
\|f\|^2_{L^2(B^+_1\setminus B_r)} \le A (1+\log\frac1r).
\]
Then for $1\le q<2$ we have
\[
\|f\|_{L^q(B^+_1)} \le C(A,q,r_0)<\infty,
\]
where $C(A,q,r_0)$ is independent of $f$.
\end{lem}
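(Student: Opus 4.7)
\medskip

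\noindent\textbf{Proof plan.} The idea is a dyadic decomposition of $B_1^+$ centred at the origin, where the hypothesis gives us $L^2$ bounds on each annulus that grow only logarithmically, while the areas of the annuli shrink geometrically. Since we only ask for $L^q$ with $q<2$, H\"older's inequality produces an extra geometric factor that dominates the logarithm, yielding a convergent series.

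\smallskip

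\noindent\emph{Step 1: Setup of the decomposition.} I would split
\[
B_1^+ = \bigl(B_1^+\setminus B_{r_0}^+\bigr) \cup \bigcup_{k\ge 0} A_k,
\qquad A_k := B_{2^{-k}r_0}^+\setminus B_{2^{-k-1}r_0}^+.
\]
On the outer piece $B_1^+\setminus B_{r_0}^+$, the hypothesis applied with $r=r_0$ gives an $L^2$ bound, and since this set has finite measure, H\"older yields an $L^q$ bound depending only on $A$, $q$, and $r_0$.

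\smallskip

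\noindent\emph{Step 2: $L^2$ bound on each dyadic annulus.} Since $A_k\subset B_1^+\setminus B_{2^{-k-1}r_0}^+$, the assumption at radius $r=2^{-k-1}r_0\le r_0$ gives
\[
\|f\|_{L^2(A_k)}^2 \le A\left(1+\log\tfrac{2^{k+1}}{r_0}\right) \le C_1(A,r_0)\,(k+1).
\]

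\smallskip

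\noindent\emph{Step 3: Converting to $L^q$ via H\"older.} On $A_k$, H\"older with exponents $2/q$ and $2/(2-q)$ combined with $|A_k|\le C\,r_0^2\,4^{-k}$ yields
\[
\int_{A_k}|f|^q\,dx \le \|f\|_{L^2(A_k)}^{q}\,|A_k|^{1-q/2} \le C_2(A,q,r_0)\,(k+1)^{q/2}\,2^{-(2-q)k}.
\]

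\smallskip

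\noindent\emph{Step 4: Summation.} Because $2-q>0$, the geometric decay $2^{-(2-q)k}$ beats the polynomial growth $(k+1)^{q/2}$, so
\[
\sum_{k\ge 0} (k+1)^{q/2}\,2^{-(2-q)k} < \infty,
\]
and summing the annular contributions together with the bound on $B_1^+\setminus B_{r_0}^+$ gives the desired finite estimate $\|f\|_{L^q(B_1^+)}\le C(A,q,r_0)$.

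\smallskip

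There is no serious obstacle here: the argument is a routine dyadic decomposition. The only point to keep in mind is that the hypothesis only gives $L^2$ control on the complement of small balls (and not on the small balls themselves), which is exactly why one must argue annulus-by-annulus and why one cannot hope for an $L^2$ bound (the constant in $A(1+\log\frac1r)$ would blow up as $r\to 0$), but can for any $L^q$ with $q<2$.
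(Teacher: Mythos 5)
Your proof is correct and follows essentially the same route as the paper: the same dyadic decomposition $A_k = B^+_{2^{-k}r_0}\setminus B^+_{2^{-k-1}r_0}$, the same H\"older step $\int_{A_k}|f|^q \le \|f\|_{L^2(A_k)}^q |A_k|^{1-q/2}$, and the same observation that the geometric decay $2^{-(2-q)k}$ dominates the logarithmic/polynomial growth so the series converges. No meaningful differences.
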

\proof{} Let $1\le q<2$. 
Using H\"older's inequality (as $q<2$), we calculate for $r_j=2^{-j}r_0$:
\begin{align*}
\int_{B^+_{r_0}} |f|^q dx 
& = \sum_{j=0}^\infty \int_{B^+_{r_j}\setminus B_{r_{j+1}}} (|f|^2)^{\frac{q}{2}}  \, dx
\\
&\le\sum_{j=0}^\infty \left(\int_{B^+_{r_j}\setminus B_{r_{j+1}}} |f|^2dx\right)^{\frac q2}
|{B^+_{r_j}\setminus B_{r_{j+1}}}|^{1-\frac q2}\\
&\le \sum_{j=0}^\infty
\left(\int_{B^+_1\setminus B_{r_{j+1}}} |f|^2dx\right)^{\frac q2} (\frac\pi 2)^{1-\frac q2}(2^{-j}r_0)^{2- q}\\
&\le C\sum_{j=0}^\infty (1+j\log 2-\log r_0)^{\frac q2} 2^{-(2-q)j}.
\end{align*}
The sum converges by the root test
so $ \|f\|_{L^q(B^+_{r_0})}\le C(A,q,r_0)$. We also clearly have that $\|f\|_{L^q(B^+_1\setminus B_{r_0})} \le C(A,q,r_0)$.\qed

\proof{ of Theorem~\ref{thm:GCforGeps}} We divide the proof in several steps:

\medskip

\noindent {\bf Step 1}. {\it Proof of point 1.} For small $r>0$, using Lemma \ref{lem:conformalflat} (and the notation therein), we can cover a neighbourhood of 
$\dOm$ with a finite number of patches $A_j=\Psi_{p_j}({B_{r(1-c_1 r\log\frac1r)}^+\cup I_{r(1-c_1 r\log\frac1r)}})\subset B_r(p_j)\cap \overline{\Omega}$ for a finite set of points $p_j\in \dOm$, such that $\cup A_j$ is relatively open 
in $\overline{\Om}$ and the functions $\psi_\eps^{(j)}=\phi_\eps\circ\Psi_{p_j}$ satisfy the energy estimate for the functionals \eqref{funct_fhat}:
\[
\limsup_{\eps\to 0} \frac1{|\log\eps|}\hat F^{(g^{(j)})}_{\eps}(\psi_\eps^{(j)};B^+_\rho) <\infty
\]
where we have denoted
 $g^{(j)}=g\circ \Psi_{p_j}$ for $g$ a lifting as given in \eqref{g} and $\rho=r(1-c_1r\log\frac1r)$. 
 On each patch, we arrange $g^{(j)}$ to be continuous.
From \eqref{eq:l7roughbd} in Lemma \ref{lem:lem7}, we find that the functions $w_\eps^{(j)}=\psi_\eps^{(j)}-\widehat{g^{(j)}}_\rho$ then satisfy
\[
\limsup_{\eps\to 0} \frac1{|\log\eps|}\hat F^{(0)}_{\eps}(w_\eps^{(j)};B^+_\rho) <\infty.
\]
We can now use Lemma~\ref{lem:ABS2d1d} to reduce $\hat F^{(0)}_\eps$ to $F_\eps$ defined at \eqref{def:feps} and apply Proposition~\ref{prop:orliczbd} to see that for a {sequence / family} $z^{(j)}_\eps\in \ZZ$, 
$w^{(j)}_\eps(0, \cdot)-\pi z^{(j)}_\eps$ are bounded in $L^p(I_{\rho})$ for every $p\in [1, \infty)$. By Proposition~\ref{prop:strcomp}, we have up to a subsequence the $L^p$ convergence
 $w_\eps^{(j)}(0, \cdot)-\pi z^{(j)}_\eps\to w_0^{(j)}\in BV(I_\rho;\pi\ZZ)$. 
Changing variables, we obtain convergence for $\phi_\eps-\pi z^{(j)}_\eps$ in $L^p(A_j\cap
\partial\Om)$.
 If $\partial\Omega \cap A_j\cap A_j\neq \emptyset$, it
follows that $z^{(j)}_\eps-z^{(j)}_\eps(\in \ZZ)$ converges as $\eps\to 0$, i.e., it is constant for small $\eps$; in particular, 
we may choose a subsequence $z_\eps\in \ZZ$ that works for all of the patches $A_j$. Adding up the results on the patches,
it follows that  $(\phi_\eps-{\pi z_\eps})_\eps$ is bounded and converges as claimed in $L^p(\dOm)$ for every $p\in [1, \infty)$ to a limit function $\phi_0$ on $\dOm$
that satisfies $\phi_0-g\in BV(\dOm;\pi\ZZ)$. Furthermore, $\de_\tau \phi_\eps\to \de_\tau \phi_0$ in $W^{-1,p}$. 
With $\ka=[\de_\tau g]_{ac}$   we obtain that $\de_\tau\phi_0-\ka = -\pi \sum_{j=1}^N{d_j \delta_{a_j}}$, where the $a_j$ can be chosen mutually distinct.
The measures $\de_\tau \phi_\e$ all average to zero, so $\de_\tau \phi_0$ does as well, and we must have that $\sum d_j =2$
(due to the Gau\ss{}-Bonnet theorem in the proof of Lemma~\ref{lem:lifting}).
To show the lower bound, we consider for small $r>0$ disjoint patches $A_j$ as above, centred at $a_j$.
Defining $w_\eps^{(j)}$ as above and setting $\rho=r(1-c_1r\log\frac1r)$ and $\tilde\eps=\frac{\eps}{1-c_1r\log\frac1r}$, 
the results of  Lemma~\ref{lem:conformalflat}, the convergence of $\phi_\e$ in $L^2(\dOm)$ and \eqref{eq:36} imply 
\begin{equation}\label{eq:s40}
\int_{B_r(a_j)\cap\Om} |\nabla \phi_\e|^2 \, dx + \frac1{2\pi \e} \int_{B_r(a_j)\cap\dOm} \sin^2(\phi_\e-g)\, d\h^1 \ge \hat F^{(0)}_{\tilde \eps}(w_\eps^{(j)};B^+_\rho) -Cr^{\frac12}.
\end{equation}
In $I_\rho$, we have $w_\eps^{(j)}(0, \cdot)\to w_*^{(j)}$, where $w_*^{(j)}$ is locally constant except for a single jump of height $d_j\pi$. Subtracting a suitable constant, we can
apply Corollary~\ref{cor:lob3} and obtain 
\begin{equation}\label{eq:stern41}
\hat F^{(0)}_{\tilde \eps}(w_\eps^{(j)};B^+_\rho) \ge \pi |d_j| \log\frac \rho{\tilde \eps}- M_2|d_j|=\pi |d_j| \log\frac r{\eps}- M_2|d_j|+2\pi |d_j| \log (1-c_1r\log\frac1r).
\end{equation}
Combining the results on each of the disjoint patches and dividing by $|\log\eps|$, it follows that
\[
\frac1{|\log\eps|}\Ge(\phi_\e) \ge\pi \sum_{k} |d_j| - \frac1{|\log\eps|} (\pi \sum_j |d_j| (|\log r|+M_2)+Cr^{\frac12}),
\]
and letting $\eps\to 0$ we obtain the first order lower bound as claimed.

\medskip

\noindent {\bf Step 2}. {\it Proof of point 2.} Assume now the stronger condition \eqref{eq:Gbetterub}. 
For small $r>0$, let $\rho=r(1-c_1r\log\frac1r)$. From  \eqref{eq:s40}, we then must have
\[
\sum_{j=1}^N \hat F^{(0)}_{\teps}(w_\eps^{(j)};B^+_\rho) \le \sum_{j=1}^N \pi |d_j| \log\frac\rho{\tilde\eps} + K_0,
\]
where $K_0=K_0(\rho)$ is independent of $\eps$.

For $\sigma<\rho$, we use Lemma \ref{lem:ABS2d1d} and Corollary~\ref{cor:lob3}, which shows for $\eps<\eps_0$
\[
\sum_{j=1}^N \hat F^{(0)}_{\teps}(w_{\eps}^{(j)};B^+_\sigma) \ge \sum_{j=1}^N \pi |d_j| \log\frac\sigma{\teps}- \sum_{j=1}^N |d_j| {M_2}
\]
so in $B^+_\rho\setminus B_\sigma$, we obtain
\be
\label{st1}
\limsup_{\eps\to 0}\sum_{j=1}^N \hat F^{(0)}_{\teps}(w_\eps^{(j)};B^+_\rho\setminus B_\sigma)\le  \sum_{j=1}^N \pi |d_j|\log\frac\rho\sigma + K_0+ \sum_{j=1}^N |d_j| {M_2}.
\ee
However, as $w_\eps^{(j)}\to w^{(j)}_*$ in $L^1(I_\rho)$
where $w^{(j)}_*$ is locally constant expect one jump point of size $d_j\pi$, 
by \eqref{estim_outer}, 
we get
\[
\liminf_{\eps\to 0}\sum_{j=1}^N \hat F^{(0)}_{\teps}(w_\eps^{(j)};B^+_\rho\setminus B_\sigma) \ge \sum_{j=1}^N \pi d_j^2 \log\frac\rho\sigma,
\]
so 
\[
\sum_{j=1}^N \pi (d_j^2-|d_j|) \log\frac\rho\sigma \le K_0+ \sum_{j=1}^N |d_j| {M_2}.
\]
Letting $\sigma\to 0$, we obtain that this is only possible if $\sum_{j=1}^N  (d_j^2-|d_j|) \le 0$, so $d_j=\pm 1$ as claimed.

From \eqref{eq:s40}, \eqref{eq:stern41} and \eqref{eq:Gbetterub}, we find 
the existence of a constant $K_1$ such that for every small $\eps>0$ and $r>0$, 
\[
 \int_{\Omega_r}|\nabla \phi_\eps|^2 dx 
 \le N\pi\log\frac1r + K_1,
\]
where $\Omega_r=\Om\setminus \bigcup_j B_r(a_j)$. We conclude using Lemma~\ref{lem:lpbd} that $\nabla \phi_\eps$ are uniformly bounded in $L^q(\Om)$ for every $q<2$.

It follows that there exists $\hat \phi_0\in W^{1,q}(\Om)$ such that 
for a subsequence, 
 $\phi_\eps\rightharpoonup \hat \phi_0$ weakly in $W^{1,q}(\Omega)$ and weakly  in $H^1(O)$ for any open  $O$ with $\overline{O}\subset \overline{\Om}\setminus \{a_1,\dots, a_N\}$. By the trace theorem, we deduce that $\hat \phi_0$ is an extension (in $\Om$) of the boundary limit $\phi_0$ found at point 1.  In order to prove the second order lower bound \eqref{eq:Gepslowerbd} for $\phi_\eps$, we replace $\phi_\eps$ by $\phi^*_\eps$ which is the harmonic extension of $\phi_\eps\big|_{\dOm}$ to $\Om$, i.e., $\phi^*_\eps$ is the minimiser of the Dirichlet energy in $\Om$ under the Dirichlet boundary condition $\phi_\eps\big|_{\dOm}$. Therefore, $\Ge(\phi_\eps)\geq 
 \Ge(\phi^*_\eps)$ and it is sufficient to prove \eqref{eq:Gepslowerbd} for $\phi^*_\eps$. 
By the above argument for the convergence of $\phi_\eps$, replacing $\phi_\eps$ by $\phi^*_\eps$, we know that  $\phi^*_\eps$ converges weakly in $W^{1,p}(\Omega)$ and weakly  in $H^1(O)$ for any open  $O$ with $\overline{O}\subset \overline{\Om}\setminus \{a_1,\dots, a_N\}$ to the harmonic extension $\phi_*$ to $\Om$ of $\phi_0:\dOm\to \R$. Using lower semicontinuity of the Dirichlet integral, we find by letting $\eps\to 0$:
\[
\int_{\Om_r} |\nabla \phi_*|^2\, dx \le \liminf_{\eps\to 0}\int_{\Omega_r}|\nabla \phi_\eps|^2 \, dx.
\] 

By definition of $W_\Omega$ we know that
\[
\int_{\Om_r} |\nabla \phi_*|^2dx = \pi N \log\frac1r + W_\Omega(\{(a_j,d_j)\})+o_{r}(1).
\]
From Proposition \ref{prop:onevortexGC} and \eqref{eq:s40}, we find
\be\label{eq:newlbba}
\liminf_{\eps\to 0}\left(\int_{\bigcup_j B_r(a_j)\cap \Om} |\nabla \phi_\eps|^2 dx + \frac{1}{2\pi\eps} \int_{\bigcup_j  B_r(a_j)\cap \dOm} \sin^2(\phi_\eps-g) d\h^1 - N(\pi\log\frac{r}\eps+\gamma_0)\right) \ge - CNr^{\frac12}.
\ee

Combining the last three relations, we see that
\[
\liminf_{\eps\to 0} \left(\Ge(\phi_\e)- \pi N \log\frac1\eps -N\gamma_0-W_\Omega(\{(a_j,d_j)\}) \right)  \ge o_r(1)
\]
Letting $r\to0$, we conclude  \eqref{eq:Gepslowerbd}.

\medskip

\noindent {\bf Step 3}. {\it Proof of point 3.} 
Let $\phi_*$ be the harmonic extension of $\phi_0$ given in Definition~\ref{defi:renen}, and let $r>0$ be a small radius. 
For each $j$ we use Lemma~\ref{lem:conformalflat} and find $\Psi_{a_j}:\ol{B^{+}_{2r}}\to\ol\Om$ as there.

Close to $a_j$, for a suitable choice of the argument function and arguing as in Lemma~\ref{lem:lem7}, $\phi_*=\pm d_j\arg(z-a_j)+h(z)$,
 for $h\in W^{1,p}$ in a neighbourhood of $a_j$ for all $p\in(1,\infty)$, with   bounds depending only {on $p$}, $\dOm$ and the choice of $\{(a_j,d_j)\}$ (since $g$ is Lipschitz). 
  Clearly $\sin(h-g)=0$ on $\dOm$.
  It follows using the Dini regularity of $\Psi_{a_j}$ that $\hat \phi_*=\phi_*\circ \Psi_{a_j}$ can be written as 
\[
\hat \phi_*(z) = \pm d_j\arg(z)+\hat h(z)
\]
{in a neighborhood of $0$} with $\hat h=h\circ \Psi_{a_j}$ {bounded in
${W^{1,p}}$ around the origin as above.}
We now define 
\[
\hat \psi_\eps = \begin{cases}
\phi_*(z) &\quad \text{{if }} |\Psi_{a_j}^{-1}(z)|>r\\
h(z)\pm\phi_\eps(\Psi_{a_j}^{-1}(z)) &\quad \text{elsewhere},
\end{cases}
\]
where $\phi_\eps$ is the function as defined in Step 1 of the proof of Lemma~\ref{lem:cabre} for $d_j=\pm 1$ 
and $\phi_\eps=\phi_{d,\eps}$ as in the proof of Lemma~\ref{lem:ubhigher} for $|d_j|>1$.
Then $\hat \psi_\eps$ is continuous in $\overline{\Omega}$. 
From our construction, it is clear that $\hat \psi_\e \to \phi_0$ in all $L^p(\dOm)$.

Using Lemma~\ref{lem:conformalflat} and the definition of $\phi_*$, denoting $E_r=\bigcup_{j=1}^N(B_r(a_j)\triangle \Psi_{a_j}(B_r^+))$, we have 
\[
\int_{E_r} |\nabla \hat \psi_\eps|^2\, dx \le C\int_{r(1-c_1r\log\frac1r)}^{r(1+c_1r\log\frac1r)} \frac1s\, ds =O(r\log\frac1r).
\] 
Inside $\Psi_{a_j}(B_r^+)$, we compute the energy of $\hat \psi_\eps$. By conformal mapping, we can compute it in $B_r^+$.
Note that 
\[
\int_{B_r^+} |\nabla \phi_\eps+{\nabla}\hat h|^2 \, dx = \int_{B_r^+} |\nabla \phi_\eps|^2\, dx + 2 \int_{B_r^+} \nabla \phi_\eps\cdot {\nabla}\hat h\, dx + 
\int_{B_r^+} |{\nabla}\hat h|^2 \, dx.
\]
For $d_j=\pm 1$, by  Lemma~\ref{lem:cabre}, 
\[
\hat F_\eps( \phi_\eps;B_r^+)\le \pi \log\frac r\eps+\gamma_0 +o_r(1).
\]

Using $\int_{B_r\cap \R} \sin^2 \phi_\e \,d\h^1 \le C$, we find using Lemma~\ref{lem:conformalflat} that
\[
 \int_{B_r(a_j)\cap \dOm} \sin^2(\hat \psi_\e - g) \, d\h^1 = \int_{B_r\cap \R} \sin^2 \phi_\eps \, d\h^1+ O(r\log\frac1r).
\] 

As $\hat h\in {W^{1,p}}$  and $|\nabla \phi_\eps(x,y)|\le \frac1{|(x,y)|}$, we can estimate {using H\"older's inequality}
\[
\left|\int_{B_r^+} \nabla \phi_\eps\cdot {\nabla}\hat h\, dx \right|
{ \le \left(\int_{B_r^+} |\nabla \phi_\eps|^{\frac32}\, dx \right)^{\frac23} \left(\int_{B_r^+} |\nabla \hat h|^3\, dx\right)^\frac13}
\le {o_r(1)} \quad \textrm{and}\quad 
\int_{B_r^+} |{\nabla}\hat h|^2 \, dx{\le o_r(1)}.
\]
As the Dirichlet energy of $\phi_*$ in $\Omega\setminus \bigcup_{j=1}^N B_r(a_j)$
is $W_\Om(\{a_j, d_j\})+N\pi |\log r|+o_r(1)$, we can thus establish that
 the upper bound \eqref{eq:gammaG_ub} holds {for $\hat \psi_\e$ with an error $o_r(1)$. Choosing $r$ sufficiently small, we see that \eqref{eq:gammaG_ub} must hold.
 Replacing $\hat \psi_\e$ by the harmonic function with the same boundary conditions, the energy does not increase.
As harmonic functions satisfy $\|f\|_{L^2(\Om)}\le C\|f\|_{L^2(\dOm)}$, 
 we obtain that $\hat \psi_\e\to \phi_0$ in $L^2(\Omega)$ and hence (by boundedness) in all $L^p(\Omega)$.
 }
For $|d_j|>1$, applying the result of Lemma~\ref{lem:ubhigher} similarly leads to \eqref{eq:gammaG_ub1}.\footnote{For a different proof of the upper bound construction in the case $g=0$, we refer to \cite{CCK19}.}

\qed

\section{Second order $\Gamma$-convergence for the full energy. Proof of Theorems \ref{thm:firstocomp}, \ref{thm:gammacforee} and \ref{thm:gcubintro}}
\label{sec:gammacee}
In this section we prove Theorems~\ref{thm:firstocomp}, \ref{thm:gammacforee} and \ref{thm:gcubintro}.
\begin{proof}{~of Theorem~\ref{thm:firstocomp}}
Let $(u_\e)$ be a {sequence / family} of maps with $\eee(u_\e)\le C|\log\e|$. Then we can use Theorem~\ref{lem_approx} to construct a {sequence / family} $U_\eps$ with the the following properties:
\begin{itemize}
\item $U_\eps\in H^1(\Om;\Ss^1)$;
\item ${\|U_\e-u_\e\|_{L^p(\Om)}\to 0}$, ${\|U_\e-u_\e\|_{L^p(\dOm)}\to 0}$ as $\e\to 0$ for every $p\geq 1$;
\item $\eee(U_\eps) \le \eee(u_\eps)+o_\e(1)$
\item  ${\jaco(U_\eps)-\jaco(u_\eps)\to 0}$ in $({\rm Lip}(\Om))^*$ as $\e\to 0$.
\end{itemize}
By Lemma~\ref{lem:lifting}, we find $\phi_\eps$ such that $U_\eps=e^{i\phi_\eps}$ and $\eee(U_\eps) = {\Ge}(\phi_\eps)$, with $\Ge$ defined in \eqref{eq:Geps}. 
The global Jacobian of $U_\eps$ is given by \eqref{jac_s1} as $\jaco(U_\eps) = -\partial_\tau \phi_\eps  \h^1\llcorner \partial\Omega$. 
By Theorem \ref{thm:GCforGeps}, {for a subsequence, there exist integers $z_\e\in \ZZ$ (either all of them are even, or all are odd) such that $\phi_\eps-\pi z_\e$ converges to a limit $\phi_0$ in any $L^p(\dOm)$ where $\phi_0-g \in BV(\dOm;\pi\ZZ)$ with $g$ given in \eqref{g}. As $|e^{is}-e^{it}|\leq \frac\pi 2|s-t|$ for every $s,t\in \R$, we deduce that $U_\e\to \pm e^{i\phi_0}$  in any $L^p(\dOm)$. Changing $\phi_0$ in $\phi_0-\pi$ (in the case where $z_\e$ are all odd), we obtain  the desired convergence $u_\e\to e^{i\phi_0}$  in any $L^p(\dOm)$. Moreover,}
the convergence $\partial_\tau\phi_\eps \to \partial_\tau\phi_0$ in $W^{-1,p}(\partial\Om)$ for any $p\in (1, \infty)$ directly induces the convergence of $\jaco(U_\eps)=\jacbd(U_\eps)$ as claimed in $({\rm Lip}(\Om))^*$.
As  $\| \jaco(U_\eps)-\jaco(u_\eps)\|\to 0$ in $({\rm Lip}(\Om))^*$, we obtain that $\jaco(u_\eps)$ tends to the same limit.
Since $\eee(u_\e)\ge \eee(U_\e)-o(1)=\Ge(\phi_\e)-o(1)$, the lower bounds for $\Ge(\phi_\eps)$ directly translate into the claimed lower bound for $\eee(u_\e)$ at the first order.
\qed
\end{proof}

\medskip

\begin{proof}{~of Theorem~\ref{thm:gammacforee}}
Continuing as in the previous proof (within the same notation), we note that the estimate \eqref{eq:sharpenbd} transfers to $\Ge(\phi_\eps)$ so that Theorem \ref{thm:GCforGeps} yields the claims about $|d_i|=\pm 1$  and again, 
$\eee(u_\e)\ge \eee(U_\e)-o(1)=\Ge(\phi_\e)-o(1)$ implies the desired lower bounds for $\eee$ at the second order. 

To show the $L^q(\Om)$ bound for $\nabla u_\eps$ for every $q<2$, we proceed as follows: Using the boundary vortices $a_j$ with their multiplicities  $d_j$ from Theorem~\ref{thm:GCforGeps} coming from the lifting $\phi_\eps$ of $U_\eps$, we have
by \eqref{eq:newlbba} that 
\[
{\liminf_{\rho\to 0}} \liminf_{\eps\to 0}\left( \int_{\bigcup B_\rho(a_j)\cap \Om} |\nabla U_\eps|^2 dx + \frac{1}{2\pi\eps} \int_{\bigcup B_\rho(a_j)\cap \dOm} (U_\eps\cdot\nu)^2 d\h^1 - N(\pi\log\frac\rho\eps+\gamma_0)\right) \ge 0.
\]

From \eqref{eq:localapprox} applied to 
{$G=B_{\rho(1+\rho)}(a_j)\cap \Om$ so that $G_\eta\supset B_\rho(a_j)\cap \Om$}, 
we now deduce (since {$\log\frac{\rho(1+\rho)}{\rho}\to 0$} as $\rho\to 0$)
\be
\label{dsta}
\liminf_{\rho\to 0} \liminf_{\eps\to 0}\Bigl( \int_{\bigcup B_\rho(a_j)\cap \Om} |\nabla u_\eps|^2+\frac1{\eta^2}(1-|u_\eps|^2)^2 \, dx
+ \frac{1}{2\pi\eps} \int_{\bigcup B_\rho(a_j)\cap \dOm} (u_\eps\cdot\nu)^2 d\h^1 - N(\pi\log\frac\rho\eps+\gamma_0)\Bigr) \ge 0.
\ee
Using Corollary~\ref{cor:lob3} and \eqref{eq:localapprox}, we also find $C$ such that for $\rho<\rho_0$, $\eps<\eps_0$: 
\be\label{dsta2}
\int_{\bigcup B_\rho(a_j)\cap \Om} |\nabla u_\eps|^2+\frac1{\eta^2}(1-|u_\eps|^2)^2 \, dx
+ \frac{1}{2\pi\eps} \int_{\bigcup B_\rho(a_j)\cap \dOm} (u_\eps\cdot\nu)^2 d\h^1 \ge N\pi \log\frac\rho\eps-C,
\ee
hence, by \eqref{eq:sharpenbd},
\[
\int_{\Omega \setminus \bigcup B_{\rho}(a_j)} |\nabla u_\e|^2 \, dx \le N\pi \log \frac1\rho+C.
\]
 so Lemma \ref{lem:lpbd} applies and shows that $\limsup_{\e\to 0}\|\nabla u_\e\|_{L^q(\Omega)}<\infty$ for every $q<2$.

Finally, we need to show  \eqref{eq:thpenub}, which clearly implies \eqref{eq:l2lob} via \eqref{dsta2}. 
However, \eqref{eq:thpenub} follows from 
the exact same argument as used in Lemma~\ref{lem:ubpen}: 
Let $\tilde u_{2\e}:=u_\e$, $\tilde U_{2\e}:=U_\e$ and as in \eqref{dsta2}, apply Corollary \ref{cor:lob3} for $(\tilde u_{2\e})$ to get
for $\eps$ sufficiently small, 
\[
E_{2\eps, 2\eta}(u_{\eps})=E_{2\eps, 2\eta}(\tilde u_{2\eps})\ge N\pi \log \frac1{2\eps} -C,
\]
while by the upper bound  for some fixed $\rho>0$:
\[
\eee(u_{\eps}) \le N\pi \log \frac1{\eps} +C,
\]
so 
\[
\frac3{4\eta^2} \int_\Om (1-|u_{\eps}|^2)^2 \, dx + \frac1{4\pi \eps } \int_{\dOm} (u_{\eps}\cdot \nu)^2\, d\h^1=
\eee(u_{\eps}) -E_{2\eps, 2\eta}(u_{\eps})\le \tilde C,
\]
which clearly implies \eqref{eq:thpenub}.

For point $iv)$, by Theorem \ref{thm:GCforGeps}, we know that up to a subsequence and an additive constant, $\phi_\eps\to \phi_0$ a.e. in $\dOm$ which by dominated convergence theorem implies that $U_\e=e^{i\phi_\e}\to e^{i\phi_0}$ in every $L^p(\dOm)$ for $p\geq 1$. By \eqref{aprox_bdry}, we know that $u_\e-U_\e\to 0$ in $L^p(\dOm)$, therefore $u_\e- e^{i\phi_0}\to 0$ in $L^p(\dOm)$ 
{As $(u_\eps)$ is bounded in $W^{1,q}(\Om)$ for every $q\in [1,2)$, by the trace theorem and Theorem \ref{thm:GCforGeps} point 2), for a subsequence, $u_\e$ converges weakly in $W^{1,q}(\Om)$ and strongly in $L^p(\Om)$ for every $p\geq 1$ to an $\Ss^1$-valued extension $e^{i\hat{\phi_0}}$ of $e^{i\phi_0}$ in $\Om$}.
\qed
\end{proof}

\medskip

\begin{proof}{~of Theorem~\ref{thm:gcubintro}}
The upper bound construction is a direct consequence of the corresponding construction for $\Ge$ in {Theorem~\ref{thm:GCforGeps}}. With $\hat \psi_\eps$ as constructed there, 
we set $u_\eps = e^{i\hat \psi_\eps}$, then $|u_\eps|=1$ and $\jaco(u_\eps) = -\partial_\tau \hat \psi_\eps  \h^1\llcorner \partial\Omega$, and then the convergence and 
energy bound results follow directly using $\eee(u_\e) = \Ge(\hat \psi_\e)$. 
\qed
\end{proof}

\paragraph{Acknowledgment.} R.I. acknowledges partial support by the ANR project ANR-14-CE25-0009-01.

\bibliographystyle{plain}
\bibliography{ignatkurzke_arxiv}

\begin{thebibliography}{10}

\bibitem{boojum}
Stan {Alama}, Lia {Bronsard}, and Dmitry {Golovaty}.
\newblock {Thin Film Liquid Crystals with Oblique Anchoring and Boojums}.
\newblock {\em arXiv:1907.04757}, Jul 2019.

\bibitem{AlbertiBouchittSeppeche:1994a}
Giovanni Alberti, Guy Bouchitt{\'e}, and Pierre Seppecher.
\newblock Un r\'esultat de perturbations singuli\`eres avec la norme {$H\sp
  {1/2}$}.
\newblock {\em C. R. Acad. Sci. Paris S\'er. I Math.}, 319(4):333--338, 1994.

\bibitem{AlbertiBouchittSeppeche:1998a}
Giovanni Alberti, Guy Bouchitt{\'e}, and Pierre Seppecher.
\newblock Phase transition with the line-tension effect.
\newblock {\em Arch. Rational Mech. Anal.}, 144(1):1--46, 1998.

\bibitem{Alicandro:2014aa}
Roberto Alicandro and Marcello Ponsiglione.
\newblock Ginzburg-{L}andau functionals and renormalized energy: a revised
  {$\Gamma$}-convergence approach.
\newblock {\em J. Funct. Anal.}, 266(8):4890--4907, 2014.

\bibitem{Axler:2001aa}
Sheldon Axler, Paul Bourdon, and Wade Ramey.
\newblock {\em Harmonic function theory}, volume 137 of {\em Graduate Texts in
  Mathematics}.
\newblock Springer-Verlag, New York, second edition, 2001.

\bibitem{BEK19}
Marco Baffetti, Timothy Espin, and Matthias Kurzke.
\newblock A single multiplicity result for boundary vortices.
\newblock In preparation, 2019.

\bibitem{Ball:1989aa}
J.~M. Ball.
\newblock A version of the fundamental theorem for {Y}oung measures.
\newblock In {\em P{DE}s and continuum models of phase transitions ({N}ice,
  1988)}, volume 344 of {\em Lecture Notes in Phys.}, pages 207--215. Springer,
  Berlin, 1989.

\bibitem{BethuelBrezisHelein:1994a}
Fabrice Bethuel, Ha{\"{\i}}m Brezis, and Fr{\'e}d{\'e}ric H{\'e}lein.
\newblock {\em Ginzburg-{L}andau vortices}.
\newblock Progress in Nonlinear Differential Equations and their Applications,
  13. Birkh\"auser Boston Inc., Boston, MA, 1994.

\bibitem{BethuelZheng:1988a}
Fabrice Bethuel and Xiao~Min Zheng.
\newblock Density of smooth functions between two manifolds in {S}obolev
  spaces.
\newblock {\em J. Funct. Anal.}, 80(1):60--75, 1988.

\bibitem{BreNgu}
Ha{\"{\i}}m Brezis and Hoai-Minh Nguyen.
\newblock The {J}acobian determinant revisited.
\newblock {\em Invent. Math.}, 185(1):17--54, 2011.

\bibitem{CCK19}
Xavier Cabr{\'e}, Neus {C{\'o}nsul}, and Matthias {Kurzke}.
\newblock Minimizers for boundary reactions: renormalized energy, location of
  singularities, and applications, 2019.
\newblock In preparation.

\bibitem{CabreSola-Mor:2005a}
Xavier Cabr{\'e} and Joan Sol{\`a}-Morales.
\newblock Layer solutions in a half-space for boundary reactions.
\newblock {\em Comm. Pure Appl. Math.}, 58(12):1678--1732, 2005.

\bibitem{Cantero:2009a}
Rub\'en Cantero-{\'A}lvarez.
\newblock {\em Pattern, Walls and Vortex: A micromagnetic excursion}.
\newblock PhD thesis, Universit\"at Bonn, 2009.

\bibitem{ColliandJerrard:1999a}
J.~E. Colliander and R.~L. Jerrard.
\newblock Ginzburg-{L}andau vortices: weak stability and {S}chr\"odinger
  equation dynamics.
\newblock {\em J. Anal. Math.}, 77:129--205, 1999.

\bibitem{Cote:2014aa}
Rapha\"el C\^ote, Radu Ignat, and Evelyne Miot.
\newblock A thin-film limit in the {L}andau-{L}ifshitz-{G}ilbert equation
  relevant for the formation of {N}\'eel walls.
\newblock {\em J. Fixed Point Theory Appl.}, 15(1):241--272, 2014.

\bibitem{Pino:1997aa}
Manuel del Pino and Patricio~L. Felmer.
\newblock Local minimizers for the {G}inzburg-{L}andau energy.
\newblock {\em Math. Z.}, 225(4):671--684, 1997.

\bibitem{GarroniMuller:2006a}
Adriana Garroni and Stefan M{\"u}ller.
\newblock A variational model for dislocations in the line tension limit.
\newblock {\em Arch. Ration. Mech. Anal.}, 181(3):535--578, 2006.

\bibitem{GarsiaRodemich:1974a}
A.~M. Garsia and E.~Rodemich.
\newblock Monotonicity of certain functionals under rearrangement.
\newblock {\em Ann. Inst. Fourier (Grenoble)}, 24(2):vi, 67--116, 1974.

\bibitem{GradshteRyzhik:2007a}
I.~S. Gradshteyn and I.~M. Ryzhik.
\newblock {\em Table of integrals, series, and products}.
\newblock Elsevier/Academic Press, Amsterdam, seventh edition, 2007.
\newblock Translated from the Russian, Translation edited and with a preface by
  Alan Jeffrey and Daniel Zwillinger, With one CD-ROM (Windows, Macintosh and
  UNIX).

\bibitem{Ig_CV}
Radu Ignat.
\newblock Optimal lifting for {${\rm BV}(S^1,S^1)$}.
\newblock {\em Calc. Var. Partial Differential Equations}, 23(1):83--96, 2005.

\bibitem{Ig_JacBV}
Radu Ignat.
\newblock The space {${\rm BV}(S^2,S^1)$}: minimal connection and optimal
  lifting.
\newblock {\em Ann. Inst. H. Poincar\'{e} Anal. Non Lin\'{e}aire},
  22(3):283--302, 2005.

\bibitem{Ignat_HDR}
Radu Ignat.
\newblock Singularities of divergence-free vector fields with values into
  {$S^1$} or {$S^2$}. {A}pplications to micromagnetics.
\newblock {\em Confluentes Math.}, 4(3):1230001, 80, 2012.

\bibitem{IgJe}
Radu Ignat and Robert~L. Jerrard.
\newblock Interaction energy between vortices of vector fields on {R}iemannian
  surfaces.
\newblock {\em C. R. Math. Acad. Sci. Paris}, 355(5):515--521, 2017.

\bibitem{IgJeP}
Radu Ignat and Robert~L. Jerrard.
\newblock Renormalized energy between vortices in some {G}inzburg-{L}andau
  models on $2$-dimensional {R}iemannian manifolds.
\newblock arXiv:1910.02921, 2019.

\bibitem{IK_bdv}
Radu Ignat and Matthias Kurzke.
\newblock An effective model for boundary vortices in thin-film micromagnetics.
\newblock Preprint, 2019.

\bibitem{IKL19}
Radu {Ignat}, Matthias {Kurzke}, and Xavier {Lamy}.
\newblock {Global uniform estimate for the modulus of $2D$ Ginzburg-Landau
  vortexless solutions with asymptotically infinite boundary energy}.
\newblock {\em arXiv:1904.00856}, Apr 2019.

\bibitem{IgnatOtto:2011a}
Radu Ignat and Felix Otto.
\newblock A compactness result for {L}andau state in thin-film micromagnetics.
\newblock {\em Ann. Inst. H. Poincar\'e Anal. Non Lin\'eaire}, 28(2):247--282,
  2011.

\bibitem{Jerrard}
Robert~L. Jerrard.
\newblock Lower bounds for generalized {G}inzburg-{L}andau functionals.
\newblock {\em SIAM J. Math. Anal.}, 30(4):721--746, 1999.

\bibitem{JerrardSoner:2002b}
Robert~L. Jerrard and Halil~Mete Soner.
\newblock The {J}acobian and the {G}inzburg-{L}andau energy.
\newblock {\em Calc. Var. Partial Differential Equations}, 14(2):151--191,
  2002.

\bibitem{KohnSlastiko:2005a}
Robert~V. Kohn and Valeriy~V. Slastikov.
\newblock Another thin-film limit of micromagnetics.
\newblock {\em Arch. Ration. Mech. Anal.}, 178(2):227--245, 2005.

\bibitem{Kurzke:2006b}
Matthias Kurzke.
\newblock Boundary vortices in thin magnetic films.
\newblock {\em Calc. Var. Partial Differential Equations}, 26(1):1--28, 2006.

\bibitem{Kurzke:2006a}
Matthias Kurzke.
\newblock A nonlocal singular perturbation problem with periodic well
  potential.
\newblock {\em ESAIM Control Optim. Calc. Var.}, 12(1):52--63, 2006.

\bibitem{Kurzke:2007a}
Matthias Kurzke.
\newblock The gradient flow motion of boundary vortices.
\newblock {\em Ann. Inst. H. Poincar\'e Anal. Non Lin\'eaire}, 24(1):91--112,
  2007.

\bibitem{Moser:2003a}
Roger Moser.
\newblock Ginzburg-{L}andau vortices for thin ferromagnetic films.
\newblock {\em AMRX Appl. Math. Res. Express}, (1):1--32, 2003.

\bibitem{Moser:2004a}
Roger Moser.
\newblock Boundary vortices for thin ferromagnetic films.
\newblock {\em Arch. Ration. Mech. Anal.}, 174(2):267--300, 2004.

\bibitem{Muller:1999a}
Stefan M{\"u}ller.
\newblock Variational models for microstructure and phase transitions.
\newblock In {\em Calculus of variations and geometric evolution problems
  (Cetraro, 1996)}, volume 1713 of {\em Lecture Notes in Math.}, pages 85--210.
  Springer, Berlin, 1999.

\bibitem{Pommerenke:1992aa}
Ch. Pommerenke.
\newblock {\em Boundary behaviour of conformal maps}, volume 299 of {\em
  Grundlehren der Mathematischen Wissenschaften [Fundamental Principles of
  Mathematical Sciences]}.
\newblock Springer-Verlag, Berlin, 1992.

\bibitem{Sandier:1998a}
Etienne Sandier.
\newblock Lower bounds for the energy of unit vector fields and applications.
\newblock {\em J. Funct. Anal.}, 152(2):379--403, 1998.

\bibitem{SandierSerfaty:2007a}
Etienne Sandier and Sylvia Serfaty.
\newblock {\em Vortices in the magnetic {G}inzburg-{L}andau model}.
\newblock Progress in Nonlinear Differential Equations and their Applications,
  70. Birkh\"auser Boston Inc., Boston, MA, 2007.

\bibitem{Struwe:1994a}
Michael Struwe.
\newblock On the asymptotic behavior of minimizers of the {G}inzburg-{L}andau
  model in {$2$} dimensions.
\newblock {\em Differential Integral Equations}, 7(5-6):1613--1624, 1994.

\bibitem{Taylor:1997a}
Michael~E. Taylor.
\newblock {\em Partial differential equations. {III}}, volume 117 of {\em
  Applied Mathematical Sciences}.
\newblock Springer-Verlag, New York, 1997.
\newblock Nonlinear equations, Corrected reprint of the 1996 original.

\bibitem{Toland:1997a}
J.~F. Toland.
\newblock The {P}eierls-{N}abarro and {B}enjamin-{O}no equations.
\newblock {\em J. Funct. Anal.}, 145(1):136--150, 1997.

\bibitem{Valadier:1994aa}
Michel Valadier.
\newblock A course on {Y}oung measures.
\newblock {\em Rend. Istit. Mat. Univ. Trieste}, 26(suppl.):349--394 (1995),
  1994.
\newblock Workshop on Measure Theory and Real Analysis (Italian) (Grado, 1993).

\end{thebibliography}
\end{document}